\title{Integrable measure equivalence rigidity of right-angled Artin groups via quasi-isometry}
\author{Camille Horbez and Jingyin Huang}
\date{\today}
\begin{document}
	\newtheorem{de}{Definition}[section]
	\newtheorem{dec}[de]{Definition-Construction}
	\newtheorem{theo}[de]{Theorem} 
	\newtheorem{prop}[de]{Proposition}
	\newtheorem{lemma}[de]{Lemma}
	\newtheorem{cor}[de]{Corollary}
	\newtheorem{propd}[de]{Proposition-Definition}
	\newtheorem{conj}[de]{Conjecture}
	\newtheorem{claim}[de]{Claim}
	\newtheorem*{claim2}{Claim}
	\newtheorem{obs}[de]{Observation}
	
	\newtheorem{theointro}{Theorem}
	\newtheorem*{defintro}{Definition}
	\newtheorem{corintro}[theointro]{Corollary}

	\theoremstyle{remark}
	\newtheorem{rk}[de]{Remark}
	\newtheorem{ex}[de]{Example}
	\newtheorem{question}[de]{Question}
	\newtheorem{assumption}[de]{Standing assumption}
	
	\normalsize
	
	\newcommand{\st}{\mathrm{st}}
	\newcommand{\lk}{\mathrm{lk}}
	\newcommand{\Bij}{\mathrm{Bij}}
	\newcommand{\Ga}{\Gamma}
	\newcommand{\ga}{\gamma}
	\newcommand{\Htaut}{(\mathrm{H}_{\mathrm{taut}})}
	\newcommand{\Hicc}{(\mathrm{H}_{\mathrm{icc}})}
	\newcommand{\Htrans}{(\mathrm{H}_{\mathrm{trans}})}
	\newcommand{\cala}{\mathcal{A}}
	\newcommand{\calb}{\mathcal{B}}
	\newcommand{\calh}{\mathcal{H}}
	\newcommand{\calg}{\mathcal{G}}
	\newcommand{\calk}{\mathcal{K}}
	\newcommand{\call}{\mathcal{L}}
	\newcommand{\calo}{\mathcal{O}}
	\newcommand{\calp}{\mathcal{P}}
	\newcommand{\calw}{\mathcal{W}}
	\newcommand{\calz}{\mathcal{Z}}
	\newcommand{\caln}{\mathcal{N}}
	\newcommand{\G}{\mathsf G}
	\newcommand{\sfH}{\mathsf H}
	\newcommand{\Out}{\operatorname{Out}}
	\newcommand{\Aut}{\mathrm{Aut}}
	\newcommand{\stab}{\mathrm{Stab}}
	\newcommand{\Isom}{\mathrm{Isom}}
	\newcommand{\Maps}{\mathrm{Maps}}
	\newcommand{\dunion}{\sqcup}
	\newcommand{\B}{\mathbb B}
	\newcommand{\Bu}{\B}
	\newcommand{\fp}{\operatorname{FP}}
	\newcommand{\ba}{\mathbb A}
	\newcommand{\Prob}{\mathrm{Prob}}
	\newcommand{\Stab}{\mathrm{Stab}}
	\newcommand{\Conv}{\mathrm{Conv}}
	\newcommand{\reg}{\mathrm{reg}}
	\newcommand{\sing}{\mathrm{sing}}
	\newcommand{\thin}{\mathrm{thin}}
	\newcommand{\cb}{\overline{\B}^R}
	\newcommand{\Perm}{\operatorname{Perm}}
	\newcommand{\sfv}{\mathsf{v}}
	\newcommand{\sfw}{\mathsf{w}}
 \newcommand{\Homeo}{\mathrm{Homeo}}

	\newcommand{\actson}{\curvearrowright}
	\newcommand{\gr}{\mathrm{gr}}
	
	\makeatletter
	\edef\@tempa#1#2{\def#1{\mathaccent\string"\noexpand\accentclass@#2 }}
	\@tempa\rond{017}
	\makeatother
	
	\newcommand{\Ccom}[1]{\Cmod\marginpar{\color{red}\tiny #1 --ch}} 
	\newcommand{\Ccomm}[1]{\Jmod\marginpar{\color{blue}\tiny #1 --jh}}
	\newcommand{\Cmod}{$\textcolor{red}{\clubsuit}$} 
	\newcommand{\Jmod}{$\textcolor{blue}{\spadesuit}$}
	
		\maketitle

		\begin{abstract}
			Let $G$ be a right-angled Artin group with $|\Out(G)|<+\infty$. We prove that if a countable group $H$ with bounded finite subgroups is measure equivalent to $G$, with an $L^1$-integrable measure equivalence cocycle towards $G$, then $H$ is finitely generated and quasi-isometric to $G$. In particular, through work of Kleiner and the second-named author, $H$ acts properly and cocompactly on a $\mathrm{CAT}(0)$ cube complex which is quasi-isometric to $G$ and equivariantly projects to the right-angled building of $G$. 
		
			As a consequence of work of the second-named author, we derive a superrigidity theorem in integrable measure equivalence for an infinite class of right-angled Artin groups, including those whose defining graph is an $n$-gon with $n\ge 5$. In contrast, we also prove that if a right-angled Artin group $G$ with $|\Out(G)|<+\infty$ splits non-trivially as a product, then there does not exist any locally compact group which contains all groups $H$ that are $L^1$-measure equivalent to $G$ as lattices, even up to replacing $H$ by a finite-index subgroup and taking the quotient by a finite normal subgroup.
		\end{abstract}

\setcounter{tocdepth}{1}
\tableofcontents

\normalsize
  
		\section{Introduction}

  \subsection{Background, history and motivation}

Measure equivalence was introduced by Gromov \cite{Gro} as a measure-theoretic analogue to quasi-isometry. Two countable groups $G_1,G_2$ are \emph{measure equivalent} if there exists a standard (non-null) measure space $\Omega$ (called a \emph{coupling}), equipped with a measure-preserving action of $G_1\times G_2$, such that for every $i\in\{1,2\}$, the $G_i$-action on $\Omega$ is free and has a finite measure fundamental domain. Quasi-isometry between finitely generated groups has an analogous characterization, with $\Omega$ a (non-empty) locally compact topological space on which $G_1$ and $G_2$ have commuting actions, both properly discontinuous and cocompact. As a motivating example, lattices in the same locally compact second countable group are always measure equivalent -- if cocompact, they are quasi-isometric. Despite the analogy in definitions, there is no implication in either way. By a celebrated theorem of Ornstein--Weiss \cite{OW}, building on earlier work of Dye \cite{Dye1,Dye2}, all countably infinite amenable groups are measure equivalent -- they are far from being all quasi-isometric. Conversely, measure equivalence preserves Property~(T) \cite{Fur-me} or ratios of $\ell^2$-Betti numbers \cite{Gab-l2}, which are not quasi-isometry invariants. In contrast to the Ornstein--Weiss theorem, there has been a lot of effort in proving the quasi-isometric and measure equivalence rigidity of many important classes of groups. These include lattices in higher rank simple Lie groups \cite{Fur-me,KL,EF,Esk}, where Zimmer's cocycle superrigidity theorem \cite{Zim2} played a central role on the side of measure equivalence, and surface mapping class groups \cite{Kid-me,BKMM,Ham}.
 
By using a notion of \emph{uniform measure equivalence} to study the large-scale geometry of amenable groups \cite{Sha}, Shalom strengthened the bridge between measure equivalence and quasi-isometry. 
In the same spirit of imposing a quantitative control on the word length of a cocycle naturally associated to the measure equivalence, Bader--Furman--Sauer coined the notion of \emph{integrable measure equivalence}, reviewed at the beginning of Section~\ref{sec:results}. For this notion, they established new rigidity theorems for some rank $1$ lattices, including all lattices in $\mathrm{Isom}(\mathbb{H}_{\mathbb{R}}^n)$ with $n\ge 3$, and all cocompact lattices in $\mathrm{Isom}(\mathbb{H}_{\mathbb{R}}^2)$ -- for the latter rigidity fails for (standard) measure equivalence \cite{BFS}. The idea behind integrable measure equivalence finds its roots in the work of Margulis \cite{Mar}, where integrability conditions on lattices appear to be crucial in induction arguments, see also \cite{Sha2}. Integrable measure equivalence retains more geometric information about the group, like growth \cite{Aus} or the isoperimetric profile \cite{DKLMT}. On the ergodic side, integrability conditions on orbit equivalence cocycles (closely related to measure equivalence cocycles) already appeared in Belinskaya's theorem \cite{Bel} regarding actions of $\mathbb{Z}$, and have also been studied in connection to ergodic notions like entropy \cite{Aus2,KLi,KLi2}.

Outside the realm of Lie groups, there has been growing interest in understanding lattices in totally disconnected locally compact groups and their rigidity properties. These turn out to be very mysterious compared to the more classical lattices acting on symmetric spaces and buildings. A general classification theorem of lattice embeddings due to Bader--Furman--Sauer \cite[Theorem~A]{BFS2} highlights the importance of the totally disconnected case. Of particular relevance are lattices acting on $\mathrm{CAT}(0)$ cube complexes, among which right-angled Artin groups form a prototypical example. 

Right-angled Artin groups are also important for many other reasons. We mention in particular their connections to buildings \cite{Dav}, and the deep combinatorial tools from the work of Haglund--Wise \cite{HW,HW2} that famously led to Agol's solution to the virtual Haken conjecture \cite{Ago}. These also turn out to be crucial ingredients in quasi-isometry, measure equivalence and other forms of rigidity, as will be further demonstrated in this paper.

Given a finite simplicial graph $\Gamma$, the right-angled Artin group $G_\Gamma$ has a finite presentation with one generator per vertex of $\Gamma$, where two generators commute whenever the associated vertices are adjacent. There has been a lot of work regarding the quasi-isometry classification/rigidity of these groups, e.g.\ \cite{BN,BJN,BKS,Hua,Hua2,Marg}; some of them are quasi-isometrically rigid \cite{Hua-QI}. In previous work \cite{HH}, we initiated a study of right-angled Artin groups in measure equivalence. Contrary to the situation in quasi-isometry, and in contrast to the behaviour of certain other classes of Artin groups \cite{HH1}, they demonstrate a lack of rigidity, in the sense that the class of groups that are measure equivalent to $G_\Gamma$ is huge, for instance it contains all graph products of countably infinite amenable groups over $\Gamma$. In fact the line between rigidity and flexibility is quite subtle, see e.g.\ \cite{HHI} where we recover rigidity by imposing extra ergodicity assumptions on the coupling. 

In the present paper, we relate integrable measure equivalence and quasi-isometry for right-angled Artin groups with finite outer automorphism group, and derive a superrigidity theorem in some cases. The finiteness condition on the outer automorphism group naturally appears in rigidity questions; it is easily readable on the defining graph \cite{Ser,Lau} and is generic in a sense \cite{CF,Day}.

\subsection{Integrable measure equivalence versus quasi-isometry for RAAGs}\label{sec:results}

Let $G$ and $H$ be two countable groups, with $G$ finitely generated. Let $|\cdot|_G$ be a word length on $G$ with respect to some finite generating set. An \emph{$(L^1,L^0)$-measure equivalence coupling from $H$ to $G$} is a measure equivalence coupling $(\Omega,\mu)$ between $G$ and $H$ such that there exists a Borel fundamental domain $X_G$ for the $G$-action on $\Omega$, for which the measure equivalence cocycle $c:H\times X_G\to G$ (defined by letting $c(h,x)$ be the unique element $g\in G$ such that $ghx\in X_G$) satisfies \[\forall h\in H,~~  \int_{X_G}|c(h,x)|_G\; d\mu(x)<+\infty.\] The terminology $(L^1,L^0)$-measure equivalence coupling comes from the fact that we are only imposing an $L^1$-integrability condition from $H$ to $G$, not from $G$ to $H$ (this would not make sense as $H$ is not assumed finitely generated). In this respect, this notion is weaker than $L^1$-measure equivalence in the sense of Bader--Furman--Sauer \cite{BFS}, who imposed integrability in both directions.

We say that a group $H$ has  \emph{bounded finite subgroups} if there is a bound on the cardinality of its finite subgroups. Our main theorem is the following.
  
		\begin{theointro}\label{theointro:main}
			Let $G$ be a right-angled Artin group with $|\Out(G)|<+\infty$, let $H$ be a countable group with bounded finite subgroups. 
			
			If there exists an $(L^1,L^0)$-measure equivalence coupling from $H$ to $G$, then $H$ is finitely generated and quasi-isometric to $G$.
		\end{theointro}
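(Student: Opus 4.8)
The plan is to pass from the measure-theoretic coupling to a quasi-isometry by means of a "quantitative" orbit-equivalence argument, using the $L^1$-integrability to control the geometry. I will first recall the structure of the problem: an $(L^1,L^0)$-coupling $(\Omega,\mu)$ from $H$ to $G$ gives a measure equivalence cocycle $c\colon H\times X_G\to G$ which is $L^1$ in the $G$-word metric. The first task is to show that such a cocycle is "at bounded distance from a quasi-isometry" in a suitable sense; the standard template here (going back to Shalom's uniform measure equivalence and the Bader--Furman--Sauer approach to integrable ME) is to build, from the cocycle, a coarse equivariant map and to upgrade integrability into a genuine large-scale Lipschitz bound. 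Since $H$ is not assumed finitely generated at the outset, the very first step must be to produce a finite generating set: one takes finitely many elements $h_1,\dots,h_k\in H$ whose translates of a fundamental domain cover $\Omega$ up to measure zero, and uses $L^1$-integrability to see that these, together with the "return" data, generate $H$ and that the resulting word metric is comparable to the one pulled back through $c$.

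**The core geometric input** is the rigidity of RAAGs with finite outer automorphism group. Here I would invoke the Kleiner--Huang description mentioned in the abstract: $G$ acts properly and cocompactly on a $\mathrm{CAT}(0)$ cube complex equivariantly fibering over the right-angled building of $G$, and this cube complex / building structure is quasi-isometry-invariant in a strong, combinatorial way. The strategy is to transport the combinatorial structure (hyperplanes, standard flats, the building) through the coarse map coming from $c$, much as one does in the quasi-isometry classification papers \cite{Hua,Hua-QI}, but now starting from a cocycle rather than an honest quasi-isometry. Concretely: using $L^1$-integrability one shows that the cocycle $c$ coarsely preserves, in a measured sense, the "parallelism classes" of standard flats of $G$; an ergodic-theoretic averaging argument (a Zimmer-type straightening, but in the soft $L^1$ regime) then promotes this to a bona fide $H$-equivariant quasi-action of $H$ on the model space of $G$. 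The bounded-torsion hypothesis is exactly what one needs to rule out infinite stabilizers and to conclude that this quasi-action is proper; cocompactness follows from the finite-index fundamental domain.

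**Carrying this out step by step**: (1) extract a finite generating set of $H$ from the coupling and show $H$ is finitely generated with word metric comparable to the $c$-pulled-back metric; (2) build a coarsely $H$-equivariant map $X_G\to G$ (hence, after identifying $G$ with a net in its model cube complex, a map to the model space) and use $L^1$-integrability plus a Borel--Cantelli / maximal-inequality argument to show it is coarsely Lipschitz on a set of full measure, then on all of $H$ after enlarging constants; (3) symmetrize: run the same construction with the roles suitably reversed (using that a fundamental domain for $H$ has finite measure) to get a coarse inverse, yielding a quasi-isometry $H\to G$ at the level of the ergodic-theoretic "average"; (4) invoke the QI-rigidity of $G$ (finite $\Out$) to upgrade the quasi-action to the announced action on a $\mathrm{CAT}(0)$ cube complex. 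The main obstacle, I expect, is step (2)–(3): converting the purely integral ($L^1$) control of the cocycle into a pointwise (or coarsely-everywhere) Lipschitz estimate, i.e. producing the quasi-isometry itself rather than a map that is "Lipschitz on average." This is where one must exploit the specific geometry of RAAGs — in particular the fact that standard flats and hyperplanes give a rich supply of "directions" along which the cocycle can be tested — rather than a general ergodic-theoretic principle; controlling the failure set and showing it can be absorbed into the quasi-isometry constants is the crux of the argument.
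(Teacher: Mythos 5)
Your proposal has two genuine gaps, and they sit exactly at the two places where the paper does its real work. First, the finite-generation step. You propose to ``take finitely many elements $h_1,\dots,h_k\in H$ whose translates of a fundamental domain cover $\Omega$ up to measure zero'' and deduce generation. This does not work: a fundamental domain for $H$ has finite measure while $\Omega$ need not, and even covering a finite-measure fundamental domain up to measure $\varepsilon$ by finitely many translates (which is all one can do) does not yield a generating set. The paper treats finite generation as a serious theorem, not a preliminary: it first proves (Theorem~\ref{theo:fg}) that a bounded-torsion group with an $L^1$-integrable embedding into $\mathbb{Z}$ is virtually cyclic --- an extension of Bowen's growth theorem to possibly infinitely generated groups, needed because vertex stabilizers of the $H$-action are not known to be finitely generated a priori --- and only obtains finite generation of $H$ itself at the very end, from a proper cocompact action on a blow-up building (via Brown's criterion for stabilizers along the way).

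Second, and more fundamentally, your steps (2)--(3) ask for a general mechanism converting the $L^1$ bound on the cocycle into a coarsely Lipschitz, coarsely equivariant map $H\to G$ (``Borel--Cantelli / maximal inequality'', ``Zimmer-type straightening in the soft $L^1$ regime''). No such mechanism exists in this generality, and the paper does not attempt one. Instead, the architecture is: (i) with \emph{no} integrability, use a Furman--Kida-style rigidity argument --- every self-coupling of $G$ factors through $\Aut(\B_\Gamma)$, where $\B_\Gamma$ is the right-angled building, with strong ICC established via proximal dynamics on the Roller boundary --- to produce an honest action of $H$ on $\B_\Gamma$ with amenable stabilizers, cocompact by bounded torsion; (ii) use integrability \emph{only once}, restricted to the induced couplings between rank-$1$ vertex stabilizers, to show $H$-stabilizers of standard lines are virtually cyclic (this is where Theorem~\ref{theo:fg} enters); (iii) show the factor actions $H_\sfv\actson Z_\sfv$ are conjugate to isometric-up-to-uniform-constants actions on $\mathbb{Z}$ (via a normal infinite cyclic subgroup preserving all $\sfv$-lines and a translation-length argument), and then feed this into the blow-up-building criterion of Theorem~\ref{theo:QI}. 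The quasi-isometry $H\to G$ is never constructed from the cocycle; it falls out of the proper cocompact action on the blow-up building. You correctly identify the conversion of integral control to pointwise control as ``the crux,'' but the resolution is not to strengthen the cocycle estimate --- it is to route everything through the building and localize the use of integrability to cyclic stabilizers.
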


 As such, the theorem fails if $H$ is allowed to have unbounded finite subgroups.  This comes from the existence of infinitely generated non-uniform lattices in the automorphism group of the universal cover of the Salvetti complex of $G$, whenever $G$ is a non-abelian right-angled Artin group (see \cite[Section~4.2]{HH}).  Another example of groups $H$ with unbounded finite subgroups having an $(L^1,L^0)$-measure equivalence coupling towards $G$ is given by graph products over the defining graph of $G$, with vertex groups isomorphic to $\oplus_{\mathbb{N}}\mathbb Z/2\mathbb Z$: indeed, the odometer gives a measure equivalence (in fact an orbit equivalence) between $\oplus_{\mathbb{N}}\mathbb Z/2\mathbb Z$ and $\mathbb{Z}$ with an $L^\infty$-integrability condition, and the integrability passes to graph products.
 We do not know any finitely generated examples, however.  This is related to the deep and important question raised in \cite[Question~33]{FHT}, asking which polyhedral complexes admit both finitely generated and non-finitely-generated non-uniform lattices.
 

The integrability condition in Theorem~\ref{theointro:main} is also crucial, as it excludes examples coming from graph products of amenable groups.

We mention that Theorem~\ref{theointro:main} is already new even for $G=\mathbb{Z}$, when $H$ is not assumed finitely generated. Finitely generated groups $H$ with an $(L^1,L^0)$-measure equivalence coupling from $H$ to $\mathbb{Z}$ grow linearly by a theorem of Bowen \cite{Aus}, and are therefore virtually cyclic. But excluding the possibility that $H$ be infinitely generated (e.g.\ $H=\mathbb{Q}$, whose finitely generated subgroups are all isomorphic to $\mathbb{Z}$) requires a new argument. In the present work, this generalization to infinitely generated groups is not just for the sake of the greatest generality. Indeed,  when $G$ is an arbitrary right-angled Artin group with $|\Out(G)|<+\infty$, even if we start with a finitely generated group $H$, in the course of the proof, we will have to work with subgroups of $H$ that arise as point stabilizers for some $H$-action on a $\mathrm{CAT}(0)$ cube complex, and we will not know \emph{a priori} that these are finitely generated. In fact, proving finite generation will be an important task in the proof. 

For certain groups (like surface mapping class groups), there are separate rigidity statements in measure equivalence and quasi-isometry, which imply the conclusion of Theorem~\ref{theointro:main}. This is not the case however for the class of groups in Theorem~\ref{theointro:main}. 

\subsection{Consequences to superrigidity}

Groups that are quasi-isometric to a right-angled Artin group with finite outer automorphism group have been extensively studied \cite{HK,Hua-QI}. The following corollary follows from the combination of Theorem~\ref{theointro:main} and \cite[Corollary~6.4]{HK}.

\begin{corintro}\label{corintro:cube} 
Let $G$ be a right-angled Artin group with $|\Out(G)|<+\infty$, let $H$ be a countable group with  bounded finite subgroups. 
			
			If there exists an $(L^1,L^0)$-measure equivalence coupling from $H$ to $G$, then $H$ acts properly discontinuously, cocompactly, by cubical automorphisms on a $\mathrm{CAT}(0)$ cube complex which is quasi-isometric to $G$.
\end{corintro}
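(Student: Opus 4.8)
The plan is to deduce Corollary~\ref{corintro:cube} by combining Theorem~\ref{theointro:main} with the structural results of Huang--Kleiner on groups quasi-isometric to right-angled Artin groups. First I would apply Theorem~\ref{theointro:main}: its hypotheses are exactly met here (the group $G$ is a right-angled Artin group with $|\Out(G)|<+\infty$, and $H$ is a countable group with bounded torsion admitting an $(L^1,L^0)$-measure equivalence coupling towards $G$), so the theorem yields that $H$ is finitely generated and quasi-isometric to $G$. This step converts the measure-theoretic hypothesis into a purely geometric one, which is where all the real work of the paper is spent.

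Next I would invoke \cite[Corollary~6.4]{HK}, whose input is precisely a finitely generated group with bounded torsion that is quasi-isometric to a right-angled Artin group with finite outer automorphism group, and whose output is a properly discontinuous, cocompact action of that group by cubical automorphisms on a $\mathrm{CAT}(0)$ cube complex, the cube complex itself being quasi-isometric to $G$ (indeed, by construction, equivariantly projecting onto the right-angled building associated to $G$, which is quasi-isometric to $G$). Feeding $H$ into this statement — it satisfies all the required hypotheses by the previous paragraph together with the standing assumptions — produces the desired cube complex and the desired action, which is exactly the conclusion of Corollary~\ref{corintro:cube}.

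The only points requiring attention are bookkeeping ones: one should check that the notion of quasi-isometry and the torsion condition used in \cite[Corollary~6.4]{HK} match what Theorem~\ref{theointro:main} and the standing hypotheses provide. The bounded torsion assumption on $H$ is really used twice — once inside Theorem~\ref{theointro:main} to produce the quasi-isometry in the first place, and once in \cite{HK} to guarantee that the cell stabilizers of the constructed cubical action have uniformly bounded order, hence are finite, so that the action is genuinely properly discontinuous rather than merely having finite-index-free-part behaviour. I expect no genuine obstacle at this stage: the entire substance of the corollary lies in Theorem~\ref{theointro:main} (the technical heart of the paper) and in the cited work of Huang--Kleiner, and the corollary is their formal combination.
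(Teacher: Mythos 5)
Your proposal matches the paper's own derivation exactly: the paper states that Corollary~\ref{corintro:cube} follows by combining Theorem~\ref{theointro:main} (which gives that $H$ is finitely generated and quasi-isometric to $G$) with \cite[Corollary~6.4]{HK}. The only small remark is that the paper's internal Theorem~\ref{theo:QI} already produces the cubical action (as a blow-up building projecting onto $\B_\Gamma$) in the course of proving the main theorem, so the citation of \cite{HK} is essentially a repackaging of that construction rather than an independent second input; but this does not change the logic of your argument.
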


In a sense, this is analogous to Furman's theorem on lattices $G$ in higher-rank simple Lie groups \cite{Fur-me}, stating that any countable group that is measure equivalent to $G$, acts as a lattice (up to a finite kernel) on the corresponding symmetric space (or \cite{BFS} for $\mathrm{Isom}(\mathbb{H}_{\mathbb{R}}^n)$ under an integrability assumption). But there is a crucial difference, in that we need to allow the cube complex to depend on $H$, see Theorem~\ref{theointro:nonrigidity}. Nevertheless, all cube complexes that arise in Corollary~\ref{corintro:cube} are simple deformations of the universal cover of the Salvetti complex of $G$, and they have a very explicit description given in Section~\ref{sec:qi-criterion} (in particular, they all collapse onto the right-angled building of $G$).

We also mention that if the group $H$ (with bounded finite subgroups) is only assumed to be measure equivalent to $G$, with no integrability condition, then we can still derive that $H$ acts cocompactly on a $\mathrm{CAT}(0)$ cube complex (in fact on the right-angled building of $G$), with amenable stabilizers, see Theorem~\ref{theointro:me-cat}.

Under further assumptions on the defining graph of $G$, it is proved in \cite[Theorem~1.2]{Hua-QI}, using deep combinatorial insights on special cube complexes by Haglund--Wise \cite{HW2}, that all uniform lattices acting on all cube complexes arising in Corollary~\ref{corintro:cube} are virtually special, which is the key for proving that they are commensurable -- see also \cite{She} for recent progress on the relationship between virtual specialness and commensurability. Thus we obtain a superrigidity theorem in integrable measure equivalence for a class of right-angled Artin groups. 

More precisely, we say that a finite simplicial graph $\Gamma$ is \emph{star-rigid} if for every vertex $v\in V\Gamma$, the only automorphism of $\Gamma$ that fixes the star of $v$ pointwise is the identity. Examples of such graphs include the $n$-gon, for any $n\ge 3$, and any asymmetric graph. We recall that an \emph{induced square} in a simplicial graph $\Gamma$ is an embedded 4-cycle $C$ such that no two opposite vertices in $C$ are adjacent in $\Gamma$.
		
		\begin{corintro}\label{corintro:rigidity}
			Let $G$ be a right-angled Artin group with $|\Out(G)|<+\infty$, whose defining graph is star-rigid and does not contain any induced square. Let $H$ be any countable group with bounded finite subgroups.
			
			If there exists an $(L^1,L^0)$-measure equivalence coupling from $H$ to $G$, then $G$ and $H$ are commensurable up to a finite kernel.
		\end{corintro}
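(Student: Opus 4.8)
The plan is to obtain Corollary~\ref{corintro:rigidity} by chaining Theorem~\ref{theointro:main}, Corollary~\ref{corintro:cube}, and the quasi-isometric rigidity results of the second-named author from \cite{Hua-QI}; there is no new ingredient beyond Theorem~\ref{theointro:main} itself. First, since $H$ has bounded torsion and there is an $(L^1,L^0)$-measure equivalence coupling from $H$ to $G$, Theorem~\ref{theointro:main} gives that $H$ is finitely generated and quasi-isometric to $G$, and then Corollary~\ref{corintro:cube} produces a properly discontinuous, cocompact cubical action of $H$ on a $\mathrm{CAT}(0)$ cube complex $X$ that is quasi-isometric to $G$. I would then record, using the explicit description referenced in Section~\ref{sec:qi-criterion}, that $X$ belongs to the family of ``deformed Salvetti complexes'' of $G$ (all collapsing onto the right-angled building of $G$), a family which also contains the universal cover $\widetilde{S_G}$ of the Salvetti complex of $G$ itself (the undeformed member), on which $G$ acts freely, properly discontinuously and cocompactly.

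Next I would invoke \cite[Theorem~1.2]{Hua-QI}: under the standing hypotheses that $|\Out(G)|<+\infty$, that the defining graph of $G$ is star-rigid, and that it contains no induced square, every uniform lattice acting on a cube complex in this family is virtually special in the sense of Haglund--Wise, and any two such uniform lattices are commensurable up to a finite kernel. Applying this to the $G$-action on $\widetilde{S_G}$ and to the $H$-action on $X$ yields that $G$ and $H$ are commensurable up to a finite kernel, which is the desired conclusion. A brief consistency check is worth spelling out: virtual specialness makes $H$ virtually torsion-free, so a finite-index subgroup of $H$ already acts freely and cocompactly on $X$ (this is the precise situation addressed in \cite{Hua-QI}); the finite kernel that may appear on the $H$-side absorbs a possible finite normal subgroup of $H$ coming from non-freeness of the original action, while on the $G$-side no kernel appears since $G$ is torsion-free.

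The hard part is not on our side of the argument: it lies entirely inside \cite[Theorem~1.2]{Hua-QI}, because the cube complex $X$ genuinely depends on $H$ and need not be $\widetilde{S_G}$ — Theorem~\ref{theointro:nonrigidity} shows this flexibility is unavoidable — so one cannot simply compare lattices inside a single automorphism group. It is the deep combinatorial theory of special cube complexes of Haglund--Wise \cite{HW2} that forces all these uniform lattices, over the whole family of deformed complexes, into a single commensurability class. On our side, the only point to verify by hand is that the cube complexes produced by Corollary~\ref{corintro:cube} are precisely those to which \cite[Theorem~1.2]{Hua-QI} applies, and this is immediate from their description in Section~\ref{sec:qi-criterion}.
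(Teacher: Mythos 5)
Your proposal matches the paper's argument exactly: the authors obtain Corollary~\ref{corintro:rigidity} by combining Theorem~\ref{theointro:main} with Corollary~\ref{corintro:cube} and then invoking \cite[Theorem~1.2]{Hua-QI}, which shows that all uniform lattices on the blow-up buildings of Section~\ref{sec:qi-criterion} are virtually special and hence commensurable to $G$ up to a finite kernel. Your consistency checks about torsion and finite kernels are sensible but add nothing beyond what the cited theorem already handles.
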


This is perhaps the first instance where passing through quasi-isometry is key for obtaining rigidity results on the side of (integrable) measure equivalence.
  
\begin{rk}
\label{rk:commensurability}
   The conclusion of Corollary~\ref{corintro:rigidity} is false whenever the defining graph of $G$ contains an induced square in view of \cite[Theorem~1.8]{Hua-QI} (regardless of whether $\Out(G)$ is finite or not). In this case, the universal cover $X$ of the Salvetti complex of $G$ contains a subcomplex which is a product of two trees. An appropriate irreducible lattice acting on this product of trees can then be extended to a cocompact lattice in $\Aut(X)$ that is not commensurable to $G$ even up to a finite kernel.  
\end{rk}

  \subsection{Lattice envelopes}\label{sec:lattice-embedding}

Let $G$ be a right-angled Artin group with $|\Out(G)|<+\infty$, and let $\mathcal C_G$ be the class of all countable groups $H$ with  bounded finite subgroups having an $(L^1,L^0)$-measure equivalence coupling towards $G$. Motivated by results in Lie groups as discussed before, we ask the following questions. Does there exist a locally compact second countable group $\mathfrak G$ 
\begin{enumerate}
\item such that every $H\in\mathcal C_G$ has a lattice representation into $\mathfrak G$ with finite kernel?
\item such that every $H\in\mathcal C_G$ has a finite-index subgroup $H^0\subseteq H$ that has a lattice representation into $\mathfrak G$ with finite kernel?
\end{enumerate}
It turns out that the answer to the first question is negative whenever $|\Out(G)|<+\infty$, by \cite[Theorem~6.11]{HK} (whose proof constructs a group $H$ that is commensurable to $G$ and has no lattice representation with finite kernel in the same locally compact group as $G$). On the other hand, the second question has a positive answer for the groups appearing in Corollary~\ref{corintro:rigidity}. But our next theorem provides a class of right-angled Artin groups $G$ where even the second question as a negative answer, namely: the answer to the second question is negative whenever $|\Out(G)|<+\infty$ and $G$ splits as a product of two non-cyclic groups. In fact, for these groups $G$, the second question even has a negative answer if $\mathcal{C}_G$ is replaced with the (possibly smaller) class of all groups $H$ that are \emph{strongly commable} with $G$, in the sense that there exist finitely generated groups $G=G_1,\dots,G_k=H$ such that for every $i\in\{1,\dots, k-1\}$, the groups $G_i$ and $G_{i+1}$ are uniform lattices in a common locally compact second countable group -- this definition is a variation over Cornulier's notion of \emph{commability} \cite{Cor}.

\begin{theointro}\label{theointro:nonrigidity}
Let $G$ be the  direct product of two non-cyclic right-angled Artin groups with finite outer automorphism groups (hence $|\Out(G)|<+\infty$). 

Then there does not exist any locally compact group $\mathfrak G$ such that any torsion-free countable group $H$ which is strongly commable with $G$, has a finite-index subgroup $H^0$ with a lattice embedding into $\mathfrak G$.
\end{theointro}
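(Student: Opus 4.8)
Given any candidate $\mathfrak{G}$, we produce a torsion-free group that is strongly commable with $G$ but no finite-index subgroup of which embeds as a lattice with finite kernel in $\mathfrak{G}$. The groups we use are cocompact lattices on deformations of the universal cover $X$ of the Salvetti complex of $G$ that carry, in the ``square directions'' of $X$, the local structure of Burger--Mozes-type irreducible lattices in products of trees. It is the product decomposition $G=G_1\times G_2$ that makes such lattices available: writing $\Gamma_i$ for the defining graph of $G_i$, and noting that $|\Out(G_i)|<+\infty$ forbids the non-cyclic graph $\Gamma_i$ from being complete, one can choose non-adjacent vertices $a,b\in V\Gamma_1$ and $c,d\in V\Gamma_2$; these span an induced square in the join $\Gamma_1*\Gamma_2$, which is the defining graph of $G$. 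Hence $X$, a product of the universal covers of the Salvetti complexes of $G_1$ and $G_2$, contains a convex subcomplex which is a product of two trees, on which the special subgroup $\langle a,b\rangle\times\langle c,d\rangle\cong F_2\times F_2$ of $G$ acts properly cocompactly.

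\noindent\textbf{The family.} In the spirit of \cite[Theorem~1.8]{Hua-QI} (cf.\ Remark~\ref{rk:commensurability}), from a torsion-free irreducible cocompact lattice $\Delta$ on a product of two trees $T\times T'$ one manufactures a cube complex $Y_\Delta$ --- a deformation of $X$ of the type described in Section~\ref{sec:qi-criterion}, in which the square directions are ``thickened'' to $T\times T'$ and which, like $X$, collapses onto the right-angled building of $G$ --- together with a torsion-free cocompact lattice $\Lambda_\Delta\le\mathrm{Aut}(Y_\Delta)$ extending $\Delta$. Since this deformation is realized by a finite chain of moves, each exhibiting a common cocompact lattice, $\Lambda_\Delta$ is strongly commable with $G$. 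Letting $\Delta=\Delta_n$ range over infinitely many irreducible cocompact lattices supported on products of trees of pairwise distinct (and unbounded) valences, which moreover one may take to be (virtually) simple --- using the known constructions of simple cocompact lattices in products of trees --- one obtains torsion-free groups $\Lambda_n:=\Lambda_{\Delta_n}$ that are strongly commable with $G$, pairwise non-commensurable, and such that the groups $\mathrm{Aut}(Y_n)$ with $Y_n:=Y_{\Delta_n}$ are pairwise non-isomorphic even after passing to finite-index open subgroups and dividing by compact normal subgroups.

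\noindent\textbf{Envelope-rigidity and conclusion.} The decisive input is that each $\Lambda_n$ is \emph{envelope-rigid}: if a locally compact second countable group $L$ has a finite-index subgroup of $\Lambda_n$ as a lattice with finite kernel, then, after passing to a finite-index open subgroup of $L$ and dividing by a compact normal subgroup, either $L$ is discrete and commensurable with $\Lambda_n$, or $L$ is isomorphic to $\mathrm{Aut}(Y_n)$; and the latter is never discrete, even after such reductions (its tree directions carry a non-discrete local automorphism group). Granting this, let $\mathfrak{G}$ be as in the statement; since $G$ and every $\Lambda_n$ are torsion-free and strongly commable with $G$, finite-index subgroups of $G$ and of each $\Lambda_n$ embed as lattices with trivial kernel in $\mathfrak{G}$. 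Suppose first that $\mathfrak{G}$ becomes discrete after the above reductions; then each $\Lambda_n$ is commensurable with that discrete group, hence the $\Lambda_n$ are pairwise commensurable, a contradiction. So, after reductions, $\mathfrak{G}$ is not discrete, and envelope-rigidity applied to each $n$ --- the discrete alternative being now excluded --- forces $\mathfrak{G}$ to be isomorphic, after reductions, to $\mathrm{Aut}(Y_n)$; but this must hold for every $n$, contradicting that the $\mathrm{Aut}(Y_n)$ are pairwise non-isomorphic after reductions. Hence for some $n$ no finite-index subgroup of $\Lambda_n$ embeds as a lattice with finite kernel in $\mathfrak{G}$, and $\mathfrak{G}$ is not as in the statement. (That $G$ itself is only used to place the $\Lambda_n$ in the relevant class, and not in the final contradiction, fits with Corollary~\ref{corintro:rigidity}: when the defining graph has no induced square, this ``product-of-trees'' flexibility disappears and a universal envelope up to finite index does exist.)

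\noindent\textbf{Main obstacle.} The crux is the envelope-rigidity of the $\Lambda_n$ and the pairwise non-isomorphism of the $\mathrm{Aut}(Y_n)$. One must analyze $\mathrm{Aut}(Y_n)$ precisely enough --- showing that its tree directions carry the full Burger--Mozes local structure of $\Delta_n$ while the remaining directions stay rigid, as in the Huang--Kleiner theory behind Corollary~\ref{corintro:cube} --- to invoke the classification of lattice embeddings of \cite[Theorem~A]{BFS2} together with the normal subgroup theorem for the tree factors, thereby pinning down the admissible ambient groups up to the stated reductions and distinguishing the $\mathrm{Aut}(Y_n)$. The remaining ingredients are comparatively routine: constructing $Y_\Delta$ and the extension $\Lambda_\Delta$ (in the spirit of \cite[Theorem~1.8]{Hua-QI} and Section~\ref{sec:qi-criterion}); checking the strong commability of $\Lambda_\Delta$ with $G$ via the chain of moves realizing the deformation; choosing the family $(\Delta_n)$; and tracking torsion-freeness and the passage to finite-index subgroups throughout.
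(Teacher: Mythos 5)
Your construction of the family is in the same spirit as the paper's (Burger--Mozes-type lattices on a product of trees sitting inside the square directions of $X$, extended via canonical completion to cocompact lattices $\Lambda_n$ on deformations of $X$, with strong commability read off from the chain of common cocompact actions). But the mechanism by which you derive the final contradiction rests on an ``envelope-rigidity'' dichotomy --- that any lattice envelope of a finite-index subgroup of $\Lambda_n$ is, after passing to a finite-index open subgroup and dividing by a compact normal subgroup, either discrete and commensurable with $\Lambda_n$ or isomorphic to $\Aut(Y_n)$ --- which you correctly flag as the crux but do not prove, and which is too strong as stated. What the paper's own envelope analysis (Theorem~\ref{theo:lattice-embedding}) actually yields is a continuous homomorphism from the envelope to $\Aut(Y)$ with compact kernel and \emph{cocompact closed image}, for \emph{some} blow-up building $Y$ determined by the quasi-action of the envelope on $G$ --- not necessarily $Y_n$, and not necessarily with image all of $\Aut(Y)$. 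Closed cocompact intermediate subgroups of $\Aut(Y_n)$ containing $\Lambda_n$ as a lattice are not excluded, so the dichotomy fails, and with it the step where you force $\mathfrak{G}$ to be simultaneously isomorphic to all the pairwise non-isomorphic $\Aut(Y_n)$. Invoking \cite[Theorem~A]{BFS2} plus normal subgroup theorems is a plausible-sounding repair, but it is not carried out and it is not what the paper does.

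The paper's actual contradiction is quantitative and bypasses envelope classification entirely. From the common envelope $\mathfrak{G}$ (whose lattice embeddings are cocompact by Theorem~\ref{theo:cocompact}), Proposition~\ref{prop:q-action} produces a single proper cobounded action of $\mathfrak{G}$ on $G_\Lambda$ by flat-preserving bijections that are quasi-isometries with \emph{uniform} constants. Proposition~\ref{prop:quasi-action} then gives one constant $C$ such that, for every vertex $\sfv$ of the extension graph and every subgroup of the stabilizer, if the factor action on $Z_\sfv$ has a finite orbit then all its orbits have size at most $C$. On the other hand, the Burger--Mozes local transitivity (orbits of size $\ge n/4$ in the local action) combined with the projection lemmas (Lemmas~\ref{lem:projection1} and~\ref{lem:consistent}) produces, for large $n$, subgroups whose factor actions have finite orbits of size at least $n/4$, contradicting the uniform bound. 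You would need to either supply a genuine proof of your rigidity dichotomy or switch to an argument of this uniform-constants type; as written, the proof does not close.
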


 To prove Theorem~\ref{theointro:nonrigidity}, we construct an infinite sequence of torsion-free countable groups $H_n$ that are all strongly commable to $G$, and which cannot all virtually embed as lattices in a common locally compact group. We do not know, however, whether we can find two torsion-free countable groups $H_1,H_2$, both commable with $G$, such that no finite-index subgroups $H_1^0$ of $H_1$ and $H_2^0$ of $H_2$ have a lattice embedding in a common locally compact group.

Our proof of Theorem~\ref{theointro:nonrigidity} crucially relies on the celebrated construction by Burger--Mozes of simple groups which are uniform lattices in products of trees \cite{BM}. This forms a sharp contrast with Corollary~\ref{corintro:rigidity} which relies on virtual specialness of $H$ -- the examples leading to Theorem~\ref{theointro:nonrigidity} are very far from being virtually special. 
Theorem~\ref{theointro:nonrigidity} can also be viewed as a much stronger version of Remark~\ref{rk:commensurability}, where one goes from lack of commensurability to lack of (virtual) common locally compact model.

Theorem~\ref{theointro:nonrigidity} also contrasts with Kida's proof of the measure equivalence superrigidity of products of mapping class groups \cite{Kid-me}, which he derives from the superrigidity of mapping class groups together with the work of Monod--Shalom \cite{MS} on rigidity for products of negatively curved ($\mathcal{C}_{\mathrm{reg}}$) groups. The difference with our work is that the form of rigidity established by Kida for mapping class groups is even stronger in that he proves that every self measure equivalence coupling factors through the tautological one by left/right multiplication on the (extended) mapping class group. This stronger form of rigidity fails in our context -- we will come back to this while discussing the proof of our main theorem. 
 
Nevertheless, we can still get information on the possible lattice envelopes of a non-cyclic right-angled Artin group $G$ with $|\Out(G)|<+\infty$. More generally, if $H$ is a countable group with bounded finite subgroups which is measure equivalent to $G$, then any lattice embedding of $H$ is cocompact, and in fact every lattice envelope $\mathfrak{H}$ of $H$ is totally disconnected up to a compact kernel (Theorem~\ref{theo:cocompact}). And if there is an $(L^1,L^0)$-measure equivalence coupling from $H$ to $G$, then more can be said: in this case, there exists a uniformly locally finite $\mathrm{CAT}(0)$ cube complex $Y$ quasi-isometric to $G$ (having the same description as in Corollary~\ref{corintro:cube}), and a continuous homomorphism $\mathfrak{H}\to\Aut(Y)$ with compact kernel and cocompact image (Theorem~\ref{theo:lattice-embedding}). We mention that some of these results follow alternatively from the general work of Bader--Furman--Sauer on lattice envelopes \cite{BFS2}. We take a different route, in closer relation to Furman's ideas for exploiting measure equivalence rigidity towards classification of lattice embeddings \cite{Fur-lattice}. On the topic of lattice envelopes of right-angled Artin groups, we also refer the reader to the recent work of Caprace--de Medts \cite{CdM}.

\subsection{Discussion of the proof of the main theorem}

A common strategy towards proving the measure equivalence rigidity of a group $G$, initiated by Furman \cite{Fur-me} and further developed by Monod--Shalom \cite{MS}, Kida \cite{Kid-me}, Bader--Furman--Sauer \cite{BFS}, consists in showing that every self-coupling of $G$ factors through $G$ itself, or more generally through a locally compact group $\mathfrak{G}$ that contains $G$ as a lattice. However, this cannot hold in our setting even for integrable self-couplings, as this would imply that every countable group that is integrably measure equivalent to $G$ has a lattice representation with finite kernel into $\mathfrak{G}$. This would contradict our discussion in Section~\ref{sec:lattice-embedding}.

More specifically, Kida's strategy for measure equivalence rigidity of mapping class groups \cite{Kid-me} relies on having a graph $\mathcal{C}$  on which $G$ acts (in his case, the curve graph of the surface), with two properties. 
First, vertex stabilizers of $\mathcal{C}$ are ``recognized'' in an appropriate sense by any self-coupling $\Omega$ of $G$, which allows to build an equivariant map $\Omega\to\Aut(\mathcal{C})$. Second, the graph $\mathcal{C}$ is combinatorially rigid in the sense that $\Aut(\mathcal{C})$ is virtually isomorphic to $G$ -- this is ensured by a theorem of Ivanov for the curve graph \cite{Iva}. 

For right-angled Artin groups, there cannot exist a graph $\mathcal{C}$ that has both properties because of the discussion in Section~\ref{sec:lattice-embedding}. However the first half of Kida's strategy still works, and was carried in our previous work \cite{HH}. More precisely, Kim and Koberda introduced in \cite{KK} an analogue of the curve graph, which they called the \emph{extension graph} $\Gamma^e$ -- but the Polish group $\Aut(\Gamma^e)$ is much bigger than $G_\Gamma$ itself and not locally compact, in fact it contains the permutation group of countably many elements $\mathfrak{S}_\infty$ as a subgroup \cite[Corollary~4.20]{Hua}. In earlier work \cite{HH}, we proved the following fact (see also Lemma~\ref{lemma:self-coupling}), under our standing assumption that $|\Out(G_\Gamma)|<+\infty$.
\\
\\
\textbf{Fact.} For every self-coupling $\Sigma$ of $G_\Gamma$, there is a $(G_\Gamma\times G_\Gamma)$-equivariant measurable map $\Sigma\to\Aut(\Gamma^e)$ -- where the action on $\Aut(\Gamma^e)$ is by left/right multiplication. 
\\

 This is the starting point of the present paper. From there our proof has three steps.
\\
\\
\textbf{Step 1: An action of $H$ on the right-angled building of $G_\Gamma$.} This first step does not use any integrability assumption. In addition to the extension graph, another important geometric object attached to $G_\Gamma$ is its right-angled building $\B_\Gamma$, a $\mathrm{CAT}(0)$ cube complex introduced by Davis in \cite{Dav} which encodes the arrangement of flats in $G_\Gamma$. Its vertices are exactly the \emph{standard flats} in $G_\Gamma$, i.e.\ left cosets of free abelian subgroups coming from complete subgraphs of $\Gamma$ -- a vertex corresponding to a left coset of $\mathbb{Z}^k$ is said to have \emph{rank} $k$. And there is an edge between two vertices representing cosets $C,C'$ if $C\subseteq C'$ and this is a codimension $1$ inclusion. There is a natural way of filling in higher-dimensional cubes -- we refer to Section~\ref{sec:extension-graph} for more details and Remark~\ref{rk:building} on how the right-angled building is related to the classical buildings. We prove the following theorem, which answers, at least partly, the first question in \cite[p.~1028]{HH}.

\begin{theointro}\label{theointro:me-cat}
    Let $G$ be a right-angled Artin group with $|\Out(G)|<+\infty$, let $\B$ be the right-angled building of $G$, and let $H$ be a group which is measure equivalent to $G$.

    Then $H$ acts on $\B$ with amenable vertex stabilizers, and cocompactly provided that $H$ has bounded finite subgroups.  
\end{theointro}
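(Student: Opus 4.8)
The plan is to feed the Fact above into the standard measure-equivalence induction machinery, and then to transport the $G$-action on $\B$ across the coupling. Fix a $(G,H)$-coupling $(\Omega,\mu)$, a Borel fundamental domain $X_G$ for the $G$-action, and the associated measure-equivalence cocycle $c\colon H\times X_G\to G$; form the self-coupling $\Sigma=\Omega\times_H\check\Omega$ of $G$ (with $\check\Omega$ being $\Omega$ with the roles of $G$ and $H$ interchanged). By Lemma~\ref{lemma:self-coupling} there is a $(G\times G)$-equivariant Borel map $\Phi\colon\Sigma\to\Aut(\Gamma^e)$. If $G$ is cyclic — the only abelian right-angled Artin group with finite outer automorphism group — then $\Gamma^e$ is trivial and $\Phi$ says nothing, but the conclusion is immediate since amenability is a measure-equivalence invariant, so $H$ is amenable and acts on $\B$ with amenable vertex stabilizers, cocompactly. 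So assume $G$ non-cyclic; then $G$ has trivial centre (a cone vertex in the defining graph would force $\Out(G)$ to be infinite), and the conjugation homomorphism $\lambda\colon G\hookrightarrow\Aut(\Gamma^e)$ is injective.

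\textbf{Step 1 (straightening the cocycle).} Pushing $\Phi$ from $\Sigma$ down to $\Omega$, and using the finiteness of the fundamental domain — in the spirit of Furman's induction arguments \cite{Fur-me} and Kida's treatment of mapping class groups \cite{Kid-me} — I would obtain a homomorphism $\rho\colon H\to\Aut(\Gamma^e)$ and a Borel map $\varphi\colon X_G\to\Aut(\Gamma^e)$ with $\lambda(c(h,x))=\varphi(h\cdot x)^{-1}\rho(h)\,\varphi(x)$ for almost every $(h,x)$. In particular $H$ acts on the vertex set of $\Gamma^e$ through $\rho$.

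\textbf{Step 2 (from $\Gamma^e$ to $\B$).} This is where the real difficulty lies. The vertex set $\mathcal F$ of $\B$ is strictly finer than that of $\Gamma^e$: a standard flat of rank $k$ is a clique of $\Gamma^e$ (its set of directions) equipped with a point in a countably infinite fibre — a coset of $G_K$ inside its centraliser — and the incidence between flats of consecutive ranks is not visible to the abstract Polish group $\Aut(\Gamma^e)$. The plan is to recover this extra data from the coupling: the cocycle $c$ gives a measure-class-preserving groupoid action $h\cdot(x,F)=(h\cdot x,c(h,x)F)$ of $H$ on $X_G\times\mathcal F$, and, using the description of $\B$ in terms of $\Gamma^e$ developed in Section~\ref{sec:extension-graph} together with the cohomology relation of Step~1, I would untwist this groupoid action by $\varphi$ into a genuine simplicial action of $H$ on $\B$ extending the $\rho$-action on directions; the finiteness of $X_G$ is what pins this untwisting down and forces it to respect the incidence relations. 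I expect this — rather than the soft cocycle manipulations of Step~1 — to be the main obstacle, precisely because it requires going beyond what $\Aut(\Gamma^e)$ records.

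\textbf{Step 3 (amenable stabilizers and cocompactness).} For the $G$-action on $\B$ the stabilizer of a rank-$k$ flat is the free abelian group $\mathbb Z^k$, and $\B/G$ is finite (its cells correspond, up to $G$, to chains of cliques in the finite graph $\Gamma$). For the $H$-action produced in Step~2, the stabilizer $H_F$ of a vertex is, through the coupling and the cohomology relation of Step~1, coupled to the corresponding amenable flat stabilizer in $G$; thus $H_F$ admits a measure-preserving action on a finite-measure space together with a cocycle into an amenable group, so $H_F$ is amenable by Zimmer amenability. Cocompactness of the $H$-action then follows from the finiteness of $X_G$ together with the bound on the torsion of $H$, which is used to ensure that finitely many $H$-orbits cover each fibre of the flat structure. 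This last hypothesis is necessary: the infinitely generated non-uniform lattices of \cite[Section~4.2]{HH} are measure equivalent to $G$, have unbounded torsion, and act on $\B$ with infinitely many vertex orbits.
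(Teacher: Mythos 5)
Your overall strategy (inducing the self-coupling map through $\Aut(\Gamma^e)$ \`a la Furman--Kida, then restricting the coupling to vertex stabilizers, then counting orbits using bounded torsion) is the one the paper follows, but there are two genuine gaps. First, Step~1 is not free: to push the $(G\times G)$-equivariant map $\Sigma\to\Aut(\Gamma^e)$ down to a homomorphism $\rho\colon H\to\Aut(\Gamma^e)$ and an almost equivariant $\theta\colon\Omega\to\Aut(\Gamma^e)$, the standard machinery (Theorem~\ref{theo:taut}, i.e.\ \cite[Theorem~3.5]{Kid-amalgam} or \cite[Theorem~2.6]{BFS}) requires the tautening map on self-couplings to be essentially unique, which is precisely the strong ICC property of the inclusion $G\subseteq\Aut(\Gamma^e)$. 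Without it, the map obtained on $\Omega\times_H\check\Omega$ does not descend consistently to $\Omega$, and $\rho$ need not be well defined or multiplicative. This is a substantive input: the paper devotes all of Section~\ref{sec:proximal} to proving it (Proposition~\ref{prop:icc}), via minimality and strong proximality of the factor actions on the regular part of the Roller boundary of $\B$. One cannot appeal to ``the spirit of Furman's induction'' without verifying this hypothesis. A related omission: for the restricted set $\theta^{-1}(\Stab_{\Aut(\B)}(v))$ to have positive measure (so that $H_v$ is genuinely coupled to the abelian group $G_v$, and so that the orbit count for cocompactness goes through), one needs $\G\cdot\Stab_{\Aut(\B)}(v)=\Aut(\B)$, i.e.\ that the countable group and $\Aut(\B)$ have the same vertex orbits; for $G$ itself this fails when $\Gamma$ has symmetries, which is why the paper passes to $\hat G=G\rtimes\Aut(\Gamma)$ (Section~\ref{sec:finite-index-extension}).

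Second, your Step~2 identifies the wrong difficulty and proposes a construction that is not carried out. When $|\Out(G)|<+\infty$, the incidence data of $\B$ \emph{is} visible to $\Aut(\Gamma^e)$: every element of $G$ is the intersection of the maximal standard flats containing it, these correspond bijectively to maximal cliques of $\Gamma^e$, and by \cite[Lemmas~4.12 and~4.17]{Hua} the intersection of the image flats is again a singleton. This yields a canonical isomorphism of Polish groups $\Aut(\Gamma^e)\simeq\Aut(\B)$ (Lemma~\ref{lemma:iso}), so no extra data needs to be extracted from the coupling, and $\rho$ already gives the action on $\B$. As written, your ``untwisting'' produces at best a cocycle-twisted action on a bundle over $X_G$, and converting that into an honest simplicial action of $H$ on $\B$ would again require exactly the rigidity/uniqueness statement you skipped in Step~1. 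Your Step~3 is correct in outline and matches the paper's Corollaries~\ref{cor:ME-coupling-restriction} and~\ref{cor:ME-coupling-orbits}, modulo the orbit-matching point above.
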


We now say a word about its proof. It turns out that $\Aut(\B_\Gamma)$ is naturally isomorphic to $\Aut(\Gamma^e)$, see Section~\ref{sec:autos}. So the above fact ensures that every self-coupling of $G_\Gamma$ factors through $\Aut(\B_\Gamma)$. By a general argument from \cite{Kid-amalgam,BFS}, given a measure equivalence coupling $\Omega$ between $G_\Gamma$ and $H$, we obtain a representation of $H$ in $\Aut(\B_\Gamma)$ (i.e.\ an action of $H$ on $\B_\Gamma$) and a $(G_\Gamma\times H)$-equivariant measurable map $\theta:\Omega\to\Aut(\B_{\Gamma})$.

We then elaborate on an argument of Kida \cite[Section~5.2]{Kid-amalgam} and formulate a general framework that enables, given a vertex $v\in V\B_\Gamma$, to induce a measure equivalence coupling $\Omega_v$ between the stabilizers $G_v$ (for the original action of $G_\Gamma$) and $H_v$ (for the $H$-action obtained as above). This coupling $\Omega_v$ is (up to a small technicality) the preimage, under $\theta$, of the full stabilizer of $v$ in $\Aut(\B_{\Gamma})$. This general framework is established in Section~\ref{sec:me}.

We mention that there are several subtleties for implementing the above strategy. In particular, it is important for all our arguments to know that the inclusion of $G_\Gamma$ in $\Aut(\B_\Gamma)$ is \emph{strongly ICC}, i.e.\ that the Dirac measure at $\mathrm{id}$ is the only probability measure on $\Aut(\B_\Gamma)$ which is invariant under the conjugation by every element of $G_\Gamma$. The proof of this fact relies heavily on the proximal dynamics (in the sense of Furstenberg \cite{Furs}) of the action of $G_\Gamma$ on the Roller compactification of $\B_\Gamma$, relying on tools established by Fern\'os \cite{Fer} and Kar--Sageev \cite{KS}. 
\\
\\
\textbf{Step 2: From amenable to virtually cyclic stabilizers of rank $1$ vertices.}
This is the only place where we use our integrability assumption. We use it to show that every vertex of $\B_\Gamma$ whose stabilizer for the action of $G_\Gamma$ is cyclic, also has a virtually cyclic stabilizer for the action of $H$. For this, using an argument of Escalier and the first-named author \cite{EH}, we observe that if the measure equivalence coupling $\Omega$ is $L^1$-integrable from $H$ to $G$, then the induced measure equivalence coupling $\Omega_v$ between the vertex stabilizers $G_v,H_v$ is also $L^1$-integrable from $H_v$ to $G_v\approx\mathbb{Z}$. 

At this point, if we knew that $H_v$ were finitely generated, then we could apply Bowen's theorem stating that growth is preserved by $L^1$-integrable measure equivalence \cite[Appendix~B]{Aus}, and deduce that $H_v$ is virtually cyclic. The main difficulty is that we do not know \emph{a priori} that vertex stabilizers for the $H$-action are finitely generated, even if we had assumed $H$ to be finitely generated to start with. It is therefore crucial for us to extend Bowen's theorem and prove the following, which specializes Theorem~\ref{theointro:main} to the case where $G=\mathbb{Z}$ (in fact Theorem~\ref{theo:fg} gives a slightly more precise version phrased in terms of integrable embeddings).

\begin{theointro}[{see Theorem~\ref{theo:fg}}]\label{theointro:z}
Let $H$ be a countable group with bounded finite subgroups, and assume that there is an $(L^1,L^0)$-measure equivalence coupling from $H$ to $\mathbb{Z}$.

Then $H$ is virtually cyclic.
\end{theointro}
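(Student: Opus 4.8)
The plan is to reduce the statement to a rigidity property of $L^1$-orbit equivalence cocycles with target $\mathbb{Z}$, and, on the way, to prove a form of Bowen's growth theorem that does not presuppose finite generation of $H$.

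\textbf{Setting up.} First note that $H$ is infinite: a finite group cannot be measure equivalent to $\mathbb{Z}$, since a fundamental domain for the $\mathbb{Z}$-action forces the coupling to have infinite measure while a fundamental domain for a finite group forces finite measure. Also $H$ is amenable, being measure equivalent to the amenable group $\mathbb{Z}$. Using the standard dictionary between measure equivalence couplings and (stable) orbit equivalence, together with amenability — which via Ornstein--Weiss lets one match up the fundamental-domain measures — I would upgrade the $(L^1,L^0)$-coupling from $H$ to $\mathbb{Z}$ into an honest $L^1$-orbit equivalence: a free ergodic pmp action $H\curvearrowright (X,\mu)$ with $\mu$ a probability measure, whose orbit relation coincides with that of a free ergodic pmp $\mathbb{Z}$-action, and whose associated cocycle $\sigma\colon H\times X\to\mathbb{Z}$ satisfies $\ell(h):=\int_X|\sigma(h,x)|\,d\mu<+\infty$ for every $h\in H$. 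Here $\ell$ is symmetric and subadditive (via the cocycle identity and the invariance of $\mu$) with $\ell(e)=0$, and — crucially, because freeness of the $H$-action means $\sigma(h,\cdot)$ is nowhere zero for $h\neq e$, hence $|\sigma(h,\cdot)|\geq 1$ — one gets $\ell(h)\geq\mu(X)=1$ for all $h\neq e$.

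\textbf{A growth estimate without finite generation.} Working in the $\mathbb{Z}$-extension coupling $\Omega=X\times\mathbb{Z}$, on which $H$ acts freely by $\widehat h(x,t)=(h\cdot x,\,t+\sigma(h,x))$ with fundamental domain $X\times\{0\}$ of measure $1$, commuting with the translation $\mathbb{Z}$-action, I would prove the linear bound
\[
\#\{h\in H:\ \ell(h)\leq r\}\ \leq\ 8r+2\qquad(r\geq 1).
\]
Indeed, for any finite set $S$ of elements with $\ell\leq r$, the translates $\widehat h(X\times\{0\})$, $h\in S$, are pairwise disjoint of measure $1$, and $\sum_{h\in S}\int_{\widehat h(X\times\{0\})}|t| = \sum_{h\in S}\ell(h)\leq r\,\#S$; by Markov's inequality at least half of their total mass lies in $X\times\{|t|\leq 2r\}$, a set of measure at most $4r+1$, which forces $\#S\leq 8r+2$. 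Applied to $S$ the ball of radius $n$ in a finitely generated subgroup $\langle F\rangle$ (whose elements have $\ell\leq n\max_{f\in F}\ell(f)$ by subadditivity), this shows that every finitely generated subgroup of $H$ has at most linear growth, hence — by the classical fact that a finitely generated group of linear growth is virtually cyclic — is virtually cyclic. Since $H$ has bounded torsion, say all its finite subgroups have order at most $N$, each finitely generated subgroup is either finite of order $\leq N$ or contains an infinite cyclic subgroup of index $\leq 2N$. This is the promised extension of Bowen's theorem; it is insensitive to whether $H$ itself is finitely generated. Combining it with the fact that $H$ is infinite and (being bounded torsion) not locally finite, $H$ has an infinite finitely generated subgroup, which is therefore virtually cyclic; fix an element $h_0\in H$ of infinite order inside it.

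\textbf{From ``all finitely generated subgroups are virtually cyclic'' to ``$H$ is virtually cyclic''.} The hardest point is that the previous step does not suffice by itself: groups such as $\mathbb{Q}$ have all finitely generated subgroups virtually cyclic without being virtually cyclic (and $\bigoplus_{\mathbb{N}}\mathbb{Z}/2\mathbb{Z}$ shows that bounded torsion is genuinely needed too), so they must be excluded using more than growth. I would aim to show that the indices $[\,\langle h_0,h\rangle:\langle h_0\rangle\,]$ are bounded independently of $h\in H$; once this is known, taking a finitely generated subgroup realizing the supremal such finite index shows that it is all of $H$, so $H$ is finitely generated and we conclude by the growth estimate. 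By the structure of virtually cyclic groups together with bounded torsion, each $\langle h_0,h\rangle$ contains an infinite cyclic $\langle g\rangle$ of index $\leq 2N$ with $h_0^{\,m}\in\langle g\rangle$ for some $m\leq 2N$, and the index $[\,\langle h_0,h\rangle:\langle h_0\rangle\,]$ is controlled by the exponent $|d|$ with $h_0^{\,m}=g^{\,d}$; so it is enough to bound $|d|$ in terms of $\ell(h_0)$ alone. This is where a Belinskaya-type $L^1$-rigidity for the cocycle $\sigma$ must enter: because $\sigma$ is $L^1$ and arises from an orbit equivalence with a $\mathbb{Z}$-action, one should extract from the restriction of $\sigma$ to a cyclic subgroup a drift-type lower bound of the shape $\ell(g^{\,d})\geq c\,|d|$ with $c>0$ controlled uniformly (note that $\ell(g^{\,d})\geq|d|\,|\phi(g)|$ where $\phi(h):=\int_X\sigma(h,\cdot)\,d\mu$ is a homomorphism $H\to\mathbb{R}$, and $\ell(g^{\,d})\to+\infty$ as $|d|\to\infty$ because $d\mapsto\sigma(g^{\,d},x)$ is injective into $\mathbb{Z}$ for a.e.\ $x$ by freeness); combined with $\ell(g^{\,d})=\ell(h_0^{\,m})\leq 2N\,\ell(h_0)$ this bounds $|d|$, hence the index. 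I expect this last step — running the Belinskaya-type argument for a cyclic subgroup that need not generate the whole orbit relation, and extracting a \emph{uniform} constant that rules out the ``infinite divisibility up to finite index'' exemplified by $\mathbb{Q}$ — to be the main obstacle, and it is precisely where the $L^1$ hypothesis and the bounded-torsion hypothesis are both indispensable.
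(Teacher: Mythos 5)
Your first two steps are sound in spirit and your counting bound $\#\{h\in H:\ell(h)\le r\}\le 8r+2$ is a clean, more elementary substitute for the paper's invocation of Bowen's growth theorem (the paper instead works with Bowen's notion of $L^1$-integrable embeddings precisely to avoid the reduction to an honest orbit equivalence, which you gloss over: matching fundamental-domain measures via Ornstein--Weiss and arranging freeness of the induced $H$-action while preserving $L^1$-integrability of the cocycle has real technical content). Your reduction of the theorem to a uniform bound on the index of a fixed infinite cyclic subgroup inside larger cyclic subgroups also matches the paper's Lemma on locally virtually cyclic groups with bounded torsion.

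The genuine gap is the final step, which you yourself flag as ``the main obstacle'': you never prove the uniform index bound, and the route you propose would fail. A pointwise drift bound $\ell(g^d)\ge c\,|d|$ with uniform $c>0$ does not follow from anything you have established: the homomorphism $\phi(h)=\int_X\sigma(h,\cdot)\,d\mu$ can vanish identically on $\langle g\rangle$, and ``$\ell(g^d)\to+\infty$'' carries no rate — your own counting lemma only controls how many powers have small $\ell$, not which ones, so it cannot rule out that $\ell(g^{d_0})$ is small for one enormous $d_0=$ (the exponent with $h_0^m=g^{d_0}$). What is actually needed, and what the paper does (Lemma~6.5 together with the computation in Theorem~6.8), is an \emph{averaging} argument over balls in the big cyclic group $L_2=\langle g\rangle$: subadditivity plus the index $k=[L_2:L_1]$ force the average of $\ell(g^j)/|j|$ over $|j|\le n$ to be at most roughly $2M_1/k$ (where $M_1=\ell(h_0)$), while a bounded-multiplicity property of the cocycle on a large-measure subset (the paper's Lemma~6.4, the analogue of your counting bound) forces this average to be bounded below by a universal constant on a positive proportion of the ball; comparing the two yields $k\lesssim C M_1$ uniformly. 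Your counting bound could in fact be combined with a Markov-inequality argument over $B_{L_2}(e,n)$ to run essentially this comparison, but as written your proposal heads toward an individual lower bound that is neither available nor the right statement, so the crux of the theorem remains unproved.
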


\medskip

\noindent \textbf{Step 3: Control of factor actions and conclusion.}
At this point we have actions of $H$ and $G_\Gamma$ acting on the same complex $\B_\Gamma$ by cubical automorphisms, with all the cell stabilizers being virtually isomorphic.\footnote{In reality, we will only prove that stabilizers of rank $1$ vertices in $H$ and $G_{\Gamma}$ are virtually isomorphic, which is enough for our purposes, but this could be extended to higher-rank vertices.} However, this is far from concluding that $H$ and $G_\Gamma$ are virtually isomorphic or even quasi-isometric. 

By restricting the action $H\actson \B_\Gamma$ to rank 0 vertices of $G_\Gamma$, we obtain an action of $H$ on $G_{\Gamma}$ by \emph{flat-preserving bijections}, which means that every element $h\in H$ acts on $G_{\Gamma}$ by sending any standard flat $F$ bijectively onto another standard flat $F'$. A flat-preserving bijection is in generally not an isometry (or even a quasi-isometry) -- in fact, given a standard line $\ell$ (i.e.\ a 1-dimensional standard flat) in $G_\Gamma$, any permutation of elements of $\ell$ extends to a flat-perserving bijection of $G_\Gamma$. On the other hand, a flat-preserving bijection is a quasi-isometry if its restriction to each standard line is a quasi-isometry, with uniform constants. 
The upshot of Step 3 is to prove that the action $H\actson G_\Gamma$  coarsely preserves the order along each standard line up to a carefully chosen conjugation. As the permutation on parallel standard lines would interfere with each other, to make this precise, we use the notion of a \emph{factor action} introduced in \cite{HK}, which we now recall.

Let $\sfv\in V\Gamma^e$ be a vertex, corresponding to a cyclic subgroup $gG_v g^{-1}$, with $g\in G_\Gamma$ and $v\in V\Gamma$. The stabilizer $H_\sfv$ (for the $H$-action on $\Aut(\Gamma^e)\approx\Aut(\B_\Gamma)$) preserves the product region $P_\sfv=g(G_v\times G_{\lk(v)})$, which is also the union of all standard lines in $G_\Gamma$ with stabilizer $gG_vg^{-1}$ (here $G_{\lk(v)}$ is the subgroup generated by elements in $V\Gamma\setminus\{v\}$ that commute with $v$). Geometrically we think of $P_\sfv$ as the union of all standard lines that are parallel to $gG_v$. 
The action of $H_\sfv$ on $P_\sfv$ preserves its product decomposition, thereby inducing a \emph{factor action} of $H_\sfv$ on $Z_\sfv\approx gG_vg^{-1}$. The main ingredient of Step 3 is the following result connecting measure equivalence to quasi-isometry in our setting, established in Section~\ref{sec:factor-action}.

\medskip
\noindent
\textbf{Key property.} Let $\alpha:H\actson\B_\Gamma$ be an action obtained from an $(L^1,L^0)$-measure equivalence coupling from $H$ to $G_\Gamma$, and let $\sfv\in V\Gamma^e$. Then the factor action $H_\sfv\actson Z_\sfv$ is conjugate to an action of $H_\sfv$ on $\mathbb{Z}$ by uniform quasi-isometries.
\medskip

Once the key property is established, it follows from \cite{HK} that the action $H\actson G_\Gamma$ is conjugate to an action by uniform quasi-isometries, from which it is not hard to deduce $H$ and $G_\Gamma$ are quasi-isometric.

\subsection{Structure of the paper}

In Section~\ref{sec:prelim}, we provide background on right-angled Artin groups and associated geometric objects; in particular, we review Kim and Koberda's  extension graph, and the right-angled building, and compare their automorphism groups. In Section~\ref{sec:qi-criterion}, we review the work of Kleiner and the second-named author, and use it to establish a sufficient criterion to ensure that a group is quasi-isometric to a right-angled Artin group with finite outer automorphism group. Section~\ref{sec:me} contains general constructions and lemmas regarding measure equivalence couplings. Section~\ref{sec:proximal} uses proximal dynamics to establish the strong ICC property for $\Aut(\B_\Gamma)$. In Section~\ref{sec:action-with-amenable-stab}, we combine the measure equivalence framework and our previous work \cite{HH} to prove Theorem~\ref{theointro:me-cat}: every group that is measure equivalent to a right-angled Artin group $G$ with $|\Out(G)|<+\infty$, acts on the right-angled building of $G$ with amenable stabilizers. In Section~\ref{sec:integrability}, we exploit our integrability assumption; we prove Theorem~\ref{theointro:z} and use it to get control on the stabilizers for the $H$-action on the right-angled building of $G$. The required control on factor actions, described in Step~3 of the above sketch, is established in Section~\ref{sec:factor-action}, and we complete the proof of our main theorem (Theorem~\ref{theointro:main}) in Section~\ref{sec:proof}. Section~\ref{sec:lattice-envelope} contains our theorems regarding lattice envelopes of groups that are measure equivalent to a right-angled Artin group. Finally, we prove Theorem~\ref{theointro:nonrigidity}, regarding the lack of virtual common locally compact model for groups that are strongly commable to $G$, when $G$ splits as a product, in Section~\ref{sec:non-rigidity}.

\paragraph*{Acknowledgments.}
We thank Uri Bader, Pierre-Emmanuel Caprace, Amandine Escalier, Alex Furman, Matthieu Joseph, François Le Maître and Romain Tessera for fruitful related conversations. We are also grateful to Yves Cornulier for helpful comments on an earlier version of this paper. Finally, we thank the referee for their helpful comments on our paper.

The first-named author was funded by the European Union (ERC, Artin-Out-ME-OA, 101040507). Views and opinions expressed are however those of the authors only and do not necessarily reflect those of the European Union or the European Research Council. Neither the European Union nor the granting authority can be held responsible for them.  A CC-BY public copyright license has been applied by the
authors to the present document and will be applied to all subsequent
versions arising from this
submission, in accordance with the grant’s open access conditions.

The second-named author was funded by a Sloan fellowship.

This project was started at the Institut Henri Poincaré (UAR 839 CNRS-Sorbonne Université) during the trimester program \emph{Groups acting on fractals, Hyperbolicity and Self-similarity}. Both authors thank the IHP for its hospitality and support (through LabEx CARMIN, ANR-10-LABX-59-01).

		\section{Preliminaries on the geometry of right-angled Artin groups}\label{sec:prelim}
		
		\subsection{Right-angled Artin groups}
		
  Let $\Gamma$ be a finite simplicial graph, i.e.\ $\Gamma$ has no loop-edge and no multiple edges between vertices. The \emph{right-angled Artin group} with defining graph $\Gamma$, denoted by $G_\Gamma$, is the group defined by the following presentation:
		\begin{center}
			$\langle V\Gamma |\ [v,w]=1$ if $v$ and $w$ are joined by an edge $\rangle$.
		\end{center} 
		The reader is referred to \cite{Cha} for an introduction to right-angled Artin groups. 
		
		Recall that $\Lambda\subset\Gamma$ is an \textit{induced subgraph} if vertices of $\Lambda$ are adjacent in $\Lambda$ if and only if they are adjacent in $\Ga$. Each induced subgraph $\Lambda\subset\Ga$ yields an injective homomorphism $G_{\Lambda}\to G_\Ga$ whose image is called 
		a \emph{standard parabolic subgroup} of type $\Lambda$. A \emph{parabolic subgroup} of $G_\Gamma$ is a conjugate of some standard parabolic subgroup. A \emph{standard coset} of type $\Lambda$ is a left coset of the form $gG_{\Lambda}$.
		A \emph{standard abelian subgroup} of $G_\Gamma$ is a standard parabolic subgroup whose type is a complete subgraph. In particular, the trivial subgroup is a standard abelian subgroup, whose type is the empty set. A \emph{standard flat} of type $\Lambda$ in $G_\Ga$ is a left coset of a standard abelian subgroup of type $\Lambda$, and its \emph{dimension} is the rank of this abelian subgroup. One-dimensional standard flats are also called \emph{standard lines}. Zero-dimensional standard flats are exactly elements in $G_\Ga$.
		
		The \emph{star} of a vertex $v$ in $\Gamma$, denoted by $\st(v)$, is the induced subgraph spanned by $v$ and all its adjacent vertices. Its \emph{link} $\lk(v)$ is the induced subgraph spanned by all vertices that are adjacent to $v$. 
		
		The \emph{orthogonal} $\Lambda^\perp$ of an induced subgraph $\Lambda\subseteq \Gamma$ is the induced subgraph of $\Gamma$ spanned by all vertices in $V\Gamma\setminus V\Lambda$ which are adjacent to every vertex of $\Lambda$. For example $\lk(v)=\{v\}^\perp$. The following proposition was proved by Charney, Crisp and Vogtmann, building on work of Godelle \cite{God}.
		
		\begin{prop}[{\cite[Proposition~2.2]{CCV}}]\label{prop:normalizer}
			Let $\Gamma$ be a finite simplicial graph.
			\begin{enumerate}
				\item For every induced subgraph $\Lambda\subseteq\Gamma$, the normalizer of $G_{\Lambda}$ in $G_{\Gamma}$ is equal to $G_{\Lambda\cup \Lambda^{\perp}}$.
				\item Let $\Lambda_1,\Lambda_2\subseteq\Gamma$ be induced subgraphs. If $G_{\Lambda_1}$ and $G_{\Lambda_2}$ are conjugate, then $\Lambda_1=\Lambda_2$.
				\item Given two induced subgraphs $\Lambda_1$ and $\Lambda_2$ in $\Gamma$, if $gG_{\Lambda_1}g^{-1}\subset G_{\Lambda_2}$ for some $g\in G_\Gamma$, then there exists $h\in G_{\Lambda_2}$ such that $gG_{\Lambda_1}g^{-1}=hG_{\Lambda_1}h^{-1}$.
			\end{enumerate}
		\end{prop}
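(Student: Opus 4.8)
The plan is to derive all three parts from two tools: the abelianization homomorphism $G_\Gamma\twoheadrightarrow\mathbb{Z}^{V\Gamma}$, and the theory of minimal-length coset representatives for standard parabolic subgroups (as in the work of Godelle \cite{God} underlying \cite{CCV}). Part~(2) is the easy case: since the defining relators of $G_\Gamma$ are commutators, its abelianization is the free abelian group $\mathbb{Z}^{V\Gamma}$ on the vertices, the subgroup $G_\Lambda$ maps onto the coordinate subgroup $\mathbb{Z}^{V\Lambda}$, and conjugation is trivial on $\mathbb{Z}^{V\Gamma}$; hence $gG_{\Lambda_1}g^{-1}=G_{\Lambda_2}$ forces $\mathbb{Z}^{V\Lambda_1}=\mathbb{Z}^{V\Lambda_2}$, so $V\Lambda_1=V\Lambda_2$, and, both being induced, $\Lambda_1=\Lambda_2$. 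The same computation applied to an inclusion $gG_{\Lambda_1}g^{-1}\subseteq G_{\Lambda_2}$ gives $V\Lambda_1\subseteq V\Lambda_2$, so for part~(3) we may assume throughout that $\Lambda_1$ is an induced subgraph of $\Lambda_2$ with $G_{\Lambda_1}\le G_{\Lambda_2}$; abelianization will also be used below to pin down a sign.

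Parts~(1) and~(3) then both follow from a single claim: if $\Lambda_1\subseteq\Lambda_2$ and $gG_{\Lambda_1}g^{-1}\subseteq G_{\Lambda_2}$, and we decompose $g=kg_0$ with $k\in G_{\Lambda_2}$ and $g_0$ a minimal-length representative of the right coset $G_{\Lambda_2}g$, then $g_0$ centralizes $G_{\Lambda_1}$. To prove this, note that conjugation by $k\in G_{\Lambda_2}$ preserves $G_{\Lambda_2}$, so $g_0G_{\Lambda_1}g_0^{-1}\subseteq G_{\Lambda_2}$. Fix a vertex $a\in V\Lambda_1\subseteq V\Lambda_2$ and set $w:=g_0ag_0^{-1}\in G_{\Lambda_2}$, so that $wg_0=g_0a$. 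The product formula for minimal coset representatives of a standard parabolic subgroup gives $\ell(wg_0)=\ell(w)+\ell(g_0)$, while $\ell(g_0a)\le\ell(g_0)+1$ trivially; hence $\ell(w)\le 1$. Since $a\ne 1$ and $G_\Gamma$ is torsion-free we have $w\ne 1$, so $w$ is a single generator $b^{\pm 1}$ with $b\in V\Lambda_2$; abelianizing $g_0ag_0^{-1}=b^{\pm 1}$ forces $b^{\pm 1}=a$ in $\mathbb{Z}^{V\Gamma}$, i.e.\ $w=a$. Letting $a$ run over the generating set $V\Lambda_1$ of $G_{\Lambda_1}$ shows that $g_0$ centralizes $G_{\Lambda_1}$, as claimed.

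Part~(3) is now immediate: $gG_{\Lambda_1}g^{-1}=k\,(g_0G_{\Lambda_1}g_0^{-1})\,k^{-1}=kG_{\Lambda_1}k^{-1}$ with $k\in G_{\Lambda_2}$, so $h:=k$ works. For part~(1), the inclusion $G_{\Lambda\cup\Lambda^\perp}\subseteq N_{G_\Gamma}(G_\Lambda)$ is clear because every vertex of $\Lambda^\perp$ commutes with every vertex of $\Lambda$, so $G_{\Lambda\cup\Lambda^\perp}=G_\Lambda\times G_{\Lambda^\perp}$. Conversely, apply the above claim with $\Lambda_1=\Lambda_2=\Lambda$ to $g\in N_{G_\Gamma}(G_\Lambda)$: then $g_0$ centralizes $G_\Lambda$, so $g_0\in\bigcap_{a\in V\Lambda}Z_{G_\Gamma}(a)$. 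By Servatius' centralizer theorem \cite{Ser} one has $Z_{G_\Gamma}(a)=G_{\st(a)}$, and since an intersection of standard parabolic subgroups is the standard parabolic associated to the intersection of their defining subgraphs, $g_0$ lies in the standard parabolic subgroup of $\bigcap_{a\in V\Lambda}\st(a)$. A vertex lying in every $\st(a)$ is either a vertex of $\Lambda$ or a vertex adjacent to all of $\Lambda$, so $\bigcap_{a\in V\Lambda}\st(a)\subseteq\Lambda\cup\Lambda^\perp$; hence $g_0\in G_{\Lambda\cup\Lambda^\perp}$ and $g=kg_0\in G_\Lambda\cdot G_{\Lambda\cup\Lambda^\perp}=G_{\Lambda\cup\Lambda^\perp}$.

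The conceptual content is light; the load-bearing input is the product formula $\ell(wg_0)=\ell(w)+\ell(g_0)$ for minimal coset representatives of standard parabolic subgroups, which is precisely what converts the algebraic hypothesis into the length bound $\ell(g_0ag_0^{-1})\le 1$. This rests on the combinatorics of reduced words in right-angled Artin groups (uniqueness up to commutation shuffles), equivalently on the convexity of standard subcomplexes of the universal cover of the Salvetti complex; I would cite it from \cite{God}, or reprove it via the normal-form formalism. The remaining ingredients, $Z_{G_\Gamma}(v)=G_{\st(v)}$ and $\bigcap_i G_{\Lambda_i}=G_{\bigcap_i\Lambda_i}$, are standard and each has a short self-contained proof --- for instance, $Z_{G_\Gamma}(v)=G_{\st(v)}$ follows from the amalgam decomposition $G_\Gamma=G_{\Gamma\setminus\{v\}}\ast_{G_{\lk(v)}}G_{\st(v)}$, in whose Bass--Serre tree the generator $v$ fixes a unique vertex.
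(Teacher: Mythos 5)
Your argument is correct. Note first that the paper itself offers no proof of this statement: it is quoted verbatim from Charney--Crisp--Vogtmann \cite[Proposition~2.2]{CCV}, whose proof in turn rests on Godelle's analysis of normalizers of parabolic subgroups. So there is nothing in the paper to compare against; what you have written is a self-contained RAAG-specific derivation, and it holds up. The reduction of (2) and of the hypotheses of (3) via the abelianization $G_\Gamma\twoheadrightarrow\mathbb{Z}^{V\Gamma}$ is clean, and the central claim --- that the minimal-length representative $g_0$ of $G_{\Lambda_2}g$ centralizes $G_{\Lambda_1}$ --- follows correctly from the product formula $\ell(wg_0)=\ell(w)+\ell(g_0)$ for $w\in G_{\Lambda_2}$, which is indeed the load-bearing input and is a genuine standard fact (it follows from Green's normal form for graph products: if the concatenation of reduced words for $w$ and $g_0$ were not reduced, a letter of $V\Lambda_2^{\pm1}$ could be shuffled to the front of $g_0$, contradicting minimality of $g_0$ in its coset). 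The remaining ingredients ($Z_{G_\Gamma}(v)=G_{\st(v)}$ and $G_{\Lambda_1}\cap G_{\Lambda_2}=G_{\Lambda_1\cap\Lambda_2}$) are standard and your sketches of them are fine.

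Two cosmetic points. The appeal to torsion-freeness to get $w\neq 1$ is unnecessary: $w=g_0ag_0^{-1}$ is nontrivial simply because conjugation is injective and $a\neq 1$ (the latter seen, e.g., in the abelianization). And when you conclude $w=b^{\pm1}$ with $b\in V\Lambda_2$, membership of $b$ in $V\Lambda_2$ is again read off from the abelianization rather than being automatic --- but since the very next step abelianizes to force $b=a\in V\Lambda_1\subseteq V\Lambda_2$ anyway, nothing is lost.
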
 
		
		Two standard flats $F_1=g_1G_{\Lambda_1}$ and $F_2=g_2G_{\Lambda_2}$ in $G_\Gamma$ are \emph{parallel} if $\Lambda_1=\Lambda_2$ and  $g_1^{-1}g_2$ belongs to the normalizer of $G_{\Lambda_1}$ in $G_\Gamma$. Note that this definition does not depend on the choice of coset representatives $g_1$ and $g_2$. The \emph{parallel set} of a standard flat $F$, denoted $P_F$, is the union of all standard flats that are parallel to $F$. By Proposition~\ref{prop:normalizer}(1), if $F$ has type $\Lambda$, then $P_F=g G_{\Lambda\cup \Lambda^{\perp}}$ for some $g\in G_\Gamma$. The splitting $G_{\Lambda\cup \Lambda^{\perp}}=G_\Lambda\times G_{\Lambda^\perp}$ gives a splitting of the parallel set $P_F\cong F\times F^\perp$, moreover, this splitting gives a \emph{parallelism map} $p:F_1\to F_2$ between any two standard flats that are parallel. 

\paragraph*{The universal cover of the Salvetti complex.}

      The group $G_\Gamma$ acts geometrically, i.e.\ properly discontinuously and cocompactly, on a $\mathrm{CAT}(0)$ cube complex $X_\Gamma$, defined as follows. The 1-skeleton of $X_\Gamma$ is the Cayley graph $C_\Gamma$ of $G_\Gamma$ for its standard generating set $S$. It is equipped with the usual orientation and labeling of edges in a Cayley graph by elements of $S$. We then glue a square to each 4-cycle in the Cayley graph to obtain the 2-skeleton of $X_\Gamma$, then attach a $3$-cube to each copy of the boundary of a $3$-cube (with the obvious attaching maps), and more generally by induction on $k$, we glue a $k$-cube on each copy of the boundary of a $k$-cube. This process terminates after finitely many steps and results in a finite-dimensional $\mathrm{CAT}(0)$ cube complex \cite[Section~3]{CD}, on which $G_\Gamma$ acts geometrically. 
      We will identify $G_\Gamma$ with the 0-skeleton of $X_\Gamma$. 
   The \emph{Salvetti complex} of $G_\Gamma$, introduced in \cite{Sal} and denoted $S_\Gamma$, is defined to be the quotient $G_\Gamma\backslash X_\Gamma$ -- and $X_{\Gamma}$ is the universal cover of $S_\Gamma$. Note that the 2-skeleton of $S_\Gamma$ is exactly the presentation complex of $G_\Gamma$. Hence we label each edge of $S_\Gamma$ by a generator of $G_\Gamma$ and orient each edge. Each complete subgraph $\Delta$ of $\Gamma$ with $n$ vertices gives a copy of the $n$-dimensional torus in $S_\Gamma$ as a subcomplex, and $S_\Gamma$ is a union of these torus subcomplexes.

		\subsection{Extension graphs and right-angled buildings}\label{sec:extension-graph}
		
		\paragraph*{Extension graphs.} The following notion was introduced by Kim and Koberda.
		
		\begin{de}[Extension graph \cite{KK}]
			Let $\Gamma$ be a finite simplicial graph. The \emph{extension graph} $\Gamma^e$ of $\Gamma$ is the simplicial graph whose vertices are the infinite cyclic parabolic subgroups of $G_\Gamma$, where two vertices are adjacent if the corresponding parabolic subgroups commute. 
		\end{de}
		
	We emphasize that, unless $\Gamma$ is a complete graph, the extension graph $\Gamma^e$ is infinite and not locally finite (assuming $\Gamma$ is connected). The conjugation action of $G_\Gamma$ on itself induces an action of $G_\Gamma$ on $\Gamma^e$ by graph automorphisms. Every standard flat $F\subseteq G_{\Gamma}$ determines a (finite) complete subgraph $\Delta(F)\subseteq\Gamma^e$, in the following way: $\stab_{G_\Gamma}(F)$ is a parabolic subgroup, generated by a finite set of pairwise commuting infinite cyclic parabolic subgroups $\{Z_1,\dots,Z_k\}$, and we let $\Delta(F)$ to the complete subgraph of $\Gamma^e$ spanned by the vertices corresponding to $Z_1,\ldots,Z_k$. The number $k$ of vertices in $\Delta(F)$ is equal to the dimension of $F$. Given $\sfv\in V\Gamma^e$, a \emph{$\sfv$-line} in $G_\Gamma$ is a standard line $\ell$ with $\Delta(\ell)=\{\sfv\}$.
		
		Note that a collection of pairwise distinct cyclic parabolic subgroups $P_i=g_i\langle v_i\rangle g_i^{-1}$ for $1\le i\le n$ mutually commute if and only $v_i$ and $v_j$ are adjacent in $\Gamma$ for $i\neq j$ and there exists $g\in G_\Gamma$ such that for every $i\in\{1,\dots,n\}$, one has  $P_i=g\langle v_i\rangle g^{-1}$. This is a consequence of Proposition~\ref{prop:normalizer} and an induction on $n$.

		Thus, if $K$ is a complete subgraph of $\Gamma^e$, then there exists a standard flat $F=g\langle v_1\ldots,v_n\rangle$ such that $\Delta(F)=K$. 
		
		If two standard flats $F_1,F_2$ satisfy $\Delta(F_1)=\Delta(F_2)$, then they are parallel. Indeed, if $F_i=g_iG_{\Lambda_i}$ for every $i\in\{1,2\}$, then $\Delta(F_1)=\Delta(F_2)$ implies $g_1G_{\Lambda_1}g^{-1}_1=g_2G_{\Lambda_2}g^{-1}_2$. Then Proposition~\ref{prop:normalizer} implies that $F_1$ and $F_2$ are parallel. As a consequence, the map $\Delta$ from the collection of standard flats to the collection of cliques of $\Gamma^e$ induces a bijection between maximal standard flats in $G_\Ga$ and maximal cliques in $\Ga^e$. 
		
		Thus we can define $\Ga^e$ alternatively as follows. Vertices of $\Ga^e$ are in one-to-one correspondence with parallelism classes of standard lines in $G_\Ga$. Two vertices of $\Ga^e$ are adjacent if and only if there are  representatives of the associated parallelism classes that together span a $2$-dimensional standard flat.

		\paragraph*{Right-angled buildings.}
		
		Recall that an \emph{interval} in a partially ordered set $(\calp,\le)$ is a subset of the form $I_{a,b}=\{x\in\calp\mid a\le x\le b\}$ for some $a,b\in\calp$ with $a\le b$. If every interval in $\calp$ is a Boolean lattice of finite rank, then there exists a unique (up to isomorphism) cube complex $|\calp|$ whose poset of cubes is isomorphic to the poset of intervals of $\calp$, see e.g.\ \cite[Proposition~A.38]{AB}. We call $|\calp|$ the \emph{cubical realization} of $\calp$. 
		
		We will be particularly interested in the case when $\mathcal P$ is the poset of standard flats in $G_\Gamma$, ordered by inclusion. Note that if we take $g_1G_{\Lambda_1}\le g_2G_{\Lambda_2}$ in $\mathcal P$, then we can assume that $g_2=g_1$ up to changing coset representatives, and the interval between $g_1G_{\Lambda_1}$ and $g_2G_{\Lambda_2}$ consists of all cosets of form $g_1 G_{\Lambda}$ with $\Lambda_1\subset \Lambda\subset \Lambda_2$. In particular it is a Boolean lattice of finite rank. The following notion was introduced by Davis in \cite{Dav}.
		
		\begin{de}[Right-angled building]
			\label{def:building}
			Let $\Gamma$ be a finite simplicial graph. The \emph{right-angled building} of $G_\Gamma$, denoted by $\B_\Gamma$, is the cubical realization of the poset of standard flats in $G_\Gamma$. The left action of $G_\Gamma$ on itself induces an action of $G_\Gamma$ on $\B_\Gamma$ by cubical automorphisms.
		\end{de}
		
		In Figure~\ref{fig:building} below, we draw a subcomplex of $\B_\Gamma$ when $\Gamma$ is a pentagon with consecutive vertices denoted by $a,b,c,d,e$. The vertex in Figure~\ref{def:building} labeled by $ab$ corresponds to the identity left coset of the subgroup generated by $a$ and $b$. Similarly, we define other vertices in Figure~\ref{def:building}. The central vertex corresponds to the identity element in $G_\Gamma$. The subcomplex of $\B_\Gamma$ displayed in Figure~\ref{def:building} is a fundamental domain for the action $G_\Gamma\actson \B_\Gamma$.
		\begin{figure}[h]
	\centering
	\includegraphics[scale=0.9]{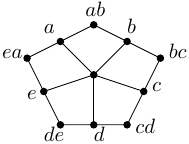}
	\caption{Fundamental domain of the pentagon building.}
	\label{fig:building}
\end{figure}			
  
  \begin{rk}
  \label{rk:building}
We comment on the terminology ``right-angled building'', even though we will not explicitly need the connection to buildings in the rest of the paper.  Davis \cite{Dav} explained that we can view the right-angled Artin group $G_\Gamma$ as a building in the sense of Tits (see \cite[Section 3]{Dav}) modeled on a reflection group $W_\Gamma$ which is the right-angled Coxeter group with the same defining graph. Moreover, $\B_\Gamma$ is a geometric model witnessing some classical geometric properties of buildings, for example, any two points in $\B_\Gamma$ are contained in a common ``apartment'' which is a convex subcomplex isomorphic to the canonical $\mathrm{CAT}(0)$ cube complex (called the \emph{Davis complex}) associated with $W_\Gamma$. We refer to \cite[Section~5]{Dav} for a more detailed discussion.
  \end{rk}

		By \cite[Corollary 11.7]{Dav} (attributed to Meier), the cube complex $\B_\Gamma$ is $\mathrm{CAT}(0)$. There is a one-to-one correspondence between $k$-cubes in $\B_\Gamma$ and intervals of the form $I_{F_1,F_2}$ where $F_1\subseteq F_2$ are two standard flats in $\Gamma$ with $\dim(F_2)-\dim(F_1)=k$. In particular, vertices of $\B_\Gamma$ correspond to standard flats in $G_\Gamma$. The \emph{rank} of a vertex $v\in V\B_\Gamma$ is the dimension of the corresponding standard flat. We label every vertex of $\B_\Gamma$ by the type of the corresponding standard flat. The vertex set of $\B_\Gamma$ inherits a partial order from the poset of standard flats. 
		
		The action of $G_\Gamma$ on $\B_\Gamma$ preserves the labellings of vertices. This action is cocompact, but not proper -- the stabilizer of a cube is isomorphic to $\mathbb Z^{n}$ where $n$ is the rank of the minimal vertex in this cube.
		
		Recall that the \emph{join} of two simplicial graphs $\Gamma_1$ and $\Gamma_2$, denoted $\Gamma_1\circ \Gamma_2$, is the simplicial graph obtained from the disjoint union $\Gamma_1\sqcup \Gamma_2$ by adding an edge between any vertex in $\Gamma_1$ and any vertex in $\Gamma_2$.
		It follows from the definition that if $\Gamma=\Gamma_1\circ \Gamma_2$, then $\B_\Gamma\cong \B_{\Gamma_1}\times \B_{\Gamma_2}$.

\begin{lemma}\label{lemma:rank}
			Every cubical automorphism of $\B_\Gamma$ preserves ranks of vertices.
		\end{lemma}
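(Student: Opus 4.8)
The plan is to reconstruct the rank of a vertex of $\B_\Gamma$ purely from the graph structure of its $1$-skeleton, which any cubical automorphism automatically preserves (a cubical automorphism of $\B_\Gamma$ restricts to a graph automorphism of the $1$-skeleton, hence to an isometry for the combinatorial distance). Isolating this as a lemma is worthwhile precisely because a cubical automorphism need not, a priori, respect the partial order on $V\B_\Gamma$. Concretely, I will prove that for every vertex $v$ the rank of $v$ equals the combinatorial distance from $v$ to the set $R_0$ of rank-$0$ vertices, and separately that $R_0$ is invariant under every cubical automorphism; the lemma follows at once.

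For the invariance of $R_0$ I will characterize rank-$0$ vertices as exactly the vertices of finite valence in the $1$-skeleton (equivalently, the vertices with finite link). On one hand, the edges incident to a rank-$0$ vertex $g\in G_\Gamma$ are in bijection with the standard lines through $g$, namely the cosets $g\langle w\rangle$ for $w\in V\Gamma$, so there are exactly $|V\Gamma|$ of them. On the other hand, if $v$ has rank $k\ge 1$, say $v=gG_\Lambda$ with $|\Lambda|=k$, then choosing $w\in\Lambda$ and setting $\Lambda'=\Lambda\setminus\{w\}$, the standard sub-flats of $v$ of type $\Lambda'$ give an infinite family of edges incident to $v$, since they are in bijection with the cosets of $G_{\Lambda'}$ in $G_\Lambda\cong\mathbb{Z}^k$, an infinite set. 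Hence $R_0$ is exactly the set of finite-valence vertices, and is therefore preserved by every cubical automorphism.

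For the distance formula I will use that every edge of $\B_\Gamma$ joins two vertices whose ranks differ by exactly $1$: an edge corresponds to an interval $I_{F_1,F_2}$ with $\dim F_2=\dim F_1+1$, and the only elements of this interval are $F_1$ and $F_2$. Thus the rank changes by $\pm 1$ at each step of any edge-path, so every path from $v$ to a rank-$0$ vertex has length at least $\mathrm{rank}(v)$; conversely, writing $v=gG_\Lambda$ and choosing a chain $\emptyset=\Lambda_0\subsetneq\cdots\subsetneq\Lambda_k=\Lambda$ of induced subgraphs, the vertices $gG_{\Lambda_0}=g,\ gG_{\Lambda_1},\ \dots,\ gG_{\Lambda_k}=v$ form an edge-path of length $k=\mathrm{rank}(v)$ ending at the rank-$0$ vertex $g$. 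This yields $\mathrm{rank}(v)=d(v,R_0)$, and combining with the invariance of $R_0$ gives $\mathrm{rank}(\phi(v))=d(\phi(v),R_0)=d(\phi(v),\phi(R_0))=d(v,R_0)=\mathrm{rank}(v)$ for every cubical automorphism $\phi$. I do not anticipate any genuine obstacle; the only step needing a moment of care is the valence computation, where one must check that a positive-dimensional standard flat contains infinitely many standard sub-flats of each given codimension-$1$ type, which reduces to the fact that $\mathbb{Z}^k/\mathbb{Z}^{k-1}$ is infinite.
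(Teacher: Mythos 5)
Your proof is correct, and it takes a different route from the paper's. The paper distinguishes all ranks pairwise by a purely local invariant: it decomposes the link of a vertex $x$ as the join $\lk^+(x,\B_\Gamma)*\lk^-(x,\B_\Gamma)$ of the upward and downward parts, observes that $\lk^+$ is finite while $\lk^-$ is a join $V_1*\cdots*V_k$ of $k$ infinite discrete sets (one per parallelism class of codimension-$1$ subflats), and concludes that vertices of different ranks have non-isomorphic links. You instead extract only the coarsest piece of this local information --- finite versus infinite valence, which singles out the rank-$0$ vertices --- and then recover the rank globally as the combinatorial distance to the set $R_0$ of rank-$0$ vertices, using that every edge changes rank by exactly $\pm 1$. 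Both arguments hinge on the same basic fact (a positive-rank vertex has infinitely many downward neighbors, via $\mathbb{Z}^k/\mathbb{Z}^{k-1}$ being infinite), but the paper's version yields the slightly stronger statement that the isomorphism type of the link determines the rank, which is occasionally convenient, whereas yours is more elementary in that it never needs the join structure of $\lk^-$, only a valence count plus the standard parity bound on rank change along edge-paths. All the steps you flag --- the valence computation at rank $0$, the infinitude of codimension-$1$ subflats, and the two inequalities in the distance formula --- check out.
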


\begin{proof}
Let $x\in V\B_\Gamma$ be a vertex. Recall that the \emph{link} of $x$ in $\B_\Gamma$, denoted by $\lk(x,\B_\Gamma)$, is the simplicial complex formed by intersecting an $\varepsilon$-sphere around $x$ with $\B_\Gamma$, for $\varepsilon>0$ small enough. In particular, there is a bijection between vertices of $\B_\Gamma$ which are adjacent to $x$ and vertices in $\lk(x,\B_\Gamma)$. We claim that any two vertices of $\B_\Gamma$ with different ranks must have non-isomorphic links in $\B_\Gamma$, which implies the lemma.

Now we prove the claim.
Let $\lk^+(x,\B_\Gamma)$ (resp.\ $\lk^-(x,\B_\Gamma)$) be the full subcomplex of $\lk(x,\B_\Gamma)$ spanned by vertices which are larger than $x$ (resp.\ smaller than $x$). A simplex in $\lk^+(x,\B_\Gamma)$ corresponds to the corner of a cube $C^+\subset \B_\Gamma$ corresponding to an interval of form $[x,y]$ for some vertex $y\ge x$. A simplex in $\lk^-(x,\B_\Gamma)$ corresponds to the corner of a cube $C^-\subset \B_\Gamma$ corresponding to an interval of form $[z,x]$ for some vertex $z\le x$. Then $C^+$ and $C^-$ span a bigger cube in $\B_\Gamma$ isometric to $C^+\times C^-$ corresponding to the interval $[z,y]$.
Thus $\lk(x,\B_\Gamma)$ is the join of $\lk^+(x,\B_\Gamma)$ and $\lk^-(x,\B_\Gamma)$. Note that $\lk^+(x,\B_\Gamma)$ is a finite complex. Suppose $x$ has rank $k$ and corresponds to a standard flat $F$ of dimension $k$. Vertices in $\lk^-(x,\B_\Gamma)$ are in one-to-one correspondence with co-dimension 1 standard flats in $F$. Thus these vertices can be divided into $k$ different classes $V_1,\dots,V_k$, corresponding to the $k$ parallelism classes of co-dimension 1 standard flats in $F$. Each $V_i$ is an infinite set. Moreover, $\lk^-(x,\B_\Gamma)$ is a join of $k$ discrete sets $V_1*V_2*\cdots* V_k$. Thus the claim follows.
\end{proof}

  \subsection{Automorphisms of the extension graph and the right-angled building}\label{sec:autos}
  We endow $\Aut(\B_\Gamma)$ and $\Aut(\Gamma^e)$ with the compact-open topology, making them Polish groups.		
		A bijection $f:G_\Gamma\to G_\Gamma$ is \emph{flat-preserving} if both $f$ and $f^{-1}$ send any standard flat bijectively onto another standard flat. Let $\Bij_{\fp}(G_\Gamma)$ be the group of flat-preserving bijections of $G_\Gamma$,  again equipped with the compact-open topology, or equivalently the topology of pointwise convergence, which makes it a Polish group. Lemma~\ref{lemma:rank} implies that for every $f\in\Aut(\B_\Gamma)$, the restriction of $f$ to the set of rank $0$ vertices is a flat-preserving bijection of $G_\Gamma$. Conversely, any flat-preserving bijection of $G_\Gamma$ induces an automorphism of the poset of standard flats, hence an automorphism of $\B_\Gamma$. This yields an isomorphism of Polish groups $\Aut(\B_\Gamma)\simeq \Bij_{\fp}(G_\Gamma)$. We now explain how to identify $\Aut(\B_\Gamma)$ to $\Aut(\Gamma^e)$ when $|\Out(G_\Gamma)|<+\infty$, as will be recorded in Lemma~\ref{lemma:iso} below.

\paragraph{From $\Aut(\B_\Gamma)$ to $\Aut(\Gamma^e)$.}  We define a map $\Phi:\Aut(\B_\Gamma)\to\Aut(\Gamma^e)$ as follows. Take $\alpha\in\Aut(\B_\Gamma)$. The restriction of $\alpha$ to the set of rank $0$ vertices of $\B_\Gamma$ is a flat-preserving bijection $g:G_\Gamma\to G_\Gamma$. We claim that $g$ sends parallel standard lines to parallel standard lines. Indeed, let $\ell_1$ and $\ell_2$ be parallel standard lines. If $\ell_1$ and $\ell_2$ are contained in a common $2$-dimensional standard flat, then the claim is obvious. In general, it follows from Proposition~\ref{prop:normalizer} that there is a finite chain of standard lines starting from $\ell_1$ and ending at $\ell_2$ such that consecutive members in the chain are parallel and contained in a common 2-dimensional standard flat. The claim thus follows. 
 
The above claim implies that $g$ induces a bijection of the set of parallelism classes of standard lines of $G_\Gamma$. Recall that vertices of $\Gamma^e$ correspond to parallelism classes of standard lines. Thus $g$ induces a bijection of $V\Gamma^e$, and by construction this bijection preserves adjacency. We let $\Phi(\alpha)$ be this automorphism of $\Gamma^e$. 

\paragraph{From $\Aut(\Gamma^e)$ to $\Aut(\B_\Gamma)$.} Conversely, assuming that $|\Out(G_\Gamma)|<+\infty$, we now build a map $\Theta:\Aut(\Gamma^e)\to\Aut(\B_\Gamma)$, which will be an inverse to $\Phi$.

Let $\alpha\in\Aut(\Gamma^e)$. Let $p\in G_\Gamma$, and let $F_1,\dots,F_n$ be the maximal standard flats that contain $p$. Each standard flat $F_i$ corresponds to a maximal clique $C_i$ in $\Gamma^e$, and $\alpha(C_i)$ in turn corresponds to a unique maximal standard flat $F'_i$ of $G_\Gamma$. As $|\Out(G_\Gamma)|<+\infty$, it follows from \cite[Lemmas~4.12 and~4.17]{Hua} that $F'_1\cap\dots\cap F'_n$ is a singleton $\{p'\}$. Letting $\alpha_\ast(p)=p'$ defines a map $\alpha_\ast:G_\Gamma\to G_\Gamma$. By construction, the $\alpha_{\ast}$-image of any maximal standard flat is contained in a maximal standard flat. By considering $(\alpha^{-1})_\ast$, we see that in fact $\alpha_\ast$ sends every maximal standard flat bijectively onto another standard flat.

We claim that in fact $\alpha_\ast$ is flat-preserving. For this it is enough to prove that it sends standard lines to standard lines -- an inductive argument then shows it for standard flats of higher dimension. As $|\Out(G_\Gamma)|<+\infty$, every vertex $v\in V\Gamma$ is the intersection of the maximal cliques in $\Gamma$ containing $v$ (otherwise the link of $v$ would be contained in the star of another vertex). As standard flats containing a standard line of type $v$ are in one-to-one correspondence with cliques in $\Gamma$ that contain $v$, every standard line $\ell$ is the intersection of all the maximal standard flats containing $\ell$. Hence $\alpha_*$ and $(\alpha^{-1})_*$ send standard lines bijectively onto standard lines, as claimed. 

We now let $\Theta(\alpha)=\alpha_\ast$, viewed as an automorphism of $\B_\Gamma$. 

One readily verifies that $\Phi$ and $\Theta$ are continuous group homomorphisms, and that they are inverses of each other, which we record in the following statement.

\begin{lemma}
\label{lemma:iso}
Suppose that $|\Out(G_\Gamma)|<+\infty$. Then $\Phi:\Aut(\B_\Gamma)\to \Aut(\Gamma^e)$ is an isomorphism of Polish groups with inverse $\Theta$, and $\Phi(G_\Gamma)=G_\Gamma$ under the natural embeddings of $G_\Gamma$ in $\Aut(\B_\Gamma)$ and in $\Aut(\Gamma^e)$. \qed
\end{lemma}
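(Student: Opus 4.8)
The plan is to verify that the two maps $\Phi\colon\Aut(\B_\Gamma)\to\Aut(\Gamma^e)$ and $\Theta\colon\Aut(\Gamma^e)\to\Aut(\B_\Gamma)$ constructed above are mutually inverse continuous group homomorphisms, and that $\Phi$ restricts to the identity on $G_\Gamma$. Since both groups carry the compact-open topology and act on countable (or locally countable) discrete structures, continuity will follow from the fact that $\Phi(\alpha)$ and $\Theta(\alpha)$ are each determined, on any single vertex, by the values of $\alpha$ on a finite set of vertices; I would spell this out once. The substantive content is that $\Phi$ and $\Theta$ are inverse bijections and are homomorphisms, so that is where I would spend the effort.

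First I would check that $\Phi$ is a homomorphism. Given $\alpha,\beta\in\Aut(\B_\Gamma)$, both $\Phi(\alpha\beta)$ and $\Phi(\alpha)\Phi(\beta)$ are automorphisms of $\Gamma^e$; since a vertex of $\Gamma^e$ is a parallelism class of standard lines and the restriction of $\alpha\beta$ to rank $0$ vertices is the composite of the restrictions of $\alpha$ and $\beta$, both automorphisms send the class of a standard line $\ell$ to the class of $(\alpha\beta)|_{G_\Gamma}(\ell)$; equality of automorphisms follows since they agree on all vertices. The same argument gives that $\Theta$ is a homomorphism: $\Theta(\alpha)$ is recovered as the flat-preserving bijection $\alpha_\ast$ of $G_\Gamma$, and the construction of $\alpha_\ast$ (intersecting the images of the maximal standard flats through a point) is manifestly functorial in $\alpha$ once one knows it is well-defined, which was established above using \cite[Lemmas~4.12, 4.17]{Hua} and the hypothesis $|\Out(G_\Gamma)|<+\infty$.

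Next I would prove $\Theta\circ\Phi=\mathrm{id}$ and $\Phi\circ\Theta=\mathrm{id}$. Fix $\alpha\in\Aut(\B_\Gamma)$ with underlying flat-preserving bijection $g=\alpha|_{G_\Gamma}$. Then $\Phi(\alpha)$ is the induced automorphism of $\Gamma^e$, and applying $\Theta$ to it produces the flat-preserving bijection $(\Phi(\alpha))_\ast$ defined by: for $p\in G_\Gamma$ with maximal standard flats $F_1,\dots,F_n$ through it, $(\Phi(\alpha))_\ast(p)$ is the unique point of $g(F_1)\cap\dots\cap g(F_n)$. But $g$ is itself flat-preserving and bijective, so $g(F_1)\cap\dots\cap g(F_n)=g(F_1\cap\dots\cap F_n)=g(\{p\})=\{g(p)\}$; hence $(\Phi(\alpha))_\ast=g$, i.e.\ $\Theta(\Phi(\alpha))=\alpha$ as automorphisms of $\B_\Gamma$ (two automorphisms of $\B_\Gamma$ agreeing on rank $0$ vertices agree everywhere, since every vertex of $\B_\Gamma$ is an intersection of maximal standard flats). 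For the reverse composite, given $\alpha\in\Aut(\Gamma^e)$, the automorphism $\Phi(\Theta(\alpha))$ of $\Gamma^e$ sends the parallelism class of a standard line $\ell$ to the class of $\alpha_\ast(\ell)$; by construction $\alpha_\ast$ sends $\ell$ into the standard flat whose clique is $\alpha(\Delta(\ell))$, and since $\Delta$ induces a bijection between parallelism classes of standard lines and vertices of $\Gamma^e$, the class of $\alpha_\ast(\ell)$ corresponds to $\alpha$ applied to the vertex $\Delta(\ell)$; thus $\Phi(\Theta(\alpha))=\alpha$. Finally, $\Phi(G_\Gamma)=G_\Gamma$ is immediate: the conjugation action of $g\in G_\Gamma$ on $\B_\Gamma$ has underlying flat-preserving bijection left translation by $g$, which sends a standard line to a parallel-class representative exactly as does the conjugation action of $g$ on $\Gamma^e$, so $\Phi$ carries the embedded copy of $G_\Gamma$ identically onto the embedded copy of $G_\Gamma$.

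The main obstacle is purely bookkeeping rather than conceptual: one must repeatedly invoke the identification ``every standard flat is the intersection of the maximal standard flats containing it'' (which rests on $|\Out(G_\Gamma)|<+\infty$ via the combinatorial statement that every clique of $\Gamma$ is the intersection of the maximal cliques containing it, established above) to guarantee that maps defined only on maximal flats, or only on rank $0$ vertices, extend uniquely and compatibly. Once this principle is isolated as a lemma, the rest of the verification is formal. I expect no genuine difficulty, which is why the statement is recorded with a \qed and no displayed proof in the excerpt.
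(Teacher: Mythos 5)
Your verification is correct and follows exactly the route the paper intends: the paper constructs $\Phi$ and $\Theta$ and records the lemma with no displayed proof, asserting that one "readily verifies" they are mutually inverse continuous homomorphisms, and your proposal supplies precisely that routine check (homomorphism property, the two composites via the principle that flats and vertices are intersections of maximal flats/cliques, continuity from finite determination, and the identification of left translation on $\B_\Gamma$ with conjugation on $\Gamma^e$). The only nitpick is terminological: the natural embedding $G_\Gamma\hookrightarrow\Aut(\B_\Gamma)$ is by left translation, not conjugation, though your actual argument uses the correct map.
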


\begin{rk}\label{rk:iso}
 In the sequel, the last part of the lemma will be used in the following way. Letting $G_\Gamma\times G_\Gamma$ act by left/right multiplication on both $\Aut(\B_\Gamma)$ and $\Aut(\Gamma^e)$, the map $\Phi$ is $(G_\Gamma\times G_\Gamma)$-equivariant. 
\end{rk}

		\section{Blow-up buildings and a quasi-isometry criterion}\label{sec:qi-criterion}
	
		In this section, we review work of Kleiner and the second named author \cite{HK} and use it to provide a criterion ensuring that a group is quasi-isometric to a right-angled Artin group $G=G_\Gamma$ with $|\Out(G)|<+\infty$, see Theorem~\ref{theo:QI}. 

  	\paragraph*{Factor actions.} Take a vertex $\sfv\in V\Gamma^e$, and let $P_\sfv$ be the union of all $\sfv$-lines in $G_\Gamma$.  Proposition~\ref{prop:normalizer} implies that $P_\sfv$ is a left coset of the form $gG_{\st(v)}$ for some $v\in V\Gamma$. Let $Z_\sfv$ be the collection of left cosets of $G_{\lk(v)}$ in $P_\sfv$ and $\mathcal L_\sfv$ be the collection of $\sfv$-lines (i.e.\ left cosets of $G_\sfv$ in $P_\sfv$). There are natural projections $\pi_1: P_\sfv\to Z_\sfv$ and $\pi_2:P_\sfv\to \mathcal L_\sfv$, and we can identify $P_\sfv$ and $Z_\sfv\times \call_\sfv$ via the bijection $(\pi_1,\pi_2)$.  
  
  
		
		Let $H$ be a group. Any action $\alpha: H\to\Aut(\B_\Gamma)$ induces an $H$-action by flat-preserving bijections on $G_\Gamma$, as well as an $H$-action on $\Gamma^e$ by graph automorphisms, as explained in Section~\ref{sec:autos}.
  Let $\sfv\in V\Gamma^e$ and let $H_\sfv$ be its $H$-stabilizer. Then $H_\sfv$ preserves $P_\sfv$, and acts on it by flat-preserving bijections sending $\sfv$-lines to $\sfv$-lines. As a consequence, the $H_\sfv$-action on $P_\sfv$ preserves the product decomposition $Z_\sfv\times \call_\sfv$ described above. In particular, there is an induced action $\alpha_\sfv:H_\sfv\to\Bij(Z_\sfv)$, where $\Bij(Z_\sfv)$ is the group of all bijections of $Z_\sfv$. 

The following notion, from \cite[Definition~5.32]{HK}, will be crucial in the present work.

		\begin{de}[Factor action]
			\label{def:factor action}
		Given an action $H\actson\B_{\Gamma}$ of a group $H$ by cubical automorphisms, and $\sfv\in V\Gamma^e$, the induced action $\alpha_\sfv:H_\sfv\to\Bij(Z_\sfv)$ is called the \emph{factor action} of $\alpha$ associated to $\sfv$.
		\end{de}

The goal of the present section is to derive the following theorem from the work of Kleiner and the second named author \cite{HK}.

  		\begin{theo}
			\label{theo:QI}
	 Let $G=G_\Gamma$ be a right-angled Artin group with $|\Out(G)|<+\infty$. Let $H$ be a group. Assume that $H$ has an action $\alpha:H\actson G$ by flat-preserving bijections satisfying the following conditions: 
			\begin{enumerate}
				\item the action has finitely many orbits and finite stabilizers;
				\item for every $\sfv\in V\Gamma^e$, the factor action $\alpha_\sfv:H_\sfv\actson Z_\sfv$ is conjugate to an action on $\mathbb{Z}$ by uniform quasi-isometries.
			\end{enumerate}
			Then $H$ is finitely generated and quasi-isometric to $G$. 
   
   Moreover, $H$ acts geometrically (i.e.\ properly and cocompactly by automorphisms) on a CAT(0) cube complex $Y$ with an $H$-equivariant surjective cubical map $Y\to \B_\Gamma$ (for the $H$-action on $\B$ induced by $\alpha$) such that the preimage of any rank $k$ vertex is isomorphic to the Euclidean space $\mathbb E^k$ with its usual cubulation.
\end{theo}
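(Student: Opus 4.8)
The plan is to feed the hypotheses into the machinery of \cite{HK}. The key observation is that conditions (1) and (2) are precisely what is needed to run the "blow-up building" construction of Kleiner and the second-named author. First I would explain how the factor-action hypothesis (2) lets one build, for each vertex $\sfv \in V\Gamma^e$, an $H_\sfv$-equivariant "blow-up" of the line $Z_\sfv$: since $\alpha_\sfv$ is conjugate to an action on $\mathbb Z$ by uniform quasi-isometries, one may replace $Z_\sfv$ by $\mathbb Z$ with its standard cubulation (a line), in such a way that $H_\sfv$ now acts by cubical automorphisms, with the conjugating map a quasi-isometry with uniform constants. Performing these blow-ups simultaneously along all parallelism classes of standard lines, and reassembling them according to the combinatorics of $\B_\Gamma$ (equivalently $\Gamma^e$) as in \cite{HK}, produces a $\mathrm{CAT}(0)$ cube complex $Y$ together with an $H$-equivariant surjective cubical map $Y \to \B_\Gamma$ whose fiber over a rank $k$ vertex is $\mathbb E^k$ with its standard cubulation. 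This is the "Moreover" part, and it is essentially a citation once the hypotheses are matched up with \cite[Definition~5.32]{HK} and the relevant blow-up theorem there.

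Next I would verify that the $H$-action on $Y$ is geometric. Properness: a point stabilizer in $Y$ maps, under $Y \to \B_\Gamma$, into the stabilizer of a vertex of $\B_\Gamma$; combined with hypothesis (1) (finite stabilizers for $\alpha$ on the rank $0$ vertices) and the fact that the fibers are Euclidean spaces on which the induced actions are by uniform quasi-isometries (hence the stabilizer of a point of the fiber is finite, using (2) again), one gets that $H$ acts on $Y$ with finite point stabilizers, i.e.\ properly discontinuously. Cocompactness: the $H$-action on $\B_\Gamma$ has finitely many orbits of vertices by (1), and over each vertex the fiber $\mathbb E^k$ has cocompact $H_v$-action (a finitely-generated-virtually-abelian group, by (2), acting on $\mathbb E^k$ geometrically), so $H \actson Y$ is cocompact. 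Since $Y$ is $\mathrm{CAT}(0)$ hence contractible, the Milnor--Švarc lemma gives that $H$ is finitely generated and $H$ is quasi-isometric to $Y$.

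Finally I would show $Y$ is quasi-isometric to $G = G_\Gamma$. The map $Y \to \B_\Gamma$ collapses each Euclidean fiber; conversely $\B_\Gamma$ is quasi-isometric to $G$ in a controlled way (the right-angled building is a collapse of the universal cover of the Salvetti complex), but the cleanest route is to compare $Y$ directly with $X_\Gamma$: both are "blow-up buildings over $\B_\Gamma$" in the sense of \cite{HK}, $X_\Gamma$ being the one where every line $Z_\sfv$ is blown up via the identity (equivalently the $G_\Gamma$-action), and $Y$ being the one coming from $\alpha$. The uniform-quasi-isometry hypothesis (2) ensures the blow-up data for $Y$ and for $X_\Gamma$ differ only by uniform quasi-isometries on each line, and \cite{HK} shows that such blow-up buildings are quasi-isometric; since $G_\Gamma$ is quasi-isometric to $X_\Gamma$ (Milnor--Švarc), we conclude $H \sim_{QI} Y \sim_{QI} X_\Gamma \sim_{QI} G$.

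The main obstacle I anticipate is purely bookkeeping rather than conceptual: one must check carefully that the hypotheses (1) and (2) exactly fit the input required by the blow-up construction of \cite{HK} (in particular that "uniform quasi-isometry" provides the uniformity of constants needed to glue the local blow-ups into a single $\mathrm{CAT}(0)$ cube complex of bounded geometry), and that the uniform control survives the reassembly so that the quasi-isometry constants between $Y$ and $X_\Gamma$ are finite. Once the dictionary with \cite{HK} is set up, each individual step is short.
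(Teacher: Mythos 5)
Your plan follows essentially the same route as the paper's proof: use hypothesis~(2) to produce a uniformly locally finite blow-up datum, build the blow-up building $Y$ of \cite{HK} with its $H$-equivariant projection $\pi:Y\to\B_\Gamma$, verify that the $H$-action on $Y$ is proper and cocompact using hypothesis~(1), and conclude via \cite[Corollary~5.30]{HK} (which gives $Y\sim_{QI}G$) together with \cite[Corollary~6.5]{HK} for the ``moreover'' part. The one point to state carefully is that hypothesis~(2) yields only a \emph{semi}-conjugation --- by \cite[Proposition~6.3]{HK} one gets an equivariant finite-to-one surjection $Z_\sfv\to\mathbb{Z}$ onto an isometric action, not a bijective conjugation of $Z_\sfv$ onto $\mathbb{Z}$ with its standard cubulation --- and this finite-to-one-ness is exactly what forces the branched lines (tips versus core) and gives the uniform local finiteness of $Y$; relatedly, the paper proves properness not by analyzing stabilizers of points in the Euclidean fibers but by an induction on the rank of vertices of $Y$, using that every vertex of positive rank is adjacent to a nonempty finite set of lower-rank vertices.
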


The cube complex $Y$ has a more explicit description, see Section~\ref{subsec:blowup building}. It is closely related to the canonical cube complex $X_\Gamma$ associated with $G_\Gamma$ (the universal cover of its Salvetti complex) in the sense that $Y$ is obtained from $X_\Gamma$ by replacing each standard flat in $X_\Gamma$ by what we call a branched flat, and gluing these branched flats in a similar pattern as how standard flats in $X_\Gamma$ are glued together. 

In the rest of this section, we will first give a more precise description of blow-up buildings, then prove Theorem~\ref{theo:QI}.

  \subsection{Blow-up buildings}
  \label{subsec:blowup building}
		\begin{de}[Branched lines and flats]
			A metric simplicial graph $\beta$ is a \emph{branched line} (see Figure~\ref{fig:branched} left) if there exists $C>0$ such that $\beta$ is obtained from $\mathbb{R}$ (equipped with its simplicial structure given by subdividing at integer points) by gluing, at every integer $n\in\mathbb{Z}$, at most $C$ edges and at least one edge of length $1$, denoted $e_{n,1},\dots,e_{n,k}$, gluing the origin of each $e_{n,i}$ at $n$.  
			
			Valence one vertices of $\beta$ are called the \emph{tips} of $\beta$, and their set is denoted by $t(\beta)$. The copy of $\mathbb{R}$ is called the \emph{core} of $\beta$.
			
			A \emph{branched flat} $F$ is a product of finitely many branched lines $\beta_1,\dots,\beta_k$. A \emph{tip} of $F$ is a tuple $(t_1,\dots,t_k)$, where each $t_i$ is a tip of $\beta_i$; we denote their set by $t(F)$. The core of $F$ is the product of the cores of the $\beta_i$.
		\end{de}
	
		\begin{figure}[h]
	\centering
	\includegraphics[scale=0.7]{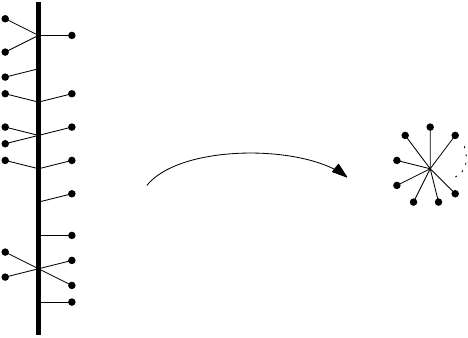}
	\caption{A branched line and a projection map.}
	\label{fig:branched}
\end{figure}	
		
		The following construction is a special case of \cite[Sections 5.2 and~5.3]{HK}, see also \cite[Section~3.1]{Hua-QI}. 
		
		\begin{de}[Blow-up datum]
			\label{definition:blow up}
			A \emph{blow-up datum} is a family of surjections $(g_\ell:\ell\to\mathbb{Z})_\ell$, where $\ell$ varies over the set of standard lines of $G_\Gamma$, such that
			\begin{enumerate}
				\item whenever $\ell_1,\ell_2$ are parallel, with parallelism map $p:\ell_2\to\ell_1$, then $g_{\ell_2}=g_{\ell_1}\circ p$;
				\item for every $\ell$, there exists $C_\ell>0$ such that $|g_\ell^{-1}(n)|\le C_\ell$ for every $n\in\mathbb{Z}$.
			\end{enumerate}
		We say that the blow-up datum $(g_\ell)_\ell$ is \emph{uniformly locally finite} if the constant $C_\ell$ can be chosen independent of $\ell$.
		\end{de}

 Recall that in our convention, a standard line is defined as a subset of $G_\Gamma$ -- in particular it is discrete.

Associated to any blow-up datum $(g_\ell)_\ell$ is a family $(\beta_\sfv)_{\sfv\in V\Gamma^e}$ of branched lines, and a family of maps $(f_\ell)_\ell$, defined in this way. For every $\sfv\in V\Gamma^e$, we first choose a $\sfv$-line $\ell_\sfv$. We then let $\beta_\sfv$ be the simplicial graph $(\ell_\sfv\times [0,1])\sqcup \mathbb R/{\sim}$, where $(x,1)\sim g_{\ell_\sfv}(x)$ for any $x\in \ell_\sfv$. The inclusion map $\ell_\sfv\to\beta_\sfv$ yields a bijection $f_{\ell_\sfv}:\ell_\sfv\to t(\beta_\sfv)$. Now, for every standard line $\ell$, denoting by $\sfv$ the type of $\ell$, there is a parallelism map $p:\ell\to\ell_\sfv$, and we let $f_{\ell}=f_{\ell_\sfv}\circ p$. We say that the family $(f_\ell)_\ell$ of bijections constructed in this way is \emph{adapted} to the blow-up datum $(g_\ell)_\ell$.

		\paragraph*{Blow-up buildings.}
		
		Let $(g_\ell)_\ell$ be a blow-up datum, and let $(f_\ell)_\ell$ be an adapted family of bijections. We now associate to $(g_\ell),(f_\ell)$ a cube complex $Y$, as follows. 
		First, to every standard flat $F\subseteq G_\Gamma$, we associate a space $\beta_F$ as follows:
		\begin{enumerate}
			\item if $\Delta(F)=\emptyset$ (i.e.\ $F$ is a 0-dimensional standard flat), we let $\beta_F$ be a point;
			\item if $\Delta(F)\neq\emptyset$, writing $F=\prod_{\sfv\in V(\Delta(F))}\ell_{\sfv}$, where each $\ell_{\sfv}\subset F$ is a standard $\sfv$-line, we let  $\beta_{F}=\prod_{\sfv\in V(\Delta(F))}\beta_{\sfv}$.
		\end{enumerate}
		Whenever $F'\subseteq F$ are two standard flats, we can write \[F'=\prod_{\sfv\in V(\Delta(F'))}\ell_{\sfv}\times\prod_{\sfv\in V(\Delta(F))\setminus V(\Delta(F'))}\{x_{\sfv}\},\] where each $x_{\sfv}$ is a vertex in $\ell_{\sfv}$. Then we define an isometric embedding $\beta_{F'}\hookrightarrow \beta_F$ as follows:
		\[\beta_{F'}=\prod_{\sfv\in V(\Delta(F'))}\beta_\sfv\cong \prod_{\sfv\in V(\Delta(F'))}\beta_{\sfv}\times\prod_{\sfv\in V(\Delta(F)\setminus \Delta(F'))}\{f_{\ell_{\sfv}}(x_{\sfv})\}\hookrightarrow \prod_{\sfv\in V(\Delta(F))}\beta_{\sfv}=\beta_F.\]
		
		\begin{de}[Blow-up building]
			The space $Y$ obtained from the disjoint union of the branched lines $\beta_F$ by identifying $\beta_{F'}$ as a subset of $\beta_F$ whenever $F'\subseteq F$, according to the above isometric embeddings, is called the \emph{blow-up building} associated to $(g_\ell)_\ell$, $(f_\ell)_\ell$.
		\end{de}

\subsection{Properties of blow-up buildings}
		We now define a projection map $\pi:Y\to \B_\Gamma$. Note that for each standard line $\ell\subset G$, we can define a map $\pi:\beta_\ell\to \B_\Gamma$ by sending the core of $\beta_\ell$ to the rank 1 vertex in $\B_\Gamma$ associated with $\ell$, sending each vertex in $t(\beta_\ell)$ to the associated rank $0$ vertex in $\B_\Gamma$, and extending linearly (see Figure~\ref{fig:branched}). More generally, let $F=\prod_{\sfv\in V(\Delta(F))}\ell_{\sfv}$ be a standard flat, where each $\ell_{\sfv}\subset F$ is a standard $\sfv$-line, and let  $\beta_{F}=\prod_{\sfv\in V(\Delta(F))}\beta_{\sfv}$ be the associated branched flat. We define $\pi:\beta_F\to\B_\Gamma$ as follows. Every vertex $x\in\beta_F$ lies in the core of a unique subcomplex of the form $\beta_{F'}$, with $F'\subseteq F$, and we let $\pi(x)$ be the vertex of $\B_\Gamma$ associated to $F'$. 
  These maps defined on each $\beta_F$ are compatible with the gluing pattern, hence induce a map $\pi:Y\to \B_\Gamma$. Note that the restriction of $\pi:Y\to\B_\Gamma$ to each cube is either an isometry or collapses the cube to a cube of smaller dimension (by collapsing some of the interval factors). We say that a vertex $y\in Y$ has \emph{rank} $k$ if $\pi(y)$ has rank $k$.
		We record the following properties of $Y$. 
		\begin{enumerate}
			\item The natural map $\beta_F\to Y$ is injective for each standard flat $F\subset G$, see \cite[Lemma~5.16]{HK}. The image of this embedding is called a \textit{standard branched flat}. From now on we slightly abuse notation and again denote by $\beta_F$ its image in $Y$. The core of a standard branched flat is called a \textit{standard flat}. The map sending a standard flat $F$ to the core of $\beta_F$ is a one-to-one correspondence between standard flats in $G$ and standard flats in $Y$.
			\item Given any two standard flats $F_1,F_2$ of $G$, one has $\beta_{F_1}\cap \beta_{F_2}=\beta_{F_{1}\cap F_{2}}$, see \cite[Lemma 8.1]{HK}. Thus if the cores of $\beta_{F_1}$ and $\beta_{F_2}$ have nontrivial intersection, then $\beta_{F_1}=\beta_{F_2}$: indeed, if $F'\subsetneq F$, then $\beta_{F'}$ does not intersect the core of $\beta_F$; so we must have $F_1\cap F_2=F_1=F_2$. In particular, different standard flats in $Y_\Ga$ are disjoint.
			\item There exists a unique injective map $f:G\to Y$ whose restriction to any standard line $\ell$ coincides with $f_\ell$. This map $f$ sends the vertex set of each standard flat of $G$ bijectively to the tips of a standard branched flat. The image of $f$ is exactly the set of $0$-dimensional standard flats in $Y$. 
		\end{enumerate}
		
In fact, the space $Y$ is a $\mathrm{CAT}(0)$ cube complex by \cite{HK}.

		\begin{lemma}[{\cite[Corollary 5.30]{HK}}]
			\label{lem:qi}
			Let $(g_\ell)_\ell$ be a uniformly locally finite blow-up datum, let $(f_\ell)_\ell$ be an adapted family of bijections, and let $Y$ be the associated blow-up building. 
			
			Then $G$ and $Y$ are quasi-isometric.
		\end{lemma}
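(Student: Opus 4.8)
The plan is to obtain the quasi-isometry by collapsing all the ``branching'' of $Y$ and reducing to the geometric action of $G$ on the universal cover $X_\Gamma$ of its Salvetti complex. Note first that the obvious candidate --- the canonical injection $f\colon G\to Y$ onto the rank $0$ vertices from property~(3) --- is \emph{not} a quasi-isometry in general: since the blow-up surjections $g_\ell$ need not be monotone, two adjacent tips of a branched line $\beta_\sfv$ may be glued to arbitrarily far apart core points, so $f$ fails to be coarsely Lipschitz. One should instead push everything onto the cores.

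\textbf{Step 1 (collapsing the pendant edges).} Let $\bar Y$ be the cube complex obtained from $Y$ by collapsing, in each branched line $\beta_\ell$ and at each $n\in\mathbb Z$, all the pendant edges $e_{n,1},\dots,e_{n,k}$ to their common core endpoint; these collapses are compatible with the gluings $\beta_{F'}\hookrightarrow\beta_F$, so $\bar Y$ is well defined and the quotient map $q\colon Y\to\bar Y$ is surjective and $1$-Lipschitz. I claim $q$ is a quasi-isometry. Its fibres have diameter at most $\dim Y\le |V\Gamma|$, since every vertex of a branched flat lies within $\dim Y$ pendant edges of a core vertex. That $q$ decreases distances by at most an additive constant is checked via hyperplanes: $Y$ is a $\mathrm{CAT}(0)$ cube complex, the hyperplanes killed by $q$ are exactly those dual to pendant edges, and each such hyperplane separates a single tuple of tips from the rest of $Y$, so at most $2\dim Y$ of them can separate two given vertices. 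Hence $d_{\bar Y}\le d_Y\le d_{\bar Y}+2\dim Y$ on vertices.

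\textbf{Step 2 (identifying $\bar Y$ with $X_\Gamma$ up to quasi-isometry).} Using the structural properties of blow-up buildings --- namely $\beta_{F_1}\cap\beta_{F_2}=\beta_{F_1\cap F_2}$ and the fact that the embeddings $\beta_{F'}\hookrightarrow\beta_F$ fill the extra coordinates with tips $f_{\ell_\sfv}(x_\sfv)$ --- one checks that $\bar Y$ is built by gluing one Euclidean complex $\mathbb E^{\dim F}$ for each maximal standard flat $F$ of $G$ along coordinate subcomplexes, indexed by the poset of standard flats of $G$, in exactly the same combinatorial pattern as the flats of $X_\Gamma$; the only discrepancy is that standard flats of $G$ lying in a common $g_\ell$-fibre become identified in $\bar Y$. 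Choosing, coherently with parallelism (condition~(1) of a blow-up datum), a section of each $g_\ell$, one obtains a cubical map $\bar Y\to X_\Gamma$ sending each flat isometrically onto the corresponding flat of $X_\Gamma$; it is bi-Lipschitz with constants depending only on $\dim Y$, and its point-fibres have size at most $\sup_\ell C_\ell$, which is finite precisely because the blow-up datum is uniformly locally finite. Hence it is a quasi-isometry, and composing with Step~1 and with the fact that $G$ acts properly and cocompactly by isometries on $X_\Gamma$ (so $X_\Gamma$ is quasi-isometric to $G$) gives that $Y$ is quasi-isometric to $G$.

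The main obstacle is Step~2. Because the $g_\ell$ are not monotone there is no canonical quasi-isometry from $Y$ to $G$ respecting the flat structures, so one must pick sections of the $g_\ell$ by hand and verify, using compatibility with parallelism, that the choices are coherent across the infinite family of all standard flats, and that the resulting description of $\bar Y$ as ``$X_\Gamma$ with bounded clusters of parallel subflats merged'' genuinely preserves the coarse geometry --- i.e.\ that the merging of subflats controlled by uniform local finiteness creates no shortcuts that distort the quasi-isometry type. Carefully matching the collapsing map and the section-map with the gluing pattern of branched flats (the lemmas $\beta_F\hookrightarrow Y$ is injective and $\beta_{F_1}\cap\beta_{F_2}=\beta_{F_1\cap F_2}$ are exactly the tools one needs here) is the technical heart of the argument.
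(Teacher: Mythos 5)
The paper offers no proof of this lemma at all: it is imported verbatim from \cite[Corollary~5.30]{HK}, so what you have written is an attempt to reprove that external result rather than a variant of an argument in this paper. Your Step~1 is essentially correct: collapsing the pendant edges is the restriction quotient of $Y$ associated with the non-pendant hyperplanes, every vertex of $Y$ lies on the bounded (``tip'') side of at most $|V\Gamma|$ pendant hyperplanes (the right constant is $|V\Gamma|$ rather than $\dim Y$, since a rank~$0$ vertex $f(g)$ carries one pendant edge for each of the $|V\Gamma|$ standard lines through $g$), so hyperplane counting gives $d_{\bar Y}\le d_Y\le d_{\bar Y}+2|V\Gamma|$ on vertices.

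Step~2, however, contains a genuine gap, located exactly where you say the technical heart is. A blow-up datum only bounds the \emph{cardinality} of the fibres $g_\ell^{-1}(n)$, not their \emph{diameter} in $\ell$. This has three consequences that break your argument. First, the parallel subflats of a flat $F$ that become identified in $\bar Y$ form clusters of bounded cardinality but unbounded diameter, so $\bar Y$ is not ``$X_\Gamma$ with bounded clusters of parallel subflats merged''. Second, for any section $s_\sfv$ of $g_{\ell_\sfv}$, the displacement $|x-s_\sfv(g_{\ell_\sfv}(x))|$ is the distance between two points of a common fibre and is therefore unbounded; hence your map $\bar Y\to X_\Gamma$ sends the glued copy of $\beta_{F'}$ inside $\mathbb E^{F}$ and inside $\mathbb E^{F'}$ to subflats of $X_\Gamma$ that are unboundedly far apart, i.e.\ the map is not even coarsely well defined. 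Third, a section of a bounded-to-one surjection of $\mathbb{Z}$ need not be coarsely Lipschitz, so ``bi-Lipschitz with constants depending only on $\dim Y$'' is unjustified. A concrete instance: for $G=\mathbb{Z}^2=\langle a\rangle\times\langle b\rangle$ with $g_{\langle a\rangle}(a^{2k})=k$ and $g_{\langle a\rangle}(a^{2k+1})=-k$ (fibres of cardinality $2$, hence uniformly locally finite), the composite $q\circ f$ identifies $a^{2m}$ with $a^{1-2m}$, two points at distance $|4m-1|$ in $G$. Here $\bar Y\cong\mathbb{R}^2$ is indeed quasi-isometric to $G$, but not via any map that is coarsely compatible with the labelling of rank~$0$ vertices by $G$ --- which is precisely the kind of map your construction produces. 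So the lemma is true for reasons your Step~2 does not capture, and the proof really has to be taken from \cite{HK}. (In the only place the lemma is applied here, namely the proof of Theorem~\ref{theo:QI}, the surjections $g_\ell$ are equivariant semiconjugacies furnished by \cite[Proposition~6.3]{HK} and are themselves quasi-isometries $\ell\to\mathbb{Z}$; under that stronger hypothesis the fibres do have uniformly bounded diameter, the obstruction above disappears, and a repaired version of your Step~2 --- or a direct hyperplane count showing $q\circ f$ is a quasi-isometry --- would go through. But that hypothesis is not part of the statement being proved.)
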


		Each $\sfv$-line $\ell\subset G_\Gamma$ has a canonical identification with $Z_\sfv$. So the factor action $\alpha_\sfv:H_\sfv\actson Z_\sfv$ can also be viewed as an action $\alpha_{\sfv,\ell}:H_\sfv\actson\ell$.

		\begin{de}\label{de:compatible}
			Let $(g_\ell)_\ell$ be a blow-up datum, and $(f_\ell)_\ell$ be an adapted family of bijections. Let $\alpha:H\actson G_\Gamma$ be an action of a group $H$ by flat-preserving bijections. We say that $\alpha$ and $(g_\ell),(f_\ell)$ are \emph{compatible} if there exists a family of isometric actions $(\alpha'_{\sfv}:H_\sfv\to\Isom(\beta_\sfv))_{\sfv\in V\Gamma^e}$, such that 
			\begin{enumerate}
				\item for each $\sfv$-line $\ell$, the map $f_{\ell}:\ell\to t(\beta_\sfv)\subset \beta_\sfv$ is $(\alpha_{\sfv,\ell},\alpha'_\sfv)$-equivariant;
				\item if $h\in H$ sends a $\sfv$-line $\ell$ to a $\sfw$-line $\ell'$, then the map $f_{\ell'}\circ h\circ f^{-1}_{\ell}:t(\beta_\sfv)\to t(\beta_\sfw)$ extends to an $(\alpha'_\sfv,\alpha'_\sfw)$-equivariant isometry between $\beta_\sfv$ and $\beta_\sfw$.
			\end{enumerate}
		\end{de}
		
		Since flat-preserving bijections of $G_\Gamma$ are naturally in one-to-one correspondence with cubical automorphisms of $\B_\Gamma$, we will also say that an action $\alpha:H\to\Aut(\B_\Gamma)$ is \emph{compatible} with $(g_\ell),(f_\ell)$ if the corresponding $H$-action on $G_\Gamma$ by flat-preserving bijections is. The following lemma is a consequence of \cite[Lemma~5.25]{HK}.
		
		\begin{lemma}
			\label{lem:action}
			Let $\alpha:H\actson \B_\Gamma$ be an action by cubical automorphisms. Let $(g_\ell)_\ell$ be a blow-up datum, let $(f_\ell)_\ell$ be an adapted family of bijections, and let $Y$ be the associated blow-up building. 
			
			If $\alpha$ is compatible with $(g_\ell),(f_\ell)$, then there exists an action $\alpha':H\actson Y$ by cellular isometries such that the map $\pi:Y\to\B_\Gamma$ is $(\alpha',\alpha)$-equivariant.
		\end{lemma}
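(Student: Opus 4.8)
The plan is to construct the action $\alpha'$ by first building it on each standard branched flat $\beta_F$ and then checking compatibility with the gluing pattern. Given $h \in H$ and a standard flat $F$ of type $\Delta(F) = \{\sfv_1, \dots, \sfv_k\}$, the action $\alpha(h)$ on $\B_\Gamma$ sends $F$ to another standard flat $F'$ of type $\{\sfw_1, \dots, \sfw_k\}$, with $\alpha(h)$ restricting to a flat-preserving bijection $F \to F'$ respecting the product decomposition (it maps $\sfv_i$-lines to $\sfw_i$-lines for a suitable indexing). Using condition~(2) of Definition~\ref{de:compatible}, for each factor the map $f_{\ell'}\circ h \circ f_\ell^{-1}$ extends to an $(\alpha'_{\sfv_i}, \alpha'_{\sfw_i})$-equivariant isometry $\beta_{\sfv_i} \to \beta_{\sfw_i}$; taking the product over $i$ gives an isometry $\beta_F \to \beta_{F'}$. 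I would first check this is well-defined, i.e.\ independent of the choice of $\sfv_i$-lines used to express $F$ as a product, which follows from condition~(1) together with the parallelism compatibility built into the definition of $(f_\ell)_\ell$ being adapted to $(g_\ell)_\ell$.

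Next I would verify the cocycle identity: for $h_1, h_2 \in H$, the isometries $\beta_F \to \beta_{h_2 F} \to \beta_{h_1 h_2 F}$ compose to the one associated with $h_1 h_2$. On tips $t(\beta_F) \cong F$ this is immediate because $f$ intertwines the $H$-action on $G_\Gamma$ with the induced action on tips (the $f_\ell$ being compatible with the factor actions $\alpha_{\sfv,\ell}$), and two isometries of $\beta_F$ agreeing on the tips must agree everywhere since $t(\beta_F)$ is not contained in any proper subcomplex fixed by a nontrivial isometry; more robustly, one can note that $\alpha'_\sfv$ restricted to $t(\beta_\sfv)$ determines $\alpha'_\sfv$ on $\beta_\sfv$ by the structure of a branched line, and fall back on the cocycle property of the $\alpha'_\sfv$ themselves. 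Once the maps $\beta_F \to \beta_{hF}$ are defined coherently and form a cocycle, I must check compatibility with the inclusions $\beta_{F'} \hookrightarrow \beta_F$ whenever $F' \subseteq F$: this is exactly where condition~(1) of Definition~\ref{de:compatible} is needed, since the inclusion is defined using the $f_{\ell_\sfv}(x_\sfv)$ and $\alpha'_\sfv$ is built to be $(\alpha_{\sfv,\ell}, \alpha'_\sfv)$-equivariant, so the image of $x_\sfv$ is tracked correctly under $h$. Gluing the $\beta_F \to \beta_{hF}$ along these inclusions then yields a well-defined cellular isometry $\alpha'(h): Y \to Y$, and the assignment $h \mapsto \alpha'(h)$ is a homomorphism by the cocycle identity.

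Finally, $(\alpha', \alpha)$-equivariance of $\pi: Y \to \B_\Gamma$ is a direct consequence of the construction: $\pi$ sends the core of $\beta_{F'}$ (for $F' \subseteq F$) to the vertex of $\B_\Gamma$ associated to $F'$, the isometry $\beta_F \to \beta_{hF}$ sends the core of $\beta_{F'}$ to the core of $\beta_{\alpha(h)F'}$ (this is part of how the product-of-isometries is built, since each factor isometry $\beta_{\sfv_i} \to \beta_{\sfw_i}$ maps core to core), and $\pi$ then sends that to the vertex associated to $\alpha(h) F' = \alpha(h)(\pi(\text{core}))$. So $\pi \circ \alpha'(h) = \alpha(h) \circ \pi$. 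As the excerpt indicates, all of this is essentially the content of \cite[Lemma~5.25]{HK}, so the proof largely amounts to matching our Definition~\ref{de:compatible} with the hypotheses there and invoking it; I would phrase it as such rather than reproving it.

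The main obstacle, and the step requiring the most care, is the well-definedness on gluings (compatibility with the inclusions $\beta_{F'} \hookrightarrow \beta_F$): one must check that the isometry $\beta_F \to \beta_{hF}$ restricts on the subcomplex $\beta_{F'}$ to precisely the isometry $\beta_{F'} \to \beta_{hF'}$, which requires simultaneously that $h$ sends $F'$ to $hF' \subseteq hF$ respecting all the product structure and that the chosen vertices $x_\sfv$ defining the inclusion are transported by $f_{\ell'} \circ h \circ f_\ell^{-1}$ consistently across the two descriptions. This is where the two conditions in Definition~\ref{de:compatible} must interact precisely, and it is the heart of why \cite[Lemma~5.25]{HK} is stated the way it is.
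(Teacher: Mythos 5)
Your proposal is correct and matches the paper's approach exactly: the paper gives no proof of this lemma, stating only that it is a consequence of \cite[Lemma~5.25]{HK}, which is precisely how you conclude. Your preliminary sketch of the underlying construction (factorwise isometries from condition~(2), determinacy on tips, and compatibility with the inclusions $\beta_{F'}\hookrightarrow\beta_F$ via condition~(1)) is a sound unpacking of what that cited lemma does, and correctly identifies the gluing compatibility as the delicate point.
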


 \subsection{Proof of the quasi-isometry criterion} 
		\begin{proof}[Proof of Theorem~\ref{theo:QI}]
			The first step is to choose a blow-up datum which is compatible with the $H$-action.

   We claim that the action $H\actson V\Gamma^e$ has finitely many orbits. Indeed, as each point of $G$ is contained in finitely many standard lines, and the action $\alpha$ is flat-preserving, the action $\alpha:H\actson G$ has finitely many orbits of standard lines. Hence the claim follows.
   
   Let $\{\sfv_1,\dots,\sfv_n\}$ be a (finite) set of representatives of the orbits of vertices for the action $H\actson \Gamma^e$. By assumption, the action $\alpha_{\sfv_i}$ is conjugate to an action on $\mathbb{Z}$ by uniform quasi-isometries. Therefore, by \cite[Proposition~6.3]{HK}, it is semi-conjugate to an action on $\mathbb{Z}$ by isometries. More precisely, there exist an isometric action $\gamma_{\sfv_i}:H_{\sfv_i}\actson \mathbb Z$, an $(\alpha_{\sfv_i},\gamma_{\sfv_i})$-equivariant surjection $g_i: Z_{\sfv_i}\to \mathbb Z$, and $C_i>0$ satisfying $|g^{-1}_i(n)|\le C_i$ for every $n\in\mathbb{Z}$.  
			
			For every standard line $\ell$, we now define a map $g_\ell:\ell\to\mathbb{Z}$, following the construction in \cite[Section~5.6]{HK}. First, if $\ell$ is a $\sfv_i$-line for some $i\in\{1,\dots,n\}$, we let $p_i:P_{\sfv_i}\to  Z_{\sfv_i}$ be the projection map, and let $g_\ell=(g_i\circ p_i)_{|\ell}$. In general $\ell$ is a $\sfw$-line for some $\sfw\in V\Gamma^e$. For every $\sfw\in V\Gamma^e$, we choose an element $h_\sfw\in H$ such that $h_\sfw\sfw=\sfv_i$ for some $i\in\{1,\dots,n\}$. 
   Now if $\ell$ is a $\sfw$-line, then $h_\sfw(\ell)$ is a $\sfv_i$-line, and we let $g_{\ell}=g_{h_\sfw(\ell)}\circ h_\sfw$.  As the maps $g_i$ are finite-to-one, the family $(g_\ell)_\ell$ is a uniformly locally finite blow-up datum.
			
	Let $(\beta_\sfv)_{\sfv\in V\Gamma^e}$ be the family of branched lines associated to the blow-up datum $(g_\ell)_\ell$, and let $(f_{\ell}:\ell\to t(\beta_\sfv))_\ell$ be an adapted family of bijections. For every $i\in\{1,\dots,n\}$, the equivariance property of the map $g_i$ gives an isometric action $\alpha'_{\sfv_i}:H_{\sfv_i}\actson \beta_{\sfv_i}$. More generally, for every $\sfw\in V\Gamma^e$, there is an isometric action $\gamma_\sfw:H_\sfw\actson\mathbb{Z}$ given by $\gamma_\sfw(h)(z)=\gamma_{\sfv_i}(h_\sfw hh_\sfw^{-1})(z)$ (where $i$ is such that $h_\sfw\sfw=\sfv_i$). Then for every $\sfw$-line $\ell$, the map $g_\ell:\ell\to\mathbb{Z}$ constructed above is $(\alpha_{\sfw,\ell},\gamma_\sfw)$-equivariant. So it yields an isometric action $\alpha'_\sfw:H_\sfw\actson \beta_\sfw$, such that the map $f_\ell:\ell\to t(\beta_\sfw)$ is $(\alpha_{\sfw,\ell},\alpha'_\sfw)$-equivariant. One also checks that the second compatibility condition in Definition~\ref{de:compatible} is satisfied. 
			
			Let $Y$ be the blow-up building associated to $(g_\ell)_\ell,(f_\ell)_\ell$. By Lemma~\ref{lem:qi}, the space $Y$ is quasi-isometric to $G$. By Lemma~\ref{lem:action}, there is an action $\alpha':H\actson Y$ by cellular isometries, such that the map $\pi:Y\to\B_\Gamma$ is $(\alpha',\alpha)$-equivariant.
			
			We now prove that the action $\alpha'$ is proper and cocompact. This will imply that $H$ is finitely generated and quasi-isometric to $G$, as desired. By definition of $\pi$, every rank $0$ vertex of $\B_\Gamma$ has a unique preimage under $\pi$. As the map $\pi:Y\to \B_\Gamma$ is $(\alpha',\alpha)$-equivariant, it follows that the action $\alpha'$ has finitely many orbits of rank 0 vertices. Note that every vertex $y$ of $Y$ of rank at least $1$ is adjacent to at least one vertex of lower rank (this follows by considering the standard flat containing $y$ and the associated standard branched flat). Thus there exists $C>0$ such that any vertex in $Y$ is at most distance $C$ from a rank 0 vertex. As $Y$ is uniformly locally finite, it follows that $\alpha'$ has finitely many orbits of vertices.  Using again that $Y$ is uniformly locally finite, this is enough to ensure that there are only finitely many orbits of cells, so $\alpha'$ is cocompact. As $H$ acts cocompactly on a uniformly locally finite complex, to show that the action is proper, it suffices to show that the stabilizer of each vertex is finite. The case of rank 0 vertices follows from the first assumption of the theorem, the equivariance of $\pi$, and the fact that $\pi$ is a bijection between rank $0$ vertices of $Y$ and $\B_\Gamma$. The equivariance of $\pi$ ensures that the $H$-action on $Y$ preserves the rank of vertices. Therefore, the stabilizer of each vertex $y\in Y$ of rank at least $1$  permutes the non-empty finite set of vertices of lower rank which are adjacent to $y$. By induction on the rank, we thus deduce that the stabilizer of every vertex is finite.

		The moreover part of the theorem follows from \cite[Corollary 6.5]{HK}. Actually, the cube complex is exactly $Y$.
		\end{proof}

		\section{Measure equivalence couplings}\label{sec:me}

In this section, we first review the definition and framework of measure equivalence and couplings. We then establish a few general statements that will be specialized to the context of right-angled Artin groups in later sections.

\subsection{Review on measure equivalence}\label{sec:intro-me}

Recall from the introduction that a \emph{measure equivalence coupling} between two countable groups $\G$ and $\sfH$ is a standard measure space $\Omega$ (of positive measure) equipped with a measure-preserving action of $\G\times \sfH$ such that both factor actions $\G\actson \Omega$ and $\sfH\actson \Omega$ are free and have a finite measure fundamental domain. Here, a \emph{fundamental domain} for the action of $\G$ on $\Omega$ is a Borel subset $X_\G\subseteq\Omega$ such that $\G\cdot X_\G=\Omega$ up to null sets, and for every nontrivial element $g\in \G$, the intersection $X_\G\cap gX_\G$ is a null set. Two countable groups $\G,\sfH$ are \emph{measure equivalent} if there exists a measure equivalence coupling between $\G$ and $\sfH$. This turns out to be an equivalence relation on the set of countable groups, see \cite[Section~2]{Fur-me}.

 There is a dictionary between measure equivalence and stable orbit equivalence, that was established by Furman \cite{Fur-oe}. Let us briefly mention what we will need from this dictionary. Let $\Omega$ be a measure equivalence coupling between two countable groups $\mathsf{G}$ and $\mathsf{H}$, and let $X_{\mathsf{G}},X_{\mathsf{H}}$ be respective fundamental domains for the actions of $\mathsf{G},\mathsf{H}$ on $\Omega$ whose intersection $U$ has positive measure (these always exist, as shown by translating $X_{\mathsf{H}}$ if needed). There are induced actions $\G\actson X_\sfH$ and $\sfH\actson X_\G$, defined (on conull subsets) through the identifications $X_\sfH\approx \sfH\backslash\Omega$ and $X_\G\approx \G\backslash\Omega$. To distinguish these actions, when $g\in\G$ and $x\in X_\sfH$, we will write $gx\in\Omega$ for the image of $x$ under the action of $g$ on $\Omega$, and $g\cdot x\in X_\sfH$ for its image under the induced action of $g$ on $X_\sfH$. More concretely $g\cdot x$ is the unique element of $X_\sfH$ in the same $\sfH$-orbit as $gx$ (uniqueness is ensured almost everywhere using that $X_\sfH$ is a fundamental domain for the $\sfH$-action on $\Omega$). The orbits of the two induced actions $\mathsf{G}\actson X_{\mathsf{H}}$ and $\mathsf{H}\actson X_{\mathsf{G}}$ have the same intersection with $U$ (up to a null set): indeed if $x,g\cdot x\in U$ for some $g\in\G$, then there exists $h\in\sfH$ such that $hgx\in U$; as the actions of $\G$ and $\sfH$ on $\Omega$ commute, we have $ghx\in U$, showing that $h\cdot x=g\cdot x$.

There is a natural cocycle $c:\G\times X_H\to \sfH$, defined by letting $c(g,x)$ be the unique element $h\in H$ such that $hgx\in X_\sfH$. Likewise there is a cocycle $\sfH\times X_\G\to\G$. These are called the \emph{measure equivalence cocycles} associated to $\Omega$ and to the fundamental domains $X_\sfH$, $X_\G$. Here the cocycle relation means that $c(g_1g_2,x)=c(g_1,g_2x)c(g_2,x)$ for every $g_1,g_2\in\G$ and almost every $x\in X_\sfH$. We also mention that changing the fundamental domain $X_\sfH$ to another one $X_\sfH'$ changes $c$ to a cocycle $c'$ which is \emph{cohomologous}, i.e.\ such that there exists a measurable map $\varphi:X\to\sfH$ such that for every $g\in\G$ and almost every $x\in X_\sfH$, if we denote by $x'\in X'_\sfH$ the unique element in the same $\sfH$-orbit as $x$, then $c'(g,x')=\varphi(gx)c(g,x)\varphi(x)^{-1}$.

 The above can also be reformulated in the language of measured groupoids, see e.g.\ \cite[Section~2.2]{Kid-survey} or \cite[Section~3]{GH} for an introduction. Every measure-preserving action of a countable group $\G$ on a standard probability space $X$ gives rise to a measured groupoid $\G\ltimes X$ over $X$: as a Borel set this is $\G\times X$, and the composition law is given by $(h,gx)(g,x)=(hg,x)$, see e.g.\ \cite[Example~2.20]{Kid-survey} for more details. Every element $\gamma$ in a measured groupoid $\calg$ over $X$ has a source $s(\gamma)$ and a range $r(\gamma)$ in $X$: in the above example $s(g,x)=x$ and $r(g,x)=g\cdot x$. The measured groupoid $\G\ltimes X$ is naturally equipped with a measurable cocycle (i.e.\ a homomorphism of measured groupoids) $\rho_\G:\calg\to\G$, defined by letting $\rho_\G(g,x)=g$. Also, for every measured groupoid $\calg$ over a standard probability space $X$ and every positive measure Borel subset $U\subseteq X$, we can consider the restricted measured groupoid $\calg_{|U}$, consisting of all elements $\gamma\in\calg$ with $s(\gamma),r(\gamma)\in U$.
 
 Coming back to the above situation of a measure equivalence coupling $\Omega$ between $\G$ and $\sfH$, the measured groupoids $\calg_1,\calg_2$ coming from the respective actions $\mathsf{G}\actson X_{\mathsf{H}}, \mathsf{H}\actson X_{\mathsf{G}}$ have isomorphic restrictions to $U=X_{\G}\cap X_\sfH$ (where isomorphism is understood up to restricting to a conull Borel subset).

\subsection{Reduction of couplings}\label{sec:coupling-taut}

In this section, we review work of Kida \cite{Kid-me,Kid-amalgam} and Bader--Furman--Sauer \cite{BFS}. Let $L$ be a Polish group, and let $\G$ be a countable subgroup of $L$. Then $L$ is equipped with an action of $\G\times\G$ by left-right multiplication, namely $(g_1,g_2)\cdot \ell=g_1\ell g_2^{-1}$. Throughout the paper, we will use the following definition.

\begin{de}[Strongly ICC]\label{de:icc}
Let $L$ be a Polish group, and let $\G$ be a countable subgroup of $L$. The inclusion $\G\subseteq L$ is \emph{strongly ICC} if the Dirac mass at $\mathrm{id}$ is the unique Borel probability measure on $L$ which is invariant under the conjugation by every element of $\G$.
\end{de}

\begin{theo}[{Kida \cite[Theorem~3.5]{Kid-amalgam}, Bader--Furman--Sauer \cite[Theorem~2.6]{BFS}}]\label{theo:taut}
Let $L$ be a Polish group, and let $\G$ be a countable subgroup of $L$, such that the inclusion $\G\subseteq L$ is strongly ICC. Assume that for every self measure equivalence coupling $\Sigma$ of $\G$, there exists a measurable almost $(\G\times\G)$-equivariant\footnote{Whenever we say that a map is \emph{almost equivariant}, we mean that the equivariance relation holds almost everywhere.} map $\Sigma\to L$.

   Let $\sfH$ be a countable group that is measure equivalent to $\G$, and let $\Omega$ be a measure equivalence coupling between $\G$ and $\sfH$. 
			
			Then there exist a homomorphism $\iota:\sfH\to L$ with finite kernel, and a measurable almost $(\G\times \sfH)$-equivariant map $\theta:\Omega\to L$, i.e.\ for a.e.\ $\omega\in\Omega$, and any $(g,h)\in\G\times \sfH$, one has $\theta((g,h)\cdot \omega)=g\theta(\omega)\iota(h)^{-1}$.
\end{theo}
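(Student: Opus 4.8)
The plan is to follow the strategy of Kida and Bader--Furman--Sauer (as in \cite[Theorem~3.5]{Kid-amalgam} and \cite[Theorem~2.6]{BFS}) for constructing the homomorphism $\iota$ and the map $\theta$ from the hypothesis on self-couplings, the strong ICC property playing the role of ensuring uniqueness.

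\emph{Step 1: Build a self-coupling of $\G$ out of $\Omega$ and obtain a first equivariant map.} Starting from the measure equivalence coupling $\Omega$ between $\G$ and $\sfH$, consider the fiber product $\Sigma=\Omega\times_\sfH\check\Omega$, where $\check\Omega$ denotes $\Omega$ with the roles of the two $\sfH$-actions and $\G$-actions suitably swapped; more precisely, $\Sigma$ is the quotient of $\Omega\times\Omega$ by the diagonal $\sfH$-action $h\cdot(\omega_1,\omega_2)=(h\omega_1,h\omega_2)$, and it carries an action of $\G\times\G$ where the first factor acts on the first coordinate and the second on the second. One checks that $\Sigma$ is a self measure equivalence coupling of $\G$: a fundamental domain is obtained from products of fundamental domains of the two $\G$-actions and one $\sfH$-action, and finiteness of measure is routine. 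By hypothesis there is a measurable almost $(\G\times\G)$-equivariant map $\Psi:\Sigma\to L$.

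\emph{Step 2: Use the map on $\Sigma$ to produce a candidate map $\theta:\Omega\to L$ and cocycle-type data.} Precomposing $\Psi$ with the natural quotient map $\Omega\times\Omega\to\Sigma$ yields a measurable map $F:\Omega\times\Omega\to L$ which is $\sfH$-invariant for the diagonal action and satisfies $F(g_1\omega_1,g_2\omega_2)=g_1F(\omega_1,\omega_2)g_2^{-1}$. Fixing a fundamental domain $X_\sfH$ for the $\sfH$-action on $\Omega$ and a measurable section, one disintegrates $F$ to define $\theta(\omega):=F(\omega,\omega_0(\omega))$ for a suitably normalized choice, and reads off from the $\G$-equivariance in the two variables that $\theta((g,h)\cdot\omega)=g\,\theta(\omega)\,\iota(h,\omega)^{-1}$ for some measurable map $\iota:\sfH\times X_\sfH\to L$ (a priori depending on the base point). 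The cocycle identity for $\iota$ follows from the two commuting $\G$-equivariances of $F$ together with $\sfH$-invariance.

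\emph{Step 3: Use strong ICC to make $\iota$ a genuine homomorphism with finite kernel.} This is where the strong ICC hypothesis enters and is the main obstacle. One must upgrade the a priori base-point-dependent cocycle $\iota:\sfH\times X_\sfH\to L$ to an honest homomorphism $\sfH\to L$. The argument: consider the pushforward under an appropriate evaluation of a probability measure on $L$ built from $\iota$ along the $\sfH$-orbits; the two different ways of expressing the $\G\times\G$-equivariance of $\Psi$ force this measure to be conjugation-invariant under $\G$, hence (by strong ICC of $\G\subseteq L$) equal to $\delta_{\mathrm{id}}$. This rigidity collapses the dependence of $\iota$ on the base point, yielding a measurable homomorphism $\iota:\sfH\to L$ such that $\theta$ is almost $(\G\times\sfH)$-equivariant in the stated sense. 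Finiteness of the kernel of $\iota$ follows because $\ker\iota$ preserves the map $\theta$, which is essentially $\G$-equivariant with the $\G$-action on $L$ (by left multiplication) being essentially free, and essential freeness of the $\sfH$-action on $\Omega$ then forces $\ker\iota$ to be finite; alternatively one invokes strong ICC once more to see that a normal subgroup of $\sfH$ acting trivially under $\iota$ must act trivially on the relevant measure, hence be finite. Continuity/measurability of $\iota$ combined with Pettis's theorem promotes the measurable homomorphism to a continuous one, completing the proof.
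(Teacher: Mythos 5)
The paper does not actually prove this statement: it is imported as a black box from Kida and Bader--Furman--Sauer, with only the remark (after the theorem) that the essential uniqueness of the tautening map required in those references is automatic from the strong ICC hypothesis. So the comparison must be with the standard proof in the cited sources, and your plan reconstructs that argument with the right architecture: form the self-coupling $\Sigma=\Omega\times_\sfH\check\Omega$, apply the hypothesis to get $\Psi$, lift to an $\sfH$-invariant, bi-$\G$-equivariant map $F$ on $\Omega\times\Omega$, slice in the second variable, use strong ICC to kill the base-point dependence of the resulting cocycle, and extract the finite kernel from a pulled-back fundamental domain. Your finite-kernel argument in particular is correct: $\theta^{-1}(D)$, for $D$ a Borel fundamental domain of $\G\actson L$ by left multiplication, is a finite-measure $\G$-fundamental domain in $\Omega$ which is $\ker\iota$-invariant and on which $\ker\iota$ acts freely and measure-preservingly, forcing $|\ker\iota|<+\infty$.

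Two places where the sketch is thinner than the actual content of the theorem. First, Step 3 is the crux and is only named, not executed. The concrete quantity to control is $u_h(\omega_1,\omega_2)=F(\omega_1,\omega_2)^{-1}F(\omega_1,h\omega_2)$: one checks from the equivariances of $F$ that $u_h$ is invariant under the first $\G$-action and transforms by conjugation under the second, so that a suitable pushforward yields a conjugation-invariant probability measure on $L$, which strong ICC forces to be $\delta_{\mathrm{id}}$ after recentering; making ``suitable pushforward'' precise (the second variable lives in an infinite-measure space, so one must pass to quotients by $\G$ and invoke the essential uniqueness of tautening maps under composition of couplings, cf.\ \cite[Lemmas~A.7--A.8]{BFS}) is where the real work lies. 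Related to this, your definition $\theta(\omega)=F(\omega,\omega_0(\omega))$ with an $\omega$-dependent base point is dangerous as written: if $\omega_0(g\omega)\neq\omega_0(\omega)$ the $\G$-equivariance in the first variable is destroyed, so one must first establish that $F(\cdot,\omega_2)$ and $F(\cdot,\omega_2')$ differ by an essentially constant right multiplication before slicing. Second, the closing appeal to Pettis' theorem is vacuous here: $\sfH$ is a countable discrete group, so no continuity upgrade is needed. These are gaps of execution rather than of strategy; the plan is the correct one.
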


 Our assumption on $\G$ is \emph{coupling rigidity} in the sense of Kida \cite[Definition~3.3]{Kid-amalgam}, or \emph{tautness} in the sense of Bader--Furman--Sauer \cite[Definition~1.3]{BFS}. Notice that the latter notion of tautness of the self-coupling $\Sigma$ requires the equivariant map $\Sigma\to L$ to be essentially unique. However, uniqueness is automatically ensured by the strong ICC assumption, see \cite[Lemma~A.8(1)]{BFS}.

\begin{lemma}[{Kida \cite[Lemma~5.8]{Kid-me}}]\label{lemma:extension}
    Let $L$ be a Polish group, and let $\G,\hat\G$ be countable subgroups of $L$, with $\G$ normal in $\hat\G$ and of finite index in $\hat\G$. Assume that the inclusion $\G\subseteq L$ is strongly ICC. 
    
    Let $\Sigma$ be a self measure equivalence coupling of $\hat\G$, and let $\Phi:\Sigma\to L$ be a measurable map which is almost $(\G\times\G)$-equivariant.

    Then $\Phi$ is almost $(\hat\G\times\hat\G)$-equivariant.
\end{lemma}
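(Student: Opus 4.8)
The plan is to deduce $(\hat\G\times\hat\G)$-equivariance from $(\G\times\G)$-equivariance by an averaging argument over the finite group $\hat\G/\G$, using the strong ICC property to pin down the averaged quantity. Fix $\gamma\in\hat\G$. First I would consider the map $\Psi:\Sigma\to L$ defined by $\Psi(\sigma)=\gamma^{-1}\,\Phi((\gamma,\gamma)\cdot\sigma)$ (abusing notation by letting $\gamma$ also denote its image in $L$). The goal is to show $\Psi=\Phi$ almost everywhere; applying this for a set of coset representatives of $\G$ in $\hat\G$, together with the already-known $(\G\times\G)$-equivariance, yields the full $(\hat\G\times\hat\G)$-equivariance after a short computation. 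Since $\G$ is normal in $\hat\G$, conjugation by $\gamma$ permutes $\G$, so a direct check shows that $\Psi$ is again almost $(\G\times\G)$-equivariant: for $g_1,g_2\in\G$, writing $g_i'=\gamma^{-1}g_i\gamma\in\G$, we get $\Psi((g_1,g_2)\cdot\sigma)=\gamma^{-1}\Phi((\gamma g_1\gamma^{-1},\gamma g_2\gamma^{-1})(\gamma,\gamma)\cdot\sigma) = \gamma^{-1}(\gamma g_1\gamma^{-1})\Phi((\gamma,\gamma)\cdot\sigma)(\gamma g_2\gamma^{-1})^{-1}=g_1\Psi(\sigma)g_2^{-1}$, using that $\Phi$ is $(\G\times\G)$-equivariant and that $\gamma g_i\gamma^{-1}\in\G$.

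Next I would exploit the strong ICC hypothesis to conclude $\Psi=\Phi$. The point is that both $\Phi$ and $\Psi$ are almost $(\G\times\G)$-equivariant maps from the \emph{same} self-coupling $\Sigma$ of $\hat\G$ (hence in particular of $\G$) to $L$. Essential uniqueness of such maps --- which is exactly the content of \cite[Lemma~A.8(1)]{BFS}, invoked in the remark following Theorem~\ref{theo:taut} --- forces $\Psi=\Phi$ almost everywhere. Concretely, the two equivariant maps differ by a $\G$-conjugation-invariant measurable assignment; pushing forward an auxiliary measure along $\sigma\mapsto\Phi(\sigma)\Psi(\sigma)^{-1}$ produces a $\G$-conjugation-invariant probability measure on $L$, which by strong ICC must be the Dirac mass at $\mathrm{id}$, giving $\Phi=\Psi$ a.e. (This is where the strong ICC assumption on $\G\subseteq L$, rather than on $\hat\G\subseteq L$, is exactly what is needed.)

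Finally I would assemble the conclusion. Having shown $\Phi((\gamma,\gamma)\cdot\sigma)=\gamma\,\Phi(\sigma)\,\gamma^{-1}$ a.e.\ for every $\gamma\in\hat\G$, and knowing $\Phi((g,g')\cdot\sigma)=g\,\Phi(\sigma)\,g'^{-1}$ a.e.\ for $g,g'\in\G$, an arbitrary pair $(\hat g,\hat g')\in\hat\G\times\hat\G$ can be written as $(\hat g,\hat g')=(g,g')\cdot(\gamma,\gamma)$ with $\gamma$ ranging over coset representatives and $g,g'\in\G$; combining the two relations and using the cocycle-style bookkeeping (and that the union of the relevant conull sets over the finitely many representatives is still conull) gives $\Phi((\hat g,\hat g')\cdot\sigma)=\hat g\,\Phi(\sigma)\,\hat g'^{-1}$ a.e. The main obstacle is the uniqueness step: one must make sure the essential-uniqueness statement applies in this setting, i.e.\ that $\Sigma$ restricted to the $\G\times\G$-action is genuinely a self-coupling of $\G$ (which follows since $\G$ has finite index in $\hat\G$, so a finite-measure $\hat\G$-fundamental domain gives a finite-measure $\G$-fundamental domain after subdividing along coset representatives), and that the freeness of the restricted actions is preserved --- both routine but needing a line of care.
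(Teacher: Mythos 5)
Your overall strategy is the right one and is essentially the argument the paper imports from Kida's Lemma~5.8: for elements of $\hat\G$, twist $\Phi$ to produce a second almost $(\G\times\G)$-equivariant map on the same coupling, and invoke the essential uniqueness of such maps granted by the strong ICC property (\cite[Lemma~A.8(1)]{BFS}) to conclude that the twisted map equals $\Phi$. The uniqueness mechanism you describe (the map $\sigma\mapsto\Phi(\sigma)\Psi(\sigma)^{-1}$ transforms by conjugation in the first variable and is invariant in the second, so it pushes a finite invariant measure forward to a $\G$-conjugation-invariant probability measure on $L$, which must be $\delta_{\mathrm{id}}$) is also correct, as is your observation that $\Sigma$ is a genuine self-coupling of $\G$ because $[\hat\G:\G]<+\infty$. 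There is, however, a bookkeeping slip: with the action $(g_1,g_2)\cdot\ell=g_1\ell g_2^{-1}$, the map you need is $\Psi(\sigma)=\gamma^{-1}\Phi((\gamma,\gamma)\cdot\sigma)\gamma$, not $\gamma^{-1}\Phi((\gamma,\gamma)\cdot\sigma)$; without the trailing $\gamma$ your displayed computation does not close up (you are left with $\gamma g_2^{-1}\gamma^{-1}$ where you need $g_2^{-1}$), and the conclusion $\Psi=\Phi$ would yield $\Phi((\gamma,\gamma)\cdot\sigma)=\gamma\Phi(\sigma)$ rather than the intended $\gamma\Phi(\sigma)\gamma^{-1}$.

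More substantively, restricting to diagonal pairs $(\gamma,\gamma)$ does not suffice for the final assembly. The set $(\G\times\G)\cdot\{(\gamma,\gamma):\gamma\in\hat\G\}$ consists only of pairs $(\hat g,\hat g')$ with $\hat g$ and $\hat g'$ in the \emph{same} coset of $\G$ (by normality, $g_1\gamma\G=g_2\gamma\G=\G\gamma$), so your decomposition $(\hat g,\hat g')=(g,g')\cdot(\gamma,\gamma)$ is impossible when the two coordinates lie in different cosets, and equivariance for pairs such as $(\gamma,1)$ with $\gamma\notin\G$ is never established. The repair is immediate and is what Kida actually does: for an arbitrary pair $(\gamma_1,\gamma_2)\in\hat\G\times\hat\G$, set $\Psi_{\gamma_1,\gamma_2}(\sigma)=\gamma_1^{-1}\Phi((\gamma_1,\gamma_2)\cdot\sigma)\gamma_2$. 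The same normality computation (now with $\gamma_1 g_1\gamma_1^{-1}$ and $\gamma_2 g_2\gamma_2^{-1}$) shows this is almost $(\G\times\G)$-equivariant, and the uniqueness step gives $\Psi_{\gamma_1,\gamma_2}=\Phi$ almost everywhere, which is precisely the desired identity $\Phi((\gamma_1,\gamma_2)\cdot\sigma)=\gamma_1\Phi(\sigma)\gamma_2^{-1}$; no separate assembly over coset representatives is then needed.
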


\begin{proof}
    This is almost exactly \cite[Lemma~5.8]{Kid-me}, except that the group $L$ is not supposed to be discrete -- however the proof is exactly the same, upon replacing the ICC condition in Kida's lemma by the strong ICC property.
\end{proof}

\subsection{Restricting couplings to stabilizers}\label{sec:me-stab}
  
When $K$ is a polyhedral complex with countably many cells, the group $\Aut(K)$, equipped with the pointwise convergence topology, is a Polish group. A faithful action of a countable group $\G$ on $K$ enables to view $\G$ as a subgroup of $L=\Aut(K)$. 

In the present section, we will formulate two general lemmas about measure equivalence couplings that involve a Polish group $L$, and specialize them to the context where $L=\Aut(K)$. They will enable us, starting with two countable subgroups $\G,\sfH$ of $\Aut(K)$, and a measure equivalence coupling between $\G$ and $\sfH$ that factors through the natural one on $\Aut(K)$, to induce measure equivalence couplings at the level of stabilizers of vertices of $K$, and also get a control on orbits. In later sections, this will often be applied to the action of the right-angled Artin group on its right-angled building. The results appearing in the present section are inspired by work of Kida \cite[Section~5]{Kid-amalgam}.

		\begin{prop}\label{prop:ME-coupling-restriction}
			Let $L$ be a Polish group and $\G$ be a countable subgroup of $L$. Let $L'\subseteq L$ be a Borel subgroup such that $\G\cdot L'=L$.
			
			Let $\sfH$ be a countable group that is measure equivalent to $\G$, let $(\Omega,\mu)$ be a measure equivalence coupling between $\G$ and $\sfH$, and assume we are given a homomorphism $\iota:\sfH\to L$ and a measurable almost $(\G\times \sfH)$-equivariant map $\theta:\Omega\to L$, where the $(\G\times\sfH)$-action on $L$ is via $(g,h)\cdot \ell=g\ell\iota(h)^{-1}$.
			
			Then the groups $\G'=\G\cap L'$ and $\sfH'=\iota^{-1}(L')$ are measure equivalent. More precisely $\Omega'=\theta^{-1}(L')$ is a measure equivalence coupling between $\G'$ and $\sfH'$. In addition, for every subgroup $\mathsf{K}$ of either $\G$ or $\sfH$, every Borel fundamental domain for the action of $\mathsf{K}\cap\G'$ (or $\mathsf{K}\cap\sfH'$) on $\Omega'$ is contained in a Borel fundamental domain for the action of $\mathsf{K}$ on $\Omega$. 
		\end{prop}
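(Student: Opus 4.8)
The plan is to check directly that $\Omega':=\theta^{-1}(L')$, equipped with the restriction of $\mu$ and of the $(\G\times\sfH)$-action, is a measure equivalence coupling between $\G'$ and $\sfH'$, and then to obtain the statement on fundamental domains by slicing and reassembling. As a first reduction I would discard a $\mu$-null $(\G\times\sfH)$-invariant Borel subset of $\Omega$ so that $\theta$ becomes honestly $(\G\times\sfH)$-equivariant. Then $\Omega'$ is a Borel set, and since $L'$ is a subgroup containing $\G'=\G\cap L'$ and $\iota(\sfH')$, it is immediate that $\Omega'$ is invariant under $\G'\times\sfH'$. More precisely, writing $g\cdot\omega$ for $(g,e)\cdot\omega$ and $h\cdot\omega$ for $(e,h)\cdot\omega$, one has $g\cdot\Omega'=\theta^{-1}(gL')$ for $g\in\G$ and $h\cdot\Omega'=\theta^{-1}(L'\iota(h)^{-1})$ for $h\in\sfH$; hence the $\G$-translates of $\Omega'$ are parametrized by the coset space $\G/\G'$ and, because $\G\cdot L'=L$, they partition $\Omega$ up to null sets, so $\mu(\Omega')>0$ (as $\mu(\Omega)>0$ and $\mu$ is invariant). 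The $\G'$- and $\sfH'$-actions on $\Omega'$ are free because those of $\G$ and $\sfH$ on $\Omega$ are.

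The main work is to produce finite-measure Borel fundamental domains for $\G'\actson\Omega'$ and $\sfH'\actson\Omega'$. I would fix a finite-measure Borel fundamental domain $X_\G$ for $\G\actson\Omega$, pick representatives $\{g_i\}_i$ of the right cosets $\G'\backslash\G$, and set $X_{\G'}:=\bigsqcup_i(\Omega'\cap g_iX_\G)$. Starting from the partition $\Omega'=\bigsqcup_{g\in\G}(\Omega'\cap gX_\G)$, writing each $g$ uniquely as $\gamma g_i$ with $\gamma\in\G'$, and using $\Omega'\cap\gamma g_iX_\G=\gamma\cdot(\Omega'\cap g_iX_\G)$ (valid since $\gamma\cdot\Omega'=\Omega'$), one reorganizes this into $\Omega'=\bigsqcup_{\gamma\in\G'}\gamma\cdot X_{\G'}$, so that $X_{\G'}$ is a Borel fundamental domain for $\G'\actson\Omega'$. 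Moreover $\Omega'\cap g_iX_\G=g_i(X_\G\cap\theta^{-1}(g_i^{-1}L'))$, and the cosets $g_i^{-1}L'$ are pairwise disjoint with union $\G\cdot L'=L$, so $\mu(X_{\G'})=\mu(X_\G)<+\infty$. The identical recipe on the $\sfH$-side, with a finite-measure fundamental domain $X_\sfH$ for $\sfH\actson\Omega$ and representatives of the right cosets $\sfH'\backslash\sfH$, yields a Borel fundamental domain $X_{\sfH'}$ for $\sfH'\actson\Omega'$, whose measure is $\sum_j\mu(X_\sfH\cap\theta^{-1}(L'\iota(h_j)))$; here the right cosets $L'\iota(h_j)$ are pairwise disjoint, so this is at most $\mu(X_\sfH)<+\infty$. (Contrary to the $\G$-side, one only obtains an inequality, since nothing forces $\iota(\sfH)\cdot L'=L$; but a finite bound is all that is needed.) This shows that $\Omega'$ is a measure equivalence coupling between $\G'$ and $\sfH'$.

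For the last assertion, take $\mathsf{K}\le\G$ (the case $\mathsf{K}\le\sfH$ being verbatim the same), put $\mathsf{K}':=\mathsf{K}\cap\G'$, and let $D$ be a Borel fundamental domain for $\mathsf{K}'\actson\Omega'$. Since $\mathsf{K}'D=\Omega'$ one has $\mathsf{K} D=\mathsf{K}\cdot\Omega'$, and for $k\in\mathsf{K}\setminus\mathsf{K}'$ the sets $k\cdot\Omega'$ and $\Omega'$ are disjoint, being preimages under $\theta$ of distinct cosets of $L'$; hence $D$ is in fact a Borel fundamental domain for the restriction of the $\mathsf{K}$-action to the $\mathsf{K}$-invariant set $\mathsf{K}\cdot\Omega'$. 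Because $\mathsf{K}\le\G$ and $\G$ admits a Borel fundamental domain on $\Omega$, so does $\mathsf{K}$ (translate $X_\G$ over representatives of $\mathsf{K}\backslash\G$); intersecting one such domain with the $\mathsf{K}$-invariant Borel set $\Omega\setminus\mathsf{K}\cdot\Omega'$ and adjoining the result to $D$ produces a Borel fundamental domain for $\mathsf{K}\actson\Omega$ that contains $D$.

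The step I expect to be the most delicate is the coset bookkeeping in the second paragraph: the $\G$-translates of $\Omega'$ are indexed by $\G/\G'$ while the slices of $X_{\G'}$ are organized along $\G'\backslash\G$, and one must track whether left or right cosets of $L'$ occur on each side; the reward is the exact identity $\mu(X_{\G'})=\mu(X_\G)$ on the $\G$-side and the bound $\mu(X_{\sfH'})\le\mu(X_\sfH)$ on the $\sfH$-side. Everything else — the ``up to null sets'' caveats, Borel measurability of the countable unions involved, and the routine fundamental-domain verifications — is straightforward.
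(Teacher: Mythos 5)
Your proof is correct and rests on the same two mechanisms as the paper's: the equivariance of $\theta$ forces translates of $\Omega'$ by elements outside the relevant subgroup to be disjoint from $\Omega'$ (which gives both the containment statement and the finiteness of fundamental domains), and the covering of $\Omega$ by the $\G$-translates of $\Omega'$ gives positivity of $\mu(\Omega')$. The only cosmetic difference is that you build explicit fundamental domains by slicing $X_\G$ and $X_\sfH$ along coset representatives and compute their measures, whereas the paper derives finiteness more abstractly from the fact that any fundamental domain for $\G'$ (resp.\ $\sfH'$) on $\Omega'$ extends to one for $\G$ (resp.\ $\sfH$) on $\Omega$.
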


\begin{proof} 
			By definition $\Omega'$ is a $(\G'\times \sfH')$-invariant Borel subset of $\Omega$. 
   
   Let $\mathsf{K}\subseteq\mathsf{H}$ be a subgroup, let $\mathsf{K}'=\mathsf{K}\cap\sfH'$, and let $Y'$ be a Borel fundamental domain for the action of $\mathsf{K}'$ on $\Omega'$: this exists because the $\sfH$-action on $\Omega$ has one. We claim that for any $k\in \mathsf{K}\setminus\{1\}$, one has $\mu(kY'\cap Y')=0$. Indeed, for a.e.\ $x,y\in Y'$, if $y=kx$ for some $k\in \mathsf{K}$, then $\theta(y)=\theta(x)\iota(k)^{-1}$. As $\theta(x),\theta(y)\in L'$, we have $k\in \mathsf{K}'$. The claim thus follows from the fact that $Y'$ is a fundamental domain for the action of $\mathsf{K}'$ on $\Omega'$. The same argument applies to subgroups of $\G$.
			
			When $\mathsf{K}=\mathsf{H}$, the above claim implies in particular that every Borel fundamental domain for the action of $\sfH'$ on $\Omega'$ has finite measure. Likewise, any Borel fundamental domain for the action of $\G'$ on $\Omega'$ has finite measure.
   
			
			In order to conclude that $\G'$ and $\sfH'$ are measure equivalent, there remains to prove that $\mu(\Omega')>0$. Since $L=\G\cdot L'$, the space $\Omega$ is covered by the countably many subsets $\theta^{-1}(gL')=g\theta^{-1}(L')$, with $g$ varying in $\G$. As the $\G$-action on $\Omega$ is measure-preserving, the subsets $\theta^{-1}(gL')$ all have the same measure, and therefore this measure is positive. In particular $\Omega'=\theta^{-1}(L')$ has positive measure, as desired.
		\end{proof}
  
			
			
		
		In the sequel, Proposition~\ref{prop:ME-coupling-restriction} will be applied in the form of the following corollary (and often with $L=\Aut(K)$).
		
		\begin{cor}\label{cor:ME-coupling-restriction}
			Let $K$ be a polyhedral complex with countably many cells. Let $L$ be a Polish group acting on $K$ through a measurable homomorphism $L\to\Aut(K)$. Let $\G$ be a countable subgroup of $L$, and assume that the actions of $\G$ and $L$ on $VK$ have the same orbits.
			
			Let $\sfH$ be a countable group that is measure equivalent to $\G$, let $(\Omega,\mu)$ be a measure equivalence coupling between $\G$ and $\sfH$, and assume we are given a homomorphism $\iota:\sfH\to L$ and a measurable almost $(\G\times \sfH)$-equivariant map $\theta:\Omega\to L$.
			
			Then for every $v\in VK$, the groups $\G_v=\Stab_{\G}(v)$ and $\sfH_v=\iota^{-1}(\Stab_{L}(v))$ are measure equivalent. More precisely $\Omega_v=\theta^{-1}(\Stab_{L}(v))$ is a measure equivalence coupling between $\G_v$ and $\sfH_v$, and every Borel fundamental domain for the action of $\G_v$ (resp.\ $\sfH_v$) on $\Omega_v$ is contained in a Borel fundamental domain for the action of $\G$ (resp.\ $\sfH$) on $\Omega$.
		\end{cor}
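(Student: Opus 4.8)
The plan is to deduce Corollary~\ref{cor:ME-coupling-restriction} directly from Proposition~\ref{prop:ME-coupling-restriction} by the obvious choice of subgroup $L'$. Namely, given a vertex $v\in VK$, set $L'=\Stab_L(v)$. This is a Borel subgroup of $L$: the stabilizer of a vertex is a closed subgroup of $\Aut(K)$ for the pointwise convergence topology, and since the homomorphism $L\to\Aut(K)$ is measurable, $\Stab_L(v)$ is the preimage of a closed set, hence Borel. With this choice, $\G'=\G\cap L'=\Stab_\G(v)=\G_v$ and $\sfH'=\iota^{-1}(L')=\iota^{-1}(\Stab_L(v))=\sfH_v$, exactly the groups in the statement; and $\Omega'=\theta^{-1}(L')=\theta^{-1}(\Stab_L(v))=\Omega_v$.

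The only hypothesis of Proposition~\ref{prop:ME-coupling-restriction} that requires checking is $\G\cdot L'=L$. This is where the assumption ``the actions of $\G$ and $L$ on $VK$ have the same orbits'' enters. Given $\ell\in L$, the vertex $\ell^{-1}v$ lies in the $L$-orbit of $v$, hence by hypothesis in the $\G$-orbit of $v$, so there is $g\in\G$ with $g\cdot(\ell^{-1}v)=v$, i.e.\ $(\ell g^{-1})^{-1}v=\ell g^{-1}\cdot$? Let me be careful: we want $\ell = g \ell'$ with $\ell'\in\Stab_L(v)$, equivalently $g^{-1}\ell\in\Stab_L(v)$, i.e.\ $g^{-1}\ell v = v$, i.e.\ $\ell v = gv$. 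So it suffices to find $g\in\G$ with $gv=\ell v$, which is exactly the statement that $\ell v$ lies in the $\G$-orbit of $v$ — and this holds because $\ell v$ lies in the $L$-orbit of $v$, which equals the $\G$-orbit of $v$ by hypothesis. Hence $\G\cdot L'=L$, as needed.

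Now Proposition~\ref{prop:ME-coupling-restriction} applies verbatim: it gives that $\G_v$ and $\sfH_v$ are measure equivalent, with $\Omega_v=\theta^{-1}(\Stab_L(v))$ an explicit coupling between them. Finally, the fundamental domain assertion of the corollary follows from the corresponding assertion in the proposition by taking $\mathsf K=\G$ (so $\mathsf K\cap\G'=\G'=\G_v$) and $\mathsf K=\sfH$ (so $\mathsf K\cap\sfH'=\sfH'=\sfH_v$): every Borel fundamental domain for the action of $\G_v$ on $\Omega_v$ is contained in one for the action of $\G$ on $\Omega$, and similarly on the $\sfH$-side.

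There is no real obstacle here; the corollary is a packaging of the proposition. The one point that deserves a sentence in the write-up is the Borel-ness of $\Stab_L(v)$, which is why the statement insists that $L$ act on $K$ through a \emph{measurable} homomorphism $L\to\Aut(K)$ — closedness of vertex stabilizers in $\Aut(K)$ together with measurability of the homomorphism gives that $L'=\Stab_L(v)$ is a Borel subgroup of $L$, the only regularity hypothesis on $L'$ used in Proposition~\ref{prop:ME-coupling-restriction}.
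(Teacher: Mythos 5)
Your proposal is correct and is essentially identical to the paper's own proof: the paper also applies Proposition~\ref{prop:ME-coupling-restriction} with $L'=\Stab_L(v)$ and verifies $L=\G\cdot L'$ from the equality of orbits, with the fundamental-domain claim coming from the last assertion of the proposition with $\mathsf K=\G$ and $\mathsf K=\sfH$. Your extra remark on the Borel-ness of $\Stab_L(v)$ is a reasonable (if implicit in the paper) point, and your orbit computation, after the brief false start, lands correctly.
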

  
		\begin{proof}
			This follows from Proposition~\ref{prop:ME-coupling-restriction}, applied with $L'=\Stab_{L}(v)$. The fact that $L=\G\cdot L'$ follows from our assumption that $\G$ and $L$ have the same orbits on $VK$.
		\end{proof}

\begin{rk}\label{rk:K}
 In view of Proposition~\ref{prop:ME-coupling-restriction}, more generally, for every subgroup $\mathsf{K}\subseteq\G$, every Borel fundamental domain for the action of $\mathsf{K}\cap\G_v$ on $\Omega_v$ is contained in a Borel fundamental domain for the action of $\mathsf{K}$ on $\Omega$.
\end{rk}

Given a countable set $\mathbb{D}$, we denote by $\Bij(\mathbb{D})$ the group of all bijections of $\mathbb{D}$; we equip it with the topology of pointwise convergence, for which it is a Polish group. Recall from the introduction that a group $\sfH$ has \emph{bounded finite subgroups} if there is a bound on the cardinality of a finite subgroup of $\sfH$.

\begin{prop}\label{prop:orbits}
Let  $L^0$ be a Polish group with a countable subgroup $\G$. Assume that $\G$ acts transitively on a countable set $\mathbb{D}$ with finite stabilizers through a measurable homomorphism $L^0\to\Bij(\mathbb{D})$.

Let $\sfH$ be a countable group with bounded finite subgroups that is measure equivalent to $\G$, and let $\Omega$ be a measure equivalence coupling between $\G$ and $\sfH$. Assume that we are given a homomorphism $\iota:\sfH\to L^0$ and a measurable almost $(\G\times\sfH)$-equivariant map $\theta:\Omega\to L^0$.

Then $\sfH$ acts on $\mathbb{D}$ (through $\iota$) with only finitely many orbits and finite stabilizers.
\end{prop}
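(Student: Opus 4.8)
The plan is to prove the two assertions separately; the finiteness of stabilizers will not need bounded torsion, while the finiteness of the orbit set will.

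For the stabilizers, I would regard $\mathbb{D}$ as the vertex set of a $0$-dimensional polyhedral complex $K$, so that $\Aut(K)=\Bij(\mathbb{D})$ and the given homomorphism $L^0\to\Bij(\mathbb{D})$ makes $L^0$ act on $K$. Since $\G$ already acts transitively on $\mathbb{D}$, the groups $\G$ and $L^0$ have the same orbits on $VK$, so Corollary~\ref{cor:ME-coupling-restriction} applies: for each $d\in\mathbb{D}$ it yields a measure equivalence coupling $\Omega_d:=\theta^{-1}(\Stab_{L^0}(d))$ between $\G_d:=\Stab_\G(d)$ and $\sfH_d:=\iota^{-1}(\Stab_{L^0}(d))$, with every fundamental domain for $\G_d$ (resp.\ $\sfH_d$) on $\Omega_d$ contained in one for $\G$ (resp.\ $\sfH$) on $\Omega$. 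As $\G_d$ is finite, $\Omega_d$ has finite positive measure (a fundamental domain $X_{\G_d}$ for $\G_d\actson\Omega_d$ lies inside a finite-measure fundamental domain $X_\G$ for $\G\actson\Omega$, and $\mu(\Omega_d)=|\G_d|\,\mu(X_{\G_d})$); picking a fundamental domain $Y_d$ for $\sfH_d\actson\Omega_d$, which has positive measure, the relation $|\sfH_d|\,\mu(Y_d)=\mu(\Omega_d)<+\infty$ forces $\sfH_d$ to be finite. Bounded torsion then bounds $|\sfH_d|$ by some fixed $N$.

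For the number of orbits, I would fix a base point $d_0\in\mathbb{D}$ and study the measurable map $\psi\colon\Omega\to\mathbb{D}$, $\psi(\omega)=\theta(\omega)^{-1}\cdot d_0$. From $\theta((1,h)\cdot\omega)=\theta(\omega)\iota(h)^{-1}$ one reads off that $\psi$ is $\sfH$-equivariant for the action of $\sfH$ on $\mathbb{D}$ through $\iota$. Moreover, given $d\in\mathbb{D}$ and $\gamma\in\G$ with $\gamma\cdot d=d_0$ (which exists by transitivity of $\G$), a short computation identifies the fibre $\psi^{-1}(d)$ with $\theta^{-1}(\gamma\,\Stab_{L^0}(d))=(\gamma,1)\cdot\Omega_d$, so $\mu(\psi^{-1}(d))=\mu(\Omega_d)>0$. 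The preimages $\psi^{-1}(O)$ of the $\sfH$-orbits $O\subseteq\mathbb{D}$ give an $\sfH$-invariant Borel partition of $\Omega$; intersecting with a finite-measure fundamental domain $X_\sfH$ for $\sfH\actson\Omega$ yields $\sum_{O\in\sfH\backslash\mathbb{D}}\mu\bigl(X_\sfH\cap\psi^{-1}(O)\bigr)=\mu(X_\sfH)<+\infty$, where $X_\sfH\cap\psi^{-1}(O)$ is a fundamental domain for $\sfH\actson\psi^{-1}(O)$. Since $\sfH$ permutes the fibres $\{\psi^{-1}(d)\}_{d\in O}$ transitively and the finite group $\sfH_d$ acts freely on $\psi^{-1}(d)$, this covolume equals $\mu(\psi^{-1}(d))/|\sfH_d|=\mu(\Omega_d)/|\sfH_d|$ for any $d\in O$.

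The hard part, and the one place the bounded-torsion hypothesis really pays off, is to obtain a positive lower bound on $\mu(\Omega_d)/|\sfH_d|$ that is \emph{uniform} in $d$; as $|\sfH_d|\le N$ this reduces to bounding $\mu(\Omega_d)$ below. The key observation is that $\Omega_d$ is not just any coupling of two finite groups, but one whose $\G$-translates cover all of $\Omega$: indeed $\G\cdot\Stab_{L^0}(d)=L^0$ because $\G$ acts transitively on $\mathbb{D}=L^0/\Stab_{L^0}(d)$, hence $\G\cdot\Omega_d=\theta^{-1}(L^0)=\Omega$ up to null sets. Taking a fundamental domain $X_{\G_d}$ for $\G_d\actson\Omega_d$, its $\G$-translates then cover $\Omega$, and since the $\G$-translates of a fundamental domain $X_\G$ for $\G\actson\Omega$ are pairwise disjoint, $\mu(X_\G)\le\sum_{g\in\G}\mu(gX_{\G_d}\cap X_\G)=\sum_{g\in\G}\mu(X_{\G_d}\cap g^{-1}X_\G)\le\mu(X_{\G_d})\le\mu(\Omega_d)$. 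Plugging this in, every $\sfH$-orbit $O$ contributes at least $\mu(X_\G)/N$ to the finite total $\mu(X_\sfH)$, so $|\sfH\backslash\mathbb{D}|\le N\,\mu(X_\sfH)/\mu(X_\G)<+\infty$. Apart from this, the only points requiring (routine) care are the measurability of $\psi$ — built from $\theta$, inversion in the Polish group $L^0$, and the measurable action $L^0\to\Bij(\mathbb{D})$ — and the standard bookkeeping of fundamental domains up to null sets.
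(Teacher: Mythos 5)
Your proof is correct and follows essentially the same route as the paper's: your fibres $\psi^{-1}(d)=\theta^{-1}(\gamma\,\Stab_{L^0}(d))$ are exactly the paper's sets $\Omega_{d\to d_0}$, the lower bound $\mu(\Omega_d)\ge\mu(X_\G)$ is the paper's observation that a fundamental domain for $\G_d\actson\Omega_d$ is one for $\G\actson\Omega$, and the orbit count is concluded in both cases by packing at least $\mu(X_\G)/N$ of measure per $\sfH$-orbit into a finite-measure fundamental domain $X_\sfH$. The treatment of stabilizers via Corollary~\ref{cor:ME-coupling-restriction} also matches the paper.
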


\begin{proof}
We denote by $\kappa$ the common cardinality of all $\G$-stabilizers on $\mathbb{D}$. Given two elements $s,u\in\mathbb{D}$, we let $L^0_{s\to u}$ be the Borel subset of $L^0$ consisting of all elements that send $s$ to $u$, and we let $\Omega_{s\to u}=\theta^{-1}(L^0_{s\to u})$. Notice that $\Omega_{s\to u}$ is invariant under $\G_{u}$ -- and on the other hand $\mu(g\Omega_{s\to u}\cap\Omega_{s\to u})=0$ for every $g\in\G\setminus\G_u$. In addition, the $\G$-translates of $\Omega_{s\to u}$ cover $\Omega$ because the $\G$-action on $\mathbb{D}$ is transitive. Thus, any Borel fundamental domain for the action of $\G_u$ on $\Omega_{s\to u}$ is also a fundamental domain for the action of $\G$ on $\Omega$. The measure of any such fundamental domain is equal to 
			$\mu(\Omega_{s\to u})/|\G_{u}|=\mu(\Omega_{s\to u})/\kappa$. In particular, as $s,u$ vary in the $\G$-orbit of $v$, the sets $\Omega_{s\to u}$ all have the same (positive) measure, which we denote by $m$.  
			
			Corollary~\ref{cor:ME-coupling-restriction} (applied with $L=L^0$ and $K=\mathbb{D}$, so that $\Aut(K)=\Bij(\mathbb{D})$) ensures that the $\sfH$-stabilizer of any element $u\in\mathbb{D}$ is measure equivalent to $\G_{u}$, whence finite. Let $k$ be a bound on the cardinality of a finite subgroup of $\sfH$. For every $v\in \G\cdot u$, the set $\Omega_{u\to v}$ is $\sfH_u$-invariant; we let $\Omega'_{u\to v}\subseteq\Omega_{u\to v}$ be a fundamental domain for the action of the finite group $\sfH_u$. Then $\Omega'_{u\to v}$ has measure at least $m/k$.
			
			One has $\Omega=\coprod_{u\in \mathbb{D}}\Omega_{u\to v}$. Therefore, if $T\subseteq \mathbb{D}$ is a set of representatives of the $\sfH$-orbits, then $\coprod_{u\in T}\Omega'_{u\to v}$ is a fundamental domain for the $\sfH$-action on $\Omega$. Since $\sfH$ has a finite measure fundamental domain, and the measure of the above fundamental domain is at least equal to $m|T|/k$, it follows that $|T|<+\infty$. This concludes our proof.
\end{proof}

		\begin{cor}\label{cor:ME-coupling-orbits}
			Let $\G$ be a countable group, acting faithfully by automorphisms on a polyhedral complex $K$ with countably many cells,  in such a way that $\G$ and $\Aut(K)$ have the same orbits of vertices.  Let $\sfH$ be a countable group with bounded finite subgroups that is measure equivalent to $\G$, let $(\Omega,\mu)$ be a measure equivalence coupling between $\G$ and $\sfH$, and assume we are given a homomorphism $\iota:\sfH\to\Aut(K)$ and a measurable almost $(\G\times \sfH)$-equivariant map $\theta:\Omega\to\Aut(K)$.
			
			\begin{enumerate}
				\item If $v\in VK$ is a vertex with finite $\G$-stabilizer,  
    then the $\G$-orbit of $v$ is the union of finitely many $\sfH$-orbits. 
				\item If $v\in VK$ is a vertex, and if $V\subseteq VK$ is a set that is invariant under $\Stab_{\Aut(K)}(v)$, consisting of vertices having finite $\G$-stabilizer, and on which $\G_v$ acts transitively, then the action of $\sfH_v=\iota^{-1}(\Stab_{\Aut(K)}(v))$ on $V$ has finitely many orbits.
			\end{enumerate}
		\end{cor}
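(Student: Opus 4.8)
\textbf{Proof plan for Corollary~\ref{cor:ME-coupling-orbits}.}
The plan is to deduce both items from Proposition~\ref{prop:orbits} by feeding it the right countable $\G$-set $\mathbb{D}$. For item~(1), I would simply take $\mathbb{D}$ to be the $\G$-orbit of $v$ inside $VK$, equipped with the restriction of the $\G$-action; this action is transitive by construction, has finite stabilizers by hypothesis, and is realized through the measurable homomorphism $\Aut(K)\to\Bij(\mathbb{D})$ obtained by restricting automorphisms to this invariant subset of $VK$. Since $\iota:\sfH\to\Aut(K)$ composes with this restriction to give $\sfH\to\Bij(\mathbb{D})$, and $\theta:\Omega\to\Aut(K)$ composes likewise, Proposition~\ref{prop:orbits} applies verbatim and yields that $\sfH$ acts on $\mathbb{D}$ with finitely many orbits (and finite stabilizers), which is exactly the assertion that the $\G$-orbit of $v$ is a union of finitely many $\sfH$-orbits.

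For item~(2), the only change is that we must work with the stabilizer subgroups rather than the full groups. Here I would set $\mathbb{D}=V$, which by hypothesis is invariant under $\Stab_{\Aut(K)}(v)$ and in particular under $\G_v=\Stab_{\G}(v)$, and on which $\G_v$ acts transitively with finite stabilizers (each point of $V$ has finite $\G$-stabilizer, hence finite $\G_v$-stabilizer). The point is that $\G_v$ and $\sfH_v$ are themselves measure equivalent, with an explicit coupling: by Corollary~\ref{cor:ME-coupling-restriction} (applied with $L=\Aut(K)$ and the vertex $v$), $\Omega_v=\theta^{-1}(\Stab_{\Aut(K)}(v))$ is a measure equivalence coupling between $\G_v$ and $\sfH_v=\iota^{-1}(\Stab_{\Aut(K)}(v))$, and the restriction $\theta_{|\Omega_v}:\Omega_v\to\Stab_{\Aut(K)}(v)$ is almost $(\G_v\times\sfH_v)$-equivariant. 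Since $\Stab_{\Aut(K)}(v)$ acts on $V=\mathbb{D}$ through a measurable homomorphism to $\Bij(\mathbb{D})$, and $\sfH_v$ has bounded torsion as a subgroup of $\sfH$, all the hypotheses of Proposition~\ref{prop:orbits} are met with the group $\G_v$ in place of $\G$, the Polish group $L^0=\Stab_{\Aut(K)}(v)$, the set $\mathbb{D}=V$, the group $\sfH_v$ in place of $\sfH$, the coupling $\Omega_v$, and the maps $\iota_{|\sfH_v}$ and $\theta_{|\Omega_v}$. The conclusion is that $\sfH_v$ acts on $V$ with finitely many orbits, as desired.

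I do not expect any genuine obstacle here, as this corollary is purely a packaging of Proposition~\ref{prop:orbits}; the only thing to be slightly careful about is checking that the hypotheses of that proposition translate correctly in item~(2), namely that the measure equivalence coupling between $\G_v$ and $\sfH_v$ supplied by Corollary~\ref{cor:ME-coupling-restriction} comes equipped with an equivariant map to the relevant Polish group $\Stab_{\Aut(K)}(v)$ acting on $V$, which is exactly what the restriction of $\theta$ provides. One should also note that $\G_v$ being finite in item~(1) is fine — Proposition~\ref{prop:orbits} allows finite stabilizers, and in fact allows $\G$ itself to be finite — so no separate treatment of that degenerate case is needed.
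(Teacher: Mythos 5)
Your proposal is correct and follows exactly the paper's own proof: item (1) is Proposition~\ref{prop:orbits} applied with $L^0=\Aut(K)$ and $\mathbb{D}=\G\cdot v$ (which is $\Aut(K)$-invariant since the orbits coincide), and item (2) is the same proposition applied with $L^0=\Stab_{\Aut(K)}(v)$, $\mathbb{D}=V$, and the coupling $\Omega_v$ between $\G_v$ and $\sfH_v$ furnished by Corollary~\ref{cor:ME-coupling-restriction}. No gaps.
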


\begin{proof}
    The first assertion follows from Proposition~\ref{prop:orbits}, applied with $L^0=\Aut(K)$ and with $\mathbb{D}=\G\cdot v$, on which $L^0$ acts because $\G\cdot v=\Aut(K)\cdot v$.

    The second assertion follows from Proposition~\ref{prop:orbits}, applied with $L^0=\Stab_{\Aut(K)}(v)$, with $\G_v$ in place of $\G$ and $\sfH_v$ in place of $\sfH$, and with $\mathbb{D}=V$. The required measure equivalence coupling $\Omega_v$ between $\G_v$ and $\sfH_v$, coming with maps $\iota_v:\sfH_v\to L^0$ and $\theta_v:\Omega_v\to L^0$, is provided by Corollary~\ref{cor:ME-coupling-restriction}, applied with $L=\Aut(K)$.
\end{proof}

\begin{rk}
    Proposition~\ref{prop:orbits} and Corollary~\ref{cor:ME-coupling-orbits} fail if one does not assume that $\mathsf{H}$ has bounded finite subgroups. Indeed, it is possible to find a countable group $\mathsf{G}$ acting properly and cocompactly on a polyhedral complex $K$, satisfying the assumptions of Corollary~\ref{cor:ME-coupling-orbits}, and a non-uniform lattice $\mathsf{H}$ in $\Aut(K)$, acting with infinitely many orbits of vertices. In this case $\mathsf{G}$ and $\mathsf{H}$ are measure equivalent.
\end{rk}

\subsection{Measure equivalence couplings and index}

The following lemma will be used in Section~\ref{sec:factor-action} of the paper.

\begin{lemma}
			\label{lemma:finite index}
			Let $\mathsf G$ and $\mathsf H$ be two countable groups, and let $\Sigma$ be a measure equivalence coupling between $\mathsf G$ and $\mathsf H$. Let $\mathsf H'\subset \mathsf H$ be a subgroup. Assume that there exists a positive measure $(\mathsf G \times\mathsf H')$-invariant Borel subset $\Sigma'\subset \Sigma$ such that for any $h\in \mathsf H\setminus \mathsf H'$ we have $\mu(h\Sigma'\cap\Sigma')=0$.
			
			Then $\mathsf H'$ is of finite index in $\mathsf H$.
		\end{lemma}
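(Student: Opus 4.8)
The statement is essentially the observation that a "restricted coupling" detects a finite-index subgroup, and the natural way to see this is to count translates inside a fundamental domain. First I would fix a Borel fundamental domain $X_{\mathsf H}\subseteq\Sigma$ for the action of $\mathsf H$ on $\Sigma$, which exists since $\Sigma$ is a measure equivalence coupling; it has finite measure. The key idea is that the hypothesis on $\Sigma'$ says precisely that the subsets $\{h\Sigma' : h\in \mathsf H/\mathsf H'\}$ are pairwise disjoint up to null sets (if $h_1\mathsf H'\ne h_2\mathsf H'$ then $h_1^{-1}h_2\notin \mathsf H'$, so $\mu(h_1^{-1}h_2\Sigma'\cap\Sigma')=0$, hence $\mu(h_1\Sigma'\cap h_2\Sigma')=0$ by $\mathsf H$-invariance of $\mu$). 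Moreover each $h\Sigma'$ is $(\mathsf G\times h\mathsf H'h^{-1})$-invariant, in particular $\mathsf G$-invariant, so it is a union of $\mathsf G$-orbits and meets $X_{\mathsf G}$ in a set of the same positive measure for any $\mathsf G$-fundamental domain — but I will instead work directly on the $\mathsf H$-side.

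The cleaner route is as follows. Since $\Sigma'$ is $\mathsf G$-invariant and $\mathsf G\cdot X_{\mathsf H}=\Sigma$ up to null sets, we have $\mu(\Sigma'\cap X_{\mathsf H})>0$: indeed $\Sigma'=\bigcup_{g\in\mathsf G}(g(\Sigma'\cap X_{\mathsf H}))$ up to null sets — wait, more carefully, $\Sigma' = \Sigma'\cap \bigcup_{g} gX_{\mathsf H} = \bigcup_g (\Sigma'\cap gX_{\mathsf H}) = \bigcup_g g(\Sigma'\cap X_{\mathsf H})$ using $\mathsf G$-invariance of $\Sigma'$, so if $\mu(\Sigma'\cap X_{\mathsf H})=0$ then $\mu(\Sigma')=0$, contradiction. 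Set $a:=\mu(\Sigma'\cap X_{\mathsf H})>0$. Now for each coset representative $h$ of $\mathsf H'$ in $\mathsf H$, the set $h\Sigma'\cap X_{\mathsf H}$ has the same measure $a$: indeed $h^{-1}(h\Sigma'\cap X_{\mathsf H})=\Sigma'\cap h^{-1}X_{\mathsf H}$, and we can compare this with $\Sigma'\cap X_{\mathsf H}$ — hmm, $h^{-1}X_{\mathsf H}$ is another $\mathsf H$-fundamental domain, and $\Sigma'$ is not $\mathsf H$-invariant, so this needs a touch more care. Instead use: $h\Sigma'\cap X_{\mathsf H}$ is a fundamental domain for $h\mathsf H'h^{-1}$ acting on $h\Sigma'$ restricted appropriately; alternatively, simply note $h\Sigma'$ is again $\mathsf G$-invariant with $\mu(h\Sigma')=\mu(\Sigma')=\mu(\Sigma')$, so the same argument gives $\mu(h\Sigma'\cap X_{\mathsf H})=\mu(h\Sigma')/\mu(\Sigma')\cdot a$? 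That is not quite an equality either. Let me instead argue that $h\Sigma'\cap X_{\mathsf H}$ has measure exactly $a$ because $\mu(\Sigma'\cap hX_{\mathsf H})$ is independent of the $\mathsf H$-fundamental domain in the coset sense: since $\Sigma'$ is $\mathsf G$-invariant and both $X_{\mathsf H}$ and $hX_{\mathsf H}$ are $\mathsf G$-"transversals" to $\mathsf H$... In fact the clean statement is $\mu(\Sigma'\cap hX_{\mathsf H}) = \mu(h^{-1}\Sigma'\cap X_{\mathsf H})$, and one checks that $\mu(h^{-1}\Sigma'\cap X_{\mathsf H})$ depends only on $\Sigma'$ via the "$\mathsf H$-fundamental-domain-independent" measure of the image of $\Sigma'$ in $\mathsf H\backslash\Sigma$, hence is equal for all $h$.

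So, putting it together: the sets $\{h\Sigma' : h\in R\}$, where $R$ is a set of coset representatives for $\mathsf H'\backslash\mathsf H$, are pairwise disjoint mod null sets, and each meets $X_{\mathsf H}$ in a set of measure $a>0$; since these intersections are pairwise disjoint subsets of $X_{\mathsf H}$, we get $|R|\cdot a \le \mu(X_{\mathsf H})<+\infty$, whence $[\mathsf H:\mathsf H']=|R|<+\infty$. \textbf{The main obstacle} I expect is making rigorous the claim that $\mu(h\Sigma'\cap X_{\mathsf H})$ is the same positive number for every $h$; the slick way is to observe that $p:=$ (the function on $\mathsf H\backslash\Sigma$ represented by $\mathbbm 1_{\Sigma'}$, which makes sense because... actually $\Sigma'$ is not $\mathsf H$-invariant so it does not descend) — instead I would phrase it via: $h\Sigma'$ is $\mathsf G$-invariant, so $\mu(h\Sigma'\cap X_{\mathsf H})$ equals $\mu_{\mathsf G\backslash\Sigma}(\text{image of } h\Sigma')$ for the pushforward measure on $\mathsf G\backslash\Sigma$ (using $X_{\mathsf H}$ as a section is not needed — any $\mathsf G$-fundamental domain works), and $\mathsf H$ acts on $\mathsf G\backslash\Sigma$ preserving this measure, so $\mu_{\mathsf G\backslash\Sigma}(\text{im}(h\Sigma'))=\mu_{\mathsf G\backslash\Sigma}(\text{im}(\Sigma'))=a'>0$ for all $h$. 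Then the disjointness of the $h\Sigma'$ upstairs passes to disjointness of their images in $\mathsf G\backslash\Sigma$ — here one must be slightly careful since disjointness mod null downstairs requires that distinct cosets give $\mathsf G$-saturated sets with null intersection, which follows from the hypothesis $\mu(h\Sigma'\cap\Sigma')=0$ for $h\notin\mathsf H'$ together with $\mathsf G$-invariance — and we conclude $[\mathsf H:\mathsf H']\cdot a' \le \mu(\mathsf G\backslash\Sigma) = \mu(X_{\mathsf G}) < +\infty$.
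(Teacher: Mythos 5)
Your final argument is correct, and it is genuinely different from (and more elementary than) the paper's proof. The paper extends fundamental domains for $\mathsf G$ and $\mathsf H'$ on $\Sigma'$ to fundamental domains $X_{\mathsf G},X_{\mathsf H}$ on all of $\Sigma$, passes to the common measured groupoid $\calg$ over $X_{\mathsf G}\cap X_{\mathsf H}$, proves that $\calg_{|U}=\rho_{\mathsf H}^{-1}(\mathsf H')_{|U}$ on a positive-measure set $U$, and then quotes an external lemma to convert this groupoid statement into finite index. You instead run a direct counting argument downstairs in $\mathsf G\backslash\Sigma$: each translate $h\Sigma'$ is $\mathsf G$-saturated, its image in $\mathsf G\backslash\Sigma$ is the translate of the image of $\Sigma'$ under the induced measure-preserving $\mathsf H$-action, so all these images have the same measure $a'=\mu(\Sigma'\cap X_{\mathsf G})>0$; the hypothesis makes them pairwise essentially disjoint as $h$ ranges over coset representatives of $\mathsf H'$ in $\mathsf H$ (and saturation lets disjointness descend to the quotient); and they all live in a space of finite total measure $\mu(X_{\mathsf G})$, forcing $[\mathsf H:\mathsf H']\le\mu(X_{\mathsf G})/a'$. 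This is sound and self-contained, and even gives a quantitative index bound; the paper's route is heavier but fits the groupoid formalism it uses throughout. Two small points of hygiene: your first attempt (comparing $h\Sigma'\cap X_{\mathsf H}$ for an $\mathsf H$-fundamental domain) was rightly abandoned, since $\Sigma'$ is not $\mathsf H$-invariant and those intersections need not have equal measure, so the write-up should be pruned to keep only the quotient argument; and the cosets indexing the disjoint translates $h\Sigma'$ are the left cosets $h\mathsf H'$ (one checks $h\Sigma'=h'\Sigma'$ exactly when $h^{-1}h'\in\mathsf H'$, using $\mathsf H'$-invariance of $\Sigma'$), which of course still counts the index.
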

		
		\begin{proof}
			Let $E$ and $E'$ be respective Borel fundamental domains for the actions of $\mathsf G$ and $\mathsf H'$ on $\Sigma'$, chosen such that $U=E\cap E'$ has positive measure (these exist because $\mathsf G$ and $\mathsf{H}$ admit Borel fundamental domains on $\Sigma$). 
			
			We claim that $E$ is contained in a Borel fundamental domain $X_{\mathsf G}$ for the $\mathsf G$-action on $\Sigma$, and $E'$ is contained in a Borel fundamental domain $ X_{\mathsf H}$ for the $\mathsf H$-action on $\Sigma$. For the first part of the claim, we take any Borel fundamental domain $X'_{\mathsf G}$ for the $\mathsf G$-action on $\Sigma$ and take $X_{\mathsf G}=(X'_{\mathsf G}\setminus \Sigma')\cup E$. For the second part of the claim, it suffices to prove that for any $h\in \mathsf H$, the intersection $hE'\cap E'$ has measure zero. If $h\in \mathsf H'$, then this follows from that $E'$ is a fundamental domain for the $\mathsf H'$-action on $\Sigma'$. If $h\notin \mathsf H'$, then our assumption implies that $\mu(h\Sigma'\cap \Sigma')=0$, and the claim follows.
			
			As recalled in Section~\ref{sec:intro-me}, there is a natural measure-preserving action $\mathsf H\actson X_{\mathsf G}$, obtained from the identification $X_{\mathsf G}\approx \mathsf G\backslash \Sigma$. We let $\calg_1$ be the associated measured groupoid over $X_{\mathsf G}$; it is equipped with a natural cocycle $\calg_1\to \mathsf H$. Similarly, we have a natural measure-preserving action $\mathsf G\actson X_{\mathsf H}$, and we let $\calg_2$ be the associated measured groupoid over $X_{\mathsf H}$. It is equipped with a natural cocycle $\calg_2\to \mathsf G$. Let now $U_1=X_{\mathsf G}\cap X_{\mathsf H}$: this is a positive measure Borel subset (because it contains $U=E\cap E'$). We have $(\calg_1)_{|U_1}=(\calg_2)_{|U_1}$. We denote by $\calg$ this measured groupoid over $U_1$; up to restricting $U_1$ to a conull Borel subset, $\calg$ is naturally equipped with two cocycles $\rho_{\mathsf G}:\calg\to \mathsf G$ and $\rho_{\mathsf H}:\calg\to \mathsf H$.   
			
			We now claim that $\calg_{|U}=\rho_{\mathsf H}^{-1}(\mathsf H')_{|U}$. The lemma follows from this claim by using e.g.\ \cite[Lemma~B.3]{HH-Higman}.
			
	To prove the claim, it suffices to show that $\calg_{|U}\subseteq \rho_{\mathsf H}^{-1}(\mathsf H')_{|U}$. Let $\gamma\in\calg_{|U}$, let $u=s(\gamma)$ be the source of $\gamma$. As $\Sigma'$ is $\mathsf{G}$-invariant, we have $gu\in \Sigma'$. Since $E'$ is a fundamental domain for the action of $\mathsf H'$ on $\Sigma'$, there exists a unique $h\in \mathsf H'$ such that $hgu\in E'$. So $h$ is also the unique element of $\mathsf H$ satisfying $hgu\in X_{\mathsf H}$. Therefore $\rho_{\mathsf H}(\gamma)=h$, showing that $\gamma\in\rho_{\mathsf H}^{-1}(\mathsf H')$, as desired.
		\end{proof}

\section{Proximal dynamics and strong ICC property for $\Aut(\B)$}\label{sec:proximal}

The goal of this section is to prove the following proposition. We refer to Definition~\ref{de:icc} for the definition of the strong ICC property.

\begin{prop}\label{prop:icc}
    Let $G$ be a right-angled Artin group with trivial center, and let $\B$ be its right-angled building. Then the inclusion $G\subseteq\Aut(\B)$ is strongly ICC.
\end{prop}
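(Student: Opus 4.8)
The plan is to exhibit the conjugation action of $G$ on $\Aut(\B)$ as having no invariant probability measure other than $\delta_{\mathrm{id}}$, using a boundary/proximality argument. Since $\Aut(\B)\simeq\Aut(\Gamma^e)$ acts on the Roller compactification $\overline{\B}$ of the $\mathrm{CAT}(0)$ cube complex $\B$, the first step is to recall that $\B$ acts on its Roller boundary $\partial\B$ and that, because $G$ has trivial center, the $G$-action on $\B$ is (essentially) without a global fixed point and contains enough hyperbolic isometries; one then invokes the work of Fern\'os \cite{Fer} and Kar--Sageev \cite{KS} to find a unique minimal $G$-invariant subset $M\subseteq\partial\B$ on which $G$ acts \emph{minimally and strongly proximally}, so that $M$ is a $G$-boundary in the sense of Furstenberg. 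The key point to verify is that $G$ acting on $\B$ is ``non-elementary'' in the relevant cubical sense — i.e.\ it does not fix a point in $\overline{\B}$ nor a pair of points, nor preserve a Euclidean flat — which is exactly where triviality of the center is used (a global fixed point at infinity or an invariant flat would force a nontrivial normal abelian-like subgroup, contradicting $Z(G)=1$ for irreducible factors, and one reduces to the irreducible case since $\B$ splits as a product according to the join decomposition of $\Gamma$).

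Next I would run the standard ICC-via-boundary argument: let $\nu$ be a Borel probability measure on $L=\Aut(\B)$ invariant under conjugation by every $g\in G$. Push $\nu$ forward under the (continuous, $L$-equivariant for the left action) evaluation-type map — more precisely, fix the action of $L$ on $\overline{\B}$ and consider the map $L\to\Prob(\overline{\B})$, $\ell\mapsto \ell_*\mu_0$ for a suitable base measure, or alternatively use that any conjugation-invariant $\nu$ produces, via a measurable selection of limit points, a $G$-invariant measurable family of probability measures on $\overline{\B}$ assigning to each $\ell\in\mathrm{supp}(\nu)$ a point/measure fixed by the closed subgroup $\overline{\langle\langle \ell\rangle\rangle}$. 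The strong proximality of $G\actson M$ then forces this family to be supported, $\nu$-almost surely, on the unique minimal set and in fact to be a Dirac mass at a boundary point (strong proximality means the only $G$-stationary measure is a point mass, and $G$-invariant measures are a fortiori stationary). But a conjugation-invariant $\nu$ whose associated boundary data is $G$-invariant must then produce a $G$-invariant probability measure on $M$, which strong proximality rules out unless $M$ is a single point — impossible by non-elementarity — \emph{unless} the boundary data is trivial, i.e.\ $\nu$-a.e.\ $\ell$ fixes all of $\overline{\B}$ pointwise on the relevant minimal set, forcing $\ell$ to lie in the (trivial) pointwise stabilizer. Chasing this through yields $\ell=\mathrm{id}$ for $\nu$-a.e.\ $\ell$, hence $\nu=\delta_{\mathrm{id}}$.

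Concretely, the cleanest route is probably the one used by Bader--Furman--Sauer and Kida: show that the closure $\overline{G}$ in $L$ is ICC in the classical sense \emph{and} that $G$ acts strongly proximally on a compact $G$-space $M$ with $\Stab_L(m)$ for $m\in M$ not too large; then a conjugation-invariant $\nu$ disintegrates over $M$ (via the $G$-map $L\dashrightarrow \Prob(M)$ sending $\ell$ to the barycenter-type limit of $g\ell g^{-1}$ along a sequence realizing contraction) and strong proximality collapses each fiber, after which the center-free hypothesis kills the remaining ambiguity. The main obstacle I anticipate is \emph{not} the measure-theoretic bookkeeping but establishing the precise dynamical input: namely that $G\actson\overline{\B}$ is strongly proximal on its unique minimal subset and that the pointwise stabilizer in $\Aut(\B)$ of this minimal set is trivial. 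This requires combining the description of $\Aut(\B_\Gamma)$ as flat-preserving bijections (Lemma~\ref{lemma:iso}) — so that an automorphism fixing enough of the Roller boundary fixes enough standard flats to be forced to be the identity — with the cubical boundary theory of Fern\'os and Kar--Sageev, and carefully reducing to the irreducible (non-join) case where $Z(G)=1$ genuinely gives non-elementarity; handling the product case requires treating each factor's boundary and noting that $\Aut(\B)$ respects the product decomposition up to permuting isomorphic factors, which only contributes a finite group and does not affect the ICC conclusion.
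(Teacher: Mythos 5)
Your proposal follows essentially the same route as the paper: reduce to the irreducible factors (where trivial center guarantees each factor is non-cyclic, hence acts essentially and non-elementarily on its building), invoke Kar--Sageev and Fern\'os to get a minimal, strongly proximal action on the regular part $R(\B)$ of the Roller boundary, prove that $\Aut(\B)\to\Homeo(R(\B))$ is injective (this is where the flat-preserving description and the barycenter argument enter), and then run the Bader--Furman--Sauer stationary-measure argument to conclude. The one caveat is that your intermediate phrasing of strong proximality (``the only $G$-stationary measure is a point mass'') is not the statement actually used --- the correct mechanism, which you then correctly point to via Bader--Furman--Sauer, is that the set of $\mu$-stationary measures on $R(\B)$ is compact, nonempty and $G$-invariant, and strong proximality forces it to contain every Dirac mass, whence $\mu$-almost every homeomorphism fixes $R(\B)$ pointwise and injectivity finishes the proof.
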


The assumption that $G$ has trivial center amounts to requiring that in the defining graph of $\Gamma$, no vertex is joined to all other vertices by an edge.

Our plan is to embed $\Aut(\B)$ measurably into a bigger group, namely the homeomorphism group $\Homeo(R(\B))$ of the \emph{regular part} of the Roller compactification of $\B$ (defined in Section~\ref{subsec:dynamics}), and show that the inclusion $G\subseteq\Homeo(R(\B))$ is strongly ICC using properties of the dynamics of the action of $G$ on the compact topological space $R(\B)$, which implies the proposition. In Section~\ref{subsec:ICC}, we give a general dynamical criterion for proving strongly ICC, and Section~\ref{subsec:dynamics}, we justify the measureable embedding of $\Aut(\B)$ into $\Homeo(R(\B))$, and show the action of $G$ on $R(\B)$ satisfies the dynamical criterion.

\subsection{A strong ICC lemma, after Bader--Furman--Sauer}
\label{subsec:ICC}
Recall that an action of a countable group $G$ by homeomorphisms on a compact metrizable space $K$ is \emph{strongly proximal} in the sense of Furstenberg \cite{Furs} if the $G$-orbit of every $\nu\in\Prob(K)$ contains a Dirac mass in its weak-$*$ closure. The action is \emph{minimal} if every $G$-orbit is dense in $K$. Given a compact metrizable space $K$, the group $\Homeo(K)$ is equipped with the topology of uniform convergence, which turns it into a Polish group. The following lemma is a small variation over an argument of Bader--Furman--Sauer \cite[Lemma~2.4]{BFS}.

\begin{lemma}\label{lemma:bfs}
    Let $G_1,\dots,G_k$ be countable groups, let $K_1,\dots,K_k$ be compact metrizable spaces, and assume that for every $i\in\{1,\dots,k\}$, the group $G_i$ acts  faithfully, minimally and strongly proximally on $K_i$.

    Then the inclusion $G_1\times\dots\times G_k\subseteq\Homeo(K_1\times\dots\times K_k)$ is strongly ICC.
\end{lemma}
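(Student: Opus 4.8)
The plan is to reduce to the case $k=1$ and then invoke the strong proximality hypothesis directly. Let me first handle the single-factor case. Suppose $G$ acts faithfully, minimally and strongly proximally on a compact metrizable space $K$. Let $\nu\in\Prob(\Homeo(K))$ be invariant under conjugation by every element of $G$; I want to show $\nu=\delta_{\mathrm{id}}$. The first step is to push $\nu$ forward. Fix a point $x_0\in K$ and consider the evaluation map $\mathrm{ev}:\Homeo(K)\to K$, $f\mapsto f(x_0)$, which is continuous; let $\bar\nu=\mathrm{ev}_*\nu\in\Prob(K)$. Actually, a cleaner route: consider instead, for each $x\in K$, the measure $\nu_x\in\Prob(K)$ obtained as the pushforward of $\nu$ under $f\mapsto f(x)$. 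The conjugation-invariance of $\nu$ translates into an equivariance property of the assignment $x\mapsto\nu_x$: for $g\in G$, $\nu_{gx}=g_*\nu_x$. Now strong proximality lets me find, for the measure $\nu_{x_0}$, a sequence $g_n\in G$ with $(g_n)_*\nu_{x_0}\to\delta_{y}$ for some $y\in K$. Combined with equivariance, pushing the whole picture around by $g_n$ and using that $\nu$ is conjugation-invariant (so its pushforward under $f\mapsto f(x)$ is unchanged after replacing $f$ by $g_nfg_n^{-1}$ and $x$ by $g_n x$), one forces $\nu_x=\delta_x$ for a dense set of $x$, hence (by minimality and a continuity/closedness argument on $\Prob(K)$) for all $x$. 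A measure $\nu$ on $\Homeo(K)$ with $\nu$-almost every $f$ satisfying $f(x)=x$ for all $x$ in a dense set must be supported on $\{\mathrm{id}\}$, since elements of $\Homeo(K)$ are determined by their values on a dense set. This is essentially the content of \cite[Lemma~2.4]{BFS}, and I would cite it for the bulk of the single-factor argument rather than reproving it.

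For the general case, I would exploit the product structure. Write $K=K_1\times\dots\times K_k$ and $G=G_1\times\dots\times G_k$. The key observation is that $\mathrm{Homeo}(K_1)\times\dots\times\mathrm{Homeo}(K_k)$ embeds into $\mathrm{Homeo}(K)$, and $G$ sits inside this product subgroup diagonally-by-factors, i.e.\ $G_i$ acts on the $i$-th coordinate. Let $\nu\in\Prob(\mathrm{Homeo}(K))$ be conjugation-invariant under $G$. Project: for each $i$, I claim the product action of $G_i$ (acting on $K_i$, trivially on the other factors) is again faithful, minimal and strongly proximal on $K$ — minimality and strong proximality do not obviously pass to the product in the naive sense, so instead I argue coordinate by coordinate. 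Fix coordinates $x_j\in K_j$ for $j\neq i$ and consider the map $\mathrm{Homeo}(K)\to K_i$ sending $f\mapsto p_i(f(x_1,\dots,x_k))$ where $p_i$ is projection to $K_i$; apply the strong proximality of $G_i$ on $K_i$ to the pushforward measure. Running the single-factor argument with $G_i$ in the role of $G$ shows that $\nu$-almost every $f$ has the property that its $K_i$-component of $f(x)$ equals $x_i$ for a dense set of $x$. Doing this for every $i\in\{1,\dots,k\}$ and intersecting the (countably many, after choosing dense countable sets of base points) conull sets, $\nu$-almost every $f$ fixes a dense subset of $K$ pointwise, hence equals $\mathrm{id}$. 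Therefore $\nu=\delta_{\mathrm{id}}$.

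The main obstacle I anticipate is the bookkeeping in the product case: strong proximality is a statement about a single space, and one has to be careful that "freezing" the other coordinates and projecting really does reduce to the hypothesis on $K_i$, and that the conjugation-invariance of $\nu$ under the full group $G$ (not just $G_i$) is what licenses the pushforward manipulations. One also needs the (standard, but worth stating) fact that a Borel probability measure on $\mathrm{Homeo}(K)$ concentrated on homeomorphisms agreeing with $\mathrm{id}$ on a fixed countable dense set is the Dirac mass at $\mathrm{id}$, which uses that the topology on $\mathrm{Homeo}(K)$ is that of uniform convergence and $K$ is compact metrizable (so separable). Modulo these points, the argument is a routine adaptation of \cite[Lemma~2.4]{BFS}, and I would present it as such, emphasizing only the coordinate-by-coordinate reduction as the new ingredient.
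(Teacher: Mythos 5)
Your overall shape (evaluate $\nu$ at points, contract with strong proximality, finish with ``a measure concentrated on homeomorphisms fixing a dense set is $\delta_{\mathrm{id}}$'') is reasonable, but the central step of your single-factor argument has a genuine gap. From conjugation-invariance you correctly get the equivariance $\nu_{gx}=g_\ast\nu_x$, and strong proximality gives a sequence with $(g_n)_\ast\nu_{x_0}=\nu_{g_nx_0}\to\delta_y$. But this says nothing about $\nu_y$ or about $\nu_x$ for $x$ near $y$: the points $g_nx_0$ need not converge to $y$ (the measure $\nu_{x_0}$ is in no way concentrated near $x_0$). The most this line of reasoning yields, after adding the missing compactness/minimality steps, is that \emph{every} $\nu_x$ is a Dirac mass $\delta_{\sigma(x)}$ for some continuous $G$-equivariant surjection $\sigma:K\to K$, i.e.\ $\mu$-a.e.\ $f$ equals $\sigma$ --- and you would still have to prove $\sigma=\mathrm{id}$, i.e.\ that the centralizer of $G$ in $\Homeo(K)$ is trivial (another strong proximality plus minimality argument). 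Your sketch jumps directly to ``$\nu_x=\delta_x$ for a dense set of $x$'', which is exactly the unproved point. The paper avoids this entirely by working with the set $\Prob_\mu(K)$ of $\mu$-\emph{stationary} measures on $K$: it is nonempty, weak-$*$ closed and $G$-invariant (the latter by conjugation-invariance of $\mu$), so it absorbs all limit points of $G$-orbits, and $\delta_x\in\Prob_\mu(K)$ literally says $f(x)=x$ for $\mu$-a.e.\ $f$ --- no spurious $\sigma$ can appear.

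The product reduction also does not go through as stated. You propose to ``run the single-factor argument with $G_i$ in the role of $G$'' after freezing the other coordinates, citing \cite[Lemma~2.4]{BFS} for the single-factor case; but that argument is about stationary measures on the space the ambient group acts on, and $\Homeo(K_1\times\dots\times K_k)$ has no action on $K_i$, so there is no notion of a $\mu$-stationary measure on $K_i$ to invoke. (Your slice-and-project measures $\nu^{(i)}_x$ are $G_i$-equivariant, but then you are back to the field-of-measures argument and the $\sigma\neq\mathrm{id}$ issue above, now on each slice.) The paper's product trick is different and is the part worth internalizing: take a single $\mu$-stationary measure $\nu$ on the full product $K$, push it to each marginal $\nu_i\in\Prob(K_i)$, use minimality and strong proximality of $G_i\actson K_i$ to find $g_{i,n}$ with $(g_{i,n})_\ast\nu_i\to\delta_{x_i}$, and then check via a Portmanteau/intersection-of-slabs estimate that the single product sequence $g_n=(g_{1,n},\dots,g_{k,n})$ satisfies $(g_n)_\ast\nu\to\delta_x$; closedness of $\Prob_\mu(K)$ then finishes the proof.
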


\begin{proof}
We write $G=G_1\times\dots\times G_k$, and $K=K_1\times\dots\times K_k$. Let $\mu$ be a Borel probability measure on $\Homeo(K)$ which is invariant under the conjugation by any element of $G$. Let \[\Prob_\mu(K)=\{\nu\in\Prob(K)\mid \mu\ast\nu=\nu\}\] be the space of Borel $\mu$-stationary probability measures on $K$, which is a nonempty compact subset of $\Prob(K)$, equipped with the weak-$*$ topology. Note that $\Prob_\mu(K)$ is $G$-invariant, using that $\mu$ is conjugation-invariant.

We claim that for every $x=(x_1,\dots,x_k)\in K$, the Dirac mass $\delta_x$ belongs to $\Prob_\mu(K)$. This claim will imply that for every $x\in K$, we have $\mu\{f\in\Homeo(K)\mid f(x)=x\}=1$, so $\mu=\delta_{\mathrm{id}}$, and the lemma will follow.

We are thus left with proving the above claim. Let $\nu\in\Prob_\mu(K)$. For every $i\in\{1,\dots,k\}$, let $\nu_i\in\Prob(K_i)$ be the pushforward of $\nu$ under the projection map $K\to K_i$. Since the $G_i$-action on $K_i$ is minimal and strongly proximal, we can find a sequence $(g_{i,n})_{n\in\mathbb{N}}\in G_i^{\mathbb{N}}$ such that $(g_{i,n})_\ast\nu_i$ converges to $\delta_{x_i}$, as $n$ goes to $+\infty$, in the weak-$*$ topology. 

For every $n\in\mathbb{N}$, let $g_n=(g_{1,n},\dots,g_{k,n})$. We will now prove that the probability measures $(g_n)_\ast\nu$ converge to  $\delta_{x}$, as $n$ goes to $+\infty$, in the weak-$*$ topology. This will conclude our proof.

Let $U\subseteq K$ be an open set that contains $x$. Then there exist open neighborhoods $U_i$ of $x_i$, for every $i\in\{1,\dots,k\}$, such that $U_1\times\dots\times U_k\subseteq U$. For every $i\in\{1,\dots,n\}$, we have $(g_{i,n})_\ast\nu_i(U_i)\to 1$ by the Portmanteau Theorem (see \cite[Theorem~17.20]{Kec}). This means that \[\nu(K_1\times\dots \times K_{i-1}\times g_{i,n}^{-1}(U_i)\times K_{i+1}\times\dots\times K_k)\to 1\] as $n$ goes to $+\infty$. Intersecting these sets as $i$ varies in $\{1,\dots,k\}$, we obtain that $\nu(g_n^{-1}(U_1\times\dots\times U_k))\to 1$ as $n$ goes to $+\infty$. In particular $(g_n)_\ast\nu(U)\to 1$. Since $U$ was an arbitrary open set containing $x$, by the Portmanteau Theorem, this is enough to prove that $(g_n)_\ast\nu$ converges to $\delta_x$, as desired.   
\end{proof}

\subsection{Dynamics on the regular boundary}
\label{subsec:dynamics}
 We refer to \cite{Sag} for background on hyperplanes and halfspaces in $\mathrm{CAT}(0)$ cube complexes. 
Let $X$ be a $\mathrm{CAT}(0)$ cube complex. The Roller compactification of $X$ is defined in \cite{Rol,BCGNW} as follows. 
 Let $\mathrm{HS}$ be the set of all halfspaces in $X$ (i.e.\ complementary components of hyperplanes). Let $\varphi:VX\to \{0,1\}^{\mathrm{HS}}$ be the map such that $\varphi(v)(h)=1$ if $v\in h$, and $\varphi(v)(h)=0$ otherwise. The \emph{Roller compactification}, denoted $\overline X^R$, is the closure of $\varphi(VX)$ in $\{0,1\}^{\mathrm{HS}}$, in the topology of pointwise convergence. Thus, every point $\xi\in\overline{X}^R$ is a function from $\mathrm{HS}$ to $\{0,1\}$; we denote by $\langle \xi,h\rangle$ the value $\xi(h)$, and say that $h$ \emph{points towards} $\xi$ if $\langle\xi,h\rangle=1$. 
The \emph{Roller boundary} is $\partial_RX=\overline{X}^R\setminus VX$.

If $X$ has countably many cubes (and therefore countably many half-spaces), the compactification $\overline{X}^R$ is metrized as follows: fix an enumeration $\mathrm{HS}=\{h_n\}_{n\in\mathbb{N}}$, and for $\xi_1\neq\xi_2$, let $d(\xi_1,\xi_2)=2^{-n}$, where $n$ is the smallest integer such that $\langle\xi_1,h_n\rangle\neq\langle\xi_2, h_n\rangle$.

Two hyperplanes $\mathfrak h_1$ and $\mathfrak h_2$ in $X$ are \emph{strongly separated} \cite[Definition~2.1]{BC} if no hyperplane $\mathfrak h$ has a non-empty intersection with both $\mathfrak h_1$ and $\mathfrak h_2$ (this implies in particular that $\mathfrak h_1$ and $\mathfrak h_2$ are disjoint). Following Fern\'os \cite[Definition~7.3 and Proposition~7.4]{Fer}, we say that an element $\xi\in\overline{X}^R$ is \emph{regular} (also called \emph{strongly separated} in \cite[Definition~11]{KS}) if there exists an infinite nested sequence of halfspaces $h_1\supseteq h_2\supseteq\dots$ pointing towards $\xi$, whose boundary hyperplanes are pairwise strongly separated. Following \cite[Definition~5.9]{FLM}, if $X$ is irreducible, i.e.\ does not split as a product of two non-trivial convex subcomplexes, we define the \emph{regular boundary} $R(X)\subseteq\overline{X}^R$ (denoted by $S(X)$ in \cite{KS}) as the closure of the set of regular elements, which is compact. More generally, when $X=X_1\times\dots\times X_n$, with each $X_i$ irreducible, the Roller compactification splits naturally as $\overline{X}^R=\overline{X}_1^R\times\dots\times \overline{X}_n^R$, and we let $R(X)=R(X_1)\times\dots\times R(X_n)$, a compact subspace of $\overline{X}^R$.

Following \cite{CS}, we say that an action of a group $G$ on a $\mathrm{CAT}(0)$ cube complex $X$ is \emph{essential} if no $G$-orbit remains in a bounded neighborhood of a halfspace of $X$.  The action is \emph{non-elementary} if it has no   global fixed point, and no finite orbit in the visual boundary $\partial_\infty X$.

\begin{lemma}\label{lemma:basics-on-B}
Let $G=G_\Gamma$ be a non-cyclic irreducible right-angled Artin group, and let $\B$ be its right-angled building. Then $\B$ is irreducible and non-Euclidean, and $G$ acts essentially and  non-elementarily on $\B$.
\end{lemma}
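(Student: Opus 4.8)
The statement has four assertions about $\B=\B_\Gamma$ when $G=G_\Gamma$ is non-cyclic and irreducible: (i) $\B$ is irreducible, (ii) $\B$ is non-Euclidean, (iii) $G$ acts essentially on $\B$, and (iv) $G$ acts non-elementarily on $\B$. The plan is to derive each of these from the combinatorial structure of the poset of standard flats, together with basic facts about right-angled Artin groups and their defining graphs, and to feed (i) and (iii) into the machinery of the previous section.

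\emph{Irreducibility and non-Euclideanness.} Recall from Section~\ref{sec:extension-graph} that if $\Gamma=\Gamma_1\circ\Gamma_2$ is a join, then $\B_\Gamma\cong\B_{\Gamma_1}\times\B_{\Gamma_2}$; conversely, a product decomposition of $\B_\Gamma$ as a product of two nontrivial convex subcomplexes forces, via the labelled vertices of rank $0$ (which are the elements of $G_\Gamma$) and the product structure on standard flats, a corresponding join decomposition of $\Gamma$ --- the point is that two rank $1$ vertices of $\B$ whose associated standard lines span a $2$-dimensional standard flat correspond to adjacent vertices of $\Gamma^e$, hence (after translating) to adjacent vertices of $\Gamma$, so a decomposition of $\Gamma^e$'s maximal cliques into two "orthogonal" families produces a join of $\Gamma$. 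Since $G_\Gamma$ is assumed irreducible, i.e.\ $\Gamma$ is not a nontrivial join, $\B$ is irreducible. For non-Euclideanness: $\B$ is Euclidean (i.e.\ isometric to $\mathbb E^n$ with the standard cubulation) if and only if it has a single vertex of each rank up to $n$, which happens only when $\Gamma$ is a complete graph, i.e.\ $G_\Gamma\cong\mathbb Z^n$; since $G_\Gamma$ is non-cyclic and irreducible, $\Gamma$ is not complete (a complete graph on $\ge 2$ vertices is a nontrivial join), so $\B$ has infinitely many rank $0$ vertices and at least one rank $1$ vertex of infinite valence in $\lk^-$, hence is non-Euclidean. (Alternatively: a non-complete connected defining graph on $\ge 2$ vertices forces $\B_\Gamma$ to contain a tree that is not a line, so $\B$ is unbounded and not a flat.)

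\emph{Essentiality.} By \cite{CS}, for a group acting on a $\mathrm{CAT}(0)$ cube complex the action is essential iff no halfspace has a bounded-neighborhood orbit; it suffices to exhibit, for each hyperplane $\mathfrak h$ of $\B$ and each side $h$, an element $g\in G_\Gamma$ pushing a chosen basepoint arbitrarily far into $h$ and another pushing it arbitrarily far into the complementary halfspace. Hyperplanes of $\B_\Gamma$ separate the poset of standard flats; concretely, using that $G_\Gamma$ acts cocompactly on $\B$ with finitely many orbits of hyperplanes, it is enough to treat finitely many hyperplane-orbit representatives, and for each one a suitable power of a standard generator (or of a product of standard generators realizing a standard line crossing that hyperplane) does the job --- here I would use that $G_\Gamma$ has trivial center (it is non-cyclic irreducible, hence centerless by Proposition~\ref{prop:normalizer}) so that no standard line is $G_\Gamma$-invariant, guaranteeing the existence of elements acting loxodromically across each hyperplane. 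The cleanest route is probably to invoke that $G_\Gamma$ acts essentially on the universal cover $X_\Gamma$ of the Salvetti complex (which is standard: an irreducible non-cyclic RAAG acts essentially and without fixed point at infinity on $X_\Gamma$ by \cite{CS}, since $\Gamma$ is not a join), and then transfer essentiality along the $G_\Gamma$-equivariant collapse $X_\Gamma\to\B_\Gamma$ that sends standard flats of $X_\Gamma$ onto the corresponding standard flats/vertices of $\B_\Gamma$ --- collapsing maps between $\mathrm{CAT}(0)$ cube complexes carry essential actions to essential actions provided no hyperplane of the target is killed, which holds here.

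\emph{Non-elementarity.} We must show $G_\Gamma$ has no global fixed point in $\B$ and no finite orbit in $\partial_\infty\B$. No global fixed point: the action is cocompact and $\B$ is unbounded (shown above), so a fixed point is impossible; more directly, distinct rank $0$ vertices are permuted transitively-ish and there are infinitely many. No finite orbit in $\partial_\infty\B$: since $\B$ is irreducible, non-Euclidean and unbounded, and $G_\Gamma$ acts essentially (just proved) and cocompactly, the double-skewering / flipping lemma of \cite{CS} gives elements acting as rank $1$-type loxodromics with pairs of attracting/repelling points in $\partial_\infty\B$ whose combined dynamics (again using trivial center, so that these loxodromic elements are "independent" --- generated by non-commuting standard lines) rules out any finite invariant subset of the boundary; concretely a finite orbit in $\partial_\infty\B$ would, after passing to a finite-index subgroup, be fixed pointwise, contradicting the existence of two loxodromics with disjoint fixed-point sets.

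\emph{Main obstacle.} The routine parts are irreducibility and non-Euclideanness, which are direct translations of join/complete-graph conditions on $\Gamma$. The real work is essentiality and non-elementarity \emph{on $\B$} rather than on $X_\Gamma$: the collapse $X_\Gamma\to\B_\Gamma$ is not a quasi-isometry and identifies many hyperplanes, so one must check carefully that enough hyperplane structure survives --- equivalently, that for every hyperplane of $\B_\Gamma$ there is a standard line crossing it on which some element of $G_\Gamma$ acts by an unbounded translation, and that one can find two such elements with "transverse" dynamics. This is exactly where the hypotheses that $G_\Gamma$ be non-cyclic and irreducible (hence centerless, hence $\Gamma$ not a join and not complete) are used; I expect the argument to reduce, after invoking cocompactness, to a finite check on orbit representatives of hyperplanes of $\B_\Gamma$, each handled by an explicit element of $G_\Gamma$ of the form $v^n$ or $(vw)^n$ for non-adjacent vertices $v,w$ of $\Gamma$.
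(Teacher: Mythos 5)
Your plan diverges from the paper's proof on three of the four assertions, and the divergent parts contain genuine gaps. The paper disposes of irreducibility, non-Euclideanness and non-elementarity in one stroke by citing \cite[Theorem~1.3 and Section~6.2]{CH}: $G$ acts on $\B$ with two \emph{independent strongly contracting isometries}, which immediately rules out a product decomposition, a Euclidean structure, and any finite orbit in $\partial_\infty\B$. Your combinatorial derivation of irreducibility ("a product decomposition of $\B$ forces a join of $\Gamma$") is plausible but unproved as stated, and your non-elementarity argument is circular: the flipping and double-skewering lemmas of \cite{CS} that you invoke are proved under the hypothesis that $G$ has \emph{no fixed point at infinity}, which is precisely part of what non-elementarity asserts. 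Even granting the cocompact versions of rank rigidity, passing from one contracting isometry to two independent ones needs an argument (the action of $G$ on $\B$ is not proper, so "trivial center" does not by itself prevent $G$ from virtually stabilizing the pair of endpoints of an axis); the citation of \cite{CH} is what supplies this.

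The more serious error is in essentiality. A power of a standard generator $v$ does \emph{not} "do the job": $v$ fixes the rank~$1$ vertex of $\B$ corresponding to $\langle v\rangle$, so every standard generator is elliptic on $\B$, and no standard line of $G$ gives an unbounded orbit in $\B$. Your fallback of transferring essentiality from $X_\Gamma$ also fails: there is no cubical collapse $X_\Gamma\to\B_\Gamma$ (an edge of $X_\Gamma$ maps to a length-$2$ path through a rank~$1$ vertex), and the general principle you assert about collapses preserving essentiality is not a known fact. You do gesture at the correct fix — elements like $(vw)^n$ for non-adjacent $v,w$ — but the key input you never identify is that the \emph{complement graph} $\Gamma^c$ is connected because $\Gamma$ is not a join. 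The paper's proof takes an edge $e$ of $\B$ labeled $a_1$, chooses a closed edge path $a_1,\dots,a_n,a_1$ in $\Gamma^c$, sets $w=a_1\cdots a_n$, and exhibits an explicit $w$-periodic edge path through (a translate of) $e$ alternating between rank~$0$ and rank~$1$ vertices; the real content is the verification that this path is a local, hence global, geodesic, via an angle-$\pi$ computation at rank~$1$ vertices (bipartition of their links) and at rank~$0$ vertices (non-adjacency of consecutive $a_i$). Without this construction, essentiality is not established.
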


\begin{proof}
It was proved in \cite[Theorem~1.3 and Section~6.2]{CH} that $G$ acts on $\B$ with two independent strongly contracting isometries, which implies that $\B$ is irreducible, non-Euclidean, and that $G$ acts non-elemenarily on $\B$.

For essentiality, notice that every hyperplane crosses an edge $e$ joining a rank $0$ vertex to a rank $1$ vertex. It is enough to prove that $e$ is contained in a periodic bi-infinite geodesic  (for the $\mathrm{CAT}(0)$ metric on $\B$), as such geodesic is not contained in a bounded neighbourhood of any halfspace bounded by the hyperplane dual to $e$, and any periodic geodesic is contained in a bounded neighbourhood of any $G$-orbit.

	Let $\Gamma^c$ be the complement graph of $\Gamma$, i.e.\ $V\Gamma^c=V\Gamma$, and two vertices of $\Gamma^c$ are adjacent if and only if they are non-adjacent in $\Gamma$. As $\Gamma$ is not a join, $\Gamma^c$ is connected. Let $a_1$ be the label of $e$, and let $\gamma$ be a loop in $\Gamma^c$ based at $a_1$ made of a sequence $e_1,\dots,e_n$ of edges in $\Gamma^c$. We assume $\gamma$ has at least one edge and it is \emph{immersed}, i.e.\ for every $i\in\{1,\dots,n\}$ (considered modulo $n$), the edge $e_{i+1}$ is not equal to the edge $e_i$ traversed in the opposite direction. Let $a_1,\dots,a_n,a_{n+1}=a_1$ be consecutive vertices encountered along $\gamma$, and let $w$ be the product of these generators $a_1a_2\cdots a_{n-1}a_n$, an element of $G$. 
	
		\begin{figure}[h]
	\centering
	\includegraphics[scale=0.85]{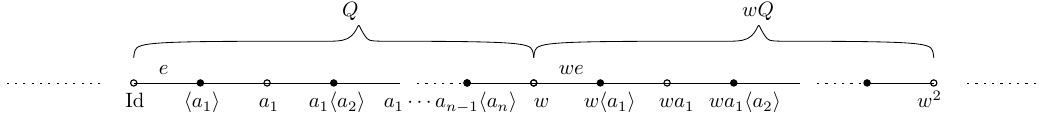}
	\caption{An axis of $w$.}
	\label{fig:geodesic}
\end{figure}	
			
We refer to Figure~\ref{fig:geodesic} for the rest of the proof. Recall that each vertex of $\B$ is represented by a left coset of a standard flat in $G$. We consider the following sequence of vertices in $\B$, alternating between rank 0 and rank 1 vertices:
			\[\{\mathrm{id}\},\ \langle a_1\rangle,\  a_1,\ a_1\langle a_2\rangle,\  a_1a_2,\ a_1a_2\langle a_3\rangle,\ \ldots,\ a_1a_2\cdots a_n.\]
			Consecutive members in this sequence are adjacent vertices in $\B$, so the above gives an edge path $Q$. We claim that $Q$ is a geodesic segment: indeed, the angle (in the sense of e.g.\ \cite[Chapter~II.3]{BH}) between two adjacent edges at a rank 1 vertex is $\pi$ (using that the link of any rank $1$ vertex has a bipartition into rank $0$ and higher-rank vertices).			And the angle between two adjacent edges at a rank 0 vertex is $\pi$ because $a_i$ and $a_{i+1}$ are not adjacent in $\Gamma$.  This is enough to ensure that $Q$ is a geodesic segment as local geodesics in a $\mathrm{CAT}(0)$ space are global geodesics \cite[Proposition~II.1.4(2)]{BH}. Likewise, as $a_1$ and $a_n$ are not adjacent in $\Gamma$, the concatenation $\cup_{i\in\mathbb Z}w^i Q$ is a geodesic line $\ell_w$ in $\B$, which is an axis for $w$. It has a translate passing through $e$, which is the desired bi-infinite geodesic.
\end{proof}

The following corollary is then an immediate application of \cite[Proposition~1]{KS}.
\begin{cor}\label{cor:kar-sageev}
Let $G$ be an irreducible right-angled Artin group, and let $\mathbb{B}$ be its right-angled building. Then the $G$-action on $R(\mathbb{B})$ is minimal and strongly proximal.
\qed
\end{cor}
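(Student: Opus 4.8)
The plan is to deduce this directly from Lemma~\ref{lemma:basics-on-B} and the dynamical theorem of Kar--Sageev \cite[Proposition~1]{KS}; once those two inputs are in place there is essentially nothing more to do.

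First I would dispose of the cyclic case. If $G\cong\mathbb{Z}$, then $\mathbb{B}$ is an infinite star, whose halfspaces are the single leaves and the complements of single leaves; no three of these are nested, so $\mathbb{B}$ contains no infinite nested sequence of halfspaces, hence no regular points, and $R(\mathbb{B})=\emptyset$. The statement is then vacuously true, so I may assume henceforth that $G$ is non-cyclic.

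Next I would check that we are in the setting of \cite[Proposition~1]{KS}. The complex $\mathbb{B}=\mathbb{B}_\Gamma$ is CAT(0) by \cite[Corollary~11.7]{Dav}, and it is finite-dimensional, its dimension being the clique number of the finite graph $\Gamma$. By Lemma~\ref{lemma:basics-on-B}, since $G$ is non-cyclic and irreducible, $\mathbb{B}$ is in addition irreducible and non-Euclidean, while the induced $G$-action on $\mathbb{B}$ is by cubical automorphisms, cocompact, essential, and non-elementary. Feeding these properties into \cite[Proposition~1]{KS} yields that the $G$-action on the strongly separated boundary $S(\mathbb{B})=R(\mathbb{B})$ is minimal and strongly proximal, which is the assertion of the corollary.

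I do not anticipate any real difficulty here: all the content is already packaged in Lemma~\ref{lemma:basics-on-B} --- which itself rests on the construction in \cite{CH} of two independent strongly contracting isometries, together with the explicit periodic bi-infinite geodesics through edges adjacent to rank~$0$ vertices that certify essentiality --- and in \cite{KS}. The only points needing attention are to confirm that the meanings of ``essential'' and ``non-elementary'' used in the excerpt agree with those required by \cite[Proposition~1]{KS} (they are the standard ones from the Caprace--Sageev framework), and, in the degenerate case $G\cong\mathbb{Z}$, to note that $R(\mathbb{B})$ is empty rather than a two-point set, so that the statement still holds.
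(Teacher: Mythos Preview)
Your proposal is correct and follows exactly the paper's approach: the paper simply records the corollary as ``an immediate application of \cite[Proposition~1]{KS}'' after Lemma~\ref{lemma:basics-on-B}. Your explicit treatment of the cyclic case $G\cong\mathbb{Z}$ is a welcome addition, since Lemma~\ref{lemma:basics-on-B} is stated only for non-cyclic $G$ and the paper does not spell out why the corollary still holds (vacuously) in that degenerate situation.
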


\begin{lemma}
\label{lem:inj}
Let $G$ be a group acting on a $\mathrm{CAT}(0)$ cube complex $X$ which is irreducible and non-Euclidean. We assume that
\begin{enumerate}
    \item the action of $G$ on $X$ is essential and non-elementary;
    \item if an element of $G$ fixes a cube setwise, then it fixes the cube pointwise.
\end{enumerate}
Then the homomorphism $\Aut(X)\to\Homeo(R(X))$ is injective.
\end{lemma}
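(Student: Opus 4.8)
Let $\varphi \in \Aut(X)$ act trivially on $R(X)$; we must show $\varphi = \mathrm{id}$. The strategy is to leverage the abundance of regular points guaranteed by the hypotheses, to first conclude that $\varphi$ fixes every hyperplane of $X$, and then to promote this to the conclusion that $\varphi$ fixes $X$ pointwise. First I would recall that under the assumptions (essential, non-elementary action on an irreducible non-Euclidean $X$), the work of Caprace--Sageev and Fern\'os guarantees that $X$ has a pair of strongly separated hyperplanes; moreover, by the double-skewering lemma of Caprace--Sageev together with the existence of regular points (in the sense of the ``Flipping Lemma'' / the analysis in \cite{Fer,KS}), for \emph{any} halfspace $\mathfrak{h}$ there is an infinite nested chain of halfspaces contained in $\mathfrak{h}$, with pairwise strongly separated bounding hyperplanes, defining a regular point $\xi \in R(X)$ with $\langle \xi, \mathfrak{h}\rangle = 1$. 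Conversely, a regular point $\xi$ ``remembers'' its nested chain, so that from $\xi$ one reads off infinitely many halfspaces pointing towards it.

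The key step is the following: if $\varphi$ fixes all regular points, then $\varphi$ fixes every hyperplane of $X$ (as an unoriented hyperplane, in fact with its two sides). Indeed, the action of $\Aut(X)$ on $\overline{X}^R$ is by the formula $\langle \varphi\xi, \mathfrak{h}\rangle = \langle \xi, \varphi^{-1}\mathfrak{h}\rangle$, so $\varphi\xi = \xi$ for all regular $\xi$ means $\langle \xi, \varphi\mathfrak{h}\rangle = \langle \xi, \mathfrak{h}\rangle$ for every halfspace $\mathfrak{h}$ and every regular $\xi$. Now I would argue that if $\varphi\mathfrak{h} \neq \mathfrak{h}$ for some halfspace $\mathfrak{h}$, then one can produce a regular point $\xi$ on which $\mathfrak{h}$ and $\varphi\mathfrak{h}$ take different values, contradicting the displayed equality. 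Concretely: the four halfspaces $\mathfrak{h}, \mathfrak{h}^*, \varphi\mathfrak{h}, (\varphi\mathfrak{h})^*$ are pairwise distinct (since the hyperplanes $\hat{\mathfrak{h}}$ and $\varphi\hat{\mathfrak{h}}$ are distinct), so there is a halfspace, say contained in $\mathfrak{h} \cap (\varphi\mathfrak{h})^*$ or else in $\mathfrak{h}^* \cap \varphi\mathfrak{h}$ depending on the configuration (using that two distinct hyperplanes in a $\mathrm{CAT}(0)$ cube complex always have a corner, i.e.\ one of the four intersections of sides is non-empty); starting from a halfspace that distinguishes $\mathfrak{h}$ from $\varphi\mathfrak{h}$, one builds an infinite nested strongly-separated chain inside it via the double-skewering lemma, yielding a regular $\xi$ with $\langle\xi,\mathfrak{h}\rangle \neq \langle\xi,\varphi\mathfrak{h}\rangle$. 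This contradiction shows $\varphi\mathfrak{h} = \mathfrak{h}$ for all $\mathfrak{h}$.

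Finally, once $\varphi$ fixes every hyperplane setwise with each of its two halfspaces, I claim $\varphi$ is the identity. A vertex $v$ of $X$ is determined by the data $(\langle \varphi(v), \mathfrak{h}\rangle)_{\mathfrak{h}\in\mathrm{HS}}$, i.e.\ by which side of each hyperplane it lies on; since $\varphi$ preserves every halfspace, $\varphi(v)$ lies on the same side of every hyperplane as $v$, hence $\varphi(v) = v$ (two distinct vertices are separated by at least one hyperplane). So $\varphi$ fixes $VX$ pointwise, hence fixes the $1$-skeleton, and then fixes every cube setwise; by hypothesis~(2) it fixes every cube pointwise, so $\varphi = \mathrm{id}$. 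I expect the main obstacle to be the second step: carefully producing, from a single halfspace that ``separates $\mathfrak{h}$ from $\varphi\mathfrak{h}$'', a genuinely regular boundary point — this requires invoking the double-skewering lemma and the strong-separation machinery of \cite{Fer,KS} in the right order, and checking that essentiality and non-elementarity are exactly what feed that machinery. Note that hypothesis~(2) is used only at the very end, and the non-Euclidean irreducible hypotheses are used precisely to guarantee strongly separated hyperplanes exist in abundance.
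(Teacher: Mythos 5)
Your skeleton (fixing $R(X)$ pointwise $\Rightarrow$ fixing every halfspace $\Rightarrow$ fixing every vertex $\Rightarrow$ identity) is genuinely different from the paper's argument, and the last two implications are fine (indeed, once all vertices are fixed you do not even need hypothesis~(2), since a cubical automorphism fixing all vertices of a cube fixes it pointwise). The gap is in the central step. To get $\varphi\mathfrak{h}=\mathfrak{h}$ you need: whenever $\hat{\mathfrak{h}}\neq\hat{\mathfrak{k}}$, some $\xi\in R(X)$ satisfies $\langle\xi,\mathfrak{h}\rangle\neq\langle\xi,\mathfrak{k}\rangle$. Your route is to find a halfspace inside a non-empty ``disagreement corner'' and then a regular point inside that halfspace. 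The second half is indeed available from the Caprace--Sageev machinery, but the first half fails: a non-empty corner of two distinct hyperplanes need not contain any halfspace, and the separation statement you need is actually \emph{false} under the lemma's hypotheses. Take $X$ to be the barycentric subdivision of the Cayley tree of $F_2$, with $G=F_2$ acting freely and cocompactly: $X$ is irreducible, non-Euclidean, the action is essential and non-elementary, every end is a regular point, yet the two hyperplanes dual to the two halves of a subdivided edge bound nested halfspaces $\mathfrak{h}_2\subsetneq\mathfrak{h}_1$ with $\mathfrak{h}_1\setminus\mathfrak{h}_2$ a single vertex, so $\langle\xi,\mathfrak{h}_1\rangle=\langle\xi,\mathfrak{h}_2\rangle$ for every $\xi\in R(X)$. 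Hence from ``$\varphi$ fixes $R(X)$ pointwise'' you can only conclude that $\varphi\mathfrak{h}$ is $R$-indistinguishable from $\mathfrak{h}$ (this is exactly the equivalence relation that appears in the measurability part of the proof of Lemma~\ref{lemma:aut-homeo}), not that $\varphi\mathfrak{h}=\mathfrak{h}$. The lemma is still true in this example, but your argument cannot detect that, so this is a genuine missing idea rather than a matter of invoking double-skewering in the right order.

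For comparison, the paper's proof sidesteps halfspaces entirely: it composes with the $\Aut(X)$-equivariant map $\partial_RX\to\partial_\infty X$ of Fern\'os--L\'ecureux--Math\'eus/Caprace--Lytchak and uses non-elementarity to produce at least three regular points fixed by $\varphi$; a barycenter argument (Kar--Sageev) then yields a vertex of $X$ fixed by $\varphi$; the fixed-point set is a convex subcomplex (this is where hypothesis~(2) enters) containing a full $G$-orbit, and essentiality together with the Haglund--Wise separation criterion forces it to be all of $X$. If you want to salvage your approach, you would have to handle $R$-indistinguishable families of halfspaces directly, which essentially amounts to reintroducing a fixed-point argument in the ``bounded regions'' between such hyperplanes.
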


\begin{proof}
Let $f\in\Aut(X)$ act trivially on $R(X)$. By \cite[Corollary~7.7]{Fer} or \cite[Lemma~8]{KS},  $R(X)$ is non-empty, so let $\xi\in R(X)$ be a regular point.  By \cite[Corollary~6.2]{FLM} (which relies on work of Caprace--Lytchak \cite[Theorem~1.1]{CL}), there is an $\Aut(X)$-equivariant map $\pi:\partial_RX\to\partial_\infty X$. Since the action of $G$ on $X$ is non-elementary, the $G$-orbit of $\pi(\xi)$ in $\partial_\infty X$ is infinite, which implies that the $G$-orbit of $\xi$ is infinite. In particular $R(X)$ contains at least $3$ regular points, and by assumption they are all fixed by $f$. We can therefore use a barycenter argument, provided by \cite[Lemma~13]{KS} or \cite[Lemma~5.14]{FLM}, and deduce that $f$ fixes a point $x\in X$. Let $Z$ be the fix point set of $f$. Assumption 2 ensures that $Z$ is a subcomplex. As $X$ is $\mathrm{CAT}(0)$, $Z$ is convex. Thus $Z$ is a convex subcomplex of $X$. By $G$-invariance of $R(X)$, we know that $Z$ contains the $G$-orbit of $x$. Assumption 1 implies that each hyperplane of $X$ separates at least two points in $Z$. Thus $Z=X$ by \cite[Lemma~13.8]{HW}.
\end{proof}

\begin{lemma}\label{lemma:aut-homeo}
Let $G$ be a right-angled Artin group with trivial center, and let $\B$ be its right-angled building. Then the homomorphism $\Aut(\B)\to\Homeo(R(\B))$ is measurable and injective.
\end{lemma}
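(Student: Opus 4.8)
The plan is to prove the two assertions separately, reducing injectivity to the irreducible case already handled in Lemma~\ref{lem:inj}. Write $\Gamma=\Gamma_1\circ\dots\circ\Gamma_k$, where the $\Gamma_i$ are the subgraphs of $\Gamma$ induced by the connected components of the complement graph $\Gamma^c$, so that each $\Gamma_i$ is not a join and $\B\cong\B_{\Gamma_1}\times\dots\times\B_{\Gamma_k}$. Since $G$ has trivial center, no $\Gamma_i$ is a single vertex (otherwise $G_{\Gamma_i}\cong\mathbb Z$ would be a direct factor of $G$, hence contained in its center), so each $G_{\Gamma_i}$ is a non-cyclic irreducible right-angled Artin group. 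By Lemma~\ref{lemma:basics-on-B}, each $\B_{\Gamma_i}$ is irreducible and non-Euclidean and $G_{\Gamma_i}$ acts on it essentially and non-elementarily. I would then check the remaining hypothesis of Lemma~\ref{lem:inj}, namely that an element of $G_{\Gamma_i}$ stabilizing a cube of $\B_{\Gamma_i}$ setwise fixes it pointwise: such a cube is an interval $I_{F_1,F_2}$ of standard flats, a stabilizing element must fix its minimum $F_1$ and maximum $F_2$, and writing $F_1=gG_{\Lambda_1}\subseteq F_2=gG_{\Lambda_2}$ this forces the element to conjugate into the abelian group $G_{\Lambda_1}$, which fixes every standard flat $gG_\Lambda$ with $\Lambda_1\subseteq\Lambda\subseteq\Lambda_2$. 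Thus Lemma~\ref{lem:inj} applies to each factor and gives that $\Aut(\B_{\Gamma_i})\to\Homeo(R(\B_{\Gamma_i}))$ is injective.

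To upgrade this to the product, I would invoke the fact that the decomposition of a finite-dimensional $\mathrm{CAT}(0)$ cube complex into irreducible factors is canonical (see e.g.\ \cite{CS}), so every $f\in\Aut(\B)$ permutes the non-trivial irreducible factors $\B_{\Gamma_i}$: there exist $\sigma\in\mathfrak{S}_k$ and isomorphisms $f_i\colon\B_{\Gamma_i}\to\B_{\Gamma_{\sigma(i)}}$ so that $f$ acts, after the permutation $\sigma$, coordinatewise by the $f_i$, and correspondingly $f$ acts on $R(\B)=\prod_i R(\B_{\Gamma_i})$. If $f$ acts trivially on $R(\B)$, then comparing coordinates and using that each $R(\B_{\Gamma_i})$ has more than one point (it is in fact infinite, cf.\ the proof of Lemma~\ref{lem:inj}) forces $\sigma=\mathrm{id}$ and each $f_i\in\Aut(\B_{\Gamma_i})$ to act trivially on $R(\B_{\Gamma_i})$; the previous paragraph then gives $f_i=\mathrm{id}$, hence $f=\mathrm{id}$.

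For measurability --- which needs no irreducibility --- I would argue that the homomorphism is in fact continuous. Every $f\in\Aut(\B)$ permutes the set $\mathrm{HS}$ of halfspaces, hence acts on $\{0,1\}^{\mathrm{HS}}$ by $(f\xi)(\mathfrak h)=\xi(f^{-1}\mathfrak h)$, preserving $\overline{\B}^R$ and, by the above canonical splitting, the compact invariant subspace $R(\B)$. The action $\Aut(\B)\times\overline{\B}^R\to\overline{\B}^R$ is jointly continuous: if $f_n\to f$ and $\xi_n\to\xi$, then for a fixed halfspace $\mathfrak h$ dual to an edge $e$ the maps $f_n^{-1}$ agree with $f^{-1}$ on $e$ for $n$ large (inversion being continuous in the Polish group $\Aut(\B)$), so $f_n^{-1}\mathfrak h=f^{-1}\mathfrak h$ eventually and $(f_n\xi_n)(\mathfrak h)=\xi_n(f^{-1}\mathfrak h)\to\xi(f^{-1}\mathfrak h)=(f\xi)(\mathfrak h)$, which is exactly convergence in $\overline{\B}^R$. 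Since $\overline{\B}^R$ is compact metrizable, a standard subsequence argument applied to both $f_n$ and $f_n^{-1}$ upgrades joint continuity to continuity of $\Aut(\B)\to\Homeo(\overline{\B}^R)$ for the uniform topology, and restricting homeomorphisms to the invariant compact subspace $R(\B)$ yields continuity of $\Aut(\B)\to\Homeo(R(\B))$, in particular Borel measurability.

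The conceptual content sits entirely in Lemma~\ref{lem:inj} and the boundary dynamics feeding it; the only points in the present argument requiring genuine care are the compatibility between the canonical factorization of a cubical automorphism and the product structure on the regular boundary, and the (soft but not automatic) joint-continuity statement on the Roller compactification. I expect the former to be the main place where one must be careful, since it is where the global (non-irreducible) case is genuinely reduced to the irreducible one.
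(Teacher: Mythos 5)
Your proof is correct. On injectivity you follow essentially the same route as the paper: reduce to the irreducible join factors of $\Gamma$ (using trivial center to rule out cyclic factors), apply Lemma~\ref{lem:inj} to each $\B_{\Gamma_i}$ with Lemma~\ref{lemma:basics-on-B} supplying essentiality and non-elementarity, and then use the canonical product decomposition of \cite{CS} together with $|R(\B_{\Gamma_i})|\ge 2$ to see that an automorphism acting trivially on $R(\B)$ cannot permute the factors. One small remark: you verify hypothesis~(2) of Lemma~\ref{lem:inj} only for elements of $G_{\Gamma_i}$, which matches the lemma as stated, whereas the paper cites Lemma~\ref{lemma:rank}; note that the proof of Lemma~\ref{lem:inj} actually invokes the setwise-implies-pointwise property for the arbitrary automorphism $f$ under consideration (to conclude that its fixed-point set is a subcomplex), so whichever verification one prefers should be made at that level of generality.

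Where you genuinely diverge is measurability. The paper proves Borel measurability directly and combinatorially: fixing the enumeration $\mathrm{HS}=\{h_k\}_{k\in\mathbb{N}}$, it introduces the relation of two halfspaces being $R$-indistinguishable and expresses the condition $d(f_\infty,\mathrm{id})<2^{-n}$ as the countable Borel condition that $h_k$ and $f^{-1}(h_k)$ be $R$-indistinguishable for every $k\le n$. You instead prove the stronger statement that $\Aut(\B)\to\Homeo(R(\B))$ is \emph{continuous}, via joint continuity of the action on $\overline{\B}^R$ (which correctly uses continuity of inversion in the Polish group $\Aut(\B)$ and the fact that the image of a halfspace is determined by the image of a dual oriented edge) followed by a standard compactness argument and restriction to the invariant compact subspace $R(\B)$. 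Your argument is sound and yields more than the paper needs; the paper's version avoids having to discuss the topology of the action on the full Roller compactification. Either approach is acceptable.
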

\begin{proof}
Injectivity in the case where $G$ is irreducible follows from  Lemma~\ref{lem:inj}, using Lemma~\ref{lemma:basics-on-B} to check the first assumption (essentiality and non-elementarity), and Lemma~\ref{lemma:rank} to check the second (setwise and pointwise stabilizers of cubes coincide).

 We now prove injectivity in general, without assuming irreducibility. By \cite[Proposition~2.6]{CS}, every $f\in\Aut(X)$ preserves the product structure $\B=\B_1\times\dots\times\B_k$ (but could potentially permute the factors). Since $|R(\B_i)|\ge 2$ for every $i\in\{1,\dots,k\}$, and since $f$ acts trivially on $R(\B)$, the automorphism $f$ does not permute the factors (as shown by comparing the images of two points of $R(\B)$ that differ on only one coordinate). By using the above for each factor independently, we see that $f=\mathrm{id}$.

We finally prove the measurability of the natural map $\Aut(\B)\to\Homeo(R(\B))$. Given $f\in\Aut(\B)$, we will denote by $f_\infty$ its image by this map. The Polish group $\Homeo(R(\B))$ is metrized with the uniform metric, i.e.\ $d(f,g)=\sup_{\xi\in R(\B)}d(f(\xi),g(\xi))$. It is enough to prove that for every $n\in\mathbb{N}$, the set of all automorphisms $f\in\Aut(\B)$ such that $d(f_\infty,\mathrm{id})<2^{-n}$ is a Borel subset of $\Aut(\B)$. Say that two half-spaces $h,h'$ are \emph{$R$-indistinguishable} if for every $\xi\in R(\B)$, one has $\langle \xi,h\rangle=\langle\xi,h'\rangle$. Then $d(f_\infty,\mathrm{id})<2^{-n}$ if and only if for every $k\le n$, the half-spaces $h_k$ and $f^{-1}(h_k)$ are $R$-indistinguishable, where we recall our enumeration $\mathrm{HS}=\{h_k\}_{k\in\mathbb{N}}$. This is a Borel condition, as it can be expressed by saying that for every $k\le n$, there exists a half-space $h$ that is $R$-indistinguishable from $h_k$, such that for every vertex $v\in h$, one has $f(v)\in h_k$.
\end{proof}

We are now ready to complete our proof of Proposition~\ref{prop:icc}.

\begin{proof}
    Write $G=G_1\times\dots\times G_k$, where no $G_i$ splits as a direct product. For every $i\in\{1,\dots,k\}$, let $\B_i$ be the right-angled building of $G_i$. By Corollary~\ref{cor:kar-sageev}, for every $i\in\{1,\dots,k\}$, the action of $G_i$ on the compact metrizable space $R(\B_i)$ is minimal and strongly proximal, and it is faithful by Lemma~\ref{lemma:aut-homeo}. Therefore, by Lemma~\ref{lemma:bfs}, the inclusion $G\subseteq\Homeo(R(\B))$ is strongly ICC. Now, if $\mu$ is a conjugation-invariant probability measure on $\Aut(\B)$, Lemma~\ref{lemma:aut-homeo} enables us to pushforward $\mu$ to a conjugation-invariant probability measure on $\Homeo(R(\B))$. It follows that $\mu$ is the Dirac mass at $\mathrm{id}$, as desired.
\end{proof}
  
		\section{Action on the right-angled building with amenable stabilizers}\label{sec:action-with-amenable-stab}
		
		The goal of the present section is to prove the following statement.
		
		\begin{prop}
			\label{prop:stabilizer-1}
			Let $G$ be a right-angled Artin group with $|\Out(G)|<+\infty$, let $\B$ be its right-angled building, and let $H$ be a countable group. Assume that $G$ and $H$ are measure equivalent.
			
			Then $H$ acts on $\B$ with amenable vertex stabilizers. If in addition $H$ has bounded finite subgroups, then this action can be chosen to be cocompact. 
		\end{prop}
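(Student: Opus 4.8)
The plan is to combine the \textbf{Fact} recalled in the introduction (from \cite{HH}) with the machinery of Section~\ref{sec:me} and the strong ICC property established in Section~\ref{sec:proximal}. First I would reduce to the case where $G$ has trivial center. Writing $G = G_0 \times \mathbb{Z}^k$ where $G_0$ has trivial center (a standard fact: for a RAAG, the center is the standard abelian subgroup spanned by the vertices adjacent to everything), one notices that $|\Out(G)| < +\infty$ forces $k = 0$ unless one treats the cyclic factors separately; but in fact $|\Out(G)|<+\infty$ already implies $G$ has trivial center except in the degenerate case $G = \mathbb{Z}$, which is easy to handle by hand (and where $\B$ is a point or a line). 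So assume $G$ has trivial center. By Proposition~\ref{prop:icc}, the inclusion $G \subseteq \Aut(\B)$ is strongly ICC. The \textbf{Fact} from \cite{HH}, together with the identification $\Aut(\B_\Gamma) \cong \Aut(\Gamma^e)$ of Lemma~\ref{lemma:iso} and Remark~\ref{rk:iso}, says that every self measure equivalence coupling $\Sigma$ of $G$ admits a $(G\times G)$-equivariant measurable map $\Sigma \to \Aut(\B)$. Hence the hypotheses of Theorem~\ref{theo:taut} are satisfied with $L = \Aut(\B)$: applied to the coupling $\Omega$ between $G$ and $H$, it yields a homomorphism $\iota : H \to \Aut(\B)$ with finite kernel and a measurable almost $(G\times H)$-equivariant map $\theta : \Omega \to \Aut(\B)$. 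Composing $\iota$ with the $H$-action on $\B$ gives the desired action of $H$ on $\B$ (strictly speaking, one should first pass to a finite quotient or simply accept a non-faithful action; since we only claim $H$ \emph{acts}, this is fine).

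Next I would verify that the vertex stabilizers of this $H$-action are amenable. Fix $v \in V\B$; it corresponds to a standard flat $F$ in $G$, and its $G$-stabilizer $G_v$ is (conjugate to) a free abelian group $\mathbb{Z}^{\dim F}$, hence amenable. Since $G$ acts cocompactly on $\B$ and $\Aut(\B)$ has the same orbits on vertices as $G$ (this is essentially the content of Lemma~\ref{lemma:iso}: $\Aut(\B) \cong \Bij_{\fp}(G_\Gamma)$ acts on rank $0$ vertices transitively within $G$-orbits, and vertex types/ranks are preserved by Lemma~\ref{lemma:rank}) — more precisely, after checking that $G$ and $\Aut(\B)$ have the same orbits on $V\B$, Corollary~\ref{cor:ME-coupling-restriction} applies with $K = \B$ and $L = \Aut(\B)$ and gives that $\Omega_v = \theta^{-1}(\Stab_{\Aut(\B)}(v))$ is a measure equivalence coupling between $G_v$ and $H_v := \iota^{-1}(\Stab_{\Aut(\B)}(v))$. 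Since amenability is a measure equivalence invariant (Ornstein--Weiss / \cite{Fur-me}) and $G_v$ is amenable, $H_v$ is amenable. As the $H$-stabilizer of $v$ is contained in $H_v$ (indeed equal to it up to the finite kernel of $\iota$), it is amenable too.

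Finally, suppose $H$ has bounded torsion; I would show the action can be arranged to be cocompact, i.e.\ that $H$ has only finitely many orbits of vertices on $\B$. This is where the bounded-torsion hypothesis enters, via Corollary~\ref{cor:ME-coupling-orbits}(1). For each vertex $v$, the $G$-orbit $G\cdot v$ consists of all vertices of the same rank and type, and the $G$-stabilizer of $v$ is $\mathbb{Z}^{\dim F}$, which is \emph{not} finite when $\dim F \geq 1$ — so Corollary~\ref{cor:ME-coupling-orbits}(1) does not directly apply to higher-rank vertices. The way around this (which I expect to be the main technical point) is to note it suffices to control orbits of rank $0$ vertices: every vertex of $\B$ of rank $\geq 1$ is adjacent to (hence within bounded distance of) a rank $0$ vertex, since the standard flat it represents contains codimension-one subflats all the way down to a point, so finitely many $H$-orbits of rank $0$ vertices together with the fact that $\B$ is not locally finite but has bounded ``vertical'' structure forces finitely many orbits of all vertices — here one uses that the $H$-action preserves rank (by equivariance of $\theta$ and the fact that $\Aut(\B)$ preserves rank, Lemma~\ref{lemma:rank}), that the link of any higher-rank vertex contains only finitely many lower-rank vertices up to the $\Stab$-action, and applies Corollary~\ref{cor:ME-coupling-orbits}(2) inductively on rank to pass from control of rank $0$ orbits to control of orbits of all ranks. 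For the base case, rank $0$ vertices of $\B$ form a single $G$-orbit (the elements of $G$) with trivial, hence finite, $G$-stabilizers, so Corollary~\ref{cor:ME-coupling-orbits}(1) gives finitely many $H$-orbits of rank $0$ vertices; then for a rank $0$ vertex $w$ and the set $V$ of rank $1$ vertices adjacent to $w$ — each with $G$-stabilizer $\mathbb{Z}$, infinite, so one must instead take $V$ to be an orbit of rank $0$ vertices under a higher stabilizer, or reorganize the induction so that at each stage one applies Corollary~\ref{cor:ME-coupling-orbits}(1) to a \emph{finite}-stabilizer vertex. Concretely, I would induct downward: the top-rank vertices have finite (trivial, if $G$ is torsion-free) $G$-stabilizer — no, this fails too since a maximal standard flat has stabilizer $\mathbb{Z}^{\dim}$. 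The correct formulation: realize that $H_v$ acts on the \emph{tips} of branched flats / on rank $0$ vertices inside the relevant parallel set, which do have finite stabilizers, and invoke Corollary~\ref{cor:ME-coupling-orbits}(2) there; combined with finiteness of the combinatorial data of $\Gamma$ this yields cocompactness. I expect assembling this last step cleanly — choosing the right sets $V$ with finite stabilizers at each inductive stage so that Corollary~\ref{cor:ME-coupling-orbits} genuinely applies — to be the main obstacle, and it is likely the reason the proposition is stated with ``can be chosen to be cocompact'' rather than being automatic.
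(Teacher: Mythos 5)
Your overall architecture matches the paper's (strong ICC plus the \textbf{Fact} plus Theorem~\ref{theo:taut} to produce $\iota$ and $\theta$; Corollary~\ref{cor:ME-coupling-restriction} to get a coupling between $G_v$ and $H_v$, hence amenability of $H_v$; Corollary~\ref{cor:ME-coupling-orbits} plus bounded torsion for cocompactness), but there is a genuine gap at the hinge of the argument. You assert that $G$ and $\Aut(\B)$ have the same orbits on $V\B$ and attribute this to Lemma~\ref{lemma:iso}. This is false whenever $\Aut(\Gamma)$ is nontrivial: the $G$-orbit of a vertex of $\B$ is the set of standard flats of the same \emph{type}, whereas a graph automorphism of $\Gamma$ induces an automorphism of $\B$ that permutes types, so for instance the standard lines $\langle a\rangle$ and $\langle b\rangle$ lie in one $\Aut(\B)$-orbit but in two distinct $G$-orbits when some automorphism of $\Gamma$ swaps $a$ and $b$. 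Lemma~\ref{lemma:iso} only identifies $\Aut(\B)$ with $\Aut(\Gamma^e)$ and says nothing about orbits. Since the same-orbit hypothesis is exactly what Corollary~\ref{cor:ME-coupling-restriction} and Corollary~\ref{cor:ME-coupling-orbits} require of the pair $(\G,L)$, your applications of them do not go through as written. The paper's fix is to replace $G$ by the finite-index extension $\hat G=G\rtimes\Aut(\Gamma)$, prove that $\hat G$ and $\Aut(\B)$ do have the same orbits of edges and vertices (Lemma~\ref{lemma:transitivity-edges} and Corollary~\ref{cor:transitivity} --- a genuine local-to-global argument on links, not a formality), and then upgrade the equivariance of the tautening map on self-couplings from $(G\times G)$ to $(\hat G\times\hat G)$ via Kida's Lemma~\ref{lemma:extension} together with the strong ICC property. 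Everything downstream is then run for $\hat G$ (which is measure equivalent to $H$ since $[\hat G:G]<+\infty$); this is the missing ingredient in your proposal.

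The cocompactness step is also left unresolved: you correctly identify that higher-rank vertices have infinite $G$-stabilizers so Corollary~\ref{cor:ME-coupling-orbits}(1) cannot be applied to them, but then cycle through several abandoned strategies without committing to one. The actual resolution is simpler than your discussion suggests: apply Corollary~\ref{cor:ME-coupling-orbits}(1) only to the rank $0$ vertices (a single $\hat G$-orbit with finite stabilizers) to get finitely many $H$-orbits there; then use that the $H$-action preserves ranks, hence the partial order on $V\B$, that every higher-rank vertex dominates some rank $0$ vertex, and that each rank $0$ vertex is dominated by only a bounded number of vertices (a constant depending only on $\Gamma$). Finitely many orbits of rank $0$ vertices then immediately forces finitely many orbits of cells; no induction on rank, and no appeal to part (2) of the corollary, is needed for this step.
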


		Proposition~\ref{prop:stabilizer-1} will be proved by applying the general statements established in Section~\ref{sec:me} to the action of an appropriate finite-index extension $\hat{G}$ of $G$ on $\Aut(\B)$. 
		
		\subsection{A finite-index extension of $G$ with the same transitivity as $\Aut(\Bu)$}\label{sec:finite-index-extension}
		
		Throughout the section, we let $G=G_\Gamma$ be a right-angled Artin group. The (finite) automorphism group $\Aut(\Gamma)$ naturally acts on $G$ by (outer) automorphisms. We let $\hat{G}=G\rtimes\Aut(\Gamma)$.
		
		The action of $\Aut(\Gamma)$ on $G$ sends standard flat to standard flat. Therefore, the $G$-action on its right-angled building $\Bu$ extends to an action of $\hat{G}$ by cubical automorphisms, where an element $(g,\theta)\in \hat{G}$ sends a vertex representing a coset $hG_\Lambda$, where $\Lambda\subseteq\Gamma$ is a complete subgraph, to the vertex representing $g\theta(hG_\Lambda)$.
		
		The importance of the group $\hat{G}$ for us is that it acts on $\Bu$ with the same transitivity as the full automorphism group $\Aut(\Bu)$, as shown by the following lemma. This property will be crucial in order to apply Corollary~\ref{cor:ME-coupling-orbits}. 
		
		\begin{lemma}\label{lemma:transitivity-edges}
			For every edge $e\in E\Bu$, the orbits of $e$ under $\hat{G}$ and under $\Aut(\Bu)$ coincide. 
		\end{lemma}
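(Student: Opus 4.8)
The plan is to show that any edge $e\in E\B$ can be mapped by an element of $\hat G$ to one of finitely many "standard" edges, and then to check that $\Aut(\B)$ cannot merge these finitely many $\hat G$-orbits any further, because the labelling data attached to an edge is an isomorphism invariant recoverable from $\B$ alone. Recall that an edge $e$ of $\B$ corresponds to a codimension-one inclusion $F_1\subsetneq F_2$ of standard flats, i.e.\ to an interval $I_{F_1,F_2}$ with $\dim F_2-\dim F_1=1$; writing $F_2=gG_\Lambda$ with $\Lambda$ a complete subgraph on vertices $v_1,\dots,v_k$ and $F_1$ obtained by deleting the $v_i$-coordinate for some $i$, the edge is determined by the coset $gG_\Lambda$, the complete subgraph $\Lambda$, and the distinguished vertex $v_i\in V\Lambda$. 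First I would use the $G$-action (by left translations) to arrange $g=\mathrm{id}$, so $F_2=G_\Lambda$; thus every edge is $G$-equivalent to one attached to the data $(\Lambda,v_i)$ with $\Lambda$ a complete subgraph and $v_i\in V\Lambda$. There are finitely many such pairs $(\Lambda,v_i)$ since $\Gamma$ is finite, and $\Aut(\Gamma)\subseteq\hat G$ acts on this finite set of pairs; so $\hat G$ has finitely many orbits of edges, one for each $\Aut(\Gamma)$-orbit of pairs $(\Lambda,v_i)$.

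Next I would show $\Aut(\B)$ does not collapse these orbits. The key point is that $\Aut(\B)$ preserves ranks of vertices (Lemma~\ref{lemma:rank}), hence preserves the rank function on edges (an edge from a rank-$(k-1)$ vertex to a rank-$k$ vertex), and more importantly that the local combinatorial type of an edge is an invariant. Concretely, given an edge $e$ from $F_1$ to $F_2$ with $F_2$ of type $\Lambda$ and the deleted coordinate of type $v_i$, the induced subgraph $\Lambda\subseteq\Gamma$ can be read off from the structure of $\lk^+(F_1,\B)$ and $\lk^-(F_2,\B)$ together with the way $e$ sits among the cubes containing it — essentially the argument already used in the proof of Lemma~\ref{lemma:rank}, extended one level: the higher-dimensional cubes containing $e$ record which vertices of $\Gamma$ commute with all of $\Lambda$, and the link structure at the endpoints records $\Lambda$ itself and the position of $v_i$ inside it. So if $\alpha\in\Aut(\B)$ sends $e$ to $e'$, then $e$ and $e'$ carry the same pair $(\Lambda,v_i)$ up to an automorphism of $\Gamma$; but then an element of $\hat G$ already realizes this, so the $\hat G$-orbit and the $\Aut(\B)$-orbit of $e$ coincide.

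The main obstacle I anticipate is making precise, in a clean way, that the isomorphism type of the pair $(\Lambda, v_i\in V\Lambda)$ is genuinely recoverable from the intrinsic cube-complex structure of $\B$ near $e$ — i.e.\ producing the combinatorial invariant that distinguishes the finitely many $\hat G$-orbits. I expect the cleanest route is: an edge $e$ from $F_1$ (rank $k-1$) to $F_2$ (rank $k$) determines, for each vertex $w$ of $\Gamma$ commuting with all of $\Lambda$, a square or higher cube containing $e$; the collection of maximal cubes containing $e$ thus encodes $\Lambda\cup\Lambda^{\perp}$ with a distinguished subgraph $\Lambda$ and a distinguished vertex of $\Lambda$, all up to $\Aut(\Gamma)$. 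Since $\Aut(\B)$ preserves cubes, incidences, and ranks, it preserves this entire packet of data; combined with the transitivity part this forces equality of orbits. An alternative, and perhaps more economical, argument would be to invoke the isomorphism $\Aut(\B)\cong\Aut(\Gamma^e)$ from Lemma~\ref{lemma:iso} together with $\Aut(\Gamma^e)=G\rtimes\Aut(\Gamma)$ (which holds when $|\Out(G)|<\infty$, cf.\ \cite{Hua}), so that $\Aut(\B)=\hat G$ outright and the lemma becomes immediate; I would present this shorter argument if the hypothesis $|\Out(G)|<\infty$ is in force here, and fall back on the direct combinatorial argument otherwise.
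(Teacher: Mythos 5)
Your reduction to finitely many $\hat G$-orbits of edges, indexed by $\Aut(\Gamma)$-orbits of pairs $(\Lambda,v_i)$ with $\Lambda$ a clique and $v_i\in V\Lambda$, is correct but is not where the difficulty lies. The gap is in the second half: you claim that the pair $(\Lambda,v_i)$ is recoverable \emph{up to an automorphism of $\Gamma$} from the local data at $e$ (the cubes containing $e$ and the links of its endpoints). That local data only determines $(\Lambda,v_i)$ together with $\Lambda^\perp$ up to \emph{abstract} isomorphism of the local structure, and two pairs can have isomorphic local pictures without being related by any automorphism of $\Gamma$ (e.g.\ two single vertices $u,w$ lying in different $\Aut(\Gamma)$-orbits but with $\lk(u)\cong\lk(w)$ as abstract graphs: the edges of $\B$ of types $(\{u\},u)$ and $(\{w\},w)$ then have isomorphic neighbourhoods of any bounded combinatorial radius you have named, yet lie in different $\hat G$-orbits). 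So the invariant you propose does not separate the $\hat G$-orbits, and the asserted implication ``$\alpha(e)=e'$ $\Rightarrow$ same pair up to $\Aut(\Gamma)$'' is exactly the statement to be proved, not a consequence of rank-preservation plus local combinatorics. Your fallback argument is also unavailable: $\Aut(\Gamma^e)$ is \emph{not} $G\rtimes\Aut(\Gamma)$ even when $|\Out(G)|<+\infty$ — the paper emphasizes that it is a non-locally-compact Polish group containing $\mathfrak{S}_\infty$ — so $\Aut(\B)\ne\hat G$, and indeed the whole point of the lemma is that these two very different groups nevertheless have the same edge orbits. (Note also that the lemma is stated without assuming $|\Out(G)|<+\infty$.)

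The paper's proof avoids defining an invariant altogether and instead directly produces the element of $\hat G$. Given $h\in\Aut(\B)$ with $he=e'$, it composes $h^{-1}$ with left translations by elements of $G$ so that the resulting automorphism fixes a rank $0$ vertex $v_x$ and carries $e'$ to an edge $\tilde e$ in the $G$-orbit of $e$, with both $e'$ and $\tilde e$ contained in the corner $K_x$ (the union of all cubes containing $v_x$; every edge of $\B$ lies in such a corner, since any point $p\in F_1$ gives a cube $I_{\{p\},F_2}$ containing $e$). The decisive point is that $\lk(v_x,K_x)$ is the flag completion of the \emph{entire} graph $\Gamma$, so an automorphism of $\B$ fixing $v_x$ induces, on $K_x$, a permutation of the edge labels that is automatically a global automorphism of $\Gamma$ — this is the step your local invariant cannot replace, because it uses that the automorphism is defined coherently on the whole corner, not just near $e$. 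That automorphism of $\Gamma$ is realized by an element of $\Aut(\Gamma)\subseteq\hat G$ (conjugated by $x$), which finishes the proof. If you want to keep your framing, you would need to replace your local invariant by this corner-rigidity statement at a rank $0$ vertex.
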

		
		\begin{proof}
			It suffices to show that if $he=e'$ for some $h\in \Aut(\B)$, then there exists $g\in \hat G$ such that $ge=e'$. Recall that vertices of $\B$ are in one-to-one correspondence with standard flats in $G$, and elements in $\Aut(\mathbb B)$ can be viewed as flat-preserving bijections of $G$. Edges of $\B$ are in one-to-one correspondence with codimension one inclusions of standard flats $F_1\subseteq F_2$. Let $F_1\subseteq F_2$ and $F'_1\subseteq F'_2$ be the inclusions of standard flats associated to the edges $e$ and $e'$, respectively. Since every automorphism of $\B$ preserves the ranks of vertices (Lemma~\ref{lemma:rank}), we have  $h(F_1,F_2)=(F'_1,F'_2)$.
			
			Let $x\in F'_1$ be a vertex. Then there is a unique standard flat $\tilde F_1$ containing $x$ such that $F_1$ and $\tilde F_1$ have the same type. Likewise there exists a unique standard flat $\tilde F_2$ containing $\tilde F_1$ which has the same type as $F_2$. Then $(F_1,F_2)$ and $(\tilde F_1,\tilde{F}_2)$ differ by a left translation, i.e.\ there exists $g_1\in G$ such that $g_1 (F_1,F_2)=(\tilde F_1,\tilde F_2)$. Thus $g_1h^{-1}(F'_1,F'_2)=(\tilde F_1,\tilde F_2)$. 
			
			Let $x'=g_1h^{-1}(x)\in \tilde F_1$. As $x$ and $x'$ both belong to $\tilde F_1$, there exists $g_2\in G$ such that $g_2(x')=x$ and $g_2 (\tilde F_1,\tilde F_2)=(\tilde F_1,\tilde F_2)$. Thus $g_2g_1h^{-1}(F'_1,F'_2)=(\tilde F_1,\tilde F_2)$. Also $g_2g_1h^{-1}(x)=x$ belongs to $F'_1\cap \tilde F_1$. Let $v_x$ be the vertex in $\B$ associated to $x$, and let $\tilde e$ be the edge associated to $(\tilde F_1,\tilde F_2)$. Let $K_x$ be the union of all cubes in $\B$ containing $v_x$. Note that $\tilde e,e'\subset K_x$, and $g_2g_1h^{-1}$ stabilizes $K_x$, sending $e'$ to $\tilde e$. Note that $\lk(v_x,K_x)$ is isomorphic to the \emph{flag completion} of the defining graph $\Gamma$ of $G$, i.e.\ vertices of $\lk(v_x,K_x)$  are in one-to-one correspondence with vertices of $\Gamma$, and a collection of vertices in $\lk(v_x,K_x)$ span a simplex whenever the associated vertices in $\Gamma$ span a complete subgraph.
 So the map $(g_2g_1h^{-1})_{|K_x}:K_x\to K_x$, and its conjugate $(x^{-1}g_2g_1h^{-1}x)_{|K_\mathrm{id}}:K_{\mathrm{id}}\to K_{\mathrm{id}}$, are induced by an automorphism of $\Ga$. This means that $x^{-1}g_2g_1h^{-1}x$ has the same action on $K_{\mathrm{id}}$ as an element of $\Aut(\Gamma)\subset \hat G$.
    Thus there exists $g_3\in \hat G$ such that $g_3(e')=\tilde e$, and hence $g^{-1}_3g_1(e)=e'$, as desired.
		\end{proof}
		
		\begin{cor}\label{cor:transitivity}
			For every vertex $v\in V\B$, the orbits of $v$ under $\hat{G}$ and under $\Aut(\B)$ coincide. 
		\end{cor}

		\begin{proof}
			Let $v'\in V\B$, and assume that there exists $h\in\Aut(\B)$ such that $hv=v'$. Let $e$ be an edge that contains $v$, and let $e'=he$. By Lemma~\ref{lemma:transitivity-edges}, there exists $g\in\hat{G}$ such that $e'=ge$. Since every automorphism of $\B$ preserves the ranks of vertices (Lemma~\ref{lemma:rank}), we deduce that $v'=gv$, and the corollary follows.
		\end{proof}
		
		We will also need to know that $\hat{G}$ and $\Aut(\B)\simeq\Aut(\Gamma^e)$ have the same orbits of vertices when acting on the extension graph, as shown by the following lemma.
		
		\begin{lemma}\label{lemma:transitivity-extension-graph}
		Assume that $|\Out(G)|<+\infty$. Then for every $\sfv\in V\Gamma^e$, the orbits of $\sfv$ under $\hat{G}$ and $\Aut(\Gamma^e)$ coincide.
		\end{lemma}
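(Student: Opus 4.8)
The plan is to transfer the statement to Corollary~\ref{cor:transitivity} through the isomorphism $\Theta\colon\Aut(\Gamma^e)\to\Aut(\B)$ provided by Lemma~\ref{lemma:iso}, which is precisely where the hypothesis $|\Out(G)|<+\infty$ is used. Recall that the vertices of $\Gamma^e$ are in bijection with parallelism classes of standard lines in $G$, and that every standard line is a rank $1$ vertex of $\B$; this yields a surjection $p$ from the set of rank $1$ vertices of $\B$ onto $V\Gamma^e$. The key bookkeeping point is that both relevant actions are compatible with $p$: for $g\in\hat{G}$ one has $g\cdot p(\ell)=p(g\ell)$ by the very definition of the $\hat{G}$-action on $\Gamma^e$ (via conjugation on the $G$-factor and via the given action of $\Aut(\Gamma)$), while for $f\in\Aut(\B)$ one has $\Phi(f)\cdot p(\ell)=p(f\ell)$, because $\Phi$ was defined in Section~\ref{sec:autos} exactly through the bijection induced on parallelism classes of standard lines, and the rank $0$ vertices adjacent (in $\B$) to a rank $1$ vertex $\ell$ are precisely the points of $\ell$.

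First I would fix $\sfv\in V\Gamma^e$ and $h\in\Aut(\Gamma^e)$, and set $\sfv'=h\sfv$. The inclusion $\hat{G}\subseteq\Aut(\Gamma^e)$ makes one direction trivial, so it suffices to produce $g\in\hat{G}$ with $g\sfv=\sfv'$. Choose a standard line $\ell$ with $\Delta(\ell)=\{\sfv\}$, i.e.\ $p(\ell)=\sfv$. Consider $\ell':=\Theta(h)(\ell)$. Since $\Theta(h)$ is a cubical automorphism of $\B$, it preserves ranks of vertices by Lemma~\ref{lemma:rank}, so $\ell'$ is again a rank $1$ vertex, i.e.\ a standard line, and by the compatibility above $p(\ell')=\Phi(\Theta(h))\cdot p(\ell)=h\sfv=\sfv'$.

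Next I would apply Corollary~\ref{cor:transitivity} to the automorphism $\Theta(h)\in\Aut(\B)$ and the rank $1$ vertex $\ell$: since $\Theta(h)$ sends $\ell$ to $\ell'$, the corollary gives an element $g\in\hat{G}$ with $g\ell=\ell'$ (as vertices of $\B$). Applying $p$ and using compatibility of the $\hat{G}$-action with $p$, we obtain $g\sfv=g\cdot p(\ell)=p(g\ell)=p(\ell')=\sfv'$, as required.

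I do not expect a genuine obstacle in this argument: the only care needed is in setting up the projection $p$ and checking that it intertwines the $\hat{G}$-action, the $\Aut(\B)$-action and the $\Aut(\Gamma^e)$-action correctly, together with the observation that $\Theta$ realizes any abstract automorphism of $\Gamma^e$ by a flat-preserving bijection of $G$ (this is the only place $|\Out(G)|<+\infty$ is invoked, via Lemma~\ref{lemma:iso}). Once these identifications are in place, the statement follows in one line from Corollary~\ref{cor:transitivity}.
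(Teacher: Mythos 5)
Your proposal is correct and follows essentially the same route as the paper: transfer $h$ to a cubical automorphism of $\B$ via the isomorphism of Lemma~\ref{lemma:iso}, note that it sends a $\sfv$-line to a $\sfv'$-line (both rank $1$ vertices of $\B$), and invoke Corollary~\ref{cor:transitivity} to produce $g\in\hat{G}$ matching these rank $1$ vertices, hence matching $\sfv$ and $\sfv'$. The extra care you take in spelling out the projection $p$ and its equivariance is exactly the bookkeeping the paper leaves implicit.
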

		
		\begin{proof}
			Let $h\in\Aut(\Gamma^e)$ and let $\sfw=h\sfv$. We aim to prove that there exists $g\in\hat G$ such that $\sfw=g\sfv$. 
			
			When viewed as an automorphism of $\B$ through the isomorphism $\Aut(\Gamma^e)\to\Aut(\B)$ provided by Lemma~\ref{lemma:iso}, the element $h$ sends a $\sfv$-line $\ell$ (associated to a rank $1$ vertex $v\in V\B$) to some $\sfw$-line $\ell'$ (associated to a rank $1$ vertex $v'\in V\B$). By Corollary~\ref{cor:transitivity}, there exists $g\in\hat G$ with $gv=v'$. Since $g$ sends the $\sfv$-line $\ell$ to the $\sfw$-line $\ell'$, we have $g\sfv=\sfw$, which concludes our proof. 
		\end{proof}

		\subsection{Reduction of self couplings to the right-angled building}
		
 The following lemma establishes the crucial reduction property of self-couplings from Theorem~\ref{theo:taut}, for the action of $\hat{G}$ on $\B$.

		\begin{lemma}\label{lemma:self-coupling}
			Let $G$ be a right-angled Artin group with $|\Out(G)|<+\infty$, and let $\Sigma$ be a self measure equivalence coupling of $\hat G$.
			
			Then there exists a measurable almost $(\hat G\times \hat G)$-equivariant map $\Sigma\to\Aut(\B)$.
		\end{lemma}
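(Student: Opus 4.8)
The plan is to assemble three facts already available: the reduction of self-couplings of $G$ to its extension graph established in \cite{HH}, the identification $\Aut(\Gamma^e)\simeq\Aut(\B)$ of Lemma~\ref{lemma:iso}, and Kida's extension principle (Lemma~\ref{lemma:extension}) combined with the strong ICC property of $\Aut(\B)$ proved in Proposition~\ref{prop:icc}. I will assume throughout that $G$ has trivial centre; this is automatic under the hypothesis $|\Out(G)|<+\infty$ unless $G\cong\mathbb Z$ (if a vertex of the defining graph were adjacent to all others while the graph had at least two vertices, a transvection onto that vertex would generate an infinite subgroup of $\Out(G)$), and the degenerate case $G\cong\mathbb Z$, for which $\hat G$ is infinite dihedral, is handled separately. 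The first observation is that $\Sigma$ is also a self measure equivalence coupling of $G$: restricting the $(\hat G\times\hat G)$-action to $G\times G$ keeps both factor actions free, and since $G$ has finite index in $\hat G$, a Borel fundamental domain for $G$ acting on $\Sigma$ is a finite union of translates of a fundamental domain for $\hat G$, hence of finite measure.

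Next I would apply the Fact recalled before the statement (proved in \cite{HH} using $|\Out(G)|<+\infty$): since $\Sigma$ is a self-coupling of $G$, there is a measurable almost $(G\times G)$-equivariant map $\Sigma\to\Aut(\Gamma^e)$, where $G\times G$ acts by left/right multiplication. Composing with the isomorphism $\Theta\colon\Aut(\Gamma^e)\to\Aut(\B)$ of Lemma~\ref{lemma:iso}, which is $(G\times G)$-equivariant by Remark~\ref{rk:iso}, yields a measurable map $\Phi\colon\Sigma\to\Aut(\B)$ that is almost $(G\times G)$-equivariant for the left/right multiplication action of $G\times G$ on $\Aut(\B)$.

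It remains to upgrade the equivariance of $\Phi$ from $G\times G$ to $\hat G\times\hat G$, which is exactly what Lemma~\ref{lemma:extension} provides once its hypotheses are checked with $L=\Aut(\B)$. By Section~\ref{sec:finite-index-extension}, $\hat G=G\rtimes\Aut(\Gamma)$ acts on $\B$ by cubical automorphisms, extending the $G$-action; this action is faithful, since an element acting trivially fixes every rank $0$ vertex of $\B$, i.e.\ every element of $G$, forcing both the $G$-part and the graph-automorphism part to be trivial. Thus $G\trianglelefteq\hat G$ are countable subgroups of $L=\Aut(\B)$ with $[\hat G:G]=|\Aut(\Gamma)|<+\infty$, and the inclusion $G\subseteq\Aut(\B)$ is strongly ICC by Proposition~\ref{prop:icc} (here the triviality of the centre of $G$ is used). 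Lemma~\ref{lemma:extension} then gives that $\Phi$ is almost $(\hat G\times\hat G)$-equivariant, which is the required map. The real content of this lemma lies in the imported ingredients --- Proposition~\ref{prop:icc}, whose proof goes through proximal dynamics on the regular boundary of $\B$, and the reduction to the extension graph from \cite{HH} --- while the only genuinely new step is the passage to the finite-index extension $\hat G$, for which Kida's Lemma~\ref{lemma:extension} is tailor-made; so the main (mild) obstacle is simply to have these pieces in the right form, together with the care needed to discard the degenerate case $G\cong\mathbb Z$.
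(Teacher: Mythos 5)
Your overall skeleton coincides with the paper's: pass from $\hat G$ to $G$, produce a measurable almost $(G\times G)$-equivariant map to $\Aut(\Gamma^e)\simeq\Aut(\B)$, and then upgrade the equivariance to $\hat G\times\hat G$ via Lemma~\ref{lemma:extension} together with the strong ICC property of Proposition~\ref{prop:icc}. Your last step, including the verification that $\hat G$ acts faithfully on $\B$ and the remark about trivial centre, is exactly how the paper concludes.

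The gap is in the middle: you invoke the ``Fact'' as if the coupling-level statement (a measurable almost $(G\times G)$-equivariant map $\Sigma\to\Aut(\Gamma^e)$) were already proved in \cite{HH}, but it is not available there in that form. The results of \cite{HH} are phrased in the language of measured groupoids: what one gets from the proof of \cite[Theorem~3.19]{HH} is a Borel map $\theta:V\Gamma^e\times U\to V\Gamma^e$, defined only on the intersection $U$ of two well-chosen fundamental domains, characterized by an identification of pulled-back vertex stabilizers $\rho_\ell^{-1}(\Stab(\sfv))=\rho_r^{-1}(\Stab(\sfw))$ on pieces of a countable partition. Turning this into the map you want is the bulk of the paper's proof and requires three non-trivial steps: (i) showing that for a.e.\ $u\in U$ the slice $\theta(\cdot,u)$ is an automorphism of $\Gamma^e$, giving $\bar\theta:U\to\Aut(\Gamma^e)$; (ii) proving the groupoid equivariance $\bar\theta(r(g))=\rho_r(g)\bar\theta(s(g))\rho_\ell(g)^{-1}$, which uses a partition argument of Kida \cite[Lemma~5.5]{Kid-me} together with the uniqueness statement \cite[Lemma~3.14]{HH}; and (iii) extending $\bar\theta$ from $U$ to a conull $(G\times G)$-invariant subset of $\Sigma$ by the assignment $(g_1,g_2)x\mapsto g_1\bar\theta(x)^{-1}g_2^{-1}$ and checking this is well defined (Kida \cite[Theorem~5.6]{Kid-me}). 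Without carrying out this translation, the key input to your argument is not justified, so as written the proposal outsources the hardest part of the lemma to a citation that does not deliver it.
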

		
		Our proof of Lemma~\ref{lemma:self-coupling} is essentially a translation of the main results of our earlier paper \cite{HH} from the language of measured groupoids to the language of self-couplings, using the dictionary between measure equivalence and stable orbit equivalence developed by Furman \cite{Fur-oe} and recalled in Section~\ref{sec:intro-me}, and some arguments from the work of Kida \cite{Kid-me}. 
		
		\begin{proof}
			Since $G$ has finite index in $\hat{G}$, the space $\Sigma$ is also a self measure equivalence coupling of $G$.
			
			Let $X_\ell,X_r\subseteq \Sigma$ be respective fundamental domains for the actions of $G_\ell=G\times\{1\}$ and $G_r=\{1\}\times G$ on $\Sigma$. In view of \cite[Lemma~2.27]{Kid-survey}, we can (and will) choose $X_\ell,X_r$ so that $(G\times G)\cdot (X_\ell\cap X_r)=\Sigma$ up to null sets. Let $U=X_\ell\cap X_r$.
	
  As recalled in Section~\ref{sec:intro-me}, there are natural measure-preserving actions $G_\ell\actson X_r$ and $G_r\actson X_\ell$, obtained through the identifications $X_r\approx G_r\backslash\Sigma$ and $X_\ell\approx G_\ell\backslash \Sigma$. Their orbits coincide on $U$, so the two corresponding measured groupoids $G_\ell\ltimes X_r$ and $G_r\ltimes X_\ell$ restrict to isomorphic measured groupoids on $U$. We denote by $\calg$ this common measured groupoid over $U$, which is naturally equipped (up to restricting to a conull Borel subset of $U$) with two cocycles $\rho_\ell:\calg\to G_\ell$ and $\rho_r:\calg\to G_r$. 
			
	 The first two paragraphs of the proof of \cite[Theorem~3.19]{HH} yield a Borel map $\theta:V\Gamma^e\times U\to V\Gamma^e$ such that for every $\sfv\in V\Gamma^e$, there exists a partition $U^*=\dunion_{i\in I}U_i$ of a conull Borel subset $U^*\subseteq U$ into at most countably many Borel subsets satisfying the following properties:
   \begin{enumerate}
       \item for every $i\in I$, the restriction $\theta|_{\{\sfv\}\times U_i}$ takes constant value, denoted $\sfw_i\in V\Gamma^e$;
       \item for every $i\in I$, we have $\rho_\ell^{-1}(\Stab_{G_\ell}(\sfv))_{|U_i}=\rho_r^{-1}(\Stab_{G_r}(\sfw_i))_{|U_i}$. 
   \end{enumerate}
   Moreover, it is shown in the proof of \cite[Theorem~3.19]{HH} that for almost every $u\in U$, the map $\theta(\cdot,u)$ gives an element in $\Aut(\Gamma^e)$, and this gives a measurable map $\bar\theta: U\to\Aut(\Gamma^e)$ (up to replacing $U$ by a conull Borel subset). 
   
   We claim that the map $\bar\theta$ satisfies the following equivariance: up to restricting $U$ to a conull Borel subset, for every $g\in\calg$, one has $\bar\theta(r(g))=\rho_r(g)\bar\theta(s(g))\rho_\ell(g)^{-1}$. The argument comes from \cite[Lemma~5.5]{Kid-me}, and is the following. It is enough to prove this equivariance on a Borel subset $B\subseteq\calg$ where the values of $\rho_\ell$ and $\rho_r$ are constant, and which induces a Borel isomorphism between $s(B)$ and $r(B)$ -- indeed $\calg$ is covered by countably many such Borel subsets. We denote by $h_\ell,h_r\in G$ the respective values of $\rho_\ell,\rho_r$ on $B$. Let $\sfv\in V\Gamma^e$. Up to a countable Borel partition of $B$, we can further assume that $\bar\theta(\cdot)(\sfv)$ is constant on $s(B)$ (with value denoted by $\sfw$), and that $\bar\theta(\cdot)(h_\ell\sfv)$ is constant on $r(B)$ (with value denoted by $\sfw'$). Then $\rho_\ell^{-1}(\Stab_G(\sfv))_{|s(B)}=\rho_r^{-1}(\Stab_G(\sfw))_{|s(B)}$ by the definition of $\bar\theta$.  Thus $\rho_{\ell}^{-1}(\Stab_G(h_\ell\sfv))_{|r(B)}$ is both equal to $\rho_r^{-1}(\Stab_G(h_r\sfw))_{|r(B)}$ (by the choice of $B$) and to $\rho_r^{-1}(\Stab_G(\sfw'))_{|r(B)}$ (by the definition of $\bar\theta$).
   Hence $h_r\sfw=\sfw'$, using the uniqueness statement \cite[Lemma~3.14]{HH}. In other words, for every $g\in B$, we have proved that $\bar\theta(r(g))(\rho_\ell(g)\sfv)=\rho_r(g)(\bar\theta(s(g))(\sfv))$. As $V\Gamma^e$ is countable, this is exactly the desired equivariance.
   
Under the natural isomorphism between $\Aut(\Gamma^e)$ and $\Aut(\B)$ recalled in Section~\ref{sec:extension-graph}, we view $\bar\theta$ as a map $U\to\Aut(\B)$, which satisfies the same equivariance (see Remark~\ref{rk:iso}).
   
   Recall that by our choice of $U$, there exists a conull Borel subset $\Sigma^*\subseteq\Sigma$ such that $(G\times G)\cdot U=\Sigma^*$. We now claim, following \cite[Theorem~5.6]{Kid-me}, that the assignment $(g_1,g_2)x\mapsto g_1\bar\theta(x)^{-1}g_2^{-1}$, with $x\in U$, is a well-defined $(G\times G)$-equivariant Borel map $\Sigma^*\to\Aut(\B)$. The only point we need to check is that if $(g_1,g_2)x=y$ with $x,y\in U$, then $g_1\bar\theta(x)^{-1}g_2^{-1}=\bar\theta(y)^{-1}$. This is precisely the contents of the equivariance proved at the level of groupoids, so our claim is proved.
   
   Finally, using that $G$ is normal in $\hat{G}$, Lemma~\ref{lemma:extension} and  Proposition~\ref{prop:icc} ensure that $\bar{\theta}$ is in fact $(\hat{G}\times\hat{G})$-equivariant, which completes our proof.
			\end{proof}

		\subsection{Proof of Proposition~\ref{prop:stabilizer-1}}
		
		Proposition~\ref{prop:stabilizer-1} follows from the combination of Lemmas~\ref{lemma:action-on-B},~\ref{lemma:restricted-coupling} and Corollary~\ref{cor:cocompact} below.
		
		\begin{lemma}\label{lemma:action-on-B}
			Let $G$ be a right-angled Artin group with $|\Out(G)|<+\infty$, let $H$ be a countable group, and let $\Omega$ be a measure equivalence coupling between $\hat{G}$ and $H$. 
			
			Then there exist a group homomorphism $\iota:H\to\Aut(\B)$ with finite kernel and a measurable almost $(\hat G\times H)$-equivariant map $\theta:\Omega\to\Aut(\B)$, i.e.\ such that for all  $(g,h)\in \hat G\times H$ and a.e.\ $\omega\in\Omega$, one has $\theta((g,h)\omega)=g\theta(\omega)\iota(h)^{-1}$.
		\end{lemma}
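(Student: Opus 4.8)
The plan is to apply Theorem~\ref{theo:taut} with $L=\Aut(\B)$ and $\G=\hat{G}$, so the main task is to verify the two hypotheses of that theorem for the inclusion $\hat{G}\subseteq\Aut(\B)$: the strong ICC property, and the fact that every self measure equivalence coupling of $\hat{G}$ admits a measurable almost $(\hat{G}\times\hat{G})$-equivariant map to $\Aut(\B)$. The second of these is exactly Lemma~\ref{lemma:self-coupling}, so nothing further is needed there. For the first, I would argue as follows. Proposition~\ref{prop:icc} gives that the inclusion $G\subseteq\Aut(\B)$ is strongly ICC, provided $G$ has trivial center; since $|\Out(G)|<+\infty$, the defining graph of $G$ has no vertex whose star is contained in the star of another vertex, and in particular $\Gamma$ has no central vertex, so $G$ indeed has trivial center and Proposition~\ref{prop:icc} applies. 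Then I would invoke the elementary fact that if $\G\subseteq L$ is strongly ICC and $\G$ is normal of finite index in $\hat{\G}\subseteq L$, then $\hat{\G}\subseteq L$ is strongly ICC as well: a conjugation-invariant probability measure on $L$ for $\hat{G}$ is a fortiori conjugation-invariant for $G$, hence equals $\delta_{\mathrm{id}}$. (This is the same principle underlying Lemma~\ref{lemma:extension}.)

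Concretely, I would write: by Lemma~\ref{lemma:self-coupling}, every self measure equivalence coupling $\Sigma$ of $\hat{G}$ admits a measurable almost $(\hat{G}\times\hat{G})$-equivariant map to $\Aut(\B)$. Since $|\Out(G)|<+\infty$, the group $G$ has trivial center, so Proposition~\ref{prop:icc} shows the inclusion $G\subseteq\Aut(\B)$ is strongly ICC; as $G$ is normal of finite index in $\hat{G}$, it follows that $\hat{G}\subseteq\Aut(\B)$ is strongly ICC. Therefore Theorem~\ref{theo:taut}, applied with $L=\Aut(\B)$ and $\G=\hat{G}$ and $\sfH=H$ (note $H$ is measure equivalent to $\hat{G}$ since it is measure equivalent to $G$ and $G$ has finite index in $\hat{G}$, using e.g.\ \cite[Lemma~2.27]{Kid-survey} or the transitivity of measure equivalence), produces the desired homomorphism $\iota:H\to\Aut(\B)$ with finite kernel and the measurable almost $(\hat{G}\times H)$-equivariant map $\theta:\Omega\to\Aut(\B)$ with $\theta((g,h)\omega)=g\theta(\omega)\iota(h)^{-1}$.

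The main obstacle, such as it is, is bookkeeping rather than mathematical depth: one must make sure $H$ is genuinely measure equivalent to $\hat{G}$ (immediate from transitivity of measure equivalence and $[\hat{G}:G]<\infty$), that the "trivial center" hypothesis of Proposition~\ref{prop:icc} is available (it follows from $|\Out(G)|<+\infty$, since a central generator would give an infinite-order central element and hence a nontrivial partial conjugation/transvection contributing to $\Out(G)$), and that the strong ICC property genuinely passes from $G$ to the finite-index overgroup $\hat{G}$ inside the same Polish group $\Aut(\B)$ — this last point is the one-line observation that any $\hat{G}$-conjugation-invariant probability measure on $\Aut(\B)$ is in particular $G$-conjugation-invariant. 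Once these three points are in place, the statement is a direct citation of Theorem~\ref{theo:taut}.
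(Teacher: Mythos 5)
Your proof is correct and follows the paper's argument exactly: the paper's own proof is a two-line citation of Theorem~\ref{theo:taut}, Proposition~\ref{prop:icc} and Lemma~\ref{lemma:self-coupling}, and you simply make explicit the two steps it leaves implicit (why the center of $G$ is trivial, and why strong ICC passes from $G$ to the finite-index overgroup $\hat G$ inside $\Aut(\B)$). The only caveat is that your claim that $|\Out(G)|<+\infty$ forces trivial center fails in the degenerate case $G=\mathbb{Z}$ (where $\Out(G)\cong\mathbb{Z}/2\mathbb{Z}$ is finite but the center is all of $G$, and strong ICC genuinely fails) --- a gap shared with the paper's own proof, and immaterial since the cyclic case is treated separately in the proof of the main theorem.
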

		
		\begin{proof}
			This is a consequence of Theorem~\ref{theo:taut}, applied with $\G=\hat{G}$ and  $L=\Aut(\B)$, using that the $\hat{G}$-action on $\B$ is faithful. Indeed the inclusion $\hat G\subseteq\Aut(\B)$  is strongly ICC by Proposition~\ref{prop:icc}, and self-couplings of $\hat{G}$ map to $\Aut(\B)$ by Lemma~\ref{lemma:self-coupling}.
		\end{proof}

		Let $(\Omega,\mu)$ be a measure equivalence coupling between $\hat{G}$ and $H$. Lemma~\ref{lemma:action-on-B} gives a measurable group homomorphism $\iota:H\to\Aut(\B)$ and a measurable almost $(\hat{G}\times H)$-equivariant map $\theta:\Omega\to\Aut(\B)$. In particular $\iota$ provides an action of $H$ on $\B$.
		
		Let $v\in V\B$, let $\Omega_v=\theta^{-1}(\Stab_{\Aut(\B)}(v))$, let $H_v=\iota^{-1}(\Stab_{\Aut(\B)}(v))$, and let $G_v$ (resp.\ $\hat{G}_v$) be the stabilizer of $v$ in $G$ (resp.\ $\hat{G}$). The equivariance of $\theta$ ensures that $\Omega_v$ is invariant under the action of $\hat{G}_v\times H_v$. 
		
		\begin{lemma}\label{lemma:restricted-coupling}
			The space $\Omega_v$ is a measure equivalence coupling between $G_v$ and $H_v$, in particular $H_v$ is amenable. 
   
   In addition, every fundamental domain for the action of $\hat G_v$ (resp.\ $G_v$, resp.\ $H_v$) on $\Omega_v$ is contained in a fundamental domain for the action of $\hat G$ (resp. $G$, resp.\ $H$) on $\Omega$.
		\end{lemma}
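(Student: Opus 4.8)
The plan is to deduce this lemma directly from the general restriction machinery of Section~\ref{sec:me}, applied with $L=\Aut(\B)$ and $K=\B$, taking $\G=\hat{G}$ and $\sfH=H$ together with the homomorphism $\iota\colon H\to\Aut(\B)$ and the almost $(\hat G\times H)$-equivariant map $\theta\colon\Omega\to\Aut(\B)$ provided by Lemma~\ref{lemma:action-on-B}. First I would check that the hypotheses of Corollary~\ref{cor:ME-coupling-restriction} are met: the right-angled building $\B$ has countably many cells, the $\hat G$-action on $\B$ is faithful (as already used in the proof of Lemma~\ref{lemma:action-on-B}), and — the only substantive point — the actions of $\hat G$ and of $\Aut(\B)$ on $V\B$ have the same orbits, which is precisely Corollary~\ref{cor:transitivity}. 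Applying Corollary~\ref{cor:ME-coupling-restriction} with $\Stab_{\Aut(\B)}(v)$ in the role of $L'$ then shows that $\Omega_v=\theta^{-1}(\Stab_{\Aut(\B)}(v))$ is a measure equivalence coupling between $\hat G\cap\Stab_{\Aut(\B)}(v)=\hat G_v$ and $\iota^{-1}(\Stab_{\Aut(\B)}(v))=H_v$.

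Next I would pass from $\hat G$ to its finite-index subgroup $G$. Since $[\hat G:G]=|\Aut(\Gamma)|<+\infty$, the group $G_v=G\cap\hat G_v$ has finite index in $\hat G_v$; restricting the (free) $\hat G_v$-action on $\Omega_v$ to $G_v$ is still free, and tiling a finite-measure fundamental domain for $\hat G_v$ by the finitely many $\hat G_v/G_v$-coset translates produces a finite-measure fundamental domain for $G_v$. Hence $\Omega_v$ is also a measure equivalence coupling between $G_v$ and $H_v$. As $G_v=\Stab_G(v)$ is isomorphic to $\mathbb{Z}^n$, where $n$ is the rank of $v$, it is amenable, and since amenability is a measure equivalence invariant, $H_v$ is amenable as well.

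Finally, for the fundamental-domain inclusions, I would read off the cases from the outputs already at hand. Corollary~\ref{cor:ME-coupling-restriction} directly gives that every Borel fundamental domain for $\hat G_v$ (resp.\ for $H_v$) on $\Omega_v$ is contained in a Borel fundamental domain for $\hat G$ (resp.\ for $H$) on $\Omega$, which covers the cases $\hat G_v\subseteq\hat G$ and $H_v\subseteq H$. For $G_v\subseteq G$, I would invoke Remark~\ref{rk:K} (the more general clause of Proposition~\ref{prop:ME-coupling-restriction}) with the subgroup $\mathsf{K}=G\subseteq\hat G$: since $\mathsf{K}\cap\hat G_v=G\cap\hat G_v=G_v$, every Borel fundamental domain for $G_v$ on $\Omega_v$ lies inside a Borel fundamental domain for $G$ on $\Omega$. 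The only real care required throughout is the bookkeeping of which ambient group ($G$ versus $\hat G$) is used at each stage, so that the finite-index reduction remains compatible with the fundamental-domain inclusions; beyond this I do not expect any genuine obstacle, the substantive work having been carried out in constructing the equivariant map $\theta$ (Lemma~\ref{lemma:action-on-B}) and in establishing the strong ICC property (Proposition~\ref{prop:icc}).
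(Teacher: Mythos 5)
Your proposal is correct and follows essentially the same route as the paper: Corollary~\ref{cor:ME-coupling-restriction} with $L=\Aut(\B)$ and $\G=\hat G$ (using Corollary~\ref{cor:transitivity} for the equal-orbits hypothesis), a finite-index passage from $\hat G_v$ to $G_v$, measure equivalence invariance of amenability, and Remark~\ref{rk:K} with $\mathsf{K}=G$ for the remaining fundamental-domain inclusion. No gaps.
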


		\begin{proof}
			The fact that $\Omega_v$ is a measure equivalence coupling between $G_v$ and $H_v$ follows from Corollary~\ref{cor:ME-coupling-restriction}, applied (with $L=\Aut(\B)$) to the action of $\G=\hat{G}$ on $\B$. Indeed $\hat{G}$ and $\Aut(\B)$ have the same orbits of vertices by Corollary~\ref{cor:transitivity}. 
   
   The amenability of $H_v$ follows from the fact that a countable group which is measure equivalent to an amenable one, is itself amenable, see e.g.\ \cite[Corollary~1.3]{Fur-me}. 
   
   The statement about fundamental domains for the actions of $\hat{G}_v$ and $H_v$ also follows from Corollary~\ref{cor:ME-coupling-restriction}. Finally, the statement about fundamental domains for the action of $G_v$ follows from Remark~\ref{rk:K}, applied with $\mathsf{K}=G$. 
		\end{proof}

		Given a vertex $v\in V\B$, we denote by $V^0(\B)_{\le v}$ the set of all rank $0$ vertices of $\B$ that are smaller than $v$ (for the partial order on $V\B$ introduced in Section~\ref{sec:extension-graph}).
		
		\begin{lemma}\label{lemma:cocompact}
			Assume that $H$ has bounded finite subgroups. Then the $H$-action on the set of rank 0 vertices of $\B$ has finite stabilizers and finitely many orbits. In addition, for every vertex $v\in V\B$, the $H_v$-action on $V^0(\B)_{\le v}$ has finitely many orbits.
		\end{lemma}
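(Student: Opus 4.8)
\textbf{Proof plan for Lemma~\ref{lemma:cocompact}.}
The plan is to apply the orbit-counting results of Section~\ref{sec:me}, specifically Corollary~\ref{cor:ME-coupling-orbits}, to the measure equivalence coupling $\Omega$ between $\hat{G}$ and $H$ provided by Lemma~\ref{lemma:action-on-B}, using the fact that $\hat{G}$ and $\Aut(\B)$ have the same orbits of vertices (Corollary~\ref{cor:transitivity}). First I would handle the rank $0$ vertices: the $\hat{G}$-action on the set of rank $0$ vertices of $\B$ is transitive (they correspond to elements of $G$, on which $\hat{G}$ acts transitively) with finite stabilizers (the stabilizer of a rank $0$ vertex in $G$ is trivial, so in $\hat{G}$ it is isomorphic to $\Aut(\Gamma)$, which is finite). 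Since $\hat{G}$ and $\Aut(\B)$ have the same orbits of vertices, the hypotheses of Corollary~\ref{cor:ME-coupling-orbits}(1) are satisfied, and because $H$ has bounded torsion, we conclude that the set of rank $0$ vertices is the union of finitely many $H$-orbits. Finiteness of the $H$-stabilizers of rank $0$ vertices is part of the conclusion of Lemma~\ref{lemma:action-on-B} combined with Corollary~\ref{cor:ME-coupling-restriction}: the $H$-stabilizer of a rank $0$ vertex is measure equivalent to its (finite) $\hat{G}$-stabilizer, hence finite.

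For the second assertion, fix a vertex $v\in V\B$, corresponding to a standard flat $F$ of rank $k$, and let $V=V^0(\B)_{\le v}$ be the set of rank $0$ vertices below $v$, i.e.\ the vertex set of $F$. I would apply Corollary~\ref{cor:ME-coupling-orbits}(2) with this $v$ and this $V$. I need to check: (a) $V$ is invariant under $\Stab_{\Aut(\B)}(v)$, which is clear since any automorphism fixing $v$ permutes the vertices below it and preserves ranks by Lemma~\ref{lemma:rank}; (b) every vertex of $V$ has finite $\hat{G}$-stabilizer, which holds as above since rank $0$ vertices have finite (indeed trivial in $G$, finite in $\hat{G}$) stabilizers; and (c) $\hat{G}_v$ acts transitively on $V$ — more precisely the relevant hypothesis is that $G_v$ (or $\hat{G}_v$) acts transitively on $V$. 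Here $G_v=\Stab_G(F)\cong\mathbb{Z}^k$ acts on the vertex set of $F$ by translations, which is transitive. Thus the hypotheses are met, and Corollary~\ref{cor:ME-coupling-orbits}(2) gives that $H_v=\iota^{-1}(\Stab_{\Aut(\B)}(v))$ acts on $V$ with finitely many orbits, which is exactly the claim.

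The main subtlety to get right is the correct bookkeeping of which group plays which role when invoking Corollary~\ref{cor:ME-coupling-orbits}(2): the corollary requires the ``$\G$'' acting on $K$ to have the same vertex orbits as $\Aut(K)$, and the measure equivalence data for the restricted coupling at $v$ (the maps $\iota_v,\theta_v$) is what is produced internally in the proof of that corollary via Corollary~\ref{cor:ME-coupling-restriction}; I should make sure the hypothesis on transitivity of $\G_v$ on $V$ is literally the one stated, namely transitivity of $\G_v$, not of $\hat{G}$. Since $G\le\hat{G}$ and $G_v$ already acts transitively on the vertex set of $F$, a fortiori $\hat{G}_v$ does, so there is no issue. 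No genuine obstacle is expected here — this lemma is a clean application of the machinery already assembled, and the only work is verifying the three bullet-style hypotheses of Corollary~\ref{cor:ME-coupling-orbits}(2) and recalling that rank $0$ vertices of $\B$ have finite stabilizers in $\hat{G}$ and (via bounded torsion and measure equivalence) in $H$.
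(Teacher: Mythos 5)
Your proposal is correct and follows essentially the same route as the paper: both parts are obtained by applying Corollary~\ref{cor:ME-coupling-orbits}(1) and (2) respectively, using that rank $0$ vertices form a single $\hat{G}$-orbit with finite stabilizers and that $G_v$ acts transitively on $V^0(\B)_{\le v}$. Your extra remark deducing finiteness of the $H$-stabilizers from Corollary~\ref{cor:ME-coupling-restriction} is exactly the argument already embedded in the proof of Proposition~\ref{prop:orbits}, so nothing is missing.
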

		
		\begin{proof}
			The first part follows from the first conclusion of Corollary~\ref{cor:ME-coupling-orbits}, using that $H$ has bounded finite subgroups and that rank $0$ vertices have finite $\hat{G}$-stabilizers and all belong to the same $\hat{G}$-orbit.
   The second part follows from the second conclusion of Corollary~\ref{cor:ME-coupling-orbits}, applied to $V=V^0(\B)_{\le v}$: indeed $V$ is invariant under $\Stab_{\Aut(\B)}(v)$ because $\Aut(\B)$ preserves ranks of vertices (Lemma~\ref{lemma:rank}), and the action of $G_v$ on $V$ is transitive.
		\end{proof}

\begin{cor}
\label{cor:cocompact}
 Assume that $H$ has bounded finite subgroups. Then the $H$-action on $\B$ is cocompact.
\end{cor}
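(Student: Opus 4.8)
The plan is to deduce cocompactness from the first assertion of Lemma~\ref{lemma:cocompact} (finitely many $H$-orbits of rank $0$ vertices) together with two elementary finiteness features of $\B$. Since $\B$ is finite-dimensional, the $H$-action on $\B$ is cocompact as soon as $H$ acts with only finitely many orbits on the set of cubes of $\B$, so this is what I would aim for. Let $N$ be the (finite) number of complete subgraphs of $\Gamma$, including the empty one. The first observation is that every vertex of $\B$ lies above a rank $0$ vertex --- if it corresponds to the coset $gG_\Lambda$ then the rank $0$ vertex $g$ works --- and that for a rank $0$ vertex $x=g$, the vertices of $\B$ lying above $x$ are exactly the flats $gG_\Lambda$ with $\Lambda$ a complete subgraph of $\Gamma$, so there are exactly $N$ of them by Proposition~\ref{prop:normalizer}(2). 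The second observation is then immediate: any cube all of whose vertices lie above a fixed rank $0$ vertex $x$ is an interval $I_{F_1,F_2}$ with $F_1$ and $F_2$ among those $N$ flats, hence there are at most $N^2$ such cubes.

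I would then introduce the set $T$ of all pairs $(x,C)$, where $C$ is a cube of $\B$ and $x$ is a rank $0$ vertex satisfying $x\le F$, $F$ being the minimal vertex of $C$. The group $H$ acts on $T$. The map $(x,C)\mapsto C$ is $H$-equivariant and surjects onto the set of cubes of $\B$, since every cube has a minimal vertex which in turn lies above a rank $0$ vertex. The map $(x,C)\mapsto x$ is $H$-equivariant onto the set of rank $0$ vertices of $\B$, and each of its fibres has at most $N^2$ elements by the observation above. Now Lemma~\ref{lemma:cocompact} provides finitely many $H$-orbits of rank $0$ vertices, say with representatives $x_1,\dots,x_m$; every $H$-orbit in $T$ projects onto the $H$-orbit of some $x_i$, and the $H$-orbits projecting onto a fixed such orbit correspond bijectively to the $H_{x_i}$-orbits in the fibre over $x_i$, a set of size at most $N^2$. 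Thus $T$ has at most $mN^2$ orbits, and therefore so does the set of cubes of $\B$, which gives the conclusion. For the bijection just used one needs the fibre over $x_i$ to be $H_{x_i}$-invariant; this holds because automorphisms of $\B$ preserve the partial order on the vertex set: by Lemma~\ref{lemma:rank} they preserve ranks, and two adjacent vertices of $\B$ correspond to a codimension $1$ inclusion of standard flats, hence are comparable with the vertex of smaller rank being the smaller one, so automorphisms preserve the covering relation and thus its transitive closure, which is the partial order.

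I do not anticipate a real obstacle here: every ingredient is either already available (Lemmas~\ref{lemma:rank} and~\ref{lemma:cocompact}, Proposition~\ref{prop:normalizer}) or is a routine structural fact about the right-angled building. The only step needing a little attention is the final orbit count --- going from ``finitely many orbits in the base with uniformly finite fibres'' to ``finitely many orbits in $T$'' --- and, within it, checking that the fibres are invariant under the corresponding point stabilizers, which is exactly where the preservation of the partial order by $\Aut(\B)$ is used.
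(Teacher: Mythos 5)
Your proof is correct and follows essentially the same route as the paper's: both arguments start from the finiteness of the number of $H$-orbits of rank $0$ vertices (Lemma~\ref{lemma:cocompact}), use that $\Aut(\B)$ preserves ranks and hence the partial order, and conclude via the uniform bound on the number of cubes whose minimal vertex lies above a given rank $0$ vertex. Your bookkeeping with the set $T$ of pairs just makes explicit the orbit count that the paper carries out more briefly.
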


\begin{proof}
By Lemma~\ref{lemma:cocompact}, $H$ acts on the set of rank 0 vertices of $\B$ with finitely many orbits. Recall that the action of $\Aut(\B)$ on $\B$ preserves ranks of vertices (Lemma~\ref{lemma:rank}), hence respects the order of vertices on $\B$. As each vertex of rank at least $1$ is lower bounded by at least one rank 0 vertex, and there is a bound $C$ such that each rank $0$ vertex is smaller than at most $C$ vertices of higher rank in $\B$, it follows that the action of $H$ on $\B$ has finitely many orbits of vertices. More generally, given any vertex $s$ of $\B$, there are only finitely many $k$-cells that contain $s$ as a minimal rank vertex. Thus the action of $H$ on $\B$ has only finitely many orbits of cells, and is therefore cocompact.
\end{proof}

		\section{Exploiting integrability}\label{sec:integrability}
		
		In this section, we exploit the integrability condition on the measure equivalence coupling between $G$ and $H$ in order to prove that the stabilizers of rank $1$ vertices for the $H$-action on the right-angled building of $G$ are virtually cyclic.

Recall from the introduction that an \emph{$(L^1,L^0)$-measure equivalence coupling from $H$ to $G$} is a measure equivalence coupling $(\Omega,\mu)$ between $H$ and $G$ for which there exists a fundamental domain $X_G$ for the $G$-action on $\Omega$ such that for each $h\in H$, \[\int_{X_G}|c(h,x)|_G\; d\mu(x)<+\infty,\] where $c:H\times X_G\to G$ is the associated measure equivalence cocycle, and $|\cdot |_G$ is a word length on $G$ associated to some finite generating set.
		
		\begin{prop}
			\label{prop:stabilizer-L1}
			Let $G$ be a right-angled Artin group with $|\Out(G)|<+\infty$, let $\B$ be its right-angled building, and let $H$ be a countable group with bounded finite subgroups. Assume that there exists an $(L^1,L^0)$-measure equivalence coupling from $H$ to $G$.
			
			Then $H$ acts on $\B$ with virtually infinite cyclic stabilizers of rank $1$ vertices. 
		\end{prop}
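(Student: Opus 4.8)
The plan is to specialize the measure-equivalence machinery of Section~\ref{sec:action-with-amenable-stab} to the stabilizer of a rank~$1$ vertex, and then invoke our extension of Bowen's theorem, namely Theorem~\ref{theo:fg}. Start from an $(L^1,L^0)$-measure equivalence coupling $(\Omega,\mu)$ from $H$ to $G$. First I would induce it along the finite-index inclusion $G\subseteq\hat G$ to get a measure equivalence coupling $(\hat\Omega,\hat\mu)$ between $\hat G$ and $H$; since $[\hat G:G]<+\infty$, the inclusion $G\hookrightarrow\hat G$ is a quasi-isometry, so the induced coupling is still $(L^1,L^0)$-integrable from $H$ to $\hat G$. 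Applying Lemma~\ref{lemma:action-on-B} to $\hat\Omega$ produces a homomorphism $\iota\colon H\to\Aut(\B)$ with finite kernel and an almost $(\hat G\times H)$-equivariant map $\theta\colon\hat\Omega\to\Aut(\B)$; the resulting $H$-action on $\B$ is precisely the one appearing in Proposition~\ref{prop:stabilizer-1}, and it remains to show that for every rank~$1$ vertex $v\in V\B$, the group $H_v=\iota^{-1}(\Stab_{\Aut(\B)}(v))$ is virtually infinite cyclic.

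Fix such a $v$. Its $G$-stabilizer $G_v=\Stab_G(v)$ is an infinite cyclic parabolic subgroup of $G$, and by Lemma~\ref{lemma:restricted-coupling} the set $\Omega_v=\theta^{-1}(\Stab_{\Aut(\B)}(v))$ is a measure equivalence coupling between $G_v\cong\mathbb Z$ and $H_v$, with the feature that a fundamental domain for $G_v$ (resp.\ $H_v$) on $\Omega_v$ is contained in a fundamental domain for $G$ (resp.\ $H$) on $\hat\Omega$. The crucial point, which follows the argument of Escalier and the first-named author \cite{EH}, is that $\Omega_v$ is still $(L^1,L^0)$-integrable from $H_v$ to $G_v$. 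The idea is that, thanks to the nesting of fundamental domains, the measure equivalence cocycle $H_v\times X_{G_v}\to G_v$ associated to $\Omega_v$ agrees with the restriction, valued in $G_v$, of the cocycle associated to $\hat\Omega$; and since $G_v$ is an undistorted subgroup of $\hat G$ (being a standard abelian subgroup of the finite-index subgroup $G$), its word length is comparable along $G_v$ to the word length of $\hat G$, so the relevant integral over $X_{G_v}$ is bounded above by a constant multiple of the finite integral coming from the $(L^1,L^0)$-hypothesis. The one technical nuisance is that the fundamental domain of $\hat\Omega$ witnessing integrability need not be the one produced by Lemma~\ref{lemma:restricted-coupling}, so one must change fundamental domains carefully while keeping integrability under control; this is exactly what is carried out in \cite{EH}.

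Granting this, $H_v$ is a countable group with bounded torsion (being a subgroup of $H$) that admits an $(L^1,L^0)$-measure equivalence coupling to $\mathbb Z$, so Theorem~\ref{theo:fg} applies and $H_v$ is virtually cyclic. Finally, $H_v$ cannot be finite: measure equivalence preserves finiteness of a group (in a coupling, a fundamental domain of either factor has positive measure, so a finite factor forces the total mass to be finite, which is incompatible with the other, infinite, factor acting freely with a positive-measure fundamental domain), and $H_v$ is measure equivalent to the infinite group $\mathbb Z$. Hence $H_v$ is virtually infinite cyclic, as desired. I expect the main obstacle to be the inheritance of $L^1$-integrability by the restricted coupling $\Omega_v$ discussed in the second paragraph; the remaining ingredients are a direct combination of results already established.
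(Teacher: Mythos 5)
Your proposal follows essentially the same route as the paper: induce the coupling to $\hat G$, restrict it to the stabilizer of a rank~$1$ vertex via Lemma~\ref{lemma:restricted-coupling}, show that $L^1$-integrability is inherited by the restricted coupling (the paper's Lemma~\ref{lemma:L1}), and then apply Theorem~\ref{theo:fg} (through Corollary~\ref{cor:l1-embedding}) together with the observation that $H_v$ is infinite because it is measure equivalent to $\mathbb{Z}$. The one place where your sketch is looser than the paper is the inheritance of integrability: the cocycle of the restricted coupling does not literally agree with the restriction of $\hat c$ --- it is only cohomologous to it, since the fundamental domains differ --- and undistortedness of $G_v$ in $\hat G$ by itself does not control the cohomology correction, which a priori could be unbounded and destroy integrability. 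The paper's mechanism is to take the correction function to be $\psi\circ\theta$, where $\psi$ sends each $g\in G$ to the shortest representative of the coset $gG_v$; a normal form argument then yields the pointwise bound $|c''(h,x)|_{G_v}\le|\hat c(h,\tilde x)|_{G}$, from which integrability follows with no multiplicative constant at all. You correctly flag this as the delicate step and point to \cite{EH}, so this is not a gap in the plan, but be aware that the actual argument is the retraction-plus-normal-form computation rather than an appeal to undistortedness.
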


The plan of the proof is to show first that for each vertex $v\in V\B$ there is an $(L^1,L^0)$-measure equivalence coupling from $H_v$ (the $H$-stabilizer of $v$) to $G_v$ (Lemma~\ref{lemma:L1}), which then gives an $L^1$-integrable embedding from $H_v$ to $G_v$ (Corollary~\ref{cor:l1-embedding}). When $v$ is rank 1, we have $G_v\cong \mathbb Z$. Then we show in Theorem~\ref{theo:fg} that $H_v$ is virtually $\mathbb Z$: if we knew that $H_v$ were finitely generated, then this would follow from work of Bowen (in an appendix of an article by Austin \cite[Theorem~B.10]{Aus}) saying that growth is invariant under $L^1$-measure equivalence; the main point of our work is to extend this to the case where $H_v$ is possible infinitely generated.
  
In the whole section, when $G$ is a finitely generated group, with a finite generating set $S$, we will write $|g|_S$ to denote the word length of an element $g\in G$ with respect to the generating set $S$. When $S$ is clear from the context (or irrelevant to the statement), we will sometimes simply write $|g|_G$.

		 \subsection{Integrable coupling between vertex stabilizers}\label{sec:integrable-stab}
		
		 We start with the following easy observation.
		
		 \begin{lemma}\label{lemma:integrable-coupling-finite-index}
		 	Let $G,H$ be countable groups, with $G$ finitely generated, and let $\hat{G}$ be a finite-index extension of $G$. Let $\hat{\Omega}$ be an $(L^1,L^0)$-measure equivalence coupling from $H$ to $\hat{G}$.
			
		 	Then $\hat\Omega$ is an $(L^1,L^0)$-measure equivalence coupling from $H$ to $G$.
		 \end{lemma}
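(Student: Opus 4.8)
The plan is to promote the given fundamental domain for the $\hat{G}$-action to an explicit one for the $G$-action, and then track how the cocycle changes; no deep input is needed, only the elementary fact that a finite-index subgroup is quasi-isometric to the ambient group via the inclusion. I expect no genuine obstacle here — the only thing requiring (routine) care will be measurability of the decompositions below.

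Concretely: fix finite generating sets of $G$ and of $\hat{G}$ (the latter is finitely generated since $G$ has finite index in it), and let $C\ge 1$ be such that $|g|_G\le C|g|_{\hat{G}}$ for every $g\in G$ — such a $C$ exists because $G\hookrightarrow\hat{G}$ is a quasi-isometry (\v{S}varc--Milnor, applied to the proper cocompact action of $G$ on the Cayley graph of $\hat{G}$). Let $X_{\hat{G}}$ be a fundamental domain for the $\hat{G}$-action on $\hat\Omega$ whose associated cocycle $c\colon H\times X_{\hat{G}}\to\hat{G}$ satisfies $\int_{X_{\hat{G}}}|c(h,y)|_{\hat{G}}\,d\mu(y)<+\infty$ for all $h\in H$. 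Pick right coset representatives $t_1,\dots,t_n$ for $G\backslash\hat{G}$ (so that $\hat{G}=\bigsqcup_{i=1}^n Gt_i$), set $D=\max_i\max(|t_i|_{\hat{G}},|t_i^{-1}|_{\hat{G}})$, and put $X_G=\bigsqcup_{i=1}^n t_iX_{\hat{G}}$. A short computation, using only that $X_{\hat{G}}$ is a fundamental domain for $\hat{G}$, shows that the translates $gt_iX_{\hat{G}}$ (for $g\in G$ and $1\le i\le n$) are pairwise essentially disjoint and cover $\hat\Omega$; hence $X_G$ is a fundamental domain for the restricted $G$-action, of finite measure $n\,\mu(X_{\hat{G}})$. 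In particular $\hat\Omega$, with the restricted $H\times G$-action, is a measure equivalence coupling between $H$ and $G$, and it remains only to check integrability of the associated cocycle $c'\colon H\times X_G\to G$ relative to $X_G$.

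For this I would write each $x\in X_G$ uniquely as $x=t_iy$ with $y\in X_{\hat{G}}$. Since the $H$- and $\hat{G}$-actions on $\hat\Omega$ commute, $hx=t_i(hy)$; setting $z=c(h,y)(hy)\in X_{\hat{G}}$ gives $hx=t_ic(h,y)^{-1}z$. Decompose $t_ic(h,y)^{-1}=\gamma t_m$ with $\gamma\in G$ and $m\in\{1,\dots,n\}$ uniquely determined; then $\gamma^{-1}hx=t_mz\in X_G$, so $c'(h,x)=\gamma^{-1}=t_mc(h,y)t_i^{-1}$, whence
\[
|c'(h,x)|_G\;\le\;C\,|t_mc(h,y)t_i^{-1}|_{\hat{G}}\;\le\;C\bigl(|c(h,y)|_{\hat{G}}+2D\bigr).
\]
The map $t_iX_{\hat{G}}\to X_{\hat{G}}$, $x\mapsto y$, is left translation by $t_i^{-1}$, hence measure-preserving; integrating over $X_G=\bigsqcup_{i=1}^n t_iX_{\hat{G}}$ therefore yields
\[
\int_{X_G}|c'(h,x)|_G\,d\mu(x)\;\le\;n\Bigl(C\!\int_{X_{\hat{G}}}|c(h,y)|_{\hat{G}}\,d\mu(y)+2CD\,\mu(X_{\hat{G}})\Bigr)\;<\;+\infty
\]
for every $h\in H$, which is exactly the assertion that $\hat\Omega$ is an $(L^1,L^0)$-measure equivalence coupling from $H$ to $G$.
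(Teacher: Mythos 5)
Your proof is correct and follows essentially the same route as the paper: both take right coset representatives $t_1,\dots,t_n$ of $G$ in $\hat G$, form $X_G=\bigsqcup_i t_iX_{\hat G}$, and show the new cocycle equals $t_m\,c(h,y)\,t_i^{-1}$ up to the bookkeeping of which coset one lands in, so that its $\hat G$-word length (hence, by non-distortion of the finite-index subgroup, its $G$-word length) differs from that of $c(h,y)$ by a uniform additive constant. Your write-up is just a more explicit version of the paper's computation, spelling out the fundamental-domain verification and the integration.
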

		
		 \begin{proof}
		 	By definition, there exists a fundamental domain $X_{\hat G}$ for the action of $\hat{G}$ on $\hat\Omega$ such that the cocycle $\hat c:H\times X_{\hat G}\to \hat{G}$ is $L^1$-integrable.
			
		 	Let $S=\{g_1,\dots,g_k\}$ be a (finite) set of representatives of the right cosets of $G$ in $\hat{G}$. Then $X_G=g_1X_{\hat{G}}\cup\dots\cup g_kX_{\hat{G}}$ is a fundamental domain for the $G$-action on $\hat\Omega$.
			
			We claim that the associated cocycle $c:H\times X_G\to G$ is $L^1$-integrable. Indeed, let $h\in H$. For $x\in X_G$, there exists a unique $\hat x\in X_{\hat G}$ and $j\in\{1,\dots,k\}$ such that $x=g_j\hat{x}$. Also by definition of $X_{\hat G}$, there exists a unique element $\hat g\in \hat G$ such that $\hat g hx\in X_{\hat G}$. Using the fact that the actions of $\hat G$ and $H$ on $\hat\Omega$ commute, we see that $(\hat g g_j)h\hat x\in X_{\hat G}$, showing that $\hat{c}(h,\hat{x})=\hat g g_j$. On the other hand, the set $S^{-1}=\{g_1^{-1},\dots,g_k^{-1}\}$ is a set of representatives of the left cosets of $G$ in $\hat{G}$, so there exist $i\in\{1,\dots,k\}$ and $g\in G$ such that $\hat{g}=g_i^{-1}g$. We then have $ghx\in g_i X_{\hat G}$, showing that $c(h,x)=g$. It follows that $c(h,x)\in S\hat{c}(h,\hat x)S^{-1}$. As this is true for every $h\in H$ and $x\in X_G$ and $S$ is a finite set, the lemma follows.
		 \end{proof}
		
	Let now $G,H$ be as in the statement of Proposition~\ref{prop:stabilizer-L1}. Let $\Omega$ be an $(L^1,L^0)$-measure equivalence coupling from $H$ to $G$, and let $X_G$ be a Borel fundamental domain for the $G$-action on $\Omega$ such that the measure equivalence cocycle $c:H\times X_G\to G$ is $L^1$-integrable.

 Let $\hat{G}$ be the finite-index extension of $G$ introduced in Section~\ref{sec:finite-index-extension}. Let $\hat{\Omega}$ be the induced coupling between $\hat{G}$ and $H$, namely $\hat{\Omega}=G\backslash (\hat{G}\times\Omega)$ -- here $\hat{G}\times G$ acts on $\hat{G}$ via $(\hat{g},g)\cdot h=\hat{g}hg^{-1}$, and $\hat{G}$ acts trivially on $\Omega$ while $H$ acts trivially on $\hat{G}$, and we are taking the quotient by the diagonal action of $G$ on $\hat{G}\times\Omega$. Notice that $X_{\hat G}=\{\mathrm{id}\}\times X_G$, identified to a subset of $\hat{\Omega}$, is a Borel fundamental domain for the action of $\hat{G}$ on $\hat{\Omega}$. The associated measure equivalence cocycle $H\times X_{\hat{G}}\to\hat{G}$ takes its values in $G$ and coincides with $c$. In particular $\hat{\Omega}$ is an $(L^1,L^0)$-measure equivalence coupling from $H$ to $\hat G$. We can therefore apply Lemma~\ref{lemma:integrable-coupling-finite-index} and obtain a Borel fundamental domain $\hat{X}_G$ for the $G$-action on $\hat{\Omega}$, such that the measure equivalence cocycle $\hat{c}:H\times \hat{X}_G\to G$ is $L^1$-integrable.
 
 Lemma~\ref{lemma:action-on-B} gives us a homomorphism $\iota:H\to\Aut(\B)$ and an almost equivariant map $\theta:\hat\Omega\to\Aut(\B)$. In particular we have an action of $H$ on $\B$. Given $v\in V\B$, we denote by $G_v$ and $H_v$ its stabilizers for the actions of $G$ and $H$, respectively. 
		
		\begin{lemma}\label{lemma:L1}
			For every $v\in V\B$, there exists an $(L^1,L^0)$-measure equivalence coupling from $H_v$ to $G_v$.
		\end{lemma}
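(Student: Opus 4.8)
The plan is to realise $\Omega_v$ as a very explicit restriction of the induced coupling $\hat\Omega$, and then to compare the measure equivalence cocycle of this restricted coupling with the $L^1$-integrable cocycle carried by $\hat\Omega$.

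First I would recall that $\hat\Omega$ is an $(L^1,L^0)$-measure equivalence coupling from $H$ to $\hat G$ (as observed in the construction preceding the statement), so there is a Borel fundamental domain $X_{\hat G}$ for the $\hat G$-action on $\hat\Omega$ whose measure equivalence cocycle $\hat c\colon H\times X_{\hat G}\to\hat G$ is $L^1$-integrable, and that Lemma~\ref{lemma:action-on-B} provides $\iota\colon H\to\Aut(\B)$ and an almost $(\hat G\times H)$-equivariant map $\theta\colon\hat\Omega\to\Aut(\B)$. Applying Corollary~\ref{cor:ME-coupling-restriction} with $L=\Aut(\B)$ and $\G=\hat G$ (legitimate since $\hat G$ and $\Aut(\B)$ have the same orbits of vertices on $\B$ by Corollary~\ref{cor:transitivity}) shows that $\Omega_v:=\theta^{-1}(\Stab_{\Aut(\B)}(v))$ is a measure equivalence coupling between the stabilizers $\hat G_v$ and $H_v$. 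As $G_v$ has finite index in $\hat G_v$, it then suffices to prove that $\Omega_v$ is an $(L^1,L^0)$-coupling from $H_v$ to $\hat G_v$, the passage to $G_v$ being Lemma~\ref{lemma:integrable-coupling-finite-index}. When $v$ has rank $0$ this is immediate: $\hat G_v$ and $G_v$ are finite, so $H_v$ is finite and every cocycle into $\hat G_v$ is bounded; so assume $\mathrm{rk}(v)\ge 1$.

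Next I would build a fundamental domain adapted to $v$. Since $\theta(x)(v)$ always lies in the $\hat G$-orbit of $v$ (Corollary~\ref{cor:transitivity}), fix a Borel section $\sigma\colon \hat G\cdot v\to\hat G$ of the orbit map $\gamma\mapsto\gamma\cdot v$ and set $X_{G_v}:=\{\,\sigma(\theta(x)(v))^{-1}x : x\in X_{\hat G}\,\}$. Using $\theta(\gamma\omega)=\gamma\theta(\omega)$ for $\gamma\in\hat G$, one checks that $\sigma(\theta(x)(v))^{-1}x\in\Omega_v$, that $x\mapsto\sigma(\theta(x)(v))^{-1}x$ is a measure-preserving Borel bijection $X_{\hat G}\to X_{G_v}$, and (via the bijection between $\hat G$-orbits in $\hat\Omega$ and $\hat G_v$-orbits in $\Omega_v$, each $\hat G$-orbit meeting $\Omega_v$ in exactly one $\hat G_v$-orbit) that $X_{G_v}$ is at once a fundamental domain for $\hat G_v\actson\Omega_v$ and for $\hat G\actson\hat\Omega$. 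A short uniqueness argument then gives, for $h\in H_v$ and $y\in X_{G_v}$, that the cocycle $c_v(h,y)$ of $\Omega_v$ (with respect to $X_{G_v}$) coincides with the cocycle $c_X(h,y)$ of $\hat\Omega$ computed with the fundamental domain $X_{G_v}$; in particular $c_X(h,y)\in\hat G_v$. Since $\hat G_v$ is undistorted in $\hat G$ (being commensurable to a standard abelian subgroup), word lengths in $\hat G_v$ and $\hat G$ agree up to affine distortion along $\hat G_v$, so it is enough to prove that $c_X$ is $L^1$-integrable, i.e. $\int_{X_{G_v}}|c_X(h,y)|_{\hat G}\,d\mu<+\infty$ for every $h\in H$.

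Finally I would compare $c_X$ with $\hat c$: these are cohomologous cocycles for $\hat G\actson\hat\Omega$, attached to the fundamental domains $X_{G_v}$ and $X_{\hat G}$, and the relevant transfer function is $x\mapsto\sigma(\theta(x)(v))^{-1}$ on $X_{\hat G}$. Tracking this through the cocycle identity and using the measure-preserving changes of variables $x\mapsto\sigma(\theta(x)(v))^{-1}x$ and $x\mapsto h\cdot x$, I would obtain, for every $h\in H$, an estimate $\int_{X_{G_v}}|c_X(h,y)|_{\hat G}\,d\mu\ \le\ \int_{X_{\hat G}}|\hat c(h,x)|_{\hat G}\,d\mu\ +\ 2\int_{X_{\hat G}}|\sigma(\theta(x)(v))|_{\hat G}\,d\mu$. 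The first integral is finite by hypothesis, so everything reduces to the finiteness of $\int_{X_{\hat G}}|\sigma(\theta(x)(v))|_{\hat G}\,d\mu$, that is, to the statement that the "displacement" $x\mapsto\theta(x)(v)$ of the vertex $v$ is $\mu$-integrably close to $v$ over the fundamental domain $X_{\hat G}$, for a suitable choice of $X_{\hat G}$ and of the section $\sigma$ (picking shortest representatives). I expect this displacement estimate to be the main obstacle: it does not follow formally, since cohomologous cocycles need not share integrability, and it is precisely here that I would import (and adapt to the present setting) the argument of Escalier and the first-named author \cite{EH}, which controls exactly this type of displacement in an $L^1$-integrable coupling. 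Granting it, the chain of reductions above shows that $\Omega_v$ is $(L^1,L^0)$ from $H_v$ to $\hat G_v$, and Lemma~\ref{lemma:integrable-coupling-finite-index} completes the proof.
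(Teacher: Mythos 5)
Your setup (inducing to $\hat G$, restricting the coupling to $\Omega_v=\theta^{-1}(\Stab_{\Aut(\B)}(v))$ via Corollary~\ref{cor:ME-coupling-restriction}, and comparing the restricted cocycle with the $L^1$ cocycle $\hat c$ through a cohomology/transfer function) matches the paper. But the proof has a genuine gap exactly where you flag it: you reduce everything to the finiteness of $\int_{X_{\hat G}}|\sigma(\theta(x)(v))|_{\hat G}\,d\mu$, i.e.\ to $L^1$-integrability of the transfer function between the two fundamental domains, and then say you would ``import and adapt'' the argument of \cite{EH}. That integrability statement is the entire content of the lemma --- as you yourself note, cohomologous cocycles need not share integrability --- and no argument for it is given. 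Worse, this is not the estimate the paper proves; the paper's proof is designed precisely to avoid ever integrating the transfer function.

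What the paper actually does: starting from the transfer function $\theta:Y_G\to G$ with $c'(h,x)=\theta(h\cdot x)^{-1}\hat c(h,\tilde x)\theta(x)$, it post-composes $\theta$ with the map $\psi:G\to G$ sending $g$ to the unique shortest element of $gG_v$ (well defined by the graph-product normal form), obtaining $\theta'=\psi\circ\theta$ and a new cocycle $c''(h,x)=\theta'(h\cdot x)^{-1}\hat c(h,\tilde x)\theta'(x)$. Since $\theta'$ differs from $\theta$ by right multiplication by elements of $G_v$, the cocycle $c''$ still maps $H_v\times Y_{G_v}$ into $G_v$ and is $G_v$-cohomologous to the measure equivalence cocycle of $\Omega_v$. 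The key point is then a \emph{pointwise} inequality $|c''(h,x)|_{G_v}\le|\hat c(h,\tilde x)|_G$: in the identity $\theta'(h\cdot x)\,c''(h,x)\,\theta'(x)^{-1}=\hat c(h,\tilde x)$, the middle factor lies in $G_v$ and cannot be shortened by cancellation against the flanking terms because these are shortest coset representatives. Integrating this pointwise bound against the known $L^1$ bound on $\hat c$ finishes the proof, with no displacement integral appearing. To repair your argument you would need to replace your final reduction by this normalization-plus-normal-form step (or supply an independent proof of the displacement estimate, which does not follow from the hypotheses by any soft argument).
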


Our proof of Lemma~\ref{lemma:L1} is inspired by an argument coming from work of Escalier and the first-named author \cite[Theorem~12.1]{EH}.
  
		\begin{proof}
 As above, let $\hat{X}_G$ be a fundamental domain for the action of $G$ on $\hat{\Omega}$, such that the cocycle $\hat{c}:H\times \hat{X}_G\to G$ is $L^1$-integrable. 
			
			By Lemma~\ref{lemma:restricted-coupling}, the space $\hat\Omega_v=\theta^{-1}(\Stab_{\Aut(\B)}(v))$ is a measure equivalence coupling between $G_v$ and $H_v$. Let $Y_{G_v},Y_{H_v}$ be respective Borel fundamental domains for these actions on $\hat\Omega_v$. Lemma~\ref{lemma:restricted-coupling} also shows that $Y_{G_v},Y_{H_v}$ extend to Borel fundamental domains $Y_G,Y_H$ for the actions of $G$ and $H$ on $\hat\Omega$.
			
			Let $c_v:H_v\times Y_{G_v}\to G_v$ be the measure equivalence cocycle. Then $c_v$ extends to a measure equivalence cocycle $c':H\times Y_G\to G$. The cocycle $c'$ is cohomologous to $\hat{c}$, i.e.\ there exists $\theta:Y_G\to G$ such that $c'(h,x)=\theta(h\cdot x)^{-1}\hat{c}(h,\tilde{x})\theta(x)$, where $\tilde{x}$ is the unique element of $\hat{X}_G$ in the same $G$-orbit as $x$.
			
			As usual $G$ is equipped with its standard generating set. Let $\psi:G\to G$ be defined by letting $\psi(g)$ be the unique element with smallest word length such that $\psi(g) G_v = g G_v$ (uniqueness comes from the normal form in the right-angled Artin group, in the sense of graph products \cite{Gre}).
			
			Let $\theta'=\psi\circ\theta$, and let $c'':H\times Y_G\to G$ be defined by  $c''(h,x)=\theta'(h\cdot x)^{-1}\hat{c}(h,\tilde{x})\theta'(x)$. By definition of $\psi$, for every $x\in X$, there exists $g_x\in G_v$ such that $\theta'(x)=\theta(x)g_x$. Using this and the fact that $c'(H_v\times Y_{G_v})\subseteq G_v$, we deduce that $c''(H_v\times Y_{G_v})\subseteq G_v$. In addition, a normal form argument  shows that $|c''(h,x)|_{G_v}\le |\hat{c}(h,\tilde{x})|_{G}$ for every $h\in H_v$ and $x\in Y_{G_v}$ (indeed, one writes $\theta'(hx)c''(h,x)\theta'(x)^{-1}=\hat{c}(h,\tilde{x})$, and observes that the subword $c''(h,x)$ on the left cannot be shortened by cancellations with $\theta'(x)$ and $\theta'(hx)$ in view of the choice of $\theta'$).
			
			Therefore the cocycle $c'':H_v\times Y_{G_v}\to G_v$ is $L^1$-integrable. It is $G_v$-cohomologous to $c'$, and therefore it is a measure equivalence cocycle from $H_v$ to $G_v$, for the coupling $\Omega_v$. This completes our proof.
		\end{proof}

\subsection{Integrable embeddings}

Let $G,H$ be countable groups, with $G$ finitely generated. Let $(X,\mu)$ be a standard probability space equipped with a measure-preserving action of $H$ by Borel automorphisms. We say that a cocycle $c:H\times X\to G$ is \emph{$L^1$-integrable} if for every $h\in H$, one has \[\int_{X}|c(h,x)|_G\; d\mu(x)<+\infty.\] In the sequel of this section, we will work with the notion of integrable embeddings, as defined by Bowen in the appendix of \cite{Aus}.

\begin{de}[{Integrable embedding (Bowen \cite[Definition~B.8]{Aus})}]
    Let $G,H$ be countable groups, with $G$ finitely generated. Let $(X,\mu)$ be a standard probability space equipped with a measure-preserving action of $H$ by Borel automorphisms. 

    A cocycle $c:H\times X\to G$ is an \emph{$L^1$-integrable embedding} if for every $\varepsilon>0$, there exist an $L^1$-integrable cocycle $c':H\times X\to G$ which is cohomologous to $c$, a Borel subset $X_0\subseteq X$ with $\mu(X_0)\ge 1-\varepsilon$, and a constant $C=C(\varepsilon)$, such that for every $h\in H$ and almost every $x\in X_0$, one has $|\{g\in G \mid gx\in X_0 \text{~and~} c'(g,x)=h\}|<C.$ 
\end{de}

The following elementary lemma, analogous to Bowen's \cite[Theorem~B.9]{Aus}, enables to check the second property from the above definition for measure equivalence cocycles. 
		
		\begin{lemma}\label{lemma:k-to-one}
			Let $\Omega$ be a measure equivalence coupling between two countable groups $G$ and $H$, let $X_G\subseteq\Omega$ be a fundamental domain for the $H$-action, and let $c:H\times X_G\to G$ be the associated measure equivalence cocycle. Let $\varepsilon>0$.
			
			Then there exist $C>0$ and a measurable subset $X_\varepsilon$ of $X_G$ with $\mu(X_\varepsilon)\ge\mu(X_G)-\varepsilon$ such that for every $x\in X_\varepsilon$ and every $g\in G$, one has \[|\{h\in H \mid h\cdot x\in X_\varepsilon \text{~and~} c(h,x)=g\}|<C.\]
		\end{lemma}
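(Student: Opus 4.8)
The plan is to reduce the assertion to a statement about how many representatives of a single $H$-orbit can lie in a suitable subset of $X_G$, and then to produce that subset by cutting $X_G$ along finitely many translates of an $H$-fundamental domain.

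\emph{Step 1: a counting identity.} First I would record an exact count for the left-hand side. Keeping the notational distinction from Section~\ref{sec:intro-me} between the action of $H$ on $\Omega$ (written $hx$) and the induced action on $X_G$ (written $h\cdot x$), recall that $h\cdot x=c(h,x)\,hx$ is the unique element of $X_G$ in the same $G$-orbit as $hx$. Fix $x\in X_G$, $g\in G$, and let $H(gx)$ be the $H$-orbit of $gx$ in $\Omega$. For any Borel set $X_\varepsilon\subseteq X_G$, the assignment $h\mapsto h\cdot x$ restricts to a bijection from $\{h\in H\mid h\cdot x\in X_\varepsilon,\ c(h,x)=g\}$ onto $H(gx)\cap X_\varepsilon$: indeed, since the $G$- and $H$-actions on $\Omega$ commute, $c(h,x)=g$ is equivalent to $ghx\in X_G$, i.e.\ to $h(gx)\in X_G$, and then $h\cdot x=c(h,x)\,hx=ghx=h(gx)$; conversely any $y\in H(gx)\cap X_\varepsilon$ equals $h(gx)$ for a unique $h\in H$, and $y\in X_\varepsilon\subseteq X_G$ forces $c(h,x)=g$ and $h\cdot x=y$; injectivity is freeness of the $H$-action on $\Omega$. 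Hence
\[
\bigl|\{h\in H\mid h\cdot x\in X_\varepsilon,\ c(h,x)=g\}\bigr|=|H(gx)\cap X_\varepsilon|,
\]
so it suffices to find $X_\varepsilon\subseteq X_G$ with $\mu(X_\varepsilon)\ge\mu(X_G)-\varepsilon$ such that every $H$-orbit of $\Omega$ meets $X_\varepsilon$ in at most $C-1$ points.

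\emph{Step 2: constructing $X_\varepsilon$.} Fix a Borel fundamental domain $X_H$ for the $H$-action on $\Omega$. The translates $\{hX_H\}_{h\in H}$ partition $\Omega$, so $X_G=\bigsqcup_{h\in H}(X_G\cap hX_H)$ up to null sets and $\sum_{h\in H}\mu(X_G\cap hX_H)=\mu(X_G)<+\infty$. Choose a finite $F\subseteq H$ with $\sum_{h\in H\setminus F}\mu(X_G\cap hX_H)<\varepsilon$, and set $X_\varepsilon:=X_G\cap\bigcup_{h\in F}hX_H$, so that $\mu(X_\varepsilon)\ge\mu(X_G)-\varepsilon$. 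For any $\omega\in\Omega$ and $h\in H$ one has $|H\omega\cap hX_H|=|H\omega\cap X_H|=1$ since $X_H$ is an $H$-fundamental domain, hence $|H\omega\cap X_\varepsilon|\le\sum_{h\in F}|H\omega\cap hX_H|=|F|$. Applying this with $\omega=gx$ and combining with the identity of Step~1 gives the lemma with $C:=|F|+1$.

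The argument is elementary; the only point requiring care is the identity of Step~1 — correctly tracking the two commuting actions and invoking freeness of the $H$-action on $\Omega$ — together with the observation that a set of the form $X_G\cap\bigcup_{h\in F}hX_H$ simultaneously captures almost all of the mass of $X_G$ and contains at most $|F|$ representatives of any given $H$-orbit. (If one uses genuine rather than merely mod-null fundamental domains the bound is literally valid for every $x\in X_\varepsilon$; otherwise it holds off a null set for each of the countably many $g\in G$, which is all that is needed.)
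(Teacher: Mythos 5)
Your proof is correct and follows essentially the same route as the paper's: both cut $X_G$ along the translates $hX_H$ of an $H$-fundamental domain, keep a finite subfamily $F$ capturing all but $\varepsilon$ of the mass, and observe that the relevant set of $h$'s injects into the $H$-orbit of $gx$, which meets each translate $hX_H$ exactly once and hence meets $X_\varepsilon$ at most $|F|$ times. Your Step 1 bijection is just a cleaner packaging of the paper's claim that distinct $h_1,h_2$ with $c(h_1,x)=c(h_2,x)=g$ send $gx$ into distinct translates of $X_H$ (and your choice $C=|F|+1$ even fixes the paper's harmless off-by-one in getting the strict inequality).
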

		
		\begin{proof}
			Let $X_H$ be a fundamental domain for the $H$-action on $\Omega$. Then $\Omega$ is covered by countably many pairwise disjoint $H$-translates of $X_H$. For every $h\in H$, let $X_{G,h}=X_G\cap hX_H$. Then the subsets $X_{G,h}$ form a countable partition of $X_G$. Choose a finite subset $F\subseteq H$ such that $\mu(\cup_{h\in F}X_{G,h})>\mu(X_G)-\varepsilon$. Let $C=|F|$, and let $X_\varepsilon=\cup_{g\in F}X_{G,h}$.
			
			Let now $g\in G$ and $x\in X_\varepsilon$. We claim that if two distinct elements $h_1,h_2\in H$ satisfy $c(h_1,x)=c(h_2,x)=g$, then $h_1x$ and $h_2x$ belong to different subsets of the partition $g^{-1}X_G=\sqcup g^{-1}X_{G,h}$. Indeed, notice first that as $c(h_1,x)=c(h_2,x)=g$, we have $h_1x,h_2x\in g^{-1}X_G$. Arguing by contradiction that the claim fails, there exists $h\in H$ such that $h_1x,h_2x\in g^{-1}(X_G\cap hX_H)$. In particular $gh_1x,gh_2x\in hX_H$. As the actions of $G$ and $H$ on $\Omega$ commute, it follows that $h_1$ and $h_2$ both send $gx$ to a common fundamental domain (namely $hX_H$) of the $H$-action on $\Omega$. This is a contradiction, which proves the claim. 

   When considering the $H$-action on $X_G$, the above claim translates as follows: if two distinct elements  $h_1,h_2\in H$ are such that $c(h_1,x)=c(h_2,x)=g$, then $h_1\cdot x$ and $h_2\cdot x$ belong to different subsets of the partition $X_G=\sqcup_{h\in H}X_{G,h}$. Since only $C$ of these subsets are contained in $X_\varepsilon$, the lemma follows.
		\end{proof}

An immediate corollary of Lemma~\ref{lemma:k-to-one} is the following, see also \cite[Theorem~B.9]{Aus}.

\begin{cor}\label{cor:l1-embedding}
Let $G$ and $H$ be countable groups, with $G$ finitely generated. If there is an $(L^1,L^0)$-measure equivalence coupling from $H$ to $G$, then there is an $L^1$-integrable embedding from $H$ to $G$.
\qed
\end{cor}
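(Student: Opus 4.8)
The plan is to show that, after a harmless rescaling of the measure, the measure equivalence cocycle supplied by the $(L^1,L^0)$-coupling already \emph{is} an $L^1$-integrable embedding in Bowen's sense, with $c'=c$, so that no passage to a genuinely different cohomologous cocycle is needed.

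First I would unpack the hypothesis: an $(L^1,L^0)$-coupling from $H$ to $G$ provides a coupling $(\Omega,\mu)$, a Borel fundamental domain $X_G\subseteq\Omega$ for the $G$-action, and a cocycle $c:H\times X_G\to G$ with $\int_{X_G}|c(h,x)|_G\,d\mu(x)<+\infty$ for every $h\in H$. I would then rescale $\mu$ by the positive constant $1/\mu(X_G)$; this changes neither the class of measure equivalence couplings nor the finiteness of the relevant integrals, so the rescaled $c$ is still $L^1$-integrable, while now $\mu_{|X_G}$ is a probability measure. Using the dictionary recalled in Section~\ref{sec:intro-me}, $X_G\approx G\backslash\Omega$ carries an induced $H$-action, which is measure-preserving because the $G$- and $H$-actions on $\Omega$ commute and $\mu$ is $H$-invariant; thus $(X_G,\mu_{|X_G})$, together with this action and the cocycle $c$, is precisely a standard probability space with a measure-preserving $H$-action and an $L^1$-integrable cocycle into $G$.

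It then remains to verify the uniform-finiteness clause in the definition of an $L^1$-integrable embedding. Given $\varepsilon>0$ I would take $c'=c$ (which is $L^1$-integrable and, trivially, cohomologous to $c$) and apply Lemma~\ref{lemma:k-to-one} to the coupling $\Omega$, the fundamental domain $X_G$, and the parameter $\varepsilon$. This yields a constant $C=C(\varepsilon)>0$ and a Borel subset $X_0=X_\varepsilon\subseteq X_G$ with $\mu(X_0)\ge\mu(X_G)-\varepsilon=1-\varepsilon$ such that $|\{h\in H\mid h\cdot x\in X_0 \text{ and } c'(h,x)=g\}|<C$ for every $g\in G$ and almost every $x\in X_0$ — exactly the property required. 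There is essentially no obstacle here: the statement truly is an immediate corollary of Lemma~\ref{lemma:k-to-one}, and the only point to be careful about is matching the normalization conventions (an $(L^1,L^0)$-coupling has only finite, not probability, fundamental domains), which the initial rescaling disposes of.
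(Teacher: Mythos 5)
Your proof is correct and is exactly the argument the paper intends: the corollary is stated as an immediate consequence of Lemma~\ref{lemma:k-to-one}, with $c'=c$ and the rescaling of $\mu_{|X_G}$ to a probability measure being the only bookkeeping required. (You have also, correctly, read the uniform-finiteness clause with the roles of $g\in G$ and $h\in H$ as they appear in Lemma~\ref{lemma:k-to-one}, which is the intended meaning of the definition.)
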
  

		\subsection{Stabilizers of rank $1$ vertices are virtually cyclic}
		
	In this section, we prove Proposition~\ref{prop:stabilizer-L1}. We already know that there exists an $(L^1,L^0)$-measure equivalence coupling from $H_v$ to $G_v$ (Lemma~\ref{lemma:L1}). If we additionally knew that $H_v$ were finitely generated, then we could use a theorem of Bowen \cite[Theorem~B.10]{Aus} to deduce that the growth of $H_v$ is at most equal to the growth of $G_v$, whence at most linear. Being infinite, $H_v$ is therefore virtually isomorphic to $\mathbb{Z}$. However, we do not know \emph{a priori} that $H_v$ is finitely generated. In this section, we extend Bowen's theorem in order to deal with this issue, see Theorem~\ref{theo:fg}. 
	
Given a finitely generated group $G$, with a finite generating set $S$, we let $B_{G,S}(e,n)=\{g\in G\mid |g|_S\le n\}$. The \emph{growth function} $\gr_{G,S}:\mathbb{N}\to\mathbb{N}$ is defined as \[\gr_{G,S}(n)=|B_{G,S}(e,n)|.\] Given two functions $f,g:\mathbb{N}\to\mathbb{N}$, write $f\preceq g$ if there exists $A$ such that $f(n)\le Ag(An+A)+A$. Write $f\sim g$ if $f\preceq g$ and $g\preceq f$: this is an equivalence relation on maps $\mathbb{N}\to\mathbb{N}$. The growth functions associated to two finite generating sets of the same group are always equivalent; we denote by $\gr_G$ their equivalence class. 
	When $S$ is clear from context, we will also simply write $B_G(e,n)$ instead of $B_{G,S}(e,n)$.

		\begin{lemma}\label{lemma:growth}
			Let $L$ be a countable group, let $G$ be a finitely generated group, and assume that there exists an $(L^1,L^0)$-measure equivalence coupling from $L$ to $G$.
			
			Then for every finitely generated subgroup $L'$ of $L$, one has $\gr_{L'}\preceq \gr_{G}$. 
		\end{lemma}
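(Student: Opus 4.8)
`\textbf{Proof proposal.}` The idea is to combine Corollary~\ref{cor:l1-embedding}, which upgrades our $(L^1,L^0)$-measure equivalence coupling into an $L^1$-integrable embedding $c:L\times X\to G$ (with $X$ a standard probability space carrying a measure-preserving $L$-action), with a Fekete-type subadditivity / Fubini counting argument in the spirit of Bowen \cite[Appendix~B]{Aus}. First I would fix a finite generating set $S'$ of $L'$ and note that, by restricting the $L$-action to $L'$, the cocycle $c$ restricts to an $L^1$-integrable embedding $c:L'\times X\to G$. Thus, for each $\varepsilon>0$, we obtain a cohomologous $L^1$-integrable cocycle $c'$, a subset $X_0\subseteq X$ with $\mu(X_0)\ge 1-\varepsilon$, and a constant $C=C(\varepsilon)$ so that for every $g\in G$ and a.e.\ $x\in X_0$, the fiber $\{\ell\in L'\mid \ell x\in X_0,\ c'(\ell,x)=g\}$ has size less than $C$.

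`Next` I would estimate $\int_X |c'(\ell,x)|_S\, d\mu(x)\le A$ for all generators $\ell\in S'$ and some constant $A$ (possible by $L^1$-integrability and finiteness of $S'$), and then use the cocycle relation together with the triangle inequality for $|\cdot|_S$ to get, for $\ell=\ell_1\cdots\ell_n$ a word of length $n$ in the generators, the bound
\[
\int_X |c'(\ell,x)|_S\, d\mu(x)\ \le\ \sum_{j=1}^n \int_X |c'(\ell_j,x)|_S\, d\mu(x)\ \le\ An .
\]
By Markov's inequality, for each $\ell\in B_{L',S'}(e,n)$ the set of $x$ with $|c'(\ell,x)|_S > 2An/\varepsilon$ has measure at most $\varepsilon/2$; intersecting with $X_0$, the set $X_\ell$ of $x\in X_0$ with $|c'(\ell,x)|_S\le 2An/\varepsilon$ still has measure at least $1-\varepsilon-\varepsilon/2\ge 1-2\varepsilon$ (choosing $\varepsilon<1/3$). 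The key point is then a counting/pigeonhole argument: if we list the elements $\ell_1,\dots,\ell_N$ of $B_{L',S'}(e,n)$, then the average over $x$ of $|\{i\le N\mid x\in X_{\ell_i}\}|$ is at least $N(1-2\varepsilon)$, so there exists a single $x_*\in X_0$ lying in at least $N(1-2\varepsilon)$ of the sets $X_{\ell_i}$. For all such $i$, the values $c'(\ell_i,x_*)$ lie in $B_{G,S}(e,2An/\varepsilon)$; and since $c'$ restricted to this fiber is at most $C$-to-one over $X_0$, the number of distinct $i$ mapping to a given $g\in G$ is at most $C$. Hence
\[
N(1-2\varepsilon)\ \le\ C\cdot |B_{G,S}(e,2An/\varepsilon)|\ =\ C\,\gr_{G,S}(2An/\varepsilon),
\]
which gives $\gr_{L',S'}(n)=N\le \tfrac{C}{1-2\varepsilon}\gr_{G,S}(2An/\varepsilon)$, i.e.\ $\gr_{L'}\preceq\gr_G$.

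`The main obstacle` I anticipate is bookkeeping around the $C$-to-one property: it is a statement about the $G$-action on $X$ implicit in the cocycle $c'$ (namely $g\cdot x$ being defined only for those $g$ with $gx\in X_0$), and one must be careful that "$c'$ restricted to the fiber over $x_*$ is $C$-to-one" is exactly what the integrable-embedding hypothesis delivers — applied with the chosen $\varepsilon$ — rather than something needing a separate argument. A secondary subtlety is that $c'$ is only \emph{cohomologous} to an honest $L^1$-integrable cocycle, so the Markov-inequality step should be applied to the genuinely $L^1$-integrable representative and then transported; since cohomologous cocycles differ by a fixed measurable map $\varphi:X\to G$, one absorbs $\varphi$ into the constants on the set where $|\varphi|_S$ is bounded (which has measure close to $1$), shrinking $X_0$ slightly. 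Once these points are handled the estimate above goes through verbatim, and since every finitely generated subgroup of $L$ is covered, the lemma follows; this is exactly the input needed to then conclude (via $\gr_{\mathbb Z}$ being linear) that vertex stabilizers $H_v$ are virtually cyclic in Proposition~\ref{prop:stabilizer-L1}.
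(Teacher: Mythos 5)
Your proposal is correct in spirit but takes a different (much longer) route than the paper: the paper's entire proof of Lemma~\ref{lemma:growth} is two sentences --- the restriction of the cocycle to $L'\times X$ is still an $L^1$-integrable embedding, and then one quotes Bowen's theorem \cite[Theorem~B.10]{Aus} verbatim. What you have written is essentially a reconstruction of Bowen's proof of that theorem (Markov inequality on $\int|c'(\ell,\cdot)|\,d\mu\le An$, pigeonhole to find a good basepoint $x_*$, and the $C$-to-one fiber bound to compare ball sizes), so your approach buys self-containedness at the cost of redoing work the paper deliberately outsources. Note that the paper \emph{does} carry out a Bowen-style argument in detail, but only in Lemma~\ref{lemma:Bowen}, where a quantitatively sharper version is needed for the infinitely generated case in Theorem~\ref{theo:fg}; for Lemma~\ref{lemma:growth} the black-box citation suffices.

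There is one genuine gap in your counting step. You define $X_\ell$ as $\{x\in X_0 : |c'(\ell,x)|_S\le 2An/\varepsilon\}$ and find $x_*$ lying in many $X_{\ell_i}$, but the $C$-to-one property of the integrable embedding bounds only the fibers $\{\ell\in L' \mid \ell x_*\in X_0,\ c'(\ell,x_*)=g\}$; for the indices $i$ you have collected you do not know that $\ell_i x_*\in X_0$, so the inequality $N(1-2\varepsilon)\le C\,\gr_{G,S}(2An/\varepsilon)$ is not justified as written. The fix is cheap: since the action is measure-preserving, $\mu(\ell^{-1}X_0)=\mu(X_0)\ge 1-\varepsilon$, so adding the condition $\ell x\in X_0$ to the definition of $X_\ell$ only costs another $\varepsilon$ in the union bound, and the averaging argument goes through with $1-2\varepsilon$ replaced by $1-3\varepsilon$. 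This return-time issue is precisely what the paper's Lemma~\ref{lemma:Bowen} handles explicitly via the sets $R_{X_0}(x)$ and the estimate $\int_{X_2}\frac{|R_{X_2}(x)\cap B_L(e,n)|}{|B_L(e,n)|}\,d\mu(x)\ge 2\mu(X_2)-\mu(X)$ from \cite[Lemma~B.11]{Aus}. Your ``secondary subtlety'' about cohomologous cocycles, on the other hand, is moot: the definition of $L^1$-integrable embedding already supplies an honest $L^1$-integrable representative $c'$ for which \emph{both} the integrability and the fiber bound hold, so no transport of constants is needed.
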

		
		\begin{proof}
			Lemma~\ref{lemma:k-to-one} ensures that any $L^1$-integrable measure equivalence cocycle $L\times X\to G$ is an $L^1$-integrable embedding. In particular, the same holds true for the restriction $L'\times X\to G$. The corollary is therefore a consequence of Bowen's theorem regarding the behavior of growth with respect to $L^1$-integrable embeddings \cite[Theorem~B.10]{Aus}.
		\end{proof}
		
		Given a probability measure-preserving action $G\actson (X,\mu)$, a Borel subset $Z\subset X$ and $x\in X$, the \emph{return time set} of $x$ with respect to $Z$ is $R_Z(x)=\{g\in G\mid gx\in Z\}$. 
		\begin{lemma}
			\label{lemma:Bowen}
			Let $L$ and $J$ be finitely generated groups, with $L$ infinite, equipped with word metrics $|\cdot|_L$ and $|\cdot|_J$ (associated to finite generating sets).
			Let $L\actson (X,\mu)$ be a measure-preserving action on a standard probability space, and $c:L\times X\to J$ an $L^1$-integrable cocycle.  Let $M'>0$ and $M>0$ be such that for every $g\in L$ satisfying $|g|_L\ge M'$, one has $\displaystyle\int_X|c(g,x)|_J\; d\mu(x)\le M |g|_L$. Let $X_0\subseteq X$ be a measurable subset with $\mu(X_0)\ge 0.9$. 
			
			Then there exists $n_0>0$ such that for every $n\ge n_0$, there exists a measurable subset $Y_n\subset X_0$ with $\mu(Y_n)\ge 0.3$, such that for every $x\in Y_n$, one has 
			\[\frac{|\{g\in B_{L}(e,n):g\in R_{X_0}(x)\ \mathrm{and}\ |c(g,x)|_J\le 120M|g|_{L}\}|}{|B_{L}(e,n)|}\ge 0.1.\]
		\end{lemma}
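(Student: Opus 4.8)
The strategy is a Markov-type / averaging argument in the spirit of Bowen's proof of the growth monotonicity theorem \cite[Appendix~B]{Aus}. The point is to show that on a large portion of $X_0$, a definite proportion of the ball $B_L(e,n)$ consists of elements that both return $x$ to $X_0$ and have small cocycle value. First I would set up the relevant averages: for $n$ large, consider the quantity
\[
A_n(x) = \frac{1}{|B_L(e,n)|}\sum_{g\in B_L(e,n)} \mathbf{1}_{X_0}(gx),
\]
and the quantity
\[
D_n(x) = \frac{1}{|B_L(e,n)|}\sum_{g\in B_L(e,n)} |c(g,x)|_J.
\]
By the pointwise ergodic theorem for the action of $L$ along the (F\o lner, or more precisely tempered) sequence of balls — or more elementarily, by integrating and using that $\mu(X_0)\ge 0.9$ together with a Fubini computation — the function $A_n$ has integral $\ge 0.9$ for all $n$, so by Markov's inequality the set where $A_n(x)\ge 0.8$ has measure $\ge 0.5$ for each fixed $n$. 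Since $L$ is infinite and finitely generated, $|B_L(e,n)|\to\infty$, so the finitely many "small" elements (those with $|g|_L<M'$) contribute a vanishing proportion, and the hypothesis $\int_X|c(g,x)|_J\,d\mu(x)\le M|g|_L$ for $|g|_L\ge M'$ gives, after summing over $g\in B_L(e,n)$ and dividing, that $\int_X D_n(x)\,d\mu(x)\le M\cdot\big(\frac{1}{|B_L(e,n)|}\sum_{g\in B_L(e,n)}|g|_L\big) + o(n) \le M\cdot n + o(n)$. Hence for $n$ large enough, $\int_X D_n(x)\,d\mu(x)\le 2Mn$, and by Markov's inequality the set $E_n = \{x : D_n(x)\le 20Mn\}$ has $\mu(E_n)\ge 0.9$.

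Next I would intersect: let $F_n = \{x\in X_0 : A_n(x)\ge 0.8\}\cap E_n$. For $n$ large, $\mu(F_n)\ge \mu(X_0) - 0.1 - 0.5\ge 0.3$; here I am using $\mu(X_0)\ge 0.9$, $\mu(X\setminus E_n)\le 0.1$, and $\mu(\{A_n<0.8\})\le 0.5$ (adjusting the constant bookkeeping slightly — the precise numerology is routine, and the statement's constant $0.3$ leaves room). I will take $Y_n$ to be (a subset of) $F_n$. Now fix $x\in F_n$. Partition $B_L(e,n)$ into the "good radius" shells: since $D_n(x)\le 20Mn$, the number of $g\in B_L(e,n)$ with $|c(g,x)|_J > 120 M |g|_L$ must be controlled. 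The cleanest way is to note that $\sum_{g\in B_L(e,n)}|c(g,x)|_J\le 20Mn|B_L(e,n)|$, and to estimate the contribution of the "bad" elements. For $g$ with $|c(g,x)|_J>120M|g|_L$ we at least have $|c(g,x)|_J>0$, but to get a genuine proportion bound we should split $B_L(e,n)$ into dyadic annuli $\{2^{k}\le |g|_L< 2^{k+1}\}$; since $L$ has at most exponential growth and the annuli near radius $n$ carry a definite proportion of the ball, the mass of $c$-values on an annulus of radius $\approx n$ is $\le 20Mn|B_L(e,n)|$, so the number of elements there with $|c(g,x)|_J>120Mn\ge 120M|g|_L$ is at most $\tfrac{1}{6}|B_L(e,n)|$. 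Combined with $A_n(x)\ge 0.8$ (so at least $0.8|B_L(e,n)|$ elements satisfy $gx\in X_0$), we conclude that the set of $g\in B_L(e,n)$ with $gx\in X_0$ \emph{and} $|c(g,x)|_J\le 120M|g|_L$ has cardinality at least $(0.8 - \tfrac16 - \text{(small-radius error)})|B_L(e,n)|\ge 0.1|B_L(e,n)|$ for $n$ large, which is exactly the claimed inequality.

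\textbf{Main obstacle.} The delicate point is converting the $L^1$-bound on the \emph{sum} $\sum_{g\in B_L(e,n)}|c(g,x)|_J$ into a bound on the \emph{number of elements $g$ for which $|c(g,x)|_J$ exceeds $120M|g|_L$}, because $|g|_L$ varies across the ball — a naive Markov estimate would only bound elements with $|c(g,x)|_J$ large in absolute terms, not relative to $|g|_L$. The resolution is the dyadic-annulus decomposition together with the fact (from at-most-exponential growth of $L$) that the top annulus $\{n/2\le |g|_L\le n\}$ already contains a fixed fraction of $B_L(e,n)$, so that working at a single scale $|g|_L\asymp n$ is legitimate after discarding a controlled fraction of small-radius elements; on that scale the relative bound $|c(g,x)|_J\le 120M|g|_L$ and the absolute bound $|c(g,x)|_J\le 120Mn$ agree up to a factor $2$, and the averaging argument applies directly. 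The other mild subtlety is which ergodic/Fubini input to invoke for $A_n$ and $D_n$: since we only need the integral estimates $\int A_n\ge 0.9$ and $\int D_n\le 2Mn$ (not pointwise convergence), a direct Fubini computation suffices and one avoids invoking a pointwise ergodic theorem along balls.
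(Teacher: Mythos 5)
Your setup (the Fubini computation for the return proportion, the Markov inequalities in $x$, and the final intersection giving measure $\ge 0.3$) matches the paper's proof in spirit. But the resolution you propose for what you correctly identify as the main obstacle --- converting the $L^1$ bound on $\sum_{g\in B_L(e,n)}|c(g,x)|_J$ into a count of elements with $|c(g,x)|_J\le 120M|g|_L$ --- does not work as written, and this is a genuine gap. First, your dyadic-annulus argument rests on the claim that the top annulus $\{n/2\le|g|_L\le n\}$ carries a \emph{definite} proportion of $B_L(e,n)$, uniformly in $n$; this is not a consequence of ``at most exponential growth'' and is not justified for a general finitely generated $L$. Second, even granting it, the constants fail in precisely the case needed for the application (Theorem~\ref{theo:fg} applies the lemma with $L$ infinite cyclic): for $L\cong\mathbb{Z}$ the top annulus has proportion about $1/2$, bad elements there (those with $|c(g,x)|_J>60Mn$) can number up to $|B|/3$, and non-returning elements up to $0.2|B|$, so your lower bound on good elements is $(1/2-1/3-0.2)|B|<0$, not $0.1|B|$. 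Tightening the annulus makes the absolute threshold drop and the Markov count of bad elements blow up, so there is a real tension your argument does not resolve.

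The paper's proof sidesteps all of this by normalizing \emph{before} averaging: it works with $\frac{1}{|B_L(e,n)|}\sum_{g\in B_L(e,n)}\frac{|c(g,x)|_J}{|g|_L}$ rather than with your $D_n(x)$. The hypothesis says $\kappa(g):=\int_X|c(g,x)|_J\,d\mu\le M|g|_L$ for all but finitely many $g$, so since $|B_L(e,n)|\to\infty$ the ball average of $\kappa(g)/|g|_L$ is at most $2M$ for $n$ large; Fubini and Markov in $x$ give a set $X_1$ of measure $\ge 0.9$ on which the ball average of $|c(g,x)|_J/|g|_L$ is at most $20M$, and then Markov with respect to counting measure on the ball bounds the number of $g$ with $|c(g,x)|_J/|g|_L>120M$ by $|B_L(e,n)|/6$ --- directly, with no annulus decomposition and no regularity assumption on the growth of $L$. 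The remaining bookkeeping (intersecting with $X_0$ and using the return-time estimate) then goes through essentially as you describe. If you replace your $D_n$ by this normalized average, your argument becomes correct.
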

		
		The existence of $M'$ and $M$ follows from the cocycle identity and subadditivity, see e.g.\ \cite[Section A.2]{BFS}. 
		
		\begin{proof}
			We follow Bowen's proof of \cite[Theorem~B.10]{Aus} very closely. For $g\in L$, we write $\kappa(g)=\displaystyle\int_X|c(g,x)|_J\; d\mu(x)$.
			As $L$ is infinite and there are only finitely many elements in $L$ such that $\kappa(g)> M|g|_L$, there exists $n_0$ such that for every $n\ge n_0$, one has
			\[\frac{1}{|B_L(e,n)|}\sum_{g\in B_L(e,n)}\frac{\kappa(g)}{|g|_L}\le 2M.\]
			We now fix $n\ge n_0$, and find $Y_n$ as in the lemma. As 
			\[\int_X\left(\frac{1}{|B_L(e,n)|}\sum_{g\in B_L(e,n)}\frac{|c(g,x)|_J}{|g|_L}\right) d\mu(x)\le 2M,\]
			we deduce from the Markov inequality that there exists a measurable subset $X_1\subset X$ with $\mu(X_1)\ge 0.9$ such that \[\frac{1}{|B_L(e,n)|}\sum_{g\in B_L(e,n)}\frac{|c(g,x)|_J}{|g|_L}\le 20 M\] for any $x\in X_1$. For every $x\in X_1$, let 
			\[L_x=\{g\in B_L(e,n) \mid |c(g,x)|_J\le 120M |g|_L\},\]
			then $|L_x|\ge (5/6)|B_L(e,n)|$. Let $X_2=X_0\cap X_1$. Then $\mu(X_2)\ge 0.8$. By \cite[Lemma~B.11]{Aus}, 
			\[\int_{X_2}\frac{|R_{X_2}(x)\cap B_L(e,n)|}{|B_L(e,n)|}d\mu(x)\ge 2\mu(X_2)-\mu(X)\ge 0.6.\]
			Thus
			\begin{align*}
				&\int_{X_2}\frac{|R_{X_0}(x)\cap L_x|}{|B_L(e,n)|}d\mu(x)\ge \int_{X_2}\frac{|R_{X_2}(x)\cap L_x|}{|B_L(e,n)|}d\mu(x)\\
				&\ge \int_{X_2}\frac{|R_{X_2}(x)\cap B_L(e,n)|+|L_x|-|B_L(e,n)|}{|B_L(e,n)|}d\mu(x)\ge 0.6+\mu(X_2)(5/6-1)\ge 0.4.
			\end{align*}
			Then the measurable subset
			\[X_3:=\left\{x\in X_2 \Bigm| \frac{|R_{X_0}(x)\cap L_x|}{|B_L(e,n)|}\ge 0.1\right\}\]
			satisfies $\mu(X_3)\ge 0.3$, otherwise
			\begin{align*}
				&\int_{X_2}\frac{|R_{X_0}(x)\cap L_x|}{|B_L(e,n)|}d\mu(x)= \int_{X_3}\frac{|R_{X_0}(x)\cap L_x|}{|B_L(e,n)|}d\mu(x)+\int_{X_2\setminus X_3}\frac{|R_{X_0}(x)\cap L_x|}{|B_L(e,n)|}d\mu(x)\\
				&\le \mu(X_3)+0.1\mu(X_2\setminus X_3)\le \mu(X_3)+0.1\mu(X_2)<0.4,
			\end{align*}
			which is a contradiction. Now the lemma follows with $Y_n=X_3$.
		\end{proof}
		
		Recall that a group $L$ is \emph{locally virtually cyclic} if every finitely generated subgroup of $L$ is virtually cyclic (possibly finite). We are grateful to Yves Cornulier for providing us with the following lemma and its proof, improving a lemma we were using in an earlier version of this paper.
		
	\begin{lemma}[Cornulier]\label{lemma:cornulier}
	    Let $L$ be a locally virtually cyclic countable group with bounded finite subgroups. Then $L$ has a characteristic subgroup of finite-index which is torsion-free and locally cyclic.
	\end{lemma}
		
		\begin{proof}
		 We will assume that $L$ is infinite, as otherwise the conclusion is obvious. We will take advantage of a result of Wall \cite[Lemma~4.1]{Wal}, ensuring that every group which is virtually isomorphic to $\mathbb{Z}$ surjects onto either $\mathbb{Z}$ or onto the infinite dihedral group $D_\infty$ with finite kernel. This justifies the case disjunction made in the next two paragraphs.
	 
		 We first assume that every infinite finitely generated subgroup of $L$ surjects onto $\mathbb{Z}$ with finite kernel. In this case the set $F$ of finite-order elements of $L$ is a subgroup, which by assumption is finite (and it is characteristic). The quotient $L/F$ is torsion-free, so every finitely generated subgroup of $L/F$ is isomorphic to $\mathbb{Z}$. Let $M\subseteq L$ be the centralizer of $F$. Then $M$ has finite index in $L$, it is characteristic, and it is central-by-(locally cyclic) hence abelian. Let $n$ be the order of $F\cap M$. Then the subgroup $M^n$ consisting of $n^{\text{th}}$ powers of elements of $M$ intersects $F$ trivially, and has finite index, so is the desired locally cyclic finite-index characteristic subgroup of $M$.  
		 
		 We now assume that $L$ contains a subgroup $H$ that maps to $D_\infty$ with finite kernel. Let $\theta_H:H\to\mathbb{Z}/2\mathbb{Z}$ be the homomorphism given by the action of $H$ on its two ends. Every finitely generated subgroup $G$ of $L$ containing $H$ must then also surjects onto $D_\infty$ with finite kernel, and the homomorphism $\theta_G$ extends $\theta_H$. Altogether these give a homomorphism $L\to\mathbb{Z}/2\mathbb{Z}$, whose kernel is a characteristic finite-index subgroup to which the above paragraph applies.
		\end{proof}
		
		We record the following corollary, which is the form in which Lemma~\ref{lemma:cornulier} will be applied in the sequel.
		
		\begin{cor}\label{cor:locally-virtually-cyclic}
			Let $L$ be a locally virtually cyclic countable group with bounded finite subgroups. Then either $L$ is virtually cyclic, or else there exists an infinite cyclic subgroup $L_1\subseteq L$ such that for every $k\in\mathbb{N}$, there exists an infinite cyclic subgroup $L_k\subseteq L$ that contains $L_1$ as a subgroup of index at least $k$. 
		\end{cor}

\begin{proof}
Let $L^0$ be a torsion-free locally cyclic finite-index subgroup of $L$: this exists by Lemma~\ref{lemma:cornulier}. If $L^0$ is finitely generated, then $L$ is virtually cyclic. Otherwise, we can find an infinite strictly increasing sequence $L_1\subseteq L_2\subseteq\dots$ of finitely generated subgroups of $L^0$, whence all isomorphic to $\mathbb{Z}$. In particular $L_k$ contains $L_1$ as a subgroup of index at least $k$ (in fact at least $2^{k-1}$), and we are done. 
\end{proof}

		\begin{theo}
			\label{theo:fg}
Let $L$ be a countable group with bounded finite subgroups, and assume that there exists an $L^1$-integrable embedding from $L$ to $\mathbb{Z}$.			
			
			Then $L$ is virtually cyclic (possibly finite).
		\end{theo}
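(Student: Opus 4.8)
The plan is to first show that $L$ is locally virtually cyclic, and then to rule out, by an averaging argument, the possibility that $L$ is not itself virtually cyclic.

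\textbf{Reduction.} Restricting an $L^1$-integrable embedding from $L$ to $\mathbb{Z}$ to a finitely generated subgroup $L'\subseteq L$ yields an $L^1$-integrable embedding from $L'$ to $\mathbb{Z}$ (the defining data restrict to $L'$, and the fiber bound only improves), so by Bowen's growth theorem \cite[Theorem~B.10]{Aus} one gets $\gr_{L'}\preceq\gr_{\mathbb{Z}}$; thus every finitely generated subgroup of $L$ has at most linear growth and is virtually cyclic, i.e.\ $L$ is locally virtually cyclic. If $L$ is virtually cyclic we are done. Otherwise, since $L$ has bounded torsion, Lemma~\ref{lemma:locally-virtually-cyclic} provides an infinite cyclic subgroup $L_1=\langle t\rangle\subseteq L$ and, for each $k\in\mathbb{N}$, an infinite cyclic subgroup $L_k=\langle s_k\rangle$ with $L_1\subseteq L_k$ and $d_k:=[L_k:L_1]\ge k$; replacing $s_k$ by its inverse if needed, we may arrange $s_k^{d_k}=t$. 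Fix once and for all $\varepsilon=1/10$ in the definition of the embedding: this gives an $L^1$-integrable cocycle $c':L\times X\to\mathbb{Z}$, a Borel set $X_0\subseteq X$ with $\mu(X_0)\ge 0.9$, and a constant $C$ such that for every $n\in\mathbb{Z}$ and a.e.\ $x\in X_0$ at most $C$ elements $g\in L$ satisfy $gx\in X_0$ and $c'(g,x)=n$. Set $M_0=\int_X|c'(t,x)|\,d\mu<\infty$; note that $M_0$ and $C$ do not depend on $k$.

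\textbf{A linear bound on $c'$ over $L_k$.} The cocycle identity together with the $L$-invariance of $\mu$ gives $\int_X|c'(t^j,x)|\,d\mu\le|j|M_0$ for all $j\in\mathbb{Z}$. Writing a general element of $L_k$ as $s_k^m=t^js_k^r$ with $m=d_kj+r$ and $0\le r<d_k$, the cocycle identity then bounds $\int_X|c'(s_k^m,x)|\,d\mu$ by $|j|M_0+A_k$, where $A_k:=\max_{0\le r<d_k}\int_X|c'(s_k^r,x)|\,d\mu<\infty$. Since $|j|\le|m|/d_k+1$, for $|m|$ beyond some (possibly $k$-dependent) threshold $M'_k$ this is at most $(2M_0/d_k)|m|$. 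Hence the hypotheses of Lemma~\ref{lemma:Bowen} hold for the group $L_k$, the target $\mathbb{Z}$, the cocycle $c'|_{L_k\times X}$, the slope $M=2M_0/d_k$, the threshold $M'_k$, and the set $X_0$.

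\textbf{Averaging and conclusion.} Lemma~\ref{lemma:Bowen} gives, for all large $n$, a subset $Y_n\subseteq X_0$ of measure $\ge 0.3$ such that for every $x\in Y_n$ at least a proportion $0.1$ of the elements $g$ in $B_{L_k}(e,n)$ satisfy $gx\in X_0$ and $|c'(g,x)|\le 120Mn$. Since $L_k\cong\mathbb{Z}$ this ball has $2n+1$ elements, so one obtains more than $n/5$ such $g$; on the other hand, for each such $g$ the value $c'(g,x)$ lies among at most $240Mn+1$ integers, and by the embedding bound (applied at a point $x\in Y_n$ where it holds simultaneously for all values, possible since $\mathbb{Z}$ is countable and $\mu(Y_n)>0$) each integer is attained by fewer than $C$ elements of $L_k$. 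Comparing the two counts, and taking $n\ge d_k/M_0$ to absorb the additive $1$, yields $n/5<(481CM_0/d_k)\,n$, that is $d_k<2405\,CM_0$. As $d_k\ge k$ and $CM_0$ is independent of $k$, taking $k$ large gives a contradiction; therefore $L$ is virtually cyclic.

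I expect the main obstacle to be the linear bound of the second step: it is precisely there that the large index $d_k$ is converted into the small slope $2M_0/d_k$ for the cocycle $c'$ restricted to $L_k$, and without this gain the counting of the third step would not beat the constant $C$ coming from the embedding. Two points require care, though they are routine: the cocycle $c'$ (and hence $M_0$) must be fixed before $k$ is allowed to vary, and the bounded-fiber property of the embedding on $L$ must be transported to the subgroup $L_k$, which is immediate since intersecting with $L_k$ only shrinks the relevant sets.
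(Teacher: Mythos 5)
Your proposal is correct and follows essentially the same route as the paper's proof: reduce to local virtual cyclicity via Bowen's growth theorem, invoke Lemma~\ref{lemma:locally-virtually-cyclic} to produce cyclic subgroups $L_k\supseteq L_1$ of unbounded index, convert the large index into a small slope for the restricted cocycle, and beat the fiber bound $C$ via Lemma~\ref{lemma:Bowen} — down to the same constants ($0.9$, $0.3$, $0.1$, $2405\,CM_0$). The two points you flag as requiring care (fixing $c'$ and $M_0$ before letting $k$ vary, and restricting the bounded-fiber property to $L_k$) are handled the same way in the paper.
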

		
		\begin{proof}
			We write $J=\mathbb{Z}$. We choose once and for all a generator for $J$, and consider the associated word length on $J$ for the rest of the argument. For $C>0$, we say that a map between two sets $f:E_1\to E_2$ is \emph{at most $C$-to-1} if the cardinality of each point inverse of $f$ is at most $C$.
			 
 Without loss of generality, we can assume that $L$ is infinite. By Lemma~\ref{lemma:growth}, every finitely generated subgroup of $L$ has growth at most linear, and therefore is virtually cyclic (possibly finite), see e.g.\ \cite{Jus}. Assuming towards a contradiction that $L$ is not virtually cyclic, let $L_1\subseteq L$ be an infinite cyclic subgroup of $L$ given by  Corollary~\ref{cor:locally-virtually-cyclic}. We will prove that there exists $K>0$ such that for every other infinite cyclic subgroup $L_2$ containing $L_1$, one has $[L_2:L_1]\le K$. This will contradict Corollary~\ref{cor:locally-virtually-cyclic} and conclude our proof.

   By assumption, there exists a measure-preserving action of $L$ on a standard probability space $(X,\mu)$, a cocycle $c:L\times X\to J$ which is $L^1$-integrable, a Borel subset $X_0\subseteq X$ with $\mu(X_0)\ge 0.9$, and a constant $C>0$, such that for every $x\in X_0$, the restriction $c(\cdot,x)_{|R_{X_0}(x)}$ is at most $C$-to-$1$.  
			
			Let $L_2$ be an infinite cyclic subgroup of $L$ that contains $L_1$, and let $k=[L_2:L_1]$. For every $i\in\{1,2\}$, we choose a generator $g_i$ of $L_i$, and we endow $L_i$ with the word metric with respect to this choice of generator. For $g\in L_2$, let \[\kappa(g)=\int_X|c(g,x)|_J\; d\mu(x).\] Let $M_1=\kappa(g_1)$ and $D=\max\{\kappa(g_2^i)\}_{0\le i\le k-1}$. Let $g\in L_2$, and write it as $g=\ell g_1+ig_2$ with $\ell\in\mathbb{Z}$ and $0\le i\le k-1$. Using the cocycle relation, we see that $\kappa(g)\le |\ell| M_1+D$. On the other hand $|g|_{L_2}=k|\ell|\pm i$. Therefore, there exists $M'\in\mathbb{N}$ such that $\kappa(g)\le \frac{2M_1}{k}|g|_{L_2}$ whenever $|g|_{L_2}\ge M'$.
			
			Recall that $\mu(X_0)\ge 0.9$. We can therefore apply Lemma~\ref{lemma:Bowen} with $M=\frac{2M_1}{k}$, and with $L_2$ in place of $L$, and deduce that for $n$ large enough, there exists a Borel subset $Y_n\subseteq X_0$ with $\mu(Y_n)\ge 0.3$ such that for every $x\in Y_n$, one has \[\frac{|\{g\in B_{L_2}(e,n):g\in R_{X_0}(x)\ \mathrm{and}\ |c(g,x)|_J\le 240\frac{M_1}{k}|g|_{L_2}\}|}{|B_{L_2}(e,n)|}\ge 0.1.\]
			Let $B_{L_2,x}^{\mathrm{good}}(e,n)$ be the set defined as in the numerator of the above expression. 
			By definition of $X_0$, for $x\in Y_n$, the map $c(\cdot,x)$ restricted to $B_{L_2,x}^{\mathrm{good}}(e,n)$ is at most $C$-to-1. On the other hand, \[c(B_{L_2,x}^{\mathrm{good}}(e,n)\times\{x\})\subseteq B_{J}\left(e,\frac{240M_1n}{k}\right),\] 
			thus \[|B_{L_2,x}^{\mathrm{good}}(e,n)|\le \frac{480CM_1n}{k}+C.\] 
			For $n$ large enough, it follows that
			\[\frac{481CM_1n/k}{|B_{L_2}(e,n)|}\ge 0.1.\]
			As $|B_{L_2}(e,n)|=2n+1$, we deduce that $\frac{481CM_1n}{2kn}\ge 0.1$. Thus $k\le 2405CM_1$.
		\end{proof}

		\begin{rk}\label{rk:odometer}
			Theorem~\ref{theo:fg} does not hold if $L$ is not assumed to have bounded finite subgroups. The simplest example is to take $L=\bigoplus_{\mathbb{N}}\mathbb{Z}/2\mathbb{Z}$. Then $L$ and $\mathbb{Z}$ have orbit equivalent actions on $X=\{0,1\}^{\mathbb{N}}$ (equipped with the product measure of the uniform measure on $\{0,1\}$). Here $\mathbb{Z}$ acts as the odometer, and $L$ acts by coordinatewise addition; two elements are in the same orbit if and only if they have the same tail. Letting $c:L\times X\to \mathbb{Z}$ be the orbit equivalence cocycle, it is easy to see that $|c(s,\cdot)|$ is bounded for every $s\in L$. 		
		\end{rk}
		
\begin{proof}[Proof of Proposition~\ref{prop:stabilizer-L1}]
		Let $v\in V\B$ be a rank $1$ vertex. By Lemma~\ref{lemma:L1}, there is an $(L^1,L^0)$-measure equivalence coupling from $H_v$ to $G_v$. By Corollary~\ref{cor:l1-embedding}, this yields in particular an $L^1$-integrable embedding from $H_v$ to $G_v$. Since $G_v$ is isomorphic to $\mathbb{Z}$, Theorem~\ref{theo:fg} implies that $H_v$ is virtually cyclic. And $H_v$ is infinite because it is measure equivalent to the infinite group $G_v$, so $H_v$ is virtually isomorphic to $\mathbb{Z}$.
		\end{proof}

		\section{Controlling the factor actions}\label{sec:factor-action}
	
 Throughout the section, we let $G$ be a right-angled Artin group with $|\Out(G)|<\infty$, and $H$ be a countable group with bounded finite subgroups, such that there exists an $(L^1,L^0)$-measure equivalence coupling $\Omega$ from $H$ to $G$.  Let $\hat\Omega$ be the measure equivalence coupling between $H$ and $\hat{G}$ defined by letting $\hat{\Omega}=(\hat{G}\times\Omega)/G$ as in Section~\ref{sec:integrable-stab}. Let $\iota:H\to\Aut(\Gamma^e)$ and $\theta:\hat\Omega\to\Aut(\Gamma^e)$ be the maps given by Lemma~\ref{lemma:action-on-B} through the canonical isomorphism $\Aut(\B)\simeq\Aut(\Gamma^e)$ recalled in Section~\ref{sec:extension-graph}.  In particular $H$ acts on $\Gamma^e$, and therefore $H$ acts on $G$ by flat-preserving bijections, see Section~\ref{sec:autos}.

For $\sfv\in V\Gamma^e$, we let $\hat\Omega_\sfv=\theta^{-1}(\Stab_{\Aut(\Gamma^e)}(\sfv))$ and $H_\sfv=\iota^{-1}(\Stab_{\Aut(\Gamma^e)}(\sfv))$. As usual we denote by $G_\sfv$ and $\hat{G}_\sfv$ the stabilizers of $\sfv$ for the actions of $G$ and $\hat{G}$, respectively. Since the actions of $\hat{G}$ and of $\Aut(\Gamma^e)$ on $\Gamma^e$ have the same orbits of vertices (Lemma~\ref{lemma:transitivity-extension-graph}), it follows from Corollary~\ref{cor:ME-coupling-restriction} that $\hat\Omega_\sfv$ is a measure equivalence coupling between the stabilizers $\hat{G}_\sfv$ and $H_\sfv$. 

 Let $\mathcal{L}_\sfv$ be the set of all $\sfv$-lines in $G$.  Recall from Section~\ref{sec:qi-criterion} that the union $P_\sfv$ of all $\sfv$-lines in $G$ is a left coset of form $gG_{\st(v)}$ for some $v\in V\Gamma$.  Recall also that the action $H_\sfv\actson G$ is by flat-preserving bijections and that $H_\sfv$ preserves $P_\sfv$ and maps $\sfv$-lines to $\sfv$-lines. In particular, there is an induced action $H_\sfv\actson \mathcal L_\sfv$. More generally the action of $\Aut_\sfv(\B)$ on $G$ by flat-preserving bijections preserves $P_\sfv$ and sends $\sfv$-line to $\sfv$-line. We denote by $\theta_\sfv:\hat\Omega_\sfv\to\Bij(P_\sfv)$ and $\iota_\sfv:H_\sfv\to\Bij(P_\sfv)$ the induced maps.	

 The goal of this section is to understand the action $H_\sfv\actson P_\sfv$ to the extent that we can connect with the assumptions of Theorem~\ref{theo:QI}. We will first prove a few group theoretical properties of $H_\sfv$, namely $H_\sfv$ is finitely generated (Section~\ref{subsec:fg}), and $H_\sfv$ contains a commensurated $\mathbb Z$ subgroup (Section~\ref{sec:commensurability}), which is actually virtually normal (Section~\ref{subsec:normal}), before we prove the main lemma (Lemma~\ref{lemma:uniform-qi}) on the action  $H_\sfv\actson P_\sfv$ in Section~\ref{subsec:conjugate}.

\subsection{Finite generation of $H_\sfv$}
\label{subsec:fg}
The main goal of this subsection is to prove that $H_\sfv$ is finitely generated (Lemma~\ref{lemma:hv-fg} below). Recall that finite generation has already been established for $H$-stabilizers of vertices of rank at most
1 in $\B$ (Proposition~\ref{prop:stabilizer-L1}), and this will be used to prove the finite generation of $H_\sfv$.

Let $\B(\sfv)$ be the union of all cubes in $\B$ whose vertices correspond to standard flats that are contained in $P_\sfv$. Then $\B(\sfv)$ is isomorphic to the right-angled building associated with $G_{\st(v)}$, in particular it is simply connected. Moreover, $\B(\sfv)$ is $H_\sfv$-invariant.

\begin{lemma}\label{lemma:cocompact-on-bv}
			The action of $H_\sfv$ on $P_\sfv$ has finitely many orbits of vertices. In particular,
   \begin{enumerate}
   \item the action of $H_\sfv$ on $\call_\sfv$ has finitely many orbits;
   \item for every $\sfv$-line $\ell\in\call_\sfv$, the action of $\Stab_{H_\sfv}(\ell)$ on $\ell$ has finitely many orbits of vertices;
   \item the action of $H_\sfv$ on $\B(\sfv)$ is cocompact.
   \end{enumerate}
		\end{lemma}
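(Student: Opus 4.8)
The plan is to derive the finiteness of orbits of $H_\sfv$ on $P_\sfv$ from the second conclusion of Corollary~\ref{cor:ME-coupling-orbits}, using that $P_\sfv$ is (the vertex set of) a union of standard flats, each rank $0$ vertex being contained in a single $\sfv$-line. Concretely, the vertex set of $P_\sfv$ is exactly the set $V^0(\B)_{\le u}$ of rank $0$ vertices below some fixed vertex $u$ associated to the maximal standard flat of $P_\sfv$? No — more precisely, $P_\sfv=gG_{\st(v)}$ as a subset of $G$, which is the set of rank $0$ vertices of $\B$ contained in the subcomplex $\B(\sfv)\cong\B_{\st(v)}$. I would first observe that $P_\sfv$, as a set of rank $0$ vertices of $\B$, is invariant under $\Stab_{\Aut(\B)}(\sfv)$ (since this stabilizer preserves ranks by Lemma~\ref{lemma:rank}, preserves $\sfv$-lines, hence preserves $P_\sfv$ which is their union), consists of vertices with finite (in fact trivial) $G$-stabilizer, and is acted on transitively by $G_\sfv=\Stab_G(\sfv)$ — this last point follows from Proposition~\ref{prop:normalizer}, since $G_\sfv=gG_{\st(v)}g^{-1}$ (for the appropriate $v,g$) acts transitively on the coset $gG_{\st(v)}$. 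Then the second assertion of Corollary~\ref{cor:ME-coupling-orbits}, applied with $V=P_\sfv$ and with the vertex $\sfv$ of $\Gamma^e$ replaced by its image $u$ in $V\B$ under the identification $\Aut(\Gamma^e)\simeq\Aut(\B)$, and using that $\hat G$ and $\Aut(\B)$ have the same vertex orbits (Corollary~\ref{cor:transitivity}), directly yields that $H_\sfv$ acts on $P_\sfv$ with finitely many orbits of (rank $0$) vertices.

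Next I would deduce the three numbered consequences. For (1): since $H_\sfv$ acts on $\call_\sfv$ and the action on $P_\sfv$ (rank $0$ vertices) is finite-orbit, and every $\sfv$-line contains vertices, the map $P_\sfv\to\call_\sfv$ sending a vertex to the unique $\sfv$-line containing it is $H_\sfv$-equivariant and surjective, so finitely many orbits downstairs follow from finitely many upstairs. For (2): fix $\ell\in\call_\sfv$; the vertices of $\ell$ are a subset of $P_\sfv$, and $\Stab_{H_\sfv}(\ell)$ acts on them; since $H_\sfv$ has finitely many orbits on $P_\sfv$, in particular $\Stab_{H_\sfv}(\ell)$ has finitely many orbits on $V\ell$ (an orbit of $H_\sfv$ meeting $\ell$ breaks into $\Stab_{H_\sfv}(\ell)$-orbits, but since $H_\sfv$ permutes the $\sfv$-lines and there are finitely many $H_\sfv$-orbits total, and each $H_\sfv$-orbit meets $\ell$ in a union of $\Stab_{H_\sfv}(\ell)$-orbits, finiteness passes down). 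For (3): the action of $H_\sfv$ on $\B(\sfv)$ preserves ranks (Lemma~\ref{lemma:rank}) and hence the partial order on vertices; every vertex of $\B(\sfv)$ of rank $\ge 1$ lies above at least one rank $0$ vertex, and each rank $0$ vertex lies below only boundedly many higher-rank vertices of $\B(\sfv)$ (the link of a rank $0$ vertex in $\B(\sfv)$ is finite, being the flag complex of $\st(v)$), so finitely many orbits of rank $0$ vertices forces finitely many orbits of all vertices, hence of all cells, hence cocompactness. This is the same argument as in Corollary~\ref{cor:cocompact}.

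The main obstacle I anticipate is the careful translation between the vertex $\sfv\in V\Gamma^e$ and the geometric data in $\B$ needed to invoke Corollary~\ref{cor:ME-coupling-orbits}(2) correctly: that corollary is stated for a vertex $v\in VK$ and an $\Stab_{\Aut(K)}(v)$-invariant set $V$ on which $\G_v$ acts transitively, whereas here the natural "base vertex" whose $\Aut(\B)$-stabilizer is relevant is not a single vertex of $\B$ but rather the stabilizer $\Stab_{\Aut(\Gamma^e)}(\sfv)=\Stab_{\Aut(\B)}(\sfv)$, which equals $\Stab_{\Aut(\B)}(u)$ for $u$ the rank $1$ vertex of $\B$ associated to the parallelism class $\sfv$. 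One must check that $\Stab_{\Aut(\B)}(u)$ indeed preserves $P_\sfv$ — this is where Lemma~\ref{lemma:rank} and the fact that an automorphism of $\B$ fixing $u$ permutes the rank $0$ vertices below $u$ but also fixes the whole parallelism class comes in; alternatively one works directly with $\Stab_{\Aut(\Gamma^e)}(\sfv)$ acting on the rank $0$ vertices, which is exactly $\Bij_{\fp}$-data as in Section~\ref{sec:autos}. Once the hypotheses are matched, the rest is bookkeeping, using that $\hat G_\sfv$ acts transitively on $P_\sfv$ (Proposition~\ref{prop:normalizer}) and has finite — here trivial — stabilizers on rank $0$ vertices.

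\begin{proof}[Proof sketch]
Recall that under the isomorphism $\Aut(\Gamma^e)\simeq\Aut(\B)$ of Lemma~\ref{lemma:iso}, the stabilizer $\Stab_{\Aut(\Gamma^e)}(\sfv)$ corresponds to the stabilizer in $\Aut(\B)$ of the rank $1$ vertex $u\in V\B$ associated to the parallelism class $\sfv$, and this group preserves the set $P_\sfv$ of rank $0$ vertices of $\B$ (the union of all $\sfv$-lines), because automorphisms of $\B$ preserve ranks of vertices (Lemma~\ref{lemma:rank}) and send $\sfv$-lines to $\sfv$-lines. Every element of $P_\sfv$ has trivial $G$-stabilizer, and $G_\sfv$ acts transitively on $P_\sfv$: indeed $P_\sfv=gG_{\st(v)}$ for some $v\in V\Gamma$ and $g\in G$, and by Proposition~\ref{prop:normalizer} the stabilizer $G_\sfv$ contains $gG_{\st(v)}g^{-1}$, which acts transitively on the coset $gG_{\st(v)}$. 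We may therefore apply the second assertion of Corollary~\ref{cor:ME-coupling-orbits}, with $K=\B$, with the vertex taken to be $u$, with $V=P_\sfv$, and with $\hat G$ in place of $\mathsf{G}$ (which is legitimate since $\hat G$ and $\Aut(\B)$ have the same orbits of vertices by Corollary~\ref{cor:transitivity}). This shows that $H_\sfv=\iota^{-1}(\Stab_{\Aut(\B)}(u))$ acts on $P_\sfv$ with finitely many orbits of vertices.

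It remains to derive the three consequences. For (1), the assignment sending a rank $0$ vertex of $P_\sfv$ to the unique $\sfv$-line of $\call_\sfv$ containing it is an $H_\sfv$-equivariant surjection $P_\sfv\to\call_\sfv$; since the source has finitely many $H_\sfv$-orbits, so does the target. For (2), fix $\ell\in\call_\sfv$. The vertex set $V\ell$ is a subset of $P_\sfv$, stable under $\Stab_{H_\sfv}(\ell)$. Each $H_\sfv$-orbit in $P_\sfv$ intersects $V\ell$ in a union of $\Stab_{H_\sfv}(\ell)$-orbits, and within a single $\sfv$-line two vertices in the same $H_\sfv$-orbit lie in the same $\Stab_{H_\sfv}(\ell)$-orbit precisely because any $h\in H_\sfv$ with $h\cdot V\ell\cap V\ell\neq\emptyset$ sends $\ell$ to a $\sfv$-line meeting $\ell$, hence fixes $\ell$ (distinct $\sfv$-lines are disjoint as subsets of $G$). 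Since there are finitely many $H_\sfv$-orbits in $P_\sfv$, the action of $\Stab_{H_\sfv}(\ell)$ on $V\ell$ has finitely many orbits. For (3), the action of $H_\sfv$ on $\B(\sfv)$ preserves ranks (Lemma~\ref{lemma:rank}), hence the partial order on vertices of $\B(\sfv)$. Every vertex of $\B(\sfv)$ of rank $\ge 1$ is above at least one rank $0$ vertex, and each rank $0$ vertex lies below at most $C$ vertices of higher rank in $\B(\sfv)$, for some constant $C$ (the link of a rank $0$ vertex in $\B(\sfv)$ is the finite flag completion of $\st(v)$). Therefore finitely many orbits of rank $0$ vertices implies finitely many orbits of vertices of all ranks, hence finitely many orbits of cells, so the action is cocompact.
\end{proof}
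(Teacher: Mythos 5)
Your derivations of the three numbered consequences from the main finiteness statement are correct and essentially match the paper's. The gap is in the main statement itself. You invoke Corollary~\ref{cor:ME-coupling-orbits}(2) with $K=\B$, base vertex $u$ a single rank~$1$ vertex, and $V=P_\sfv$, but the hypotheses are not satisfied: that corollary requires $\G_u$ (here $\hat G_u$) to act transitively on $V$, and $\hat G_u$ is the setwise stabilizer of the one line $\ell_0$ corresponding to $u$, a virtually cyclic group whose orbits on $P_\sfv=gG_{\st(v)}$ are (finite unions of) individual $\sfv$-lines — infinitely many orbits unless $\st(v)=\{v\}$. The group that does act transitively on $P_\sfv$, namely $G_\sfv=gG_{\st(v)}g^{-1}$, is the stabilizer of the vertex $\sfv$ of $\Gamma^e$, i.e.\ of the whole parallelism class, not of the single rank~$1$ vertex $u$ of $\B$. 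Relatedly, your identification $\Stab_{\Aut(\Gamma^e)}(\sfv)=\Stab_{\Aut(\B)}(u)$ is false: under Lemma~\ref{lemma:iso} the former corresponds to the automorphisms of $\B$ sending $\sfv$-lines to $\sfv$-lines, which is strictly larger than the stabilizer of any one rank~$1$ vertex. So there is no single vertex of $\B$ whose stabilizer data can be fed into Corollary~\ref{cor:ME-coupling-orbits}(2) to produce the statement you want, and the main finiteness claim is not established by your argument.

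The repair — which you gesture at in your ``alternatively one works directly with $\Stab_{\Aut(\Gamma^e)}(\sfv)$'' remark, and which is what the paper does — is to change the complex rather than the vertex. One first restricts the coupling along the extension graph: apply Corollary~\ref{cor:ME-coupling-restriction} with $K=\Gamma^e$ and the vertex $\sfv$ (legitimate since $\hat G$ and $\Aut(\Gamma^e)$ have the same vertex orbits by Lemma~\ref{lemma:transitivity-extension-graph}), producing the coupling $\hat\Omega_\sfv$ between $\hat G_\sfv$ and $H_\sfv$ together with the induced maps $\iota_\sfv:H_\sfv\to\Bij(P_\sfv)$ and $\theta_\sfv:\hat\Omega_\sfv\to\Bij(P_\sfv)$; this is exactly the setup laid out at the start of Section~\ref{sec:factor-action}. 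One then applies Proposition~\ref{prop:orbits} (equivalently, the \emph{first} assertion of Corollary~\ref{cor:ME-coupling-orbits}, with $K=P_\sfv$ viewed as a discrete countable set) to the transitive, free action of $G_\sfv$ on $P_\sfv$. This yields finitely many $H_\sfv$-orbits on $P_\sfv$, after which your bookkeeping for assertions (1)--(3) goes through unchanged.
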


		\begin{proof}
The action of $G_\sfv$ on $P_\sfv$ is transitive. We can thus apply Corollary~\ref{cor:ME-coupling-orbits} with $\G=G_\sfv$ and $\sfH=H_\sfv$, with $K=P_\sfv$, and with the coupling $\hat\Omega_\sfv$ and the maps $\iota_\sfv$ and $\theta_\sfv$. Since vertices in $P_\sfv$ have trivial $G_\sfv$-stabilizer, the first part of Corollary~\ref{cor:ME-coupling-orbits} shows that the action of $H_\sfv$ on $P_\sfv$ has finitely many orbits.

  The first two consequences follow because $H_\sfv$ sends $\sfv$-lines to $\sfv$-lines. For Assertion~3, note that $H_\sfv$ acts on the set of rank 0 vertices of $\B(\sfv)$, which is exactly $P_\sfv$, with finitely many orbits. So Assertion~3 follows by exactly the same argument as in Corollary~\ref{cor:cocompact}.
\end{proof}		
			
		\begin{lemma}
			\label{lemma:fg}
			The $H$-stabilizer of any vertex in $\B$ is finitely generated.
		\end{lemma}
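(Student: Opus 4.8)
The natural plan is to argue according to the rank of the vertex $w$. If $w$ has rank $0$, then $H_w$ is finite by Lemma~\ref{lemma:cocompact}, and if $w$ has rank $1$, then $H_w$ is virtually infinite cyclic by Proposition~\ref{prop:stabilizer-L1}; in either case $H_w$ is finitely generated and there is nothing more to prove. So the content is the case of a vertex $w$ of rank $k\ge 2$. Let $F$ be the corresponding standard flat, of type a complete subgraph on vertices $a_1,\dots,a_k$, so that $\Stab_G(F)=\langle a_1\rangle\times\dots\times\langle a_k\rangle\cong\mathbb Z^k$; in particular $F$, viewed as a set of rank $0$ vertices of $\B$, carries a product decomposition into $k$ standard lines, with exactly $k$ standard lines of $F$ through each point of $F$.

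I would then introduce the graph $\Xi_w$ defined as the subgraph of the $1$-skeleton of $\B$ induced on the set of rank $0$ and rank $1$ vertices of $\B$ lying below $w$ for the partial order on $V\B$ -- equivalently, the standard sub-flats of $F$ of dimension $0$ or $1$, a rank $0$ vertex being joined to each of the $k$ standard lines of $F$ through it. First I would check that $\Xi_w$ is connected: two points of $F\cong\mathbb Z^k$ are joined by changing one coordinate at a time, and each elementary move keeps its two endpoints on a common standard line of $F$, hence at distance $2$ in $\Xi_w$. Since $H_w$ fixes $w$ it fixes $F$ and preserves ranks (Lemma~\ref{lemma:rank}), so it permutes the standard sub-flats of $F$ rank by rank and thus acts on $\Xi_w$, without inversions. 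By the second part of Lemma~\ref{lemma:cocompact}, $H_w$ acts with finitely many orbits on the set $V^0(\B)_{\le w}$ of rank $0$ vertices of $\Xi_w$; as every rank $0$ vertex lies on exactly $k$ edges, $H_w$ also acts with finitely many orbits of rank $1$ vertices and of edges, so $H_w\actson\Xi_w$ is cocompact.

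To conclude I would invoke the standard fact that a group acting cocompactly and without inversions on a connected graph with finitely generated vertex stabilizers is finitely generated (it is the fundamental group of a finite graph of groups, hence generated by the finitely many vertex groups together with finitely many stable letters). So it remains only to check that the vertex stabilizers of $H_w\actson\Xi_w$ are finitely generated. For a rank $0$ vertex $p$, its $H_w$-stabilizer is contained in the full stabilizer $H_p$, which is finite by Lemma~\ref{lemma:cocompact}; for a rank $1$ vertex $\ell$, its $H_w$-stabilizer is contained in $H_\ell$, which is virtually infinite cyclic by Proposition~\ref{prop:stabilizer-L1}. Since the classes of finite groups and of virtually cyclic groups are closed under passing to subgroups, all these stabilizers are finitely generated, whence $H_w$ is finitely generated.

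The step I expect to be the crux -- and the point where the integrability hypothesis is essential -- is exactly the control of the rank $1$ vertex stabilizers. A naive induction on rank, writing $H_w$ as assembled from the stabilizers of the lower-rank vertices below it, breaks down because a subgroup of a finitely generated group need not be finitely generated. What makes the argument work is that ``finite'' and ``virtually cyclic'' are subgroup-closed, so that the stabilizers appearing above are automatically finitely generated once one knows -- and this is where $L^1$-integrability enters, through Proposition~\ref{prop:stabilizer-L1} and hence Theorem~\ref{theo:fg} -- that rank $1$ vertices of $\B$ already have virtually cyclic $H$-stabilizers. The remaining ingredients (connectivity of $\Xi_w$, cocompactness of the action via Lemma~\ref{lemma:cocompact}, and the fact that a flat-preserving bijection fixing $F$ permutes its standard sub-flats rank by rank) are routine.
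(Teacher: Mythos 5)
Your proof is correct, and it takes a genuinely different route from the paper's. The paper argues by induction on the rank of the vertex: for a rank-$n$ flat $F$ it fixes a standard line $\ell\subseteq F$ and finitely many orbit representatives $F_1,\dots,F_k$ of the codimension-one subflats of $F$ meeting $\ell$ in a point, passes to finite-index subgroups $H^0_\ell, H^0_{F_i}$ of their stabilizers that lie inside $H_F$ and act cofinitely on $\ell$, resp.\ $F_i$ (this requires a preliminary claim, again extracted from Lemma~\ref{lemma:cocompact}, that $H_{F'}$ has a finite-index subgroup contained in $H_F$ for $F'\subseteq F$), and shows that the subgroup they generate moves a finite set onto all of $F$ and hence has finite index in $H_F$; finite generation then follows by induction. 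You instead run a single Bass--Serre-type generation argument on the point--line incidence graph $\Xi_w$ of $F$, and the key observation that makes this work is exactly the one you flag: since rank~$0$ and rank~$1$ stabilizers are finite, resp.\ virtually cyclic (Lemma~\ref{lemma:cocompact} and Proposition~\ref{prop:stabilizer-L1}), \emph{all} their subgroups are finitely generated, so you never face the usual obstruction that subgroups of finitely generated groups need not be finitely generated — which is precisely what forces the paper into its finite-index bookkeeping. Your argument is shorter and avoids the induction on rank; it is also in the same spirit as the paper's own proof of Lemma~\ref{lemma:hv-fg}, which invokes Brown's criterion for the action on $\B(\sfv)$. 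Two minor points: your graph $\Xi_w$ is not locally finite (rank~$1$ vertices have infinite valence), which is harmless for the generation statement you use but worth noting; and, strictly speaking, $H_w$ is only a \emph{quotient} of the fundamental group of the finite graph of groups $H_w\backslash\Xi_w$ (the decomposition is faithful only for actions on trees), but the surjectivity half of the structure theorem — $H_w$ is generated by the finitely many vertex groups of a lifted spanning tree together with finitely many stable letters — is all you need and holds for any cocompact inversion-free action on a connected graph with the stated stabilizers.
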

		
		\begin{proof}
			Recall that the $H$-action on $\B$ can be equivalently viewed as an $H$-action on $G$ by flat-preserving bijections. Using this viewpoint, for every vertex $v\in V\B$ with associated standard flat $F$, the stabilizer $H_v$ coincides with the stabilizer $H_F$ of $F$ for this $H$-action on $G$. We will use this viewpoint throughout the proof.
			
			We first claim that for any standard flat $F'\subset F$, the group $H_{F'}$ has a finite index subgroup $H^0_{F'}$ which is contained in $H_F$, and acts on $F'$ with finitely many orbits. Indeed, let $v_{F'}$ be the vertex of $\B$ associated to $F'$. Lemma~\ref{lemma:cocompact} ensures that $H_{F'}$ acts on the set of rank 0 vertices of $\B$ that are smaller than $v_{F'}$ with finitely many orbits. Therefore $H_{F'}$ acts on $F'$ with finitely many orbits. Any element in $H_{F'}$ sends $F$ to another standard flat containing $F'$. As there are only finitely many standard flats containing $F'$, we can find a finite index subgroup $H^0_{F'}\subseteq H_{F'}$ that stabilizes $F$. The action $H^0_{F'}\actson F'$ still has finitely many orbits, thus the claim is proved.

			We now prove the lemma by induction on the rank of vertices of $\B$. Stabilizers in $H$ of rank $0$ vertices are finite (Lemma~\ref{lemma:cocompact}), and the case of rank 1 vertices is given by Proposition~\ref{prop:stabilizer-L1}. Let now $v\in V\B$ be a vertex of rank $n\ge 2$, corresponding to a standard flat $F$, and assume by induction that the lemma is proven for all vertices of rank at most $n-1$. Take a standard line $\ell\subset F$. By the above claim, there is a finite index subgroup $H^0_{\ell}\subseteq H_{\ell}$ that stabilizes both $F$ and $\ell$, and whose action on $\ell$ has finitely many orbits. Let $\mathcal C$ be the set of all standard flats in $F$ of dimension $\dim F-1$ that intersect $\ell$ in exactly one point. As the $H$-action on $F$ is flat-preserving, the group $H_\ell^0$ permutes elements in $\mathcal C$, and the permutation action $H_\ell^0\actson\mathcal C$ has finitely many orbits. Let $\{F_1,\dots,F_k\}$ be a set consisting of exactly one representative in each orbit of this permutation action. By the above claim, for each $i\in\{1,\dots,k\}$, there is a finite index subgroup $H^0_{F_i}\subseteq H_{F_i}$ that stabilizes both $F$ and $F_i$, and acts on $F_i$ with finitely many orbits. Let $K_i$ be a finite subset of $F_i$ such that $H^0_{F_i} K_i=F_i$. Let $K=\cup_{i=1}^kK_i$ and let $H'$ be the subgroup of $H_{F}$ generated by $H^0_{\ell}$ and $\{H^0_{F_i}\}_{i=1}^k$. As $H^0_{\ell}$ and $H^0_{F_i}$ are finitely generated by induction, it follows that $H'$ is finitely generated. Moreover, our construction implies that $H'K=F$. Thus for any $h\in H_{F}$, there exists $h'\in H'$ such that $h'h K\cap K\neq\emptyset$.
			On the other hand, as the action $H_{F}\actson F$ has finite stabilizers (Lemma~\ref{lemma:cocompact}) and $K$ is finite, there are only finitely many elements $h\in H_{F}$ such that $hK\cap K\neq\emptyset$. Thus $H'$ has finite index in $H_{F}$, and hence $H_{F}$ is finitely generated.
		\end{proof}

		\begin{cor}\label{cor:fg}
			The $H$-stabilizer of any cube in $\B$ is finitely generated.
		\end{cor}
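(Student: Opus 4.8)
The plan is to deduce Corollary~\ref{cor:fg} from Lemma~\ref{lemma:fg}, which handles the case of vertex stabilizers, by a reduction that passes from an arbitrary cube to one of its vertices. First I would recall that, by the description of $\B$ given in Section~\ref{sec:extension-graph}, every cube $\sigma$ of $\B$ corresponds to an interval $I_{F_1,F_2}$ of standard flats with $F_1\subseteq F_2$; in particular $\sigma$ has a unique minimal-rank vertex, namely the one associated with $F_1$, and a unique maximal-rank vertex, associated with $F_2$. The $H$-action on $\B$ preserves ranks of vertices (Lemma~\ref{lemma:rank}, together with the fact that $H$ acts by cubical automorphisms through $\iota$), and more precisely respects the partial order on $V\B$ (as already used in the proof of Corollary~\ref{cor:cocompact}). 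Consequently, any element of $H$ stabilizing $\sigma$ must fix both its minimal-rank vertex $v_{F_1}$ and its maximal-rank vertex $v_{F_2}$, since these are the only vertices of $\sigma$ of their respective ranks.

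The key step is then simply the inclusion $\Stab_H(\sigma)\subseteq\Stab_H(v_{F_2})=H_{F_2}$ (one could equally well use $v_{F_1}$), which follows from the previous paragraph. By Lemma~\ref{lemma:fg}, the group $H_{F_2}$ is finitely generated. To conclude I would invoke the fact that $\Stab_H(\sigma)$ has finite index in $H_{F_2}$ — indeed, any element of $H_{F_2}$ permutes the finitely many cubes of $\B$ having $v_{F_2}$ as their maximal vertex (equivalently, the finitely many standard flats contained in $F_2$), so the subgroup fixing the particular cube $\sigma$ has finite index. A finite-index subgroup of a finitely generated group is itself finitely generated, so $\Stab_H(\sigma)$ is finitely generated, as desired.

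Since there is no real obstacle here, the only point that needs a little care is the finiteness of the index $[H_{F_2}:\Stab_H(\sigma)]$: this rests on the observation that each vertex of $\B$ lies in only finitely many cubes, which is immediate from the fact that the interval below a given standard flat $F_2$ is a finite Boolean lattice (of rank equal to $\dim F_2$), as recorded in Section~\ref{sec:extension-graph}. Alternatively, one may bypass indices altogether: $\Stab_H(\sigma)$ is the stabilizer, inside the finitely generated group $H_{F_2}$, of a point in the finite set of cubes containing $v_{F_2}$, hence finite-index, hence finitely generated. Either phrasing works and the corollary follows.

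\begin{proof}[Proof of Corollary~\ref{cor:fg}]
Let $\sigma$ be a cube of $\B$, corresponding to an interval $I_{F_1,F_2}$ of standard flats with $F_1\subseteq F_2$. Then $\sigma$ has a unique vertex of maximal rank, namely the vertex $v$ associated with $F_2$. As $H$ acts on $\B$ by cubical automorphisms preserving the ranks of vertices (Lemma~\ref{lemma:rank}), every element of $\Stab_H(\sigma)$ fixes $v$, so that $\Stab_H(\sigma)\subseteq H_v$. By Lemma~\ref{lemma:fg}, the group $H_v$ is finitely generated. Moreover, $v$ is contained in only finitely many cubes of $\B$ (namely those corresponding to intervals $I_{F',F_2}$ with $F'\subseteq F_2$, and there are finitely many standard flats contained in $F_2$), and every element of $H_v$ permutes these cubes. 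Hence $\Stab_H(\sigma)$ has finite index in $H_v$, and is therefore finitely generated.
\end{proof}
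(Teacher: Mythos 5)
Your reduction to a vertex stabilizer is the right idea and matches the paper's strategy, but you chose the wrong vertex, and the finiteness claim your argument rests on is false. You pass to the \emph{maximal}-rank vertex $v_{F_2}$ of the cube and assert that there are only finitely many standard flats contained in $F_2$ (equivalently, finitely many cubes containing, or lying below, $v_{F_2}$). This fails as soon as $\dim F_2\ge 1$: a standard flat $F_2=gG_\Lambda\cong\mathbb{Z}^k$ with $k\ge 1$ contains infinitely many standard flats (for instance every point of $F_2$ is a $0$-dimensional one), and correspondingly the building $\B$ is \emph{not} locally finite -- the proof of Lemma~\ref{lemma:rank} records explicitly that $\lk^-(x,\B)$ is a join of infinite discrete sets for any vertex $x$ of positive rank. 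So the orbit of $\sigma$ under $H_{F_2}$ need not a priori be finite, and the finite-index claim $[H_{F_2}:\Stab_H(\sigma)]<\infty$ is not justified by your argument.

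The poset of standard flats is only finite \emph{upwards}, not downwards: there are finitely many standard flats containing a given one (the finiteness of $\lk^+(x,\B)$), but infinitely many contained in it. The paper's proof therefore uses the unique \emph{minimal}-rank vertex $v_{F_1}$ of the cube: rank-preservation gives $\Stab_H(\sigma)\subseteq\Stab_H(v_{F_1})$, and since $v_{F_1}$ is the minimal-rank vertex of only finitely many cubes, $\Stab_H(v_{F_1})$ permutes a finite set containing the orbit of $\sigma$, so $\Stab_H(\sigma)$ has finite index in the finitely generated group $\Stab_H(v_{F_1})$. Your parenthetical remark that ``one could equally well use $v_{F_1}$'' points at the correct argument, but the proof as written hinges on the downward finiteness that does not hold.
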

		
		\begin{proof}
			Let $C$ be a cube of $\B$, and let $v$ be the (unique) vertex of $\B$ of minimal rank. Then $v$ is the vertex of minimal rank in only finitely many cubes of $\B$. As the $H$-action on $\B$ preserves ranks of vertices (Lemma~\ref{lemma:rank}), it follows that $\Stab_H(C)$ is a finite-index subgroup of $\Stab_H(v)$. The corollary thus follows from Lemma~\ref{lemma:fg}.
		\end{proof}
		
		Recall that the $H$-action on $\B$ induces an $H$-action on the extension graph of $G$. We will also need the following fact.
		
		\begin{lemma}\label{lemma:hv-fg}
			For every $\sfv\in V\Gamma^e$, the stabilizer $H_\sfv$ is finitely generated.
		\end{lemma}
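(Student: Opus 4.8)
The plan is to exploit the cocompact action of $H_\sfv$ on $\B(\sfv)$ obtained in Lemma~\ref{lemma:cocompact-on-bv}(3). Since $\B(\sfv)$ is simply connected, we may invoke the standard criterion that a group acting cocompactly by cellular automorphisms on a connected CW-complex is finitely generated as soon as all of its vertex stabilizers are. As the vertices of $\B(\sfv)$ are exactly the standard flats $F\subseteq P_\sfv$, it suffices to prove that $\Stab_{H_\sfv}(F)$ is finitely generated for every such $F$. Throughout we use that $H_\sfv$ preserves the product decomposition $P_\sfv=Z_\sfv\times\call_\sfv$, and that every standard flat $F\subseteq P_\sfv$ splits accordingly as $F=\pi_1(F)\times\pi_2(F)$, with $\pi_2(F)$ a standard flat of $\call_\sfv$ and $\pi_1(F)$ equal either to all of $Z_\sfv$ (precisely when the type of $F$ contains the vertex $v$ with $P_\sfv=gG_{\st(v)}$, i.e.\ when $F$ contains a $\sfv$-line) or to a single point of $Z_\sfv$. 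These two cases are handled separately.

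Suppose first that $v$ lies in the type of $F$; this includes the case $F=P_\sfv$ and, more generally, every $F$ containing a $\sfv$-line. A short normalizer computation (Proposition~\ref{prop:normalizer}), using $F\subseteq P_\sfv$, shows that the standard lines of type $v$ inside $F$ are exactly the $\sfv$-lines contained in $F$, and that these form the coordinate direction of $F$ corresponding to the vertex $\sfv\in\Delta(F)\subseteq\Gamma^e$. Since $\Stab_H(F)$ permutes the finite vertex set of $\Delta(F)$, the subgroup fixing $\sfv\in\Delta(F)$ has finite index in $\Stab_H(F)$; and one checks directly that this subgroup is precisely $\Stab_H(F)\cap H_\sfv=\Stab_{H_\sfv}(F)$ (fixing the vertex $\sfv$ of $\Delta(F)$ is equivalent to mapping $\sfv$-lines to $\sfv$-lines, i.e.\ to preserving $\sfv\in V\Gamma^e$). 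Hence $\Stab_{H_\sfv}(F)$ has finite index in $\Stab_H(F)$, which is finitely generated by Lemma~\ref{lemma:fg} (or finite by Lemma~\ref{lemma:cocompact} if $\dim F=0$); in particular $\Stab_{H_\sfv}(F)$ is finitely generated, and it inherits from Lemma~\ref{lemma:fg} the fact that its action on the set of rank $0$ vertices of $F$ has finite point stabilizers and finitely many orbits.

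Now suppose $v$ is not in the type of $F$, and set $z_0=\pi_1(F)\in Z_\sfv$ and $F^+=Z_\sfv\times\pi_2(F)\supseteq F$, a standard flat of $P_\sfv$ whose type does contain $v$, so that the previous paragraph applies to $F^+$. Because the $H_\sfv$-action respects the product, $\Stab_{H_\sfv}(F)=\{h\in\Stab_{H_\sfv}(F^+)\mid h\text{ fixes }z_0\text{ for the factor action on }Z_\sfv\}$; moreover the inclusion $F\cong\{z_0\}\times\pi_2(F)\hookrightarrow F^+$ induces an \emph{injection} from the set of $\Stab_{H_\sfv}(F)$-orbits of rank $0$ vertices of $F$ into the set of $\Stab_{H_\sfv}(F^+)$-orbits of rank $0$ vertices of $F^+$ (an element of $\Stab_{H_\sfv}(F^+)$ carrying $(z_0,x)$ to $(z_0,x')$ automatically fixes $z_0$), which by the previous paragraph is finite. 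Thus $\Stab_{H_\sfv}(F)$ again acts on $F$ with finite point stabilizers and finitely many orbits. Finite generation of $\Stab_{H_\sfv}(F)$ is then obtained by induction on $\dim F$: for $\dim F\le 1$ it is a subgroup of a finite group, respectively of a virtually cyclic group (Proposition~\ref{prop:stabilizer-L1}), hence finitely generated; for $\dim F\ge 2$ one re-runs \emph{verbatim} the inductive argument in the proof of Lemma~\ref{lemma:fg}, carried out inside $P_\sfv$ — choosing a standard line $\ell\subseteq F$ and representatives of the codimension $1$ standard sub-flats of $F$ meeting $\ell$ in a single point, all of which lie in $P_\sfv$, have type not containing $v$, and have dimension $<\dim F$, so their $H_\sfv$-stabilizers are finitely generated with the required finiteness of orbits — to produce a finitely generated finite-index subgroup of $\Stab_{H_\sfv}(F)$.

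The main obstacle is exactly this last case $v\notin\mathrm{type}(F)$: there is no reason for $\Stab_{H_\sfv}(F)$ to have finite index in $\Stab_H(F)$, so one cannot simply quote Lemma~\ref{lemma:fg}, and the work lies in transporting the ``finitely many orbits on $F$'' statement from $F^+$ down to $F$ through the product structure, so that the inductive scheme of Lemma~\ref{lemma:fg} still applies. Once $\Stab_{H_\sfv}(F)$ is known to be finitely generated for every standard flat $F\subseteq P_\sfv$, the finite-generation criterion recalled at the beginning yields that $H_\sfv$ is finitely generated.
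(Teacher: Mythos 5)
Your proposal is correct, and the overall skeleton (cocompact action of $H_\sfv$ on the simply connected complex $\B(\sfv)$, plus finite generation of vertex stabilizers, plus Brown's criterion) is the same as the paper's. But the way you establish finite generation of the stabilizers $\Stab_{H_\sfv}(F)$ is genuinely different, and considerably longer than necessary. The ``main obstacle'' you identify --- that when $v\notin\mathrm{type}(F)$ there is ``no reason'' for $\Stab_{H_\sfv}(F)$ to have finite index in $\Stab_H(F)$ --- is in fact not an obstacle: the paper observes that any standard flat $F\subseteq P_\sfv$ (indeed any single point) is contained in only finitely many parallel sets $P_\sfw$, $\sfw\in V\Gamma^e$, so $\Stab_H(F)$ permutes this finite set and a finite-index subgroup of it preserves $P_\sfv$; combined with the separate observation that $H_\sfv$ has finite index in $\Stab_H(P_\sfv)$ (via the join decomposition of $\st(v)$ and the fact that $\Stab_H(P_\sfv)$ can only permute the join factors), this shows $\Stab_{H_\sfv}(F)$ has finite index in $\Stab_H(F)$ for \emph{every} $F\subseteq P_\sfv$, with no case distinction, and Lemma~\ref{lemma:fg} (resp.\ Corollary~\ref{cor:fg} for cubes) finishes immediately. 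Your workaround --- splitting according to whether $v$ lies in the type of $F$, transporting the ``finitely many orbits'' statement from $F^+=Z_\sfv\times\pi_2(F)$ down to $F=\{z_0\}\times\pi_2(F)$ via the product structure, and re-running the induction of Lemma~\ref{lemma:fg} inside $P_\sfv$ --- does go through (the injection on orbit sets is valid because an element of $\Stab_{H_\sfv}(F^+)$ carrying a vertex of $F$ to a vertex of $F$ fixes $z_0$ under the factor action and preserves $\pi_2(F)$, hence stabilizes $F$), so the proof is sound; it just re-proves by hand a finiteness that the paper gets for free from the finiteness of $\{\sfw : F\subseteq P_\sfw\}$. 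If you keep your version, you should at least soften the claim that the finite-index statement fails in the second case, since it is actually true.
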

		
		\begin{proof}
First we claim that $H_\sfv$ is of finite index in $\Stab_H(P_\sfv)$. Indeed, recall that $P_\sfv=gG_{\st(v)}$ for some $v\in V\Gamma$. Let $\st(v)=\{v\}\circ \Gamma_1\circ\cdots \circ \Gamma_k$ be the join decomposition of $\st(v)$. This gives $P_\sfv=gG_{\st(v)}\cong g\langle v\rangle\times g G_{\Gamma_1}\times \cdots\times gG_{\Gamma_k}$. As $\Stab_H(P_\sfv)$ sends standard flats to standard flats in $P_\sfv$, its action on $P_\sfv$ respects this product decomposition (it can possibly permute the factors). Then $\Stab_H(P_\sfv)$ has a finite index subgroup sending $\sfv$-lines to $\sfv$-lines, which is $H_\sfv$.

	If $F\subseteq P_\sfv$ is a standard flat (corresponding to a vertex of $\B(\sfv)$), then $F$ is contained in only finitely many regions of the form $P_\sfw$ with $\sfw\in V\Gamma^e$.  Then $\Stab_H(F)$ has a finite index subgroup preserving $P_\sfv$. By the claim in the previous paragraph, it follows that $\Stab_{H_\sfv}(F)$ is a finite-index subgroup of $\Stab_H(F)$, and is thus finitely generated by Lemma~\ref{lemma:fg}. More generally, by the same argument (and using Corollary~\ref{cor:fg}), for any cube $C$ in $\B(\sfv)$, the $H_\sfv$-stabilizer of $C$ has finite index in its $H$-stabilizer, hence is finitely generated.
			
			Now $H_\sfv$ acts by cubical automorphisms on the simply connected complex $\B(\sfv)$ cocompactly (Lemma~\ref{lemma:cocompact-on-bv}(3)), with finitely generated cell stabilizers, so it is finitely generated by \cite[Theorem~1]{Brown}.
		\end{proof}

  \subsection{Commensurability of stabilizers}\label{sec:commensurability}

 Let $Z_\sfv=g\langle v\rangle$. The splitting $P_{\sfv}=gG_{\st(v)}=g(\langle v\rangle\times G_{\lk(v)})$ gives a projection map $\pi_1:P_\sfv\to Z_\sfv$. On the other hand, as $\mathcal L_\sfv$ is the collection of $\sfv$-lines, there is a map $\pi_2:P_\sfv\to \mathcal L_\sfv$ sending each point to the $\sfv$-line containing this point. Moreover, $\mathcal L_\sfv$ can be naturally identified with $gG_{\lk(v)}$. Note that the map $(\pi_1,\pi_2)$ gives an identification between $P_\sfv$ and $Z_\sfv\times \mathcal L_\sfv$. Moreover, the action of $G_\sfv$ on $P_\sfv$ respects this product decomposition, hence gives two factor actions:
	\begin{enumerate}
		\item $G_\sfv\actson Z_\sfv$, for which the stabilizer of each element is $gG_{\lk(v)}g^{-1}$;
		\item  $G_{\sfv}\actson \mathcal L_\sfv$, for which the stabilizer of each element is $g\langle v\rangle g^{-1}$.
	\end{enumerate} 
	
	We let $Z_1=Z_{\sfv}$ and $Z_2=\mathcal{L}_\sfv$. We denote by $\Aut_\sfv(\B)$ the stabilizer of $\sfv$ in $\Aut(\B)$ (under the canonical isomorphism between $\Aut(\B)$ and $\Aut(\Gamma^e)$ recalled in Section~\ref{sec:autos}). Note that
	\begin{enumerate}
		\item $\Aut_\sfv(\B)$ preserves $P_\sfv$;
		\item $\Aut_\sfv(\B)$ sends standard flats in $P_\sfv$ to standard flats in $P_\sfv$;
		\item $\Aut_\sfv(\B)$ sends $\sfv$-lines to $\sfv$-lines.
	\end{enumerate}	
	Thus  $\Aut_\sfv(\B)$ preserves the splitting $P_\sfv=Z_1\times Z_2$.
	For every $i\in\{1,2\}$, we have factor actions of $\Aut_{\sfv}(\B)=\Stab_{\Aut(\Gamma^e)}(\sfv)$, $G_\sfv$ and $H_\sfv$ on $Z_i$. Given $x,y\in Z_i$, we denote by $\Aut_{\sfv,x}(\B)$, $G_{\sfv,x}$ and $H_{\sfv,x}$ the stabilizers of $x$ in $\Aut_\sfv(\B)$, $G_{\sfv}$ and $H_{\sfv}$ respectively, and by $G_{\sfv,x,y}$ and $H_{\sfv,x,y}$ the common stabilizers of $x$ and $y$.
	
	\begin{prop}\label{prop:commensurable}
		For every $i\in\{1,2\}$, and any $x,y\in Z_i$, the groups $H_{\mathsf{v},x}$ and $H_{\mathsf{v},y}$ are commensurable in $H_{\mathsf{v}}$.
	\end{prop}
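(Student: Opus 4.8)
The plan is to show that $H_{\mathsf v,x}$ and $H_{\mathsf v,y}$ are both commensurable with a single common subgroup inside $H_{\mathsf v}$, by leveraging the induced measure equivalence coupling at the level of $Z_i$ and the bounded torsion hypothesis via the orbit-counting results of Section~\ref{sec:me}. The key observation is that $H_{\mathsf v}$ acts on $Z_i$, and the $G_{\mathsf v}$-action on $Z_i$ is transitive with stabilizer $G_{\mathsf v,x}$ (which is $gG_{\lk(v)}g^{-1}$ when $i=1$, and $g\langle v\rangle g^{-1}$ when $i=2$); moreover $\Aut_{\mathsf v}(\B)$ acts on $Z_i$ compatibly, and $\hat G_{\mathsf v}$ and $\Aut_{\mathsf v}(\B)$ have the same orbits on the relevant sets since $\hat G$ and $\Aut(\B)$ do. So $\hat\Omega_{\mathsf v}$ is a measure equivalence coupling between $\hat G_{\mathsf v}$ and $H_{\mathsf v}$, equipped with the maps $\iota_{\mathsf v}$ and $\theta_{\mathsf v}$, and we can apply the restriction machinery (Corollary~\ref{cor:ME-coupling-restriction}) and orbit-counting machinery (Proposition~\ref{prop:orbits}, Corollary~\ref{cor:ME-coupling-orbits}) with the polyhedral complex taken to be the discrete set $Z_i$.

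First I would fix $i$ and $x,y\in Z_i$. Since $G_{\mathsf v}$ acts transitively on $Z_i$, there is $g_0\in G_{\mathsf v}$ with $g_0 x=y$, and then $G_{\mathsf v,y}=g_0 G_{\mathsf v,x}g_0^{-1}$. The point is to transport this to the $H$-side. I would apply Corollary~\ref{cor:ME-coupling-restriction} with $L=\Aut_{\mathsf v}(\B)$ acting on $K=Z_i$, and the coupling $\hat\Omega_{\mathsf v}$ between $\hat G_{\mathsf v}$ and $H_{\mathsf v}$, to get that $\hat\Omega_{\mathsf v,x}:=\theta_{\mathsf v}^{-1}(\Stab_{\Aut_{\mathsf v}(\B)}(x))$ is a measure equivalence coupling between $\hat G_{\mathsf v,x}$ and $H_{\mathsf v,x}:=\iota_{\mathsf v}^{-1}(\Stab_{\Aut_{\mathsf v}(\B)}(x))$, and similarly for $y$. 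Now $g_0$ conjugates $\hat\Omega_{\mathsf v,x}$ to $\hat\Omega_{\mathsf v,y}$ (since $g_0$ is an element of $G_{\mathsf v}\subseteq\hat G_{\mathsf v}$ acting on $\hat\Omega_{\mathsf v}$, and by equivariance of $\theta_{\mathsf v}$ it carries $\theta_{\mathsf v}^{-1}(\Stab(x))$ onto $\theta_{\mathsf v}^{-1}(\Stab(g_0 x))=\theta_{\mathsf v}^{-1}(\Stab(y))$ up to null sets), so in particular $H_{\mathsf v,x}$ and $H_{\mathsf v,y}$ are measure equivalent.

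To upgrade from abstract measure equivalence to commensurability \emph{inside $H_{\mathsf v}$}, I would consider the $H_{\mathsf v}$-orbit of $x$ in $Z_i$ and apply Corollary~\ref{cor:ME-coupling-orbits}(2): the set $V=G_{\mathsf v}\cdot x = Z_i$ is invariant under $\Stab_{\Aut_{\mathsf v}(\B)}(x)$ trivially (it is all of $Z_i$), consists of vertices with the same $G_{\mathsf v}$-stabilizer type, and $G_{\mathsf v}$ acts transitively on it; hence the $H_{\mathsf v}$-action on $Z_i$ has finitely many orbits. Similarly, applying Proposition~\ref{prop:orbits} to the restricted coupling $\hat\Omega_{\mathsf v,x}$ between $\hat G_{\mathsf v,x}$ and $H_{\mathsf v,x}$, with $\mathbb D = H_{\mathsf v,x}\cdot y$ or more precisely with $\mathbb D$ equal to an appropriate $\hat G_{\mathsf v,x}$-orbit, and using that $G_{\mathsf v,x}$ acts on $Z_i\setminus\{x\}$ with finitely many orbits (here I would invoke the structure of $G_{\mathsf v,x}=gG_{\lk(v)}g^{-1}$ or $g\langle v\rangle g^{-1}$ acting on the complementary factor, which has boundedly many orbits), one concludes that $H_{\mathsf v,x}$ acts on its orbit of $y$ with \emph{finitely many orbits}, i.e.\ $[\,H_{\mathsf v,x}:H_{\mathsf v,x}\cap H_{\mathsf v,y}\,]<+\infty$, and symmetrically $[\,H_{\mathsf v,y}:H_{\mathsf v,x}\cap H_{\mathsf v,y}\,]<+\infty$. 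This gives commensurability in $H_{\mathsf v}$.

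The main obstacle I anticipate is bookkeeping the two cases $i=1,2$ simultaneously and making sure the hypotheses of Proposition~\ref{prop:orbits} and Corollary~\ref{cor:ME-coupling-orbits} are genuinely met in each — in particular that the $G_{\mathsf v,x}$-action on the relevant orbit set has finitely many orbits (for $i=2$, $G_{\mathsf v,x}\cong g\langle v\rangle g^{-1}\cong\mathbb Z$ acts on $Z_2=\mathcal L_{\mathsf v}$ with $x$ fixed, and one needs cocompactness of this residual action, which follows from Lemma~\ref{lemma:cocompact-on-bv} applied to $\mathcal L_{\mathsf v}$ together with the finite-index relation between $H_{\mathsf v}$ and $\Stab_H(P_{\mathsf v})$; for $i=1$ the roles are reversed). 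The bounded torsion hypothesis on $H$ is used crucially in Proposition~\ref{prop:orbits}, so I must keep track that all the subgroups of $H$ appearing inherit bounded torsion, which is automatic. Everything else is a routine application of the measure-equivalence-restriction framework already established, so I would keep the exposition brief and cite Corollaries~\ref{cor:ME-coupling-restriction} and~\ref{cor:ME-coupling-orbits} and Proposition~\ref{prop:orbits} directly.
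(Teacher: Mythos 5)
The first half of your argument --- restricting the coupling $\hat\Omega_{\sfv}$ to $\hat\Omega_{\sfv,x}=\theta_\sfv^{-1}(\Stab_{\Aut_\sfv(\B)}(x))$ via Corollary~\ref{cor:ME-coupling-restriction}, using that $G_\sfv$ and $\Aut_\sfv(\B)$ act transitively on $Z_i$ --- is correct and is exactly how the paper starts. But the second half, where you try to upgrade to finite index via Proposition~\ref{prop:orbits} and Corollary~\ref{cor:ME-coupling-orbits}, has a genuine gap. Those statements require the countable group $\G$ to act on the countable set $\mathbb D$ \emph{transitively with finite stabilizers}. Here the relevant group is $G_{\sfv,x}$ acting on $Z_i$, and this action is \emph{trivial}: the $G_\sfv$-stabilizer of every point of $Z_1=Z_\sfv$ is the same subgroup $gG_{\lk(v)}g^{-1}$, and of every point of $Z_2=\call_\sfv$ is the same subgroup $g\langle v\rangle g^{-1}$, so $G_{\sfv,x}=G_{\sfv,z}$ for all $z\in Z_i$ and $G_{\sfv,x}$ fixes $Z_i$ pointwise. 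Every $G_{\sfv,x}$-orbit in $Z_i$ is a singleton with infinite stabilizer, so the hypotheses of Proposition~\ref{prop:orbits} fail outright; your intermediate claim that ``$G_{\sfv,x}$ acts on $Z_i\setminus\{x\}$ with finitely many orbits'' is false (you are conflating the action of $G_{\sfv,x}$ on $Z_i$ with its action on the complementary factor $Z_{3-i}$, which is the one that is transitive). Relatedly, the conclusion you extract --- ``$H_{\sfv,x}$ acts on its orbit of $y$ with finitely many orbits'' --- is vacuous; what you need is that the orbit $H_{\sfv,x}\cdot y$ is \emph{finite}, and neither Proposition~\ref{prop:orbits} nor Corollary~\ref{cor:ME-coupling-orbits} delivers that here.

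The paper closes this gap with a different tool, Lemma~\ref{lemma:finite index}, and the pointwise-fixing phenomenon is precisely what makes it work rather than being an obstacle. Choose $z\in Z_i$ with $\mu(\hat\Omega_{\sfv,x,y\to z})>0$ (such $z$ exists since these sets partition $\hat\Omega_{\sfv,x}$). Because $G_{\sfv,x}=G_{\sfv,x,z}$, the set $\Sigma'=\hat\Omega_{\sfv,x,y\to z}$ is invariant under the \emph{full} group $G_{\sfv,x}$, and also under $H_{\sfv,x,y}$; and any $h\in H_{\sfv,x}\setminus H_{\sfv,x,y}$ sends $\Sigma'$ to $\hat\Omega_{\sfv,x,h(y)\to z}$, which is disjoint from $\Sigma'$. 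Lemma~\ref{lemma:finite index} then gives $[H_{\sfv,x}:H_{\sfv,x,y}]<+\infty$, and commensurability follows by symmetry. Note also that this route does not use the bounded torsion hypothesis at all, whereas your route leans on it through Proposition~\ref{prop:orbits}.
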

	
	\begin{proof}
		By symmetry, it is enough to prove that $H_{\sfv,x,y}$ has finite index in $H_{\sfv,x}$. Our proof will use two facts regarding the action of $G_{\sfv}$ on $Z_i$ (proved below), namely:
		\begin{itemize}
			\item (Fact 1) The actions of $G_\sfv$ and of $\Aut_\sfv(\B)$ have the same orbits on $Z_i$ (they are both transitive).
			\item (Fact 2) For any $x,z\in Z_i$, we have $G_{\sfv,x}=G_{\sfv,z}$.
		\end{itemize}
		For Fact 1, note that the action $G_\sfv\actson P_\sfv$ is transitive and respects the splitting $P_\sfv\cong Z_\sfv\times \mathcal L_\sfv$, so the action $G_\sfv\actson Z_i$ is transitive for $i=1,2$. Hence the same holds for the bigger group $\Aut_\sfv(\B)$. Fact 2 follows from the discussion of stabilizers of factor actions before the proposition.
		
  As before, let $\hat\Omega$ be a measure equivalence coupling between $\hat G$ and $H$, and let $\theta:\hat\Omega\to \Aut(\B)\cong \Aut(\Gamma^e)$ be a measurable $(\hat G\times H)$-equivariant map. As recalled in the introductory paragraph of Section~\ref{sec:factor-action}, the space 
		$\hat\Omega_{\mathsf{v}}=\theta^{-1}(\Aut_\sfv(\B))$ is a measure equivalence coupling between $\hat{G}_{\mathsf{v}}$ and $H_{\mathsf{v}}$. Hence $\hat\Omega_{\sfv}$ is also a measure equivalence coupling between $G_{\sfv}$ and $H_{\sfv}$.
		
		Let $\hat\Omega_{\mathsf{v},x}=\theta^{-1}(\Aut_{\mathsf{v},x}(\B))$. Since the orbits of $x$ under $\Aut_{\mathsf{v}}(\B)$ and $G_{\mathsf{v}}$ coincide (Fact~1 above), it follows from  Corollary~\ref{cor:ME-coupling-restriction} (applied with $K=Z_i$, with $L=\Aut_\sfv(\B)$, with $\G=G_\sfv$, and with $\hat\Omega_\sfv$ in place of $\Omega$) that $\hat\Omega_{\mathsf{v},x}$ is a measure equivalence coupling between $G_{\mathsf{v},x}$ and $H_{\mathsf{v},x}$.
		
		 We want to apply Lemma~\ref{lemma:finite index} with $\mathsf G=G_{\sfv,x}, \mathsf H=H_{\sfv,x}, \Sigma=\hat\Omega_{\sfv,x}$ and $\mathsf H'=H_{\sfv,x,y}$. It remains to find $\Sigma'$ with desired properties. For the following discussion, we refer to the statement of Lemma~\ref{lemma:action-on-B} for our convention of how $\mathsf G$ and $\mathsf H$ act on $\Aut_{\mathsf{v},x}(\B)$.
		
		Given $z\in Z_i$, we let $\Aut_{\mathsf{v},x,y\to z}(\B)$ be the Borel subset of $\Aut_{\mathsf{v},x}(\B)$ consisting of all automorphisms that send $y$ to $z$, and let $\hat\Omega_{\mathsf{v},x,y\to z}=\theta^{-1}(\Aut_{\mathsf{v},x,y\to z}(\B))$. Then $\hat\Omega_{\mathsf{v},x}=\dunion_{z\in Z_i}\hat\Omega_{\mathsf{v},x,y\to z}$.
		Therefore, we can (and will) choose $z\in Z_i$ such that $\mu(\hat\Omega_{\mathsf{v},x,y\to z})>0$. We take $\Sigma'=\hat\Omega_{\mathsf{v},x,y\to z}$. It is invariant under $G_{\mathsf{v},x,z}$ and $\mathsf H'=H_{\mathsf{v},x,y}$, and the former group is equal to $\mathsf G= G_{\sfv,x}$ by Fact 2. Now take $h\in \mathsf H\setminus \mathsf H'$. Then $h(x)=x$ and $h(y)\neq y$. Then $h\Sigma'=\hat\Omega_{\mathsf{v},x,h(y)\to z}$, which is disjoint from $\hat\Omega_{\mathsf{v},x,y\to z}$, as desired.
	\end{proof}

			\subsection{From commensurated to normal}
	\label{subsec:normal}
	Let $\sfv\in V\Gamma^e$. Proposition~\ref{prop:commensurable}, applied to $Z_2=\mathcal{L}_\sfv$, shows that the $H_\sfv$-stabilizers of any two $\sfv$-lines are commensurable  (and they are virtually infinite cyclic by Proposition~\ref{prop:stabilizer-L1}). In this section, we will improve this by showing that $H_\sfv$ contains a normal subgroup that preserves all $\sfv$-lines: this is Proposition~\ref{prop:normal} below. We start with a lemma.
	
	\begin{lemma}
		\label{lemma:index1}
		Let $\sfv\in V\Gamma^e$. Let $\ell_1,\ell_2$ be two $\sfv$-lines in the same $H_\sfv$-orbit.
		Let $A$ be any finite-index infinite cyclic subgroup of $\stab_{H_\sfv}(\ell_1)\cap\stab_{H_\sfv}(\ell_2)$. 
		
		Then $[\stab_{H_\sfv}(\ell_1):A]=[\stab_{H_\sfv}(\ell_2):A]$.
	\end{lemma}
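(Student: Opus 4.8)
The plan is to reduce the statement to a purely combinatorial comparison of the two permutation actions $S_i:=\stab_{H_\sfv}(\ell_i)\actson\ell_i$ ($i=1,2$), where $S_i$ acts on the $\sfv$-line $\ell_i\subset G$ through the $H$-action on $G$ by flat-preserving bijections. Two features of these actions are available for free: by Lemma~\ref{lemma:cocompact} the $H$-stabilizer of any rank $0$ vertex of $\B$ is finite, so each $S_i\actson\ell_i$ has finite point stabilizers, and by Lemma~\ref{lemma:cocompact-on-bv}(2) it has finitely many orbits. By Proposition~\ref{prop:commensurable}, $S_1\cap S_2$ has finite index in each $S_i$, hence so does $A$; and since $A$ is infinite cyclic, $A\cap\stab_{S_i}(z)=1$ for every $z\in\ell_i$, so $A$ acts \emph{freely} on each $\ell_i$, still with finitely many orbits.

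For such actions I would first record an elementary mass identity: if $A\le S$ has finite index and $A$ acts freely on a set $X$ which has finitely many $S$-orbits and finite $S$-stabilizers, then
\[ |X/A|\;=\;[S:A]\cdot\!\!\sum_{[x]\in X/S}\frac{1}{|\stab_S(x)|}. \]
Indeed, it suffices to treat a single orbit $X=S/C$ with $C$ finite, in which case $|X/A|=|A\backslash S/C|$, and the natural surjection $A\backslash S\to A\backslash S/C$ has all fibres of size exactly $|C|$ (because $A$ acts freely, i.e.\ $A\cap gCg^{-1}=1$ for all $g$), whence $|A\backslash S/C|=[S:A]/|C|$; summing over orbits gives the displayed formula. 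Writing $\mathfrak m_i:=\sum_{[z]\in\ell_i/S_i}|\stab_{S_i}(z)|^{-1}>0$, this yields $|\ell_i/A|=[S_i:A]\,\mathfrak m_i$, so it remains to prove the two equalities $|\ell_1/A|=|\ell_2/A|$ and $\mathfrak m_1=\mathfrak m_2$.

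The equality $\mathfrak m_1=\mathfrak m_2$ uses the hypothesis that $\ell_1$ and $\ell_2$ lie in a common $H_\sfv$-orbit: choosing $h\in H_\sfv$ with $h\cdot\ell_1=\ell_2$, conjugation by $h$ sends $S_1$ onto $S_2$ while $h$ restricts to a bijection $\ell_1\to\ell_2$ that intertwines the two actions (as $h(s(z))=(hsh^{-1})(h(z))$ and $hsh^{-1}\in S_2$); hence the permutation actions $S_1\actson\ell_1$ and $S_2\actson\ell_2$ are isomorphic, so in particular $\mathfrak m_1=\mathfrak m_2$. The equality $|\ell_1/A|=|\ell_2/A|$ is where the product structure on the parallel set enters: recall that $H_\sfv$ preserves the decomposition $P_\sfv=Z_\sfv\times\mathcal L_\sfv$, the $\sfv$-line $\ell_i$ being the slice $\pi_2^{-1}(\ell_i)=Z_\sfv\times\{\ell_i\}$ and the factor action $\alpha_\sfv\colon H_\sfv\to\Bij(Z_\sfv)$ recording the action on the first coordinate. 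Since $A\le S_1\cap S_2$ fixes both $\ell_1$ and $\ell_2$ as points of $\mathcal L_\sfv$, each $a\in A$ acts on the slice $\ell_i$ exactly as $\alpha_\sfv(a)$ acts on $Z_\sfv$; thus both $A$-sets $\ell_1,\ell_2$ are isomorphic to $Z_\sfv$ equipped with the $\alpha_\sfv(A)$-action, so $|\ell_1/A|=|Z_\sfv/\alpha_\sfv(A)|=|\ell_2/A|$. Combining, $[S_1:A]\,\mathfrak m_1=|\ell_1/A|=|\ell_2/A|=[S_2:A]\,\mathfrak m_2=[S_2:A]\,\mathfrak m_1$, and dividing by $\mathfrak m_1>0$ gives $[S_1:A]=[S_2:A]$.

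The substantive content is thus the interplay of two isomorphisms of actions — one forced by the common $H_\sfv$-orbit, the other by the product structure $Z_\sfv\times\mathcal L_\sfv$ — while the mass identity is only a bookkeeping device to absorb the (possibly nontrivial) finite point stabilizers; in the free case it degenerates to the bare count $[S_i:A]=|\ell_i/A|/|\ell_i/S_i|$, with the numerator preserved by the product structure and the denominator by the orbit relation. The main point requiring care is precisely this stabilizer bookkeeping; everything else is formal.
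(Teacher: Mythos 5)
Your proof is correct and follows essentially the same route as the paper's: the same orbit-counting identity $|\ell_i/A|=[S_i:A]\sum_j 1/k_j$, with the stabilizer cardinalities matched between the two lines via the conjugating element $h\in H_\sfv$, and the $A$-orbit counts matched via the product structure on $P_\sfv=Z_\sfv\times\mathcal L_\sfv$. The only difference is that you spell out a proof of the mass identity, which the paper states without justification.
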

	
	\begin{proof}
		Recall that every $\sfv$-line $\ell$ determines a rank $1$ vertex $v_\ell$ in $\B$, and vertices of $\ell$ correspond to rank $0$ vertices adjacent to $v_\ell$. Therefore, for every $\sfv$-line $\ell$, 
		the action of $\Stab_{H_\sfv}(\ell)$ on $\ell$ has finite stabilizers (Lemma~\ref{lemma:cocompact}) and finitely many orbits (Lemma~\ref{lemma:cocompact-on-bv}). As $\Stab_{H_\sfv}(\ell_1)$ and $\Stab_{H_\sfv}(\ell_2)$ are commensurable (Proposition~\ref{prop:commensurable} applied to $Z_2=\mathcal{L}_\sfv$), and $A$ is torsion-free and has finite index in $\Stab_{H_\sfv}(\ell_1)\cap\Stab_{H_\sfv}(\ell_2)$, it follows that the actions $A\actson \ell_1$ and $A\actson \ell_2$ are free with finitely many orbits.
		
		As $A\subseteq H_\sfv$, the $A$-action on $P_\sfv=Z_\sfv\times \mathcal L_\sfv$ preserves the product structure. Therefore the two actions $A\actson \ell_1$ and $A\actson \ell_2$ have the same number of orbits. Let $h\in H_\sfv$ be such that $h(\ell_1)=\ell_2$. Then $h$ induces a bijection $h_\ast:\calo_1^j\mapsto\calo_2^j$ between the set of orbits $\{\mathcal{O}^1_1,\dots,\mathcal{O}^k_1\}$ of $\stab_{H_\sfv}(\ell_1)\actson \ell_1$, and the set of orbits $\{\mathcal{O}^1_2,\dots,\mathcal{O}^k_2\}$ of $\stab_{H_\sfv}(\ell_2)\actson \ell_2$. In addition $h_\ast$ preserves the (finite) cardinalities of stabilizers, i.e.\ the $\Stab_{H_\sfv}(\ell_1)$-stabilizer of any point in $\mathcal{O}_1^j$ has the same cardinality (denoted by $k_j$) as the $\Stab_{H_\sfv}(\ell_2)$-stabilizer of any point in $\mathcal{O}_2^j$. Now, for every $i\in\{1,2\}$, the number of $A$-orbits on $\ell_i$ is equal to $[\Stab_{H_\sfv}(\ell_i):A]\sum_{j=1}^k \frac{1}{k_j}$. Therefore $[\Stab_{H_\sfv}(\ell_1):A]=[\Stab_{H_\sfv}(\ell_2):A]$. 
	\end{proof}

	\begin{prop}
		\label{prop:normal}
		For every $\sfv\in V\Gamma^e$, there exists an infinite cyclic normal subgroup ${N_\sfv\unlhd H_\sfv}$ which preserves every $\sfv$-line.
	\end{prop}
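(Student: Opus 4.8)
\textbf{Proof plan for Proposition~\ref{prop:normal}.}
The plan is to produce $N_\sfv$ as the kernel of the permutation action of $H_\sfv$ on the set $\mathcal{L}_\sfv$ of $\sfv$-lines, after first replacing $H_\sfv$ by a finite-index subgroup if necessary, and then to check that this kernel is infinite cyclic. Concretely, recall from Lemma~\ref{lemma:cocompact-on-bv} that $H_\sfv\actson\mathcal{L}_\sfv$ has only finitely many orbits; choose representatives $\ell_1,\dots,\ell_m$ of these orbits. For each $i$ the stabilizer $\Stab_{H_\sfv}(\ell_i)$ is virtually infinite cyclic (Proposition~\ref{prop:stabilizer-L1}, via the identification of $\ell_i$ with a rank $1$ vertex of $\B$), and by Proposition~\ref{prop:commensurable} applied to $Z_2=\mathcal{L}_\sfv$ these stabilizers are all pairwise commensurable inside $H_\sfv$. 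I would first let $N_\sfv^0=\bigcap_{\ell\in\mathcal{L}_\sfv}\Stab_{H_\sfv}(\ell)$ be the (normal) kernel of the $H_\sfv$-action on $\mathcal{L}_\sfv$, and show it is a finite-index subgroup of $\Stab_{H_\sfv}(\ell_1)$.

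To see the finite-index claim, the key point is that $N_\sfv^0$ is an intersection of pairwise commensurable subgroups, but a priori an infinite such intersection need not have finite index; here is where Lemma~\ref{lemma:index1} does the work. Pick a torsion-free finite-index infinite cyclic $A\subseteq\bigcap_{i=1}^m\Stab_{H_\sfv}(\ell_i)$ (possible since finitely many commensurable virtually cyclic groups have a common finite-index infinite cyclic subgroup, using bounded torsion). Then $A$ has finite index in each $\Stab_{H_\sfv}(\ell_i)$, and Lemma~\ref{lemma:index1} shows that for any $\sfv$-line $\ell'$ in the $H_\sfv$-orbit of some $\ell_i$, the subgroup $A':=A\cap\Stab_{H_\sfv}(\ell')$ satisfies $[\Stab_{H_\sfv}(\ell'):A']=[\Stab_{H_\sfv}(\ell_i):A']$; since $A$ already has uniformly bounded index in the $\Stab_{H_\sfv}(\ell_i)$, and since for $g\in H_\sfv$ with $g\ell_i=\ell'$ the group $g\Stab_{H_\sfv}(\ell_i)g^{-1}=\Stab_{H_\sfv}(\ell')$ contains $g A g^{-1}$ with the same index, one deduces a uniform bound $[\Stab_{H_\sfv}(\ell'):A\cap\Stab_{H_\sfv}(\ell')]\le K$ as $\ell'$ ranges over \emph{all} of $\mathcal{L}_\sfv$. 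Now the crucial finiteness input: $A\cap N_\sfv^0$ is a subgroup of $A\cong\mathbb{Z}$, hence is either finite (hence trivial) or of finite index in $A$. If it were trivial, then $N_\sfv^0$ would intersect the infinite cyclic group $A$ trivially while being commensurable with $A$, a contradiction; so $A\cap N_\sfv^0$ has finite index in $A$, hence $N_\sfv^0$ is infinite and, being contained in $\Stab_{H_\sfv}(\ell_1)$ which is virtually cyclic and has $A$ as a finite-index subgroup, $N_\sfv^0$ is itself virtually infinite cyclic and of finite index in $\Stab_{H_\sfv}(\ell_1)$.

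It remains to extract from the virtually cyclic normal group $N_\sfv^0$ an honest infinite cyclic normal subgroup $N_\sfv\unlhd H_\sfv$. Since $N_\sfv^0$ is infinite virtually cyclic with bounded torsion, it has a canonical maximal finite normal subgroup $T$ (the set of torsion elements when $N_\sfv^0$ is finite-by-$\mathbb{Z}$; in the finite-by-dihedral case one must pass to the finite-by-$\mathbb{Z}$ subgroup of index $2$ first, which is again canonical hence normal in $H_\sfv$). The quotient $N_\sfv^0/T$ is infinite cyclic, but I need a cyclic \emph{subgroup}. For this I would use that $N_\sfv^0$, being virtually $\mathbb{Z}$ and torsion-bounded, admits a characteristic infinite cyclic subgroup: e.g.\ take the intersection of $N_\sfv^0$ with its own finite-index center-type subgroup, or more cleanly, note that the finitely many maximal infinite cyclic subgroups of a fixed index are permuted by $\mathrm{Aut}(N_\sfv^0)$, so after replacing $H_\sfv$ by a finite-index subgroup one of them becomes normal; alternatively one directly produces a characteristic copy of $\mathbb{Z}$ as the unique maximal subgroup isomorphic to $\mathbb{Z}$ on which conjugation acts trivially modulo $T$. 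In any case this characteristic infinite cyclic subgroup of $N_\sfv^0$ is normal in $H_\sfv$ (being characteristic in a normal subgroup) and preserves every $\sfv$-line (being contained in $N_\sfv^0$), so it serves as $N_\sfv$. The main obstacle I anticipate is precisely this last bookkeeping: ensuring the cyclic subgroup one extracts is genuinely characteristic in $N_\sfv^0$ — so that normality in $H_\sfv$ is automatic and no further finite-index passage (which the statement does not allow) is needed — and handling the finite-by-dihedral case uniformly; the bounded torsion hypothesis should make all the relevant finite subgroups canonically bounded and hence these canonical constructions well-defined.
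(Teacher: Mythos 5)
Your overall strategy (take the kernel $N_\sfv^0=\bigcap_{\ell\in\call_\sfv}\Stab_{H_\sfv}(\ell)$ of the action on $\call_\sfv$, which is normal for free, and then show it is infinite cyclic) differs from the paper's, and it founders at the crucial step: you never actually establish that $N_\sfv^0$ is nontrivial. The uniform bound $[\Stab_{H_\sfv}(\ell'):A\cap\Stab_{H_\sfv}(\ell')]\le K$ over \emph{all} $\ell'\in\call_\sfv$ does not follow from what you cite. Writing $A'=A\cap\Stab_{H_\sfv}(\ell')$, Lemma~\ref{lemma:index1} gives $[\Stab_{H_\sfv}(\ell'):A']=[\Stab_{H_\sfv}(\ell_i):A']=[\Stab_{H_\sfv}(\ell_i):A]\cdot[A:A']$, which grows with $[A:A']$ rather than bounding it; and the conjugation observation controls $[\Stab_{H_\sfv}(\ell'):gAg^{-1}]$, but $gAg^{-1}$ and $A'$ are different subgroups of a virtually cyclic group with no containment between them, so no bound on $[A:A']$ results. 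Proposition~\ref{prop:commensurable} gives commensurability of each pair of stabilizers but says nothing about uniformity, and an intersection of infinitely many finite-index subgroups $n_\ell A\le A\cong\mathbb{Z}$ is trivial as soon as the $n_\ell$ are unbounded. Your fallback ("if $A\cap N_\sfv^0$ were trivial, then $N_\sfv^0$ would intersect $A$ trivially while being commensurable with $A$") is circular: commensurability of $N_\sfv^0$ with $A$ is exactly what is at stake, and $N_\sfv^0=\{1\}$ is consistent with everything you have proved up to that point.

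The paper avoids this by never forming the infinite intersection. It uses that $H_\sfv$ is finitely generated (Lemma~\ref{lemma:hv-fg} -- an input your argument never touches), picks a finite generating set $S\ni 1$ and a finite set $\call_0$ of orbit representatives, and intersects only over the \emph{finite} set $\call_1=\bigcup_{s\in S}s^{-1}\call_0$ the canonical infinite cyclic subgroups $Z_\ell\le\Stab_{H_\sfv}(\ell)$ (intersection of all subgroups of the minimal index $k_\ell$ for which this is infinite cyclic -- the same characteristic-subgroup device you sketch in your last paragraph). A finite intersection of pairwise commensurable infinite cyclic groups is automatically infinite cyclic of finite index, so nontriviality is free; the work is then to prove normality, which is where Lemma~\ref{lemma:index1} is really used: for $s\in S$, both $sN_\sfv s^{-1}$ and $N_\sfv$ sit inside the infinite cyclic group $Z_\ell$ with the \emph{same} finite index, and two subgroups of $\mathbb{Z}$ of the same index that are nested (here $sN_\sfv s^{-1}\subseteq Z_\ell\supseteq N_\sfv$ with equal index in $Z_\ell$) must coincide. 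Normality then propagates line-preservation from $\call_0$ to all of $\call_\sfv=H_\sfv\call_0$. If you want to keep your "kernel" formulation, you would first have to construct the paper's $N_\sfv$ anyway (it lies inside $N_\sfv^0$ and certifies that $N_\sfv^0$ is infinite), so the finite-intersection-plus-normality argument is not optional bookkeeping but the substance of the proof.
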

	
	\begin{proof} 
		As the action of $H_\sfv$ on $\call_\sfv$ has finitely many orbits (Lemma~\ref{lemma:cocompact-on-bv}), we can (and will) choose a finite subset $\call_0\subseteq\call_\sfv$ such that $\call_\sfv= H_\sfv\call_0$.
		Recall from Lemma~\ref{lemma:hv-fg} that $H_\sfv$ is finitely generated. 
		Take a finite generating set $S$ of $H_\sfv$ containing the trivial element, and let $\call_1=\cup_{s\in S}s^{-1}\call_0$.

		For every $\ell\in\call_1$,  recall that $\Stab_{H_\sfv}(\ell)$ is virtually infinite cyclic by Proposition~\ref{prop:stabilizer-L1}.  Let $k_\ell\in\mathbb{N}$ be the smallest integer such that the intersection of all subgroups of $\Stab_{H_\sfv}(\ell)$ of index $k_\ell$ is infinite cyclic, and let $Z_\ell$ be this intersection. Notice that $Z_\ell$ is a characteristic subgroup of $\Stab_{H_\sfv}(\ell)$.
		We observe that if $\ell\in\call_0$ and $s\in S$, then $s^{-1}\ell\in\call_1$ and $Z_{s^{-1}\ell}=s^{-1}Z_\ell s$: indeed $h\mapsto s^{-1}hs$ determines an isomorphism between $\Stab_{H_\sfv}(\ell)$ and $\Stab_{H_\sfv}(s^{-1}\ell)$, so $k_\ell=k_{s^{-1}\ell}$ and the isomorphism $h\mapsto s^{-1}hs$ sends $Z_\ell$ to $Z_{s^{-1}\ell}$ in view of the definition of these subgroups.
		
		Let $N_\sfv=\cap_{\ell\in\call_1}Z_\ell$. We claim that $sN_\sfv s^{-1}=N_\sfv$ for every $s\in S$. Indeed, let $s\in S$, and take $\ell\in \call_0$. Then $N_\sfv$ has finite index in $\stab_{H_\sfv}(\ell)$. As both $\ell$ and $s^{-1}\ell$ belong to $\call_1$, Lemma~\ref{lemma:index1} shows that $[\stab_{H_\sfv}(s^{-1} \ell):N_\sfv]=[\stab_{H_\sfv}(\ell):N_\sfv]$. As $N_\sfv\subseteq \stab_{H_\sfv}(s^{-1}\ell)$, we have $sN_\sfv s^{-1}\subseteq \stab_{H_\sfv}(\ell)$ and $[\stab_{H_\sfv}(s^{-1} \ell):N_\sfv]=[\stab_{H_\sfv}(\ell):sN_\sfv s^{-1}]$. Thus $sN_\sfv s^{-1}$ and $N_\sfv$ are subgroups of $\stab_{H_\sfv}(\ell)$ of the same finite index. In addition, $N_\sfv$ is contained in $Z_\ell$ and in $Z_{s^{-1}\ell}$, so $sN_\sfv s^{-1}\subseteq Z_\ell$. Therefore $[Z_\ell:N_\sfv]=[Z_\ell:sN_\sfv s^{-1}]$. As $Z_\ell$ is infinite cyclic, it follows that $s N_\sfv s^{-1}=N_\sfv$ for every $s\in S$. As $S$ generates $H_\sfv$, it follows that $N_\sfv$ is a normal subgroup of $H_\sfv$. As $N_\sfv$ preserves each line in $\call_0$, and $H_\sfv\call_0=\call_\sfv$, it follows that $N_\sfv$ preserves every $\sfv$-line.
	\end{proof}

			\subsection{Conjugating the factor action to one by uniform quasi-isometries}\label{subsec:conjugate}
			 
			We now consider the factor action  $\alpha_\sfv:H_\sfv\to\Bij(Z_\sfv)$. Let $N_\sfv\unlhd H_\sfv$ be a normal infinite cyclic subgroup that preserves every $\sfv$-line, given by Proposition~\ref{prop:normal}.
			
			As $N_{\sfv}$ is torsion-free, by Lemma~\ref{lemma:cocompact}, the action of $N_\sfv$ on each $\sfv$-line is free, and by Lemma~\ref{lemma:cocompact-on-bv} it has finitely many orbits. 
						Let $\{\calo_1,\dots,\calo_n\}$ be the set of orbits for the action of $N_\sfv$ on $Z_\sfv$. Since $N_\sfv$ is normal in $H_\sfv$, the group $H_\sfv$ acts by permutations of the set $\{\calo_1,\dots,\calo_n\}$. Let $H_\sfv^0\subseteq H_\sfv$ be a finite index subgroup which preserves every orbit $\calo_i$.
			
			Let $a$ be a generator of $N_\sfv$. We can (and will) identify each $\calo_i$ with $\mathbb{Z}$ in such a way that $a$ acts by translation by $1$ on each $\calo_i$. Since $N_\sfv$ is normal in $H_\sfv$, the group $H^0_\sfv$ acts on each $\calo_i$ by isometries: indeed, every element $h\in H^0_\sfv$ either commutes with $a$, and then satisfies $h(x+1)=h(x)+1$ for $x\in \calo_i\cong \mathbb Z$, whence acts by translations; or else $hah^{-1}=a^{-1}$, in which case $h(x+1)=h(x)-1$ and $h$ acts by an orientation-reversing isometry.
			
			Let $H_\sfv^1\subseteq H_\sfv^0$ be the finite-index subgroup consisting of all elements that act by positive isometries (i.e.\ translations) on each $\calo_i$. Let $\tau_i:H_\sfv^1\to\mathbb{Z}$ be the translation length homomorphism on $\calo_i$.
			
			\begin{lemma}\label{lemma:translation-lengths}
				For any $i,j\in\{1,\dots,n\}$ and any $h\in H_\sfv^1$, one has $\tau_i(h)=\tau_j(h)$.
			\end{lemma}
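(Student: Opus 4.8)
The plan is to prove that the homomorphism $\tau_i-\tau_j\colon H_\sfv^1\to\mathbb Z$ vanishes identically. Fix $x\in\calo_i$ and $y\in\calo_j$. Since every $h\in H_\sfv^1$ preserves the orbit $\calo_i$ and acts on it (identified with $\mathbb Z$) by translation by $\tau_i(h)$, such an $h$ fixes $x$ if and only if $\tau_i(h)=0$; hence $H_\sfv^1\cap H_{\sfv,x}=\ker\tau_i$, and likewise $H_\sfv^1\cap H_{\sfv,y}=\ker\tau_j$. Moreover, $N_\sfv\cap H_\sfv^1$ is a nontrivial (infinite cyclic) subgroup of $H_\sfv^1$, because $N_\sfv$ is infinite cyclic and $H_\sfv^1$ has finite index in $H_\sfv$; and since the generator $a$ of $N_\sfv$ was normalized to act by translation by $1$ on \emph{every} orbit $\calo_k$, the maps $\tau_i$ and $\tau_j$ restrict to one and the same injective homomorphism $N_\sfv\cap H_\sfv^1\to\mathbb Z$. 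In particular $\tau_i$ and $\tau_j$ are nonzero, so $H_\sfv^1/\ker\tau_i$ and $H_\sfv^1/\ker\tau_j$ are both infinite cyclic.

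The key external input is Proposition~\ref{prop:commensurable}, applied to the factor $Z_\sfv$ (the $Z_1$ of that statement): it gives that $H_{\sfv,x}$ and $H_{\sfv,y}$ are commensurable in $H_\sfv$. Intersecting with the finite-index subgroup $H_\sfv^1$, the groups $\ker\tau_i=H_\sfv^1\cap H_{\sfv,x}$ and $\ker\tau_j=H_\sfv^1\cap H_{\sfv,y}$ are commensurable in $H_\sfv^1$. Consider now $\Phi=(\tau_i,\tau_j)\colon H_\sfv^1\to\mathbb Z^2$, with kernel $L=\ker\tau_i\cap\ker\tau_j$. Commensurability makes $L$ of finite index in $\ker\tau_i$, and together with $H_\sfv^1/\ker\tau_i\cong\mathbb Z$ this shows $H_\sfv^1/L$ is virtually infinite cyclic. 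But $\Phi$ induces an injection $H_\sfv^1/L\hookrightarrow\mathbb Z^2$, so $H_\sfv^1/L$ is also torsion-free and abelian; a torsion-free abelian group that is virtually $\mathbb Z$ is isomorphic to $\mathbb Z$. Thus there is a surjective homomorphism $\phi\colon H_\sfv^1\to\mathbb Z$ with $\ker\phi=L$, and since $L$ is contained in both $\ker\tau_i$ and $\ker\tau_j$, there are integers $p_i,p_j$ with $\tau_i=p_i\phi$ and $\tau_j=p_j\phi$.

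To finish, I would evaluate on a nontrivial element $w\in N_\sfv\cap H_\sfv^1$. By the first paragraph $\tau_i(w)=\tau_j(w)\neq 0$, so $p_i\,\phi(w)=p_j\,\phi(w)$ is a nonzero integer; since $\phi(w),p_i,p_j$ are integers with $\phi(w)\neq 0$, this forces $p_i=p_j$, whence $\tau_i=p_i\phi=p_j\phi=\tau_j$, as desired. I do not expect a genuine obstacle in this argument: all the substantial work has already gone into Proposition~\ref{prop:commensurable} (which in turn rests, through Proposition~\ref{prop:stabilizer-L1}, on the integrability hypothesis). The only point requiring a little care is the deduction that $H_\sfv^1/L$ is infinite cyclic: commensurability of the two kernels alone would only give a virtually cyclic quotient, and one really needs the torsion-freeness coming from the embedding into $\mathbb Z^2$ to upgrade this to $\mathbb Z$.
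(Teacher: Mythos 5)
Your proof is correct and rests on exactly the same key input as the paper's: Proposition~\ref{prop:commensurable} applied to $Z_1=Z_\sfv$, combined with the normalization of the generator $a$ of $N_\sfv$ acting by translation by $1$ on every orbit $\calo_k$. The paper extracts the contradiction more directly — if $\tau_i(h)\neq\tau_j(h)$, then every nonzero power of $ha^{-\tau_i(h)}$ lies in $H_{\sfv,x}\setminus H_{\sfv,y}$, which already violates commensurability — so your detour through the common quotient $H_\sfv^1/L\cong\mathbb{Z}$ and the proportionality constants $p_i,p_j$ is sound but not needed.
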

			
			\begin{proof}
				Arguing towards a contradiction, let $h\in H_\sfv^1$ be such that $\tau_i(h)\neq\tau_j(h)$. Let $x\in \calo_i$ and $y\in\calo_j$. Then all powers of $ha^{-\tau_i(h)}$ belong to $H_{\sfv,x}\setminus H_{\sfv,y}$. This contradicts the fact that $H_{\sfv,x}$ and $H_{\sfv,y}$ are commensurable (Proposition~\ref{prop:commensurable} applied to $Z_1=Z_\sfv$).  
			\end{proof}
			
			\begin{lemma}\label{lemma:uniform-qi}
				The action of $H_\sfv$ on $Z_\sfv$ is conjugate to an action on $\mathbb Z$ by uniform quasi-isometries.
			\end{lemma}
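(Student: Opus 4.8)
The plan is to produce an explicit bijection between $Z_\sfv$ and $\mathbb{Z}$ by \emph{interleaving} the $N_\sfv$-orbits, and then to check that, under this identification, every element of $H_\sfv$ acts by a $(1,C)$-quasi-isometry of $\mathbb{Z}$ with a single additive constant $C$ valid for all of $H_\sfv$. Recall that $Z_\sfv$ is partitioned into the $N_\sfv$-orbits $\calo_1,\dots,\calo_n$, each identified with $\mathbb{Z}$ so that the chosen generator $a$ of $N_\sfv$ acts by $m\mapsto m+1$. Relabelling the orbits by $i\in\{0,1,\dots,n-1\}$ and fixing a basepoint in each, I would define $\phi\colon Z_\sfv\to\mathbb{Z}$ by sending the point at position $m$ of $\calo_i$ to $nm+i$; by division with remainder this is a bijection. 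It then suffices to show that the conjugated maps $\tilde h:=\phi\circ\alpha_\sfv(h)\circ\phi^{-1}$, for $h\in H_\sfv$, are uniform quasi-isometries of $\mathbb{Z}$.

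First I would record the shape of $\tilde h$. Since $N_\sfv\unlhd H_\sfv$, every $h\in H_\sfv$ satisfies $hah^{-1}=a^{\varepsilon(h)}$ with $\varepsilon(h)\in\{\pm1\}$ (the same sign on every orbit), permutes the orbits $\calo_i$ by some permutation $\sigma_h$, and restricts on $\calo_i$ to an affine bijection $m\mapsto\varepsilon(h)m+c_i(h)$ onto $\calo_{\sigma_h(i)}$ (this uses $h(a^mp)=a^{\varepsilon(h)m}h(p)$). Feeding this into the definition of $\phi$ gives $\tilde h(x)=\varepsilon(h)\,x+r_h(x\bmod n)$, where $r_h(i)=n\,c_i(h)+\sigma_h(i)-\varepsilon(h)\,i$ depends only on $i\in\{0,\dots,n-1\}$, hence is bounded by $n\max_i|c_i(h)|+2n$. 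Thus each $\tilde h$ is a $(1,2\|r_h\|_\infty)$-quasi-isometric embedding, and being a bijection it is a genuine quasi-isometry --- but with an $h$-dependent additive constant, which is precisely the obstacle to be overcome. For $h$ in the finite-index subgroup $H_\sfv^1$ the situation is rigid: there $\sigma_h=\mathrm{id}$ (as $H_\sfv^1\subseteq H_\sfv^0$ preserves each orbit), $\varepsilon(h)=+1$ (by definition of $H_\sfv^1$), and $c_i(h)=\tau_i(h)=\tau(h)$ is independent of $i$ by Lemma~\ref{lemma:translation-lengths}; hence $r_h$ is the constant $n\tau(h)$ and $\tilde h$ is the translation $x\mapsto x+n\tau(h)$, an isometry of $\mathbb{Z}$.

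To obtain uniformity, I would fix a finite set $T\subseteq H_\sfv$ of representatives of the cosets of $H_\sfv^1$ (this is finite since $[H_\sfv:H_\sfv^0]$ and $[H_\sfv^0:H_\sfv^1]$ are finite), and set $C:=\max_{t\in T}2\|r_t\|_\infty<+\infty$. For arbitrary $h\in H_\sfv$, writing $h=h_1t$ with $h_1\in H_\sfv^1$ and $t\in T$ gives $\tilde h=\tilde{h_1}\circ\tilde t$, where $\tilde{h_1}$ is a translation and $\tilde t$ is a $(1,C)$-quasi-isometry of $\mathbb{Z}$; post-composing a $(1,C)$-quasi-isometry with an isometry again yields a $(1,C)$-quasi-isometry, so every $\tilde h$ is a $(1,C)$-quasi-isometry. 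Therefore $\phi$ conjugates the factor action $H_\sfv\actson Z_\sfv$ to an action of $H_\sfv$ on $\mathbb{Z}$ by uniform quasi-isometries, as claimed. The only genuinely delicate point is the uniformity of the additive constant, handled exactly by isolating the honestly isometric part $H_\sfv^1$ --- which is where Lemma~\ref{lemma:translation-lengths} and the normality of $N_\sfv$ enter --- and absorbing all $h$-dependence into it.
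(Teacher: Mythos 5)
Your proof is correct and follows essentially the same route as the paper's: the same interleaved identification $\calo_i\leftrightarrow\{mn+i\}$, the same observation that Lemma~\ref{lemma:translation-lengths} forces $H_\sfv^1$ to act by translations, and the same finite-coset decomposition to make the quasi-isometry constants uniform. The only (harmless) difference is that you write out the explicit form $\tilde h(x)=\varepsilon(h)x+r_h(x\bmod n)$ and factor $h=h_1t$ with the isometric part on the left, whereas the paper factors $h=fh'$ with it on the right.
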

			
			\begin{proof}
				Let $\sigma:H_\sfv\to\mathfrak{S}(\{1,\dots,n\})$ be the homomorphism given by the permutation of the orbits $\calo_i$. Recall that we have fixed a bijection between every orbit $\calo_i$ and $\mathbb{Z}$, such that the action of $a$ on each $\calo_i$ is by translation by $1$.
				
				Take $h\in H_\sfv$. We claim that for each $i\in\{1,\dots,n\}$, the map $h_{|\calo_i}:\calo_i\cong \mathbb Z\to \calo_{\sigma(h)(i)}\cong \mathbb Z$ is an isometry. Indeed, either $hah^{-1}=a$, in which case $h(x+1)=h(x)+1$ for any $x\in\calo_1\cup\dots\cup\calo_n$. Or else $hah^{-1}=a^{-1}$, in which case $h(x+1)=h(x)-1$ for any $x\in \calo_1\cup\dots\cup\calo_n$. This proves the claim. Moreover, either $h_{|\calo_i}$ is a translation for all $i$, or $h_{|\calo_i}$ is a reflection for all $i$.
				
				We now choose a bijection between $Z_\sfv$ and $\mathbb{Z}$ such that, under this bijection, each $\calo_i$ is identified to the subset $\{mn+i\}_{m\in\mathbb{Z}}$, and the action of $a$ is by translation by $n$. It follows from the previous paragraph that for every $h\in H_\sfv$, there exist integers $c_1(h),\dots,c_n(h)$ such that either $h(mn+i)=(m+c_i(h))n+\sigma(h)(i)$ for any $m\in\mathbb Z$ and any $i\in\{1,\dots,n\}$, or else $h(mn+i)=(c_i(h)-m)n+\sigma(h)(i)$ for any $x\in\mathbb Z$ and any $i\in\{1,\dots,n\}$. Thus each $h\in H_\sfv$ acts on $\mathbb Z$ by a quasi-isometry. We are now left with proving that the quasi-isometry constants are in fact uniform.
				
				Lemma~\ref{lemma:translation-lengths} implies that the action of $H_\sfv^1$ on $Z_\sfv\cong\mathbb{Z}$ is by translations (in particular by uniform quasi-isometries). Recall that $H_\sfv^1$ has finite index in $H_\sfv$. Let $F$ be a finite set of representatives of the left cosets of $H_\sfv^1$ in $H_\sfv$. Any $h\in H$ can then be decomposed as $h=fh'$ for some $f\in F$ and some $h'\in H_\sfv^1$. Thus the quasi-isometry constant of $h$ is bounded by a constant that only depends on the quasi-isometry constants of elements of $F$. This concludes our proof.
			\end{proof}

			\section{Proof of the main theorem}\label{sec:proof}
			
			We are now in position to complete the proof of our main theorem.
			
			\begin{proof}[Proof of Theorem~\ref{theointro:main}]
				The conclusion is obvious if $G=\{1\}$. The case where $G$ is isomorphic to $\mathbb{Z}$ is a consequence of Theorem~\ref{theo:fg}. From now on we assume that $G$ is not cyclic.
				
				Let $\B$ be the right-angled building of $G$. Lemma~\ref{lemma:action-on-B} yields an action of $H$ on $\B$ by cubical automorphisms (equivalently, an action of $H$ on $G$ by flat-preserving bijections). Lemma~\ref{lemma:cocompact} ensures that this flat-preserving action has finite point stabilizers, and finitely many orbits of vertices. Finally, Lemma~\ref{lemma:uniform-qi} ensures that for every $\sfv\in V\Gamma^e$, the factor action $H_\sfv\actson Z_\sfv$ is conjugate to an action on $\mathbb{Z}$ by uniform quasi-isometries. Therefore, Theorem~\ref{theo:QI} applies and shows that $H$ is finitely generated and quasi-isometric to $G$.
			\end{proof}
			
		We now show the importance of the assumption on bounded finite subgroups in our theorem, by giving two examples of infinitely generated groups $H$ with unbounded finite subgroups such that there exists an $(L^\infty,L^0)$-measure equivalence coupling $\Omega$ from $H$ to a right-angled Artin group $G$ (possibly with $|\Out(G)|<+\infty$). The $(L^\infty,L^0)$-condition means that there exists a Borel fundamental domain $X_G$ for the $G$-action on $\Omega$ such that, denoting by $c:H\times X_G\to G$ the associated cocycle, for every $h\in H$, $c(h,\cdot)$ takes essentially only finitely many values.
			
			\begin{prop}
				Let $K$ be a locally finite countable polyhedral complex, let $G$ be a finitely generated group acting properly discontinuously and cocompactly on $K$, and let $H$ be a lattice in $\Aut(K)$. 
				
				Then there exists an $(L^\infty,L^0)$-measure equivalence coupling from $H$ to $G$.
			\end{prop}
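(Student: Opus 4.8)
The plan is to take the ambient group $L:=\Aut(K)$ itself as the coupling, exploiting that $G$ and $H$ are both lattices in $L$; this is the classical way of producing a measure equivalence out of two lattices in a common locally compact second countable group, and the only extra point is that the $L^\infty$-integrability from $H$ to $G$ falls out of the \emph{uniformity} of the lattice $G$. First I would record the topological picture. Since $K$ is countable and locally finite, the compact-open topology on $\Aut(K)$ agrees with the topology of pointwise convergence on the countable set $VK$, so $L$ is second countable; moreover, for any vertex $v$, the stabilizer $\Stab_L(v)$ embeds as a closed subgroup of the compact group $\prod_{n\ge 0}\mathrm{Sym}(S_n(v))$, where $S_n(v)$ is the finite sphere of radius $n$ about $v$, hence $\Stab_L(v)$ is a compact open subgroup and $L$ is locally compact (and totally disconnected). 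The hypothesis that $G$ acts properly discontinuously and cocompactly on $K$ means exactly that $G$ is a uniform (discrete cocompact) lattice in $L$, after the standard harmless reduction of replacing $G$ by the finite quotient by the kernel of the action, which does not change its measure equivalence class; and $H$ is a lattice in $L$ by assumption. In particular $L$ is unimodular, being a locally compact second countable group admitting a lattice, so a Haar measure $m$ on $L$ is both left- and right-invariant.

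Next I would set up the coupling. Put $\Omega=L$, equipped with $m$ and the measure-preserving action of $G\times H$ given by $(g,h)\cdot\ell=g\ell h^{-1}$. The two factor actions — $G$ acting by left translation, $H$ by right translation — are free and commute, and each admits a Borel fundamental domain of finite $m$-measure: for $H$ this is the definition of a lattice, while for $G$ the cocompactness of $G\backslash L$ furnishes a compact set $C\subseteq L$ with $GC=L$, out of which one carves a Borel fundamental domain $X_G\subseteq C$, which is in particular relatively compact. Hence $(\Omega,m)$ is a measure equivalence coupling between $G$ and $H$.

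Finally I would check the $(L^\infty,L^0)$-condition for this choice of $X_G$. Unwinding the paper's convention, the associated cocycle $c\colon H\times X_G\to G$ sends $(h,x)$ to the inverse of the $G$-component of $xh^{-1}$ in the factorization $L=G\cdot X_G$; in particular $c(h,x)\in X_G\,h\,X_G^{-1}$ for every $x\in X_G$. For a fixed $h\in H$, the set $X_G\,h\,X_G^{-1}\subseteq C\,h\,C^{-1}$ is relatively compact, and since $G$ is discrete in $L$ its intersection with this set is finite; hence $c(h,\cdot)$ essentially takes only finitely many values, which is exactly the $(L^\infty,L^0)$-integrability of the coupling from $H$ to $G$.

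The only mildly delicate point in this argument is the topological bookkeeping of the first step — namely checking that $\Aut(K)$ is locally compact and second countable, that "properly discontinuous and cocompact" translates into "uniform lattice in $\Aut(K)$", and the reduction to a faithful $G$-action. Once that is in place, the construction of the coupling is the classical lattice construction, and the $L^\infty$-integrability is the elementary observation that a uniform lattice admits a relatively compact fundamental domain whose translates meet a fixed compact set only finitely often.
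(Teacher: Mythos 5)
Your proof is correct and follows essentially the same route as the paper: both take $\Omega=\Aut(K)$ with its Haar measure and the left/right translation action as the coupling, and both verify the $(L^\infty,L^0)$ condition by showing that for fixed $h$ the cocycle values lie in a finite subset of $G$. The only cosmetic difference is that the paper uses an explicit vertex-based fundamental domain $\{f\mid f(v_0)\in V_0\}$ and bounds $d(v_0,f(h^{-1}v_0))$ via proper discontinuity, whereas you take a relatively compact fundamental domain and intersect the compact set $ChC^{-1}$ with the discrete subgroup $G$ — the same finiteness mechanism in slightly more abstract clothing.
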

			
			In particular, let $G$ be a non-abelian right-angled Artin group, and let $H$ be an infinitely generated (non-uniform) lattice in the automorphism group of the universal cover of the Salvetti complex of $G$ (examples were constructed in \cite[Section~4.2]{HH}, and these have unbounded finite subgroups). Then there exists an $(L^\infty,L^0)$-measure equivalence coupling from $H$ to $G$.
			
			\begin{proof}
				We view $\Aut(K)$, equipped with its Haar measure, as a measure equivalence coupling between $G$ and $H$, where the action of $G\times H$ is via $(g,h)\cdot f=gfh^{-1}$. As $G$ acts cocompactly on $K$, we can find a finite set $V_0$ of representatives of the $G$-orbits of vertices in $K$. Fix $v_0\in V_0$. Then $X=\{f\in\Aut(K)\mid f(v_0)\in V_0\}$ is a measurable fundamental domain for the $G$-action on $\Aut(K)$. We let $c:H\times X\to G$ be the associated measure equivalence cocycle.  We aim to prove that for every $h\in H$, $c(h,\cdot)$ takes only finitely many values.
				
				Fix $h\in H$. Let $d$ be the metric on the $1$-skeleton of $K$ obtained by assigning length $1$ to every edge, and considering the induced path metric.  For every $f\in X$, we have \[d(v_0,f(h^{-1} v_0))\le d(v_0,f(v_0))+d(v_0,h^{-1} v_0)\le\mathrm{diam}(V_0)+d(v_0,h^{-1} v_0).\] Since the $G$-action on $K$ is properly discontinuous, we can find a finite set $B_h\subseteq G$ such that for every $f\in X$, one has $f(h^{-1}v_0)\in B_h^{-1}V_0$.  In particular, there exists $g(f)\in B_h$ such that $g(f)fh^{-1}\in X$. This shows that $c(h,f)\in B_h$ and concludes our proof. 
			\end{proof}

An \emph{$(L^\infty,L^0)$-orbit equivalence coupling} from $H$ to $G$ is a measure equivalence coupling $\Omega$ for which there exists a common Borel fundamental domain $X$ for the actions of $G$ and $H$ on $\Omega$, such that denoting by $c:H\times X\to G$ the associated cocycle, for every $h\in H$, $c(h,\cdot)$ essentially takes only finitely many values. The existence of an $(L^\infty,L^0)$-orbit equivalence coupling is equivalent to requiring that $G$ and $H$ admit free, measure-preserving actions on a standard probability space $X$ with the same orbits, so that for the natural orbit equivalence cocycle $c:H\times X\to G$, for every $h\in H$, $c(h,\cdot)$ essentially only takes finitely many values. 
   
			\begin{prop}
				Let $\Gamma$ be a finite simplicial graph. Let $H$ be any graph product over $\Gamma$ with vertex groups isomorphic to $\bigoplus_{\mathbb{N}}\mathbb{Z}/2\mathbb{Z}$. 
				
				Then there exists an $(L^\infty,L^0)$-orbit equivalence coupling from $H$ to $G$.
			\end{prop}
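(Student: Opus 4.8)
The plan is to realise $G=G_\Gamma$ and $H$ as acting with the same orbits on a common standard probability space, by assembling the odometer orbit equivalences over the vertices of $\Gamma$ into a graph product of orbit equivalences, and then to check that the $(L^\infty,L^0)$-integrability of the vertex couplings is inherited by this graph product. The vertex-level input is provided by Remark~\ref{rk:odometer} (which rests on Dye's theorem together with the explicit odometer): for each $v\in V\Gamma$ there is a standard probability space $(Y_v,\mu_v)=(\{0,1\}^{\mathbb{N}},\text{product})$ carrying free, measure-preserving actions of $\mathbb{Z}$ and of $A:=\bigoplus_{\mathbb{N}}\mathbb{Z}/2\mathbb{Z}$ with the same orbits, whose orbit equivalence cocycle $c_v\colon A\times Y_v\to\mathbb{Z}$ has the property that $c_v(s,\cdot)$ is essentially bounded, hence essentially finitely valued, for every $s\in A$.

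Next I would assemble these into a single coupling. Since $G_\Gamma=\Gamma\{\mathbb{Z}\}$ and $H=\Gamma\{A\}$ are graph products over the same graph $\Gamma$ with orbit equivalent vertex groups, the graph product construction of \cite{HH} yields a standard probability space $(X,\mu)$ carrying free, measure-preserving actions of $G_\Gamma$ and of $H$ with the same orbits. If one prefers a self-contained argument here, one can instead induct on $|V\Gamma|$: the base case $|V\Gamma|=1$ is the odometer coupling above, and the inductive step uses the amalgam decomposition $G_\Gamma=G_{\Gamma\setminus v}\ast_{G_{\lk(v)}}(G_{\lk(v)}\times\langle v\rangle)$, together with the matching decomposition of $H$, and the standard construction of an amalgamated product of measure-preserving actions over a common subgroup; some care is needed so that the couplings for the various parabolic subgraphs are chosen compatibly under restriction. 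Let $c\colon H\times X\to G_\Gamma$ denote the resulting orbit equivalence cocycle.

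It remains to verify the $(L^\infty,L^0)$-condition, namely that $c(h,\cdot)$ is essentially finitely valued for each $h\in H$; this is the only point not already contained in the construction. Fix $h\in H$ and write it as a finite word $h=s_1\cdots s_k$ in the vertex groups $A_{v_1},\dots,A_{v_k}$. In the graph product construction the space $X$ is a product of copies of the spaces $Y_v$ indexed by a countable set attached to the graph product structure, and the action of $h$ on $X$ is a permutation of this index set followed by, in only finitely many coordinates, applications of the vertex cocycles $c_{v_i}(s_i',\cdot)$ for elements $s_i'\in A_{v_i}$ depending only on $h$. Correspondingly, $c(h,x)$ is a product of a fixed element of $G_\Gamma$ (accounting for the combinatorial displacement of $h$, of word length depending only on $h$) with finitely many of the values $c_{v_i}(s_i',x')$, each of which is essentially bounded by the first paragraph. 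Hence $|c(h,x)|_{G_\Gamma}$ is bounded by a constant depending only on $h$ for $\mu$-almost every $x$, so $c(h,\cdot)$ is essentially finitely valued, and $(X,\mu)$ is an $(L^\infty,L^0)$-orbit equivalence coupling from $H$ to $G$.

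The main obstacle is the middle step: making precise (or extracting from \cite{HH}) the graph product of the orbit equivalences so that $G_\Gamma$ and $H$ genuinely act with the same orbits on a single space, and retaining enough bookkeeping in that construction to justify the displacement estimate in the last paragraph. Once the construction is in place the integrability propagation is routine, precisely because for a fixed $h$ only finitely many vertex-cocycle evaluations are involved and each of them is essentially bounded.
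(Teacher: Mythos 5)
Your proposal follows the same route as the paper: the vertex-level $(L^\infty,L^0)$-coupling from the odometer (Remark~\ref{rk:odometer}), the graph product of orbit equivalences from \cite[Proposition~4.2]{HH}, and the observation that for a fixed $h\in H$ only finitely many essentially bounded vertex-cocycle evaluations contribute to $c(h,\cdot)$, so integrability is preserved. The paper states this last step without carrying out the bookkeeping (deferring the detailed computation to \cite{EH}), and your sketch of why it works is the intended one.
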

			
			\begin{proof}
				There exists an $(L^\infty,L^0)$-orbit equivalence coupling from $\bigoplus_{\mathbb{N}}\mathbb{Z}/2\mathbb{Z}$ to $\mathbb{Z}$, see Remark~\ref{rk:odometer}. It thus follows from \cite[Proposition~4.2]{HH} that $G$ and $H$ are orbit equivalent. The proposition follows because the argument in \cite[Proposition~4.2]{HH} preserves the integrability of the coupling (by the computation from \cite[Section~11]{EH}).
			\end{proof}
	
			\section{Lattice embeddings of right-angled Artin groups}\label{sec:lattice-envelope}

In this section, we first prove a theorem that gives restrictions on the possible lattice embeddings of a countable group $H$ with bounded finite subgroups which is measure equivalent to a right-angled Artin group $G$ with $|\Out(G)|<+\infty$ (Theorem~\ref{theo:cocompact}). And we then describe all possible lattice embeddings under an integrability condition on the coupling between $G$ and $H$ (Theorem~\ref{theo:lattice-embedding}). For the second statement, we will first need to introduce the language of quasi-actions, which will be also useful in the next section. 

\subsection{Lattice embeddings and measure equivalence} 

   \begin{theo}\label{theo:cocompact}
       Let $G$ be a non-cyclic right-angled Artin group with $|\Out(G)|<+\infty$, and let $H$ be a countable group with bounded finite subgroups that is measure equivalent to $G$. Let $\mathfrak{H}$ be a locally compact second countable group, and let $\tau:H\to\mathfrak{H}$ be a lattice embedding.

Then $\mathfrak{H}$ maps continuously with compact kernel to a totally disconnected locally compact group, and $\tau$ is cocompact.
   \end{theo}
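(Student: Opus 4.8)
The plan is to deduce both conclusions from the existence of an action of $H$ on the right-angled building $\B$ of $G$ with amenable vertex stabilizers and good transitivity properties, which is exactly the content of Theorem~\ref{theointro:me-cat} (equivalently Proposition~\ref{prop:stabilizer-1}). First I would invoke that result: since $H$ has bounded torsion and is measure equivalent to $G$, it acts cocompactly on $\B$ with amenable vertex stabilizers. Because $G$ is non-cyclic, Lemma~\ref{lemma:basics-on-B} (applied factor by factor) tells us that $\B$ is a $\mathrm{CAT}(0)$ cube complex on which $G$, hence also the larger group $\Aut(\B)$, acts essentially and non-elementarily; in particular $\B$ is unbounded and has no Euclidean factor of positive dimension. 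The key point is that the $H$-action on $\B$ gives a homomorphism $H\to\Aut(\B)$, and $\Aut(\B)$ is a totally disconnected locally compact group once we know the $H$-action is cocompact and $\B$ is locally finite — but $\B$ itself is \emph{not} locally finite (rank $1$ vertices have infinitely many rank $0$ neighbours), so one must be slightly careful: instead one should pass to the \emph{quasi-action} of $H$ on $\B$ (or better, use that $H$ is quasi-isometric to $G$ under an integrability hypothesis — but here we have no integrability, so we cannot use Theorem~\ref{theointro:main}).

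The cleaner route, avoiding local finiteness issues, is the following. Given the lattice embedding $\tau:H\hookrightarrow\mathfrak{H}$, consider the measure equivalence coupling $\mathfrak{H}$ itself (with its Haar measure) between $H$ and $H$, or rather between $G$ and $H$ obtained by inducing; more directly, $\mathfrak H$ is a self measure equivalence coupling of $H$ through $\tau$, and composing couplings, $\mathfrak H$ yields a measure equivalence coupling between $G$ and $\mathfrak H$-as-a-target. Apply Lemma~\ref{lemma:self-coupling} and Theorem~\ref{theo:taut} (with $L = \Aut(\B)$, using that the inclusion $\hat G\subseteq\Aut(\B)$ is strongly ICC by Proposition~\ref{prop:icc}) to the finite-index extension $\hat G$: we obtain a continuous homomorphism $\mathfrak{H}\to\Aut(\B)$ — continuity because one can run the tautness machinery for the locally compact group $\mathfrak H$ just as for countable groups, the equivariant map $\theta$ being essentially unique by strong ICC, which forces the induced homomorphism to be a genuine continuous map. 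Let $K$ be its kernel and $\overline{\mathfrak H}$ its (closed) image. Then $K$ is a closed normal subgroup of $\mathfrak H$.

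Next I would identify $K$ as compact. The restriction of $\tau(H)$ to $\B$ is the cocompact amenable-stabilizer action above, and $K\cap\tau(H)$ is the kernel of the $H$-action on $\B$, which by Lemma~\ref{lemma:action-on-B} (finite kernel of $\iota$) is finite. Since $\tau(H)$ is a lattice in $\mathfrak H$ and $K$ is a closed normal subgroup meeting the lattice in a finite group, a standard argument (the image of the lattice in $\mathfrak H/K$ is still a lattice, cf.\ the discrete-kernel case, combined with the fact that $K$ itself is amenable as a direct limit of conjugates of the amenable stabilizers — or more simply: $K$ acts trivially on $\B$ while $\mathfrak H$ acts cocompactly, so $K$ is contained in a cell stabilizer, which is amenable) shows $K$ is amenable. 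To get compactness: $\overline{\mathfrak H}\le\Aut(\B)$ is totally disconnected (its point stabilizers on $V\B$ are open, and $\Aut(\B)$ with the permutation topology is totally disconnected since $V\B$ is countable and discrete). So $\mathfrak H/K$ is totally disconnected; the connected component $\mathfrak H^0$ of $\mathfrak H$ lies in $K$, is amenable and has compact-by-(Lie with no compact factors) structure, and since $\tau(H)$ is a bounded-torsion lattice the solvable radical and semisimple parts are ruled out, forcing $\mathfrak H^0$ compact; then $K/\mathfrak H^0$ is a totally disconnected amenable closed subgroup normalized by the lattice image and acting trivially on $\B$ — one shows it is compact by noting it normalizes the cocompact $H$-action and hence lies in the (compact) stabilizer of a compact fundamental domain's worth of cells, or by a direct argument that a closed normal subgroup acting trivially on a complex on which the ambient group acts cocompactly and properly-ish is compact. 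This gives the first conclusion: $\mathfrak H$ maps with compact kernel $K$ to the totally disconnected group $\overline{\mathfrak H}$.

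Finally, cocompactness of $\tau$: the composed homomorphism $\mathfrak H\to\overline{\mathfrak H}\le\Aut(\B)$ has compact kernel, and $\overline{\mathfrak H}$ acts on $\B$ with $\tau(H)$ acting cocompactly (that is the bounded-torsion conclusion of Theorem~\ref{theointro:me-cat}); since $\overline{\mathfrak H}$ is a closed subgroup of $\Aut(\B)$ containing the cocompact-acting $\tau(H)$, $\overline{\mathfrak H}$ too acts cocompactly, and then a fundamental-domain count shows $\tau(H)$ is cocompact in $\overline{\mathfrak H}$; pulling back through the compact kernel, $\tau(H)$ is cocompact in $\mathfrak H$. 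The main obstacle I anticipate is the compactness of the kernel $K$: establishing that a closed normal subgroup of $\mathfrak H$ which acts trivially on $\B$ is actually compact requires combining the amenability of vertex stabilizers, the cocompactness of the $H$-action, and a Baire-category / properness argument to rule out a non-compact totally disconnected piece; alternatively one can cite \cite[Theorem~A]{BFS2} for this normalization step, but giving a self-contained argument in the spirit of \cite{Fur-lattice} is the delicate part.
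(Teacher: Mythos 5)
Your overall strategy is the right one and matches the paper's: use the tautness of $G$ relative to the strongly ICC inclusion $G\subseteq\Aut(\B)$ (Lemma~\ref{lemma:self-coupling} and Proposition~\ref{prop:icc}) to produce a continuous homomorphism $\mathfrak{H}\to\Aut(\B)$, and then exploit total disconnectedness of $\Aut(\B)$. But there are two genuine gaps in how you close the argument. The first is the compactness of the kernel $K$, which you yourself flag as "the delicate part" and then attempt by hand. None of the sketched arguments work: amenability of vertex stabilizers is not compactness; cell stabilizers in $\Aut(\B)$ are \emph{not} compact (precisely because $\B$ is not locally finite, as you note at the outset — the stabilizer of a rank~$1$ vertex permutes infinitely many rank~$0$ neighbours); and the structure-theoretic discussion of $\mathfrak H^0$ is not carried out. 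The paper sidesteps all of this: since $\mathfrak{H}$ contains a lattice it is unimodular, so one can apply the locally compact version of the tautness theorem, \cite[Theorem~2.6]{BFS}, which \emph{directly} produces a continuous homomorphism $\iota:\mathfrak{H}\to\Aut(\B)$ with compact kernel and closed image. The compact kernel is part of the cited theorem, not something to be re-derived from the geometry of $\B$.

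The second gap is the cocompactness of $\tau$. Your "fundamental-domain count" — both $\tau(H)$ and $\overline{\mathfrak H}$ act cocompactly on $\B$, hence $\tau(H)$ is cocompact in $\overline{\mathfrak H}$ — implicitly requires the $\overline{\mathfrak H}$-action on $\B$ to be proper, and it is not: again because $\B$ is not locally finite, vertex stabilizers in $\Aut(\B)$ are open but non-compact, so cocompactness of the action on $\B$ gives no control on covolume. The paper argues differently: $H\cap K$ is finite, so by Raghunathan's theorem the image of $H$ in $\mathfrak{H}/K$ is again a lattice; $\mathfrak{H}/K$ is totally disconnected (being a closed subgroup of $\Aut(\B)$ up to the compact kernel); and every lattice embedding of a bounded-torsion countable group into a totally disconnected locally compact group is automatically cocompact (\cite[Corollary~4.11]{BCGM}). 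Cocompactness in $\mathfrak{H}$ itself then follows by pulling a compact set back through the proper projection $\mathfrak{H}\to\mathfrak{H}/K$. Note also that Theorem~\ref{theointro:me-cat}, which you invoke at the start, plays no role in the paper's proof of this statement.
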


In the proof, we will make use of the extension of the notion of measure equivalence to unimodular locally compact second countable groups given by \cite[Definition~1.1]{BFS}.

   \begin{proof}
        The group $\mathfrak{H}$ is measure equivalent to $G$. Let $(\Omega,\mu)$ be a measure equivalence coupling between $G$ and $\mathfrak{H}$. Recall that the inclusion $G\subseteq\Aut(\B)$ is strongly ICC (Proposition~\ref{prop:icc}). By Lemma~\ref{lemma:self-coupling}, any self measure equivalence coupling of $G$ is taut relative to the inclusion $G\subseteq\Aut(\B)$ in the sense of \cite[Definition~1.10]{BFS} (the uniqueness of the tautening map in this definition is ensured by \cite[Lemma~A.8(1)]{BFS}). Notice also that the locally compact second countable group $\mathfrak{H}$ is unimodular because it contains $H$ as a lattice. Therefore we can apply \cite[Theorem~2.6]{BFS} and deduce that there exists a continuous homomorphism $\iota:\mathfrak{H}\to\Aut(\B)$ with compact kernel $K$ and closed image.

The intersection $H\cap K$ is finite, whence a lattice in the compact group $K$. Let $\pi:\mathfrak{H}\to\mathfrak{H}/K$ be the projection map. By \cite[Theorem~1.13]{Rag}   (as restated in \cite[Lemma~3.5]{BFS2}), the $\pi$-image of $H$ in $\mathfrak{H}/K$ is again a lattice. Since $\Aut(\B)$ is totally disconnected, so is its closed subgroup $\mathfrak{H}/K$. Since every lattice embedding of a countable group with bounded finite subgroups in a totally disconnected locally compact group is cocompact (see \cite[Corollary~4.11]{BCGM}), it follows that the image of $H$ in $\mathfrak{H}/K$ is cocompact. Let $K'\subseteq\mathfrak{H}/K$ be a compact set whose $H$-translates cover $\mathfrak{H}/K$. The map $\pi$ is closed, continuous, and it has compact fibers because $K$ is compact, see \cite[Theorem~1.5.7]{AT}. It follows that $\pi^{-1}(K')$ is compact, see \cite[Theorem~3.7.2]{Eng}, and therefore $H$ is cocompact in $\mathfrak{H}$.
\end{proof}

		\subsection{Quasi-actions} 

In this section and the next, we will need the notion of a quasi-action of a group. Given $L\ge 1$ and $A\ge 0$, an \emph{$(L,A)$-quasi-action} of a group $\mathfrak{H}$ on a metric space $(Z,d)$ is a map $\rho:\mathfrak{H}\times Z\to Z$ such that 
\begin{itemize}
    \item for every $h\in \mathfrak{H}$, the map $\rho(h,\cdot):Z\to Z$ is an $(L,A)$-quasi-isometry,
    \item for every $h_1,h_2\in \mathfrak{H}$ and every $z\in Z$, one has $d(\rho(h_1,\rho(h_2,z)),\rho(h_1h_2,z))<A$, and
    \item for every $z\in Z$, one has $d(\rho(e,z),z)<A$.
\end{itemize}
A \emph{quasi-action} of $\mathfrak{H}$ on $(Z,d)$ is an $(L,A)$-quasi-action for some $L\ge 1$ and $A\ge 0$.

If $\mathfrak{H}$ is a topological group, the quasi-action $\rho$ is \emph{proper} if for any $z\in Z$ and $R>0$, the set $\{h\in \mathfrak{H}\mid \rho(h,z)\in B_R(z)\}$ has compact closure. Notice that when $\mathfrak{H}$ is discrete, this amounts to requiring that this set is finite. The quasi-action $\rho$ is \emph{cobounded} if there exist $z\in Z$ and $L>0$ such that $Z$ is contained in the $L$-neighborhood of $\rho(\mathfrak{H}\times \{z\})$. 

Two quasi-actions $\rho,\rho'$ of $\mathfrak{H}$ on the same metric space $Z$ are \emph{equivalent} if \[\sup_{h\in \mathfrak{H}}\sup_{z\in Z}d(\rho(h,z),\rho'(h,z))<+\infty.\] More generally, two quasi-actions $\rho,\rho'$ of $\mathfrak{H}$ on two metric spaces $Z,Z'$ are \emph{quasi-conjugate} if there exists a quasi-isometry $\varphi:Z\to Z'$ such that \[\sup_{h\in \mathfrak{H}}\sup_{z\in Z}d(\varphi\circ\rho(h,z),\rho'(h,\varphi(z)))<+\infty.\]

The study of quasi-actions on right-angled Artin groups relies on the following theorem of the second-named author.

\begin{theo}[{\cite[Theorem~4.18]{Hua}}]\label{theo:qi}
 Let $G$ be a non-cyclic right-angled Artin group with $|\Out(G)|<+\infty$. For every $L\ge 1,A\ge 0$, there exists $D=D(A,L)\ge 0$ such that the following holds. For every $(L,A)$-quasi-isometry $q:G\to G$, there exists a unique flat-preserving bijection $q':G\to G$ such that $d(q(x),q'(x))\le D$ for every $x\in G$. 

 In addition, for every $x\in G$, denoting by $\mathcal{F}_x$ the set of all maximal standard flats that contain $x$, one has \[\{q'(x)\}=\bigcap_{F\in\mathcal{F}_x}q_\ast(F),\] where $q_\ast(F)$ is the unique maximal standard flat at finite Hausdorff distance of $q(F)$.
\end{theo}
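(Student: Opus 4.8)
The plan is to deduce Theorem~\ref{theo:qi} from a rigidity property of top-dimensional quasiflats in $G$, and then to use the hypothesis $|\Out(G)|<+\infty$ to convert the resulting coarse combinatorial data into a genuine flat-preserving bijection. First I would identify $G$ with the $0$-skeleton of the universal cover $X_\Gamma$ of the Salvetti complex, so that standard flats of $G$ are the vertex sets of the combinatorially convex standard-flat subcomplexes of $X_\Gamma$, and an $(L,A)$-quasi-isometry $q$ of $G$ is within bounded distance of a coarse equivalence of $X_\Gamma$. The crucial — and hardest — input is: there exists $D_1=D_1(L,A)$ such that $q$ sends every maximal standard flat $F$ to within Hausdorff distance $D_1$ of a maximal standard flat $q_\ast(F)$, which is unique because two distinct maximal standard flats are at infinite Hausdorff distance. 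This rests on the structure theory of top-dimensional quasiflats in $X_\Gamma$: $F$ is a top-dimensional quasiflat, so is $q(F)$; every top-dimensional quasiflat lies within bounded Hausdorff distance of a finite union of maximal standard flats; and the image $q(F)$ of a single flat cannot ``branch'', forcing that union to consist of one flat. Running the same argument for a quasi-inverse $\bar q$ shows that $q_\ast$ and $\bar q_\ast=(\bar q)_\ast$ are mutually inverse bijections of the set $\mathcal M$ of maximal standard flats.

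Next I would promote these coarse statements to the exact incidence data needed. Since $X_\Gamma$ is locally finite and $G$ acts cocompactly, only finitely many $G$-orbits of configurations of standard flats occur, so one can recognize from $q_\ast$ and $\bar q_\ast$ alone the incidence relations between maximal standard flats; in particular $q_\ast$ induces an automorphism of the poset of standard flats of $G$, equivalently of $\B_\Gamma$. Here the hypothesis $|\Out(G)|<+\infty$ enters exactly as in the construction of $\Theta$ in Section~\ref{sec:autos} (via \cite[Lemmas~4.12 and~4.17]{Hua}): the intersection of all maximal standard flats through a point of $G$ is that single point. I would then \emph{define} $q'(x)=\bigcap_{F\in\mathcal F_x}q_\ast(F)$. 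Since the flats in $\mathcal F_x$ all contain $x$, their $q_\ast$-images all pass within $D_1$ of $q(x)$, so by a Helly-type property for convex subcomplexes of $X_\Gamma$ together with the finiteness of configurations this intersection is a nonempty standard flat of diameter at most some $D=D(L,A)$, lying within $D$ of $q(x)$; and since $q_\ast$ is an incidence-preserving bijection of $\mathcal M$, the set $\{q_\ast(F):F\in\mathcal F_x\}$ is the full collection of maximal standard flats through that standard flat, which by the $\Out$-finiteness fact forces it to be a single point $\{q'(x)\}$. This simultaneously yields the displayed formula and the bound $d(q(x),q'(x))\le D$.

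Finally I would check flat-preservation, bijectivity and uniqueness, which are now bookkeeping. Running the construction for $\bar q$ gives $\bar q'$; the composite $\bar q'\circ q'$ is within bounded distance of the identity and sends every maximal standard flat to itself, hence fixes $\bigcap_{F\ni x}F=\{x\}$ for every $x$, so $\bar q'\circ q'=\mathrm{id}$ and likewise $q'\circ\bar q'=\mathrm{id}$; thus $q'$ is a bijection with inverse $\bar q'$. Writing an arbitrary standard flat as the intersection of the maximal standard flats containing it, and using that $q_\ast,\bar q_\ast$ bijectively preserve incidence, shows that $q'$ and $(q')^{-1}$ send standard flats bijectively onto standard flats, i.e.\ $q'\in\Bij_{\fp}(G)$. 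For uniqueness, if $q''\in\Bij_{\fp}(G)$ is also within bounded distance of $q$, then $d(q'(x),q''(x))$ is bounded, so for every maximal standard flat $F$ the maximal standard flats $q'(F)$ and $q''(F)$ are at finite Hausdorff distance, hence equal; since each point is the intersection of the maximal standard flats through it, $q'=q''$. The main obstacle is the first step, the coarse preservation of maximal standard flats: this is where the structure of top-dimensional quasiflats and the hypothesis on $\Out(G)$ do the real work, while everything afterwards is combinatorics with the poset of standard flats.
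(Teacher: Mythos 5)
This theorem is quoted verbatim from \cite{Hua} and the paper offers no proof of its own beyond pointing to \cite[Lemma~4.12 and Equation~4.13]{Hua} for the displayed formula; your sketch faithfully reconstructs the architecture of Huang's argument — quasiflat rigidity yields coarse preservation of maximal standard flats with uniform constants, and the hypothesis $|\Out(G)|<+\infty$ (each point of $G$ being the intersection of the maximal standard flats through it, exactly as used for the map $\Theta$ in Section~\ref{sec:autos}) upgrades this to the flat-preserving bijection given by the displayed intersection formula. The only caveat is that the genuinely hard inputs — the top-dimensional quasiflats theorem and the non-branching of the image of a single flat — are invoked rather than proved, which matches how the paper itself treats the result.
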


We refer to \cite[Lemma 4.12 and Equation 4.13]{Hua} for the ``in addition'' statement of the above theorem. We record the following corollary of Theorem~\ref{theo:qi}.

\begin{cor}
\label{cor:equivalent}
Let $G$ be a non-cyclic right-angled Artin group with $|\Out(G)|<+\infty$, and let $\mathfrak{H}$ be a locally compact second countable group. Then any quasi-action $\rho:\mathfrak{H}\times G\to G$ is equivalent to a unique action $\alpha:\mathfrak{H}\times G\to G$ by flat-preserving bijections.  Moreover, if $\rho$ is a measurable map, then $\alpha$ is a continuous map.
 \end{cor}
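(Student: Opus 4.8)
\textbf{Proof proposal for Corollary~\ref{cor:equivalent}.}
The plan is to apply Theorem~\ref{theo:qi} pointwise in the group variable and then check that the resulting family of flat-preserving bijections is itself an action and is continuous. First I would fix $(L,A)$ so that $\rho$ is an $(L,A)$-quasi-action; then for each $h\in\mathfrak H$ the map $\rho(h,\cdot):G\to G$ is an $(L,A)$-quasi-isometry, so Theorem~\ref{theo:qi} produces a unique flat-preserving bijection $\alpha(h,\cdot)=:q'_h$ with $d(\rho(h,x),\alpha(h,x))\le D$ for all $x\in G$, where $D=D(A,L)$ is independent of $h$. This immediately gives that $\alpha$ is \emph{equivalent} to $\rho$ in the sense defined above, with the uniform bound $D$. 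Uniqueness of $\alpha$ as an equivalent action follows from the uniqueness clause in Theorem~\ref{theo:qi}: any two flat-preserving bijections at bounded distance from $\rho(h,\cdot)$ coincide (a flat-preserving bijection at bounded distance from the identity is the identity, since it must fix every standard line setwise and, being flat-preserving and a bounded perturbation of the identity on each line $\cong\mathbb Z$, must fix it pointwise — alternatively invoke the uniqueness in Theorem~\ref{theo:qi} applied to $q=\mathrm{id}$).

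Next I would verify that $\alpha$ is an action, i.e.\ $\alpha(h_1h_2,\cdot)=\alpha(h_1,\cdot)\circ\alpha(h_2,\cdot)$ and $\alpha(e,\cdot)=\mathrm{id}$. For this, observe that both sides are flat-preserving bijections: the left side by construction, and the composition of two flat-preserving bijections is again flat-preserving. Moreover, using the quasi-action axioms and the uniform bound $D$, for every $x\in G$,
\[
d\bigl(\alpha(h_1,\alpha(h_2,x)),\rho(h_1,\rho(h_2,x))\bigr)\le D + L\cdot D + A,
\]
since $\alpha(h_1,\cdot)$ is within $D$ of the $(L,A)$-quasi-isometry $\rho(h_1,\cdot)$ and $\alpha(h_2,x)$ is within $D$ of $\rho(h_2,x)$; combining with $d(\rho(h_1,\rho(h_2,x)),\rho(h_1h_2,x))<A$ and $d(\rho(h_1h_2,x),\alpha(h_1h_2,x))\le D$, we see $\alpha(h_1,\alpha(h_2,\cdot))$ and $\alpha(h_1h_2,\cdot)$ are two flat-preserving bijections at uniformly bounded distance from one another; by the uniqueness argument of the previous paragraph they are equal. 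The identity axiom is handled the same way, comparing $\alpha(e,\cdot)$ to $\mathrm{id}$ via $d(\rho(e,z),z)<A$.

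Finally, for continuity of $\alpha$ when $\rho$ is measurable, I would use the ``in addition'' formula of Theorem~\ref{theo:qi}: for $x\in G$ and $h\in\mathfrak H$, $\alpha(h,x)$ is the unique point of $G$ in $\bigcap_{F\in\mathcal F_x}\rho(h,\cdot)_\ast(F)$, where $\mathcal F_x$ is the finite set of maximal standard flats through $x$ and $\rho(h,\cdot)_\ast(F)$ is the unique maximal standard flat within finite Hausdorff distance of $\rho(h,F)$. Since $\mathcal F_x$ is finite and each $\rho(h,\cdot)_\ast(F)$ depends measurably on $h$ (the flat $\rho(h,\cdot)_\ast(F)$ is the one containing $\rho(h,y)$ for $y$ ranging over a fixed finite subset of $F$ large enough to determine the flat, up to the uniform error $D$), the map $h\mapsto\alpha(h,x)$ is measurable for each fixed $x$, hence $\alpha:\mathfrak H\to\Bij_{\fp}(G)$ is a measurable homomorphism into a Polish group; by automatic continuity of measurable homomorphisms between Polish groups (e.g.\ \cite[Theorem~9.10]{Kec}), $\alpha$ is continuous. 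The main obstacle I anticipate is being careful with the measurability/determinacy step: one must pin down $\rho(h,\cdot)_\ast(F)$ as a genuinely Borel function of $h$, which requires noting that a maximal standard flat is determined by finitely many of its points together with the knowledge that they lie at distance $\le D$ from $\rho(h,\cdot)$ evaluated on a finite set — a routine but slightly delicate bookkeeping argument.
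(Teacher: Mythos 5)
Your proposal is correct and follows essentially the same route as the paper: existence and uniqueness of $\alpha$ via Theorem~\ref{theo:qi} (with the action axioms checked by the uniqueness clause), measurability of $h\mapsto\alpha(h,\cdot)$ via the maximal-standard-flat characterization of $\alpha(h,x)$, and automatic continuity of a measurable homomorphism into the Polish group $\Bij_{\fp}(G)$. The one imprecise point is your parenthetical justification that a flat-preserving bijection at bounded distance from the identity is the identity: the ``bounded perturbation of each line $\cong\mathbb{Z}$ must fix it pointwise'' step fails as stated (a translation is a bounded perturbation), but the claim itself is true and is better deduced from the facts that distinct maximal standard flats are at infinite Hausdorff distance (since $\Delta^\perp=\emptyset$ for a maximal clique $\Delta$) and that the maximal standard flats through a point intersect only in that point when $|\Out(G)|<+\infty$ -- the same characterization you already invoke for measurability -- so this is a cosmetic repair rather than a gap.
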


\begin{proof}
  The existence and uniqueness of $\alpha$ follow from Theorem~\ref{theo:qi}. Its measurability follows from the measurability of $\rho$ and the fact that $\alpha(h,g)=g'$ if and only if there exists $M\in\mathbb{N}$ such that for every maximal standard flat $F$ containing $g$, there exists a maximal standard flat $F'$ containing $g'$ such that for every $x\in F$, there exists $x'\in F'$ such that $d(\rho(h,x),x')\le M$. Equipped with the compact-open topology, the group $\Bij_{\mathrm{FP}}(G)$ is secound-countable. So the measurable map $\mathfrak{H}\to\Bij_{\mathrm{FP}}(G)$ induced by $\alpha$ is in fact automatically continuous \cite[Theorem~B.3]{Zim}. Therefore $\alpha$ is continuous.
\end{proof}

\begin{prop}\label{prop:q-action}
Let $G$ be a non-cyclic right-angled Artin group with $|\Out(G)|<+\infty$. Let $H$ be a finitely generated group that is quasi-isometric to $G$, and let $\mathfrak{H}$ be a compactly generated locally compact second countable group which is quasi-isometric to $G$.


Then $\mathfrak{H}$ has a proper, cobounded, continuous  action on $G$ by flat-preserving bijective quasi-isometries with uniform constants, and a proper, cocompact, continuous action on a locally finite $\mathrm{CAT}(0)$ cube complex $Y$ which is quasi-isometric to $G$.
\end{prop}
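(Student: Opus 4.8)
The plan is to upgrade the quasi-isometry between $H$ and $G$ to a proper cobounded quasi-action of $\mathfrak{H}$ on $G$, to straighten this quasi-action via Corollary~\ref{cor:equivalent} into a genuine action by flat-preserving bijective quasi-isometries, and finally to feed that action into Theorem~\ref{theo:QI}, in a version for locally compact acting groups. For the first part, note that since $H$ is finitely generated and cocompact in $\mathfrak{H}$, the group $\mathfrak{H}$ is compactly generated; I would fix a proper left-invariant metric $d_{\mathfrak{H}}$ on $\mathfrak{H}$, for which the left-translation action of $\mathfrak{H}$ on itself is proper, transitive and isometric and $(\mathfrak{H},d_{\mathfrak{H}})$ is quasi-isometric to $H$, hence to $G$, by the Milnor--\v{S}varc lemma. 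Choosing a relatively compact Borel fundamental domain for the left $H$-action on $\mathfrak{H}$ and composing the resulting Borel quasi-isometry $\mathfrak{H}\to H$ with a quasi-isometry $H\to G$ gives a Borel quasi-isometry $f\colon\mathfrak{H}\to G$; a quasi-inverse $\bar f\colon G\to\mathfrak{H}$ is automatically Borel because $G$ is countable. Then $\rho(h,g):=f\big(h\,\bar f(g)\big)$ defines a Borel quasi-action of $\mathfrak{H}$ on $G$ by quasi-isometries with uniform constants, proper (from properness of the translation action together with the coarse properness of $f$ and $\bar f$) and cobounded (from the coarse density of $f(\mathfrak{H})$ in $G$).

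By Corollary~\ref{cor:equivalent}, $\rho$ is equivalent to a unique continuous action $\alpha\colon\mathfrak{H}\times G\to G$ by flat-preserving bijections; since $\alpha(h,\cdot)$ stays within a uniform bounded distance of $\rho(h,\cdot)$, each $\alpha(h,\cdot)$ is a quasi-isometry with constants independent of $h$, and $\alpha$ inherits properness and coboundedness from $\rho$. This already yields the first assertion. Moreover, properness of $\alpha$ forces every point stabilizer in $\mathfrak{H}$ to be relatively compact, and being also clopen (the preimage of a point under a continuous map to a discrete set) it is compact; and coboundedness combined with the uniform quasi-isometry bound gives, by the usual argument --- any $g\in G$ lies in the $\mathfrak{H}$-orbit of $\alpha(h^{-1},g)$ for a suitable $h$, and this point lies in a fixed finite ball around the basepoint --- that $\alpha$ has only finitely many orbits on the set $G$ of rank $0$ vertices of $\B$.

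It then remains to verify the factor-action hypothesis of Theorem~\ref{theo:QI} and to apply it. The action $\alpha$ induces an action of $\mathfrak{H}$ on the extension graph $\Gamma^e$; fix $\sfv\in V\Gamma^e$. The parallel set $P_\sfv=gG_{\st(v)}$ is a coset of a standard parabolic subgroup, hence convex in the Cayley graph of $G$, so for $h\in\mathfrak{H}_\sfv$ the bijection $\alpha(h,\cdot)$ restricts to a self-quasi-isometry of $P_\sfv$ with the same uniform constants; as this restriction respects the product decomposition $P_\sfv\cong Z_\sfv\times\call_\sfv$, which is quasi-isometric to the corresponding $\ell^1$-product, its factor $\alpha_\sfv(h)$ is a self-quasi-isometry of $Z_\sfv$ with constants independent of $h$, which under the natural identification $Z_\sfv\cong\mathbb{Z}$ says exactly that the factor action $\alpha_\sfv\colon\mathfrak{H}_\sfv\actson Z_\sfv$ is conjugate to an action on $\mathbb{Z}$ by uniform quasi-isometries. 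Thus $\alpha$ satisfies the hypotheses of Theorem~\ref{theo:QI}, with ``compact'' in place of ``finite'' for stabilizers, and the proof of that theorem applies essentially verbatim: it produces a uniformly locally finite blow-up datum compatible with $\alpha$, a uniformly (hence ordinarily) locally finite $\mathrm{CAT}(0)$ cube complex $Y$ quasi-isometric to $G$ (Lemma~\ref{lem:qi}), a continuous cellular action $\alpha'\colon\mathfrak{H}\actson Y$ and an $\alpha'$-equivariant map $Y\to\B$ (Lemma~\ref{lem:action}); the cocompactness argument is unchanged, and in the properness argument one only needs to note that an extension of a finite group by a compact group is compact, so that the induction on the rank of vertices still yields compact vertex stabilizers and hence a proper action.

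The step I expect to require the most care is precisely this last one --- checking that the proof of Theorem~\ref{theo:QI} really does go through for a locally compact, rather than discrete, acting group; the remaining ingredients are a routine combination of the Milnor--\v{S}varc lemma, Corollary~\ref{cor:equivalent}, and the product structure of parallel sets of standard lines.
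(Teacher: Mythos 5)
Your proposal follows essentially the same route as the paper: build a proper cobounded Borel quasi-action of $\mathfrak{H}$ on $G$ (your Milnor--\v{S}varc/fundamental-domain construction is the same device as the paper's appeal to \cite[Lemma~28]{MSW}), straighten it via Corollary~\ref{cor:equivalent}, and then cubulate; the only difference is that the paper invokes \cite[Theorem~6.2]{HK} for the second assertion, whereas you rerun the proof of Theorem~\ref{theo:QI} for the locally compact group, which is morally the same argument. The one detail you gloss over is the continuity of the resulting homomorphism $\mathfrak{H}\to\Aut(Y)$ --- Lemma~\ref{lem:action} as stated says nothing about topology, and the paper obtains continuity by first checking measurability of the action on rank~$0$ vertices and then applying Mackey's automatic continuity theorem (\cite[Theorem~B.3]{Zim}); you should add that step, but it does not affect the correctness of the overall argument.
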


\begin{proof}
Let $X$ be a Cayley-Abels graph of $\mathfrak H$, equipped with the path metric with edge length $1$, and let $\varphi:X\to G$ be a quasi-isometry, with its quasi-inverse given by $\psi:G\to X$. Then the proper and cocompact isometric action $\rho:\mathfrak H\times X\to X$ gives a proper and cobounded quasi-action $\rho':\mathfrak H\times G\to G$ with $\rho'(\cdot,g)=\varphi(\rho(\cdot,\psi(g)))$.


By Corollary~\ref{cor:equivalent}, there exists a unique continuous flat-preserving action $\alpha:\mathfrak{H}\to\Bij_{\mathrm{FP}}(G)$ such that there exists $D\ge 0$ such that for every $h\in\mathfrak{H}$ and every $g\in G$, one has $d(\rho'(h,g),\alpha(h)(g))\le D$. In particular the action $\alpha$ is by quasi-isometries, and it is proper and cobounded. This proves the first part of the proposition.

The second part of the proposition then follows from \cite[Theorem~6.2]{HK}, and $Y$ arises as a blow-up building as in Section~\ref{sec:qi-criterion}. As in the proof of \cite[Theorem~6.2]{HK}, the properness of the $\mathfrak{H}$-action on $Y$ follows from the properness of  its action on $G$, and the fact that there is an equivariant quasi-isometry from $G$ to the set of rank $0$ vertices of $Y$ (see the paragraphs after the proof of the claim in \cite[p.~587]{HK}). The coboundedness (in fact cocompactness as $Y$ is locally finite) of the action follows from the same argument. The continuity of the action follows from \cite[Lemma 19.29]{Cor-quasi}.
\end{proof}

\subsection{Lattice embeddings under an integrability assumption}

\begin{theo}\label{theo:lattice-embedding}
Let $G$ be a non-cyclic right-angled Artin group with $|\Out(G)|<+\infty$, and let $H$ be a countable group with bounded finite subgroups. Assume that there exists an $(L^1,L^0)$-measure equivalence coupling from $H$ to $G$. Let $\mathfrak{H}$ be a locally compact second countable group in which $H$ embeds as a lattice.

Then there exists a uniformly locally finite $\mathrm{CAT}(0)$ cube complex $Y$ quasi-isometric to $G$, and a continuous homomorphism $\mathfrak{H}\to\Aut(Y)$ with compact kernel and cocompact image. Such $Y$ can be chosen to be a blow-up building in the sense of Section~\ref{subsec:blowup building}.
\end{theo}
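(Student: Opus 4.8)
The plan is to combine Theorem~\ref{theointro:main} with the results on quasi-actions established earlier in this section, following the same strategy as in the proof of Proposition~\ref{prop:q-action}. First I would invoke Theorem~\ref{theointro:main}: since there is an $(L^1,L^0)$-measure equivalence coupling from $H$ to $G$, the group $H$ is finitely generated and quasi-isometric to $G$. Fix a quasi-isometry $\varphi:H\to G$. As $H$ is a lattice in $\mathfrak{H}$, we first need to know the lattice embedding is cocompact: this is exactly the content of Theorem~\ref{theo:cocompact} (note $G$ is non-cyclic and $|\Out(G)|<+\infty$, and $H$ has bounded torsion and is measure equivalent to $G$). So $H$ is a cocompact lattice in $\mathfrak{H}$, and moreover by Theorem~\ref{theo:cocompact} we may as well assume $\mathfrak{H}$ is totally disconnected up to a compact kernel, though we will not even need this refinement since the next step produces a compact kernel directly.

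Next I would apply Proposition~\ref{prop:q-action} to the pair $(H,\mathfrak{H})$: since $H$ is finitely generated, quasi-isometric to $G$, and a cocompact lattice in $\mathfrak{H}$, the proposition yields a proper, cocompact, continuous action of $\mathfrak{H}$ on a locally finite $\mathrm{CAT}(0)$ cube complex $Y$ which is quasi-isometric to $G$, and moreover $Y$ arises as a blow-up building in the sense of Section~\ref{subsec:blowup building} (this is how $Y$ is produced in the proof of Proposition~\ref{prop:q-action}, via \cite[Theorem~6.2]{HK}). Concretely: Proposition~\ref{prop:q-action} first gives a proper, cobounded, continuous action $\alpha:\mathfrak{H}\to\Bij_{\mathrm{FP}}(G)$ by flat-preserving bijective quasi-isometries with uniform constants (obtained by straightening, via Corollary~\ref{cor:equivalent}, the Milnor--Schwarz quasi-action coming from \cite[Lemma~28]{MSW}), and then \cite[Theorem~6.2]{HK} upgrades this to a proper, cocompact, continuous action on the blow-up building $Y$, with the map $G\to Y^{(0)}_{\mathrm{rank}\ 0}$ an $\mathfrak{H}$-equivariant quasi-isometry; in particular $Y$ is quasi-isometric to $G$ and (being locally finite and cocompact under the action) uniformly locally finite.

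It remains to read off the statement about the kernel and image of the homomorphism $\mathfrak{H}\to\Aut(Y)$. The continuity of this homomorphism is part of the conclusion of Proposition~\ref{prop:q-action}. Since the $\mathfrak{H}$-action on $Y$ is proper, the kernel of $\mathfrak{H}\to\Aut(Y)$ acts trivially on $Y$, hence is contained in the (compact) stabilizer of any vertex, and being closed it is compact. Since the action is cocompact, the image is cocompact in $\Aut(Y)$. This gives all the assertions of the theorem. The only genuinely substantive input is Theorem~\ref{theointro:main} (to get that $H$ is finitely generated and quasi-isometric to $G$), and the rest is a bookkeeping assembly of Theorem~\ref{theo:cocompact} and Proposition~\ref{prop:q-action}; I do not anticipate a serious obstacle, the main care being to check that the hypotheses of Proposition~\ref{prop:q-action} (finite generation of $H$, quasi-isometry to $G$, cocompactness of the lattice embedding) are all in place, which is precisely what the two preceding results supply.
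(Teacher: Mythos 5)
Your proposal is correct and follows essentially the same route as the paper: invoke Theorem~\ref{theointro:main} for finite generation and quasi-isometry to $G$, Theorem~\ref{theo:cocompact} for cocompactness of the lattice embedding, then Proposition~\ref{prop:q-action} to produce the blow-up building $Y$ with a proper cocompact continuous $\mathfrak{H}$-action, reading off compactness of the kernel from properness and cocompactness of the image from cocompactness of the action. No gaps.
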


\begin{proof} 
 Since there is an $(L^1,L^0)$-measure equivalence coupling from $H$ to $G$, Theorem~\ref{theointro:main} implies that $H$ is finitely generated and quasi-isometric to $G$. Since $H$ is a cocompact lattice in $\mathfrak{H}$ by Theorem~\ref{theo:cocompact}, it follows from Proposition~\ref{prop:q-action} that there exists a continuous proper cobounded action by cubical automorphisms $\alpha:\mathfrak{H}\to\Aut(Y)$ on a $\mathrm{CAT}(0)$ cube complex $Y$ that is quasi-isometric to $G$. The kernel of $\alpha$ is then a closed subgroup of $\mathfrak{H}$, in fact compact by properness of the action. And the cocompactness of the image of $\mathfrak{H}$ in $\Aut(Y)$ follows from the cocompactness of the $\mathfrak{H}$-action on $Y$.  
\end{proof}

   \section{Lack of virtual locally compact model for products}\label{sec:non-rigidity}

In this section, we prove Theorem~\ref{theointro:nonrigidity} from the introduction. More precisely, we start with a right-angled Artin group $G_\Lambda$ with $|\Out(G_\Lambda)|<+\infty$, which splits as a direct product $G_\Lambda=G_{\Gamma_1}\times G_{\Gamma_2}$. We construct a sequence of finite-index subgroups $G_{\Lambda_n}=G_{\Gamma_{1,n}}\times G_{\Gamma_{2,n}}$, and cocompact torsion-free lattices $U_n$ in the automorphism group $\Aut(X_{2n,1}\times X_{2n,2})$ of the universal cover of the Salvetti complex of $\Lambda_{2n}$, such that the groups $U_n$ do not embed as lattices in a common locally compact group $\mathfrak{G}$, even up to passing to finite-index subgroups (see Theorem~\ref{theo:final}).

There are several building blocks in our construction. We start from cocompact lattices in the automorphism group of a product of two trees, coming from the celebrated construction of Burger--Mozes \cite{BM,BM2} -- this will be reviewed in Section~\ref{sec:burger-mozes} below. We will then use two constructions, presented in Section~\ref{sec:extension}, which will allow us to extend the action on a product of trees, to a cocompact action on the product $X_{2n,1}\times X_{2n,2}$. The construction of the groups $U_n$ is completed in Section~\ref{sec:construction}. Checking that the groups $U_n$ do not admit any common lattice embedding, even virtually, will require a very fine analysis on factor actions (which were already key in the proof of the main theorem of this paper). This analysis is carried in Section~\ref{sec:factor-action-more}, and the proof of Theorem~\ref{theointro:nonrigidity} is completed in Section~\ref{sec:end}.

\subsection{Two arguments for extending actions to a bigger complex}\label{sec:extension}

We make the following definition.

\begin{de}[Even action on a tree]\label{de:even}
An action of a group $H$ on a tree $T$ is \emph{even} if it preserves the colours of vertices in any bipartite colouring of the vertex set $VT$.
\end{de}

For each $n\in\mathbb{N}$, let $T_n$ be the regular tree of valence $n$, with each edge of length $2$. 

Let $J_n$ be the Cayley graph of a rank $n$ free group with respect to a free generating set $X$, with the usual orientation and labeling of edges by elements of $X$, where edges have length $1$. Notice that $J_n$ is a regular tree of valence $2n$. Let $\tilde{X}$ be a finite set in bijection with $X$. We label edges of $J_n\times J_n$ by elements of $X\cup \tilde{X}$: edges with a nondegenerate projection to the first (resp.\ second) factor are labeled by elements of $X$ (resp.\ $\tilde{X}$). A \emph{standard line} in $J_n$ is a line made of edges of the same label. A \emph{standard flat} in $J_n\times J_n$ is a product of two standard lines, one in each factor.

An action $\alpha$ of a group $H$ on a product $T\times T'$ of two trees is \emph{factor-preserving} if there exist actions $\beta:H\to\Aut(T)$ and $\beta':H\to\Aut(T')$ such that $\alpha(h)=(\beta(h),\beta'(h))$ for every $h\in H$ -- in particular $\alpha$ does not swap the two factors. The actions $\beta$ and $\beta'$ are called the \emph{factor actions} of $\alpha$.

\begin{lemma}
	\label{lem:extend1}
	Let $\alpha:H\actson T_n\times T_n$ be a free, cocompact, factor-preserving action of a group $H$, whose factor actions are even.

Then there exist an isometric embedding $\theta:T_n\times T_n\to J_{2n}\times J_{2n}$, a group $H'$ with a factor-preserving, free and cocompact action $\alpha':H'\to\Aut(J_{2n}\times J_{2n})$ which preserves the orientation of edges and sends standard flats to standard flats, and an injective homomorphism $\varphi:H\to H'$, such that $\theta$ is equivariant with respect to $\alpha,\alpha'$ and $\varphi$. 

In addition $\theta$ can be chosen so that for any $x\in V(T_n\times T_n)$, and any two distinct edges $e_1,e_2\in E(T_n\times T_n)$ containing $x$, the labels of $\theta(e_1)$ and $\theta(e_2)$ are different.
\end{lemma}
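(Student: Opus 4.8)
The plan is to build the embedding $\theta$ by encoding each even action $\beta:H\actson T_n$ into an orientation- and label-preserving action on the $2n$-valent tree $J_{2n}$, and then take the product. First I would recall that since the action $\beta$ is even, it preserves the bipartition $VT_n=V_0\sqcup V_1$. Fix a vertex $v\in V_0$ and enumerate the $n$ edges at $v$ by $\{1,\dots,n\}$; I want to build an $H$-equivariant edge-colouring of $T_n$ by the set $X=\{x_1,\dots,x_n\}$ (on the "$J_{2n}$ side" each colour $x_i$ will correspond to a pair of oppositely-oriented edges). The key point is that an even automorphism $\alpha$ of $T_n$ acts on the local structure at each vertex by a permutation of the $n$ incident edges, and to get an \emph{equivariant} colouring one must absorb these permutations into a bigger symmetry group. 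Concretely, I would use a standard trick (as in Burger--Mozes / Wise's work on cube complexes): replace $T_n$ by the tree $J_{2n}$ which is the universal cover of a graph with one vertex and $n$ loops, i.e.\ the Cayley graph of $F_n$ with respect to $X$; there is a surjection $F_n * (\text{something finite})\to$ the local group data of the $H$-action that lets one lift. Rather than spell this out, the cleanest route is: since the $H$-action on $T_n$ is free and cocompact, $T_n/H$ is a finite graph $\Theta$, and $T_n\to\Theta$ is a covering with deck group $H$; choose a finite connected graph $\Theta'$ with a label-preserving immersion (in the sense of Stallings) or rather a finite covering $\Theta'\to R_n$ where $R_n$ is the rose with $n$ petals, such that $\Theta$ is dominated appropriately — this is exactly the kind of "canonical completion/retraction" argument. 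Then $J_{2n}$ is the universal cover of $R_n$, and the composite covering gives an $H$-equivariant label- and orientation-preserving embedding $T_n\hookrightarrow J_{2n}$, at the cost of enlarging $H$ to the deck group $H'$ of $J_{2n}\to\Theta'$, which contains $H$ as a subgroup via the covering $J_{2n}\to T_n\to\Theta'$...

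Let me restate the steps more carefully. Step 1: use that $\beta$ even implies $T_n/H =: \Theta$ is a finite graph and $T_n\to \Theta$ is a Galois covering with group $H$; since $\beta$ is even, $\Theta$ is bipartite. Step 2: construct a finite graph $\Theta_0$, a graph morphism $\Theta \to R_n$ (the rose with $n$ petals with oriented, labelled edges) — this requires orienting and labelling edges of $\Theta$ consistently, which is possible after passing to a suitable model; more precisely, I'd take a spanning tree of $\Theta$ and use freeness to get that the orientations/labels can be assigned so that distinct edges at a vertex get distinct labels, which is the "In addition" clause. If $\Theta\to R_n$ fails to exist directly one replaces $\Theta$ by a finite cover $\hat\Theta$ (corresponding to a finite-index subgroup, harmless for cocompactness and for the injection $H\hookrightarrow H'$) on which such a labelling does exist — this is where Stallings folds / completion comes in. Step 3: Let $H' = \pi_1(\Theta_0)$ acting on its universal cover $J_{2n}$ (universal cover of $R_n$, the $2n$-valent tree, with the induced orientation and $X\cup\tilde X$-labelling); the covering $J_{2n}\to T_n$ compatible with $\Theta_0\to\Theta$ realizes $\theta$ as an $H$-equivariant isometric embedding, $\varphi: H\hookrightarrow H'$ being the induced subgroup inclusion. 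Step 4: Do this for both factors (independently, with $X$ for the first, $\tilde X$ for the second), take products; the product action $\alpha' = (\beta'_1,\beta'_2)$ of $H'$ is free (since $H'$ acts freely on $J_{2n}$ in each factor — deck transformations), cocompact (finite quotient $\Theta_0\times\Theta_0$), factor-preserving by construction, preserves orientation and labels hence sends standard lines to standard lines in each factor, hence standard flats to standard flats.

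Step 5 (the "in addition" clause): in constructing the labelling $\Theta_0\to R_n$ I must ensure the morphism is \emph{locally injective} (an immersion) — i.e.\ no two edges at a vertex of $\Theta_0$ have the same label with the same orientation — so that in the universal cover $J_{2n}$, and hence in $\theta(T_n)$, distinct edges at a vertex carry distinct labels. This is automatic if $\Theta_0\to R_n$ is a covering map (which it is, if I take $\Theta_0$ to be an actual finite cover of $R_n$), since covering maps are local isomorphisms; the content is just to arrange that $\Theta$ (or a finite cover of it) embeds into such a $\Theta_0$ as a subgraph, or that $\Theta$ itself already immerses to $R_n$, which one gets by: pick any orientation and $n$-labelling of the (finitely many) edges of $\Theta$ making distinct edges at each vertex distinct — possible because $\Theta$ is a finite graph with vertex-degree $n$, so at each vertex the $n$ half-edges can be bijectively $n$-labelled, and a global consistent choice exists after passing to the double cover if there's a parity obstruction, again harmless.

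\textbf{Main obstacle.} The hard part will be Step 2–3, namely producing a finite "completion" $\Theta_0\to R_n$ that is a genuine covering (so that passing to universal covers gives the clean equivariant picture) while (a) keeping $\theta$ an \emph{isometric} embedding — this needs edges of $T_n$ to have length $2$ and edges of $J_{2n}$ length $1$, so each edge of $T_n$ must map to a path of length $2$ in $J_{2n}$; this is why $T_n$ has edges of length $2$ and is the regular tree of valence $n$ while $J_{2n}$ has valence $2n$: an edge of $\Theta$ gets \emph{subdivided} and its two half-edges receive labels from $X$ and formal inverses, so effectively $\Theta$ with subdivided edges immerses into $R_n$ — matching the valence $n\mapsto 2n$ bookkeeping. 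I must check this subdivision is compatible with the deck action (it is, since subdivision is canonical) and that the resulting map on universal covers is an isometry onto its image (it is, as a composition of subdivision and covering-space maps). And (b) ensuring $\varphi:H\to H'$ is injective: this follows because $H = \pi_1(\Theta)$ (after subdivision, $\pi_1$ unchanged) includes into $H' = \pi_1(\Theta_0)$ via $\pi_1$ of the covering/immersion $\Theta\hookrightarrow\Theta_0$, and $\pi_1$ of an immersion of graphs is injective by Stallings. So the genuine work is assembling these classical graph-covering facts into the stated equivariant conclusion; once the combinatorial model is set up, freeness, cocompactness, factor-preservation, orientation- and flat-preservation are all immediate from the construction.
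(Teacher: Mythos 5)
There is a fatal gap at the very first step: you assert that the factor action $\beta:H\actson T_n$ is free, so that $\Theta=T_n/H$ is a finite graph with $T_n\to\Theta$ a Galois covering and $H=\pi_1(\Theta)$. But the hypothesis is only that the action on the \emph{product} $T_n\times T_n$ is free; the factor actions are very far from free in the intended examples (for Burger--Mozes lattices the vertex stabilizers of a factor action act with large orbits on the incident edges -- this is exactly the ``local action'' exploited in Lemma~\ref{lemma:simple}). Once freeness of the factors fails, the whole covering-space framework collapses: $T_n\to T_n/H$ is not a covering, $H$ is not the fundamental group of a finite graph, and there is no covering $J_{2n}\to T_n$ (the valences do not even match; the relation is an embedding after subdivision, not a covering). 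The error also shows up in the conclusion your construction would produce: your $H'$ is $\pi_1$ of a finite graph in each factor, hence $H'$ is a (product of) free group(s), and an injective $\varphi:H\to H'$ would force $H$ to embed in a product of two free groups. This is impossible for any $H$ acting freely cocompactly on $T_n\times T_n$ with nontrivial factors (such $H$ contains $\mathbb{Z}^2$ and, in the relevant cases, is a non-residually-finite simple group), so the approach cannot be repaired by minor adjustments.

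The paper avoids quotienting by $H$ altogether. It labels and orients the barycentric subdivision $T'_n$ directly: edges are oriented toward the midpoints, and labelled $a_i$ or $a'_i$ according to the colour (white/black) of their source vertex and a fixed proper edge-colouring of $T_n$ by $\{s_1,\dots,s_n\}$. The $H$-action preserves the orientations and the vertex colours (this is where evenness is used) but \emph{not} the labels. The Haglund--Wise canonical completion is then performed upstairs on $T'_n$ itself, adding reverse edges and loops whose number and label-set depend only on the colour of the vertex; since colours are $H$-invariant, the $H$-action extends to the completed complex $K_n$, sending standard circles to standard circles. Only after forming $K_n\times K_n$ does one pass to the universal cover $J_{2n}\times J_{2n}$ and define $H'$ as the group of \emph{all lifts} of elements of $H$, i.e.\ an extension $1\to\pi_1(K_n\times K_n)\to H'\to H\to 1$ -- in particular $H'$ surjects onto $H$ rather than containing it as a subgroup of a free product, and $\varphi$ is the splitting over the invariant simply connected subcomplex $\theta(T'_n\times T'_n)$. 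If you want to keep a quotient-based picture, you would have to work with the quotient of the free product action on $T_n\times T_n$ (a square complex, not a graph), which is essentially what the Burger--Mozes formalism does; the factor-by-factor $\pi_1$ argument cannot work.
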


\begin{proof}
	Let $S=\{s_1,\dots,s_n\}$ be a finite set of cardinality $n$.  We label each edge of $T_n$ by a letter in $S$, in such a way that two edges incident on the same vertex never have the same label. 
 We fix a bipartition of $T_n$ into \emph{black} and \emph{white} vertices.
	Let $T'_n$ be the barycentric subdivision of $T_n$. Vertices of $T'_n$ are of three types: black vertices (in $V(T_n)$), white vertices (in $V(T_n)$), and \emph{gray} vertices, corresponding to the midpoints of edges of $T_n$. We orient each edge of $T'_n$ so that it points towards its gray vertex. We fix a set $\{a_1,\dots,a_n,a'_1,\dots,a'_n\}$ of labels. 
	Any edge of $T'_n$ from a white (resp.\ black) vertex of $T_n$ to the midpoint of an $s_i$-labeled edge of $T_n$, is labeled by $a_i$ (resp.\ $a'_i$). 
	
	Let $W_{2n}$ be a wedge sum of $2n$ oriented circles, labeled by $a_1,\ldots,a_n,a'_1,\ldots,a'_n$. There is a unique map $\pi:T'_n\to W_{2n}$ which preserves labels and orientations of edges. But $\pi$ is not a covering map. Now we enlarge $T'_n$ to a larger space $K_n$, and extend $\pi$ to a map $\pi:K_n\to W_{2n}$ which is a covering map. This relies on Haglund and Wise's \emph{canonical completion} procedure \cite[Section~6]{HW}, which we explain in this special case.

	We enlarge $T'_n$ as follows.
	For each edge $e$ of $T'_n$ oriented from a vertex $x$ to another vertex $y$, we add an edge $e'$ to $T'_n$ such that $e'$ and $e$ have the same label, but $e'$ is oriented from $y$ to $x$. Next for each white (resp.\ black) vertex $x$ of $T'_n$, we attach $n$ oriented loops based $x$,  labeled by $\{a'_1,\ldots,a'_n\}$ (resp.\ $\{a_1,\ldots,a_n\}$). Finally for each gray vertex $x$ of $T'_n$ which is the midpoint of an $s_i$-labeled edge of $T_n$, we attached $2(n-1)$ oriented loops based at $x$, labeled by $\{a_1,\ldots,\hat a_i,\ldots,a_n,a'_1,\ldots,\hat a'_i,\ldots,a'_n\}$ -- here $\hat a_i,\hat a'_i$ means that we remove these two elements from the set of labels. Now one readily verifies that the map $\pi:T'_n\to W_{2n}$ extends to a label and orientation preserving covering map $\pi:K_n\to W_{2n}$. 
	
	A \emph{standard circle} in $K_n$ is an embedded copy of $\mathbb S^1$ made of edges with the same label. Each standard circle in $K_n$ has either one or two edges.
	
	Now consider the action $H\actson T_n\times T_n$, which gives two factor actions $H\actson T_n$ preserving the bipartite vertex coloring of $T_n$. Any factor action induces an action $H\actson T'_n$ which preserves orientation of edges and colors of vertices of $T'_n$ (though it might not preserve labels of edges of $T'_n$).  And since the number of edge-loops at vertices of a given colour is constant, the action $H\actson T'_n$ extends to an action $H\actson K_n$ which preserves the orientation of edges and sends standard circles to standard circles.
  	
	Now the free and cocompact action $H\actson T_n\times T_n$ extends to a free and cocompact action $H\actson K_n\times K_n$. A \emph{standard torus} in $K_n\times K_n$ is a product of two standard circles, one  from each factor. Then the action $H\actson K_n\times K_n$ preserves the edge orientation and sends every standard torus to a standard torus. Moreover, the subspace $T'_{n}\times T'_n$ is invariant under this action.
	Let $J_{2n}\times J_{2n}$ be the universal cover of $K_n\times K_n$ with the induced label and orientation of edges (where the edges coming from the second factor are labeled by $\tilde{a}_i,\tilde{a}'_i$). Then standard flats in $J_{2n}\times J_{2n}$ are lifts of standard tori in $K_n\times K_n$ -- for this it is important to notice that components of $K_n$ consisting of edges with the same label are reduced to circles.
	
Let $H'\subseteq \Aut(J_{2n}\times J_{2n})$ be the subgroup consisting of all automorphisms that lift automorphisms in $H$. Then the $H'$-action on $J_{2n}\times J_{2n}$ is free and cocompact, preserves the orientation of edges, and sends standard flats to standard flats. In addition, the inclusion map $T'_n\times T'_n\to K_n\times K_n$ lifts to an isometric embedding $\theta:T'_n\times T'_n\to J_{2n}\times J_{2n}$. As $T'_n\times T'_n$ is invariant under the $H$-action on $K_n\times K_n$, such a lift gives an injective homomorphism $\varphi:H\to H'$ such that $\theta$ is equivariant with respect to $\varphi$. Finally, the additional part of the lemma follows from our construction.  
\end{proof}

We need another extension criterion, which is based on an idea of Hughes in \cite[Section~7.2]{Hug}. Our next lemma is a variation over a statement from work of Mj and the second-named author \cite[Section 4.2]{HM}.

Recall that $X_\Gamma$ is the locally finite cube complex canonically associated to $G_\Gamma$, in other words $X_\Gamma$ is the universal cover of the Salvetti complex of $G_\Gamma$. Given a group $H$ acting by flat-preserving bijections on $G_\Gamma$, we have a \emph{type cocycle} $c: H\times VX_\Gamma\to \Aut(\Gamma)$ where $c(h,x)$ with $h\in H$ and $x\in VX_\Gamma$ is defined as follows.
Given a vertex $v\in V\Gamma$, let $\ell$ be the standard line of type $v$ containing $x$. Then $c(h,x)(v)$ is defined to be the type of $h(\ell)$. One readily vertices that $c(h,x):V\Gamma\to V\Gamma$ preserves adjacency of vertices, hence extends to an automorphism of $\Gamma$, and that $c(h,x)$ is indeed a cocycle. 

\begin{lemma}
	\label{lem:extend2}
	Let $\Gamma_1$ be an induced subgraph of $\Gamma_2$.
	Suppose $H$ is a group acting freely and cocompactly on $X_{\Gamma_1}$, preserving the orientation of edges, and sending standard flats to standard flats. Let  $c_1:H\times VX_{\Gamma_1}\to \Aut(\Gamma_1)$ be the associated type cocycle. Suppose that there exists a cocycle $c_2:H\times VX_{\Gamma_1}\to \Aut(\Gamma_2)$ such that $c_2(h,x)_{\mid \Gamma_1}=c_1(h,x)$ for any $h\in H$ and $x\in VX_{\Gamma_1}$.

	Then there exist a group $H'$ acting freely and compactly on $X_{\Gamma_2}$ sending standard flats to standard flats, an injective group homomorphism $\phi:H\to H'$, and a $\phi$-equivariant embedding $j:X_{\Gamma_1}\to X_{\Gamma_2}$ preserving labels and orientations of edges. 
	Moreover, 
	each element of $H'$ sends standard lines labeled by $v\in V\Gamma_2$ to standard lines whose labels belong to the orbit of $v$ under the action of elements in $\{c_2(h,x)\}_{h\in H,x\in VX_{\Gamma_1}}$.
\end{lemma}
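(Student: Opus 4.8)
\textbf{Proof proposal for Lemma~\ref{lem:extend2}.}

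The plan is to mimic the ``blow-up'' of a Salvetti complex along a sub-graph, guided by Hughes's construction \cite[Section~7.2]{Hug} and the variant in \cite{HM}, and to use the cocycle $c_2$ as the bookkeeping device that tells us how to glue the new directions. First I would fix, for every vertex $v \in V\Gamma_2$, the orbit $\mathcal{O}_v = \{c_2(h,x)(v) \mid h\in H,\ x\in VX_{\Gamma_1}\} \subseteq V\Gamma_2$; note that the restriction hypothesis $c_2(h,x)_{\mid\Gamma_1}=c_1(h,x)$ guarantees that for $v\in V\Gamma_1$ the orbit $\mathcal{O}_v$ stays inside $V\Gamma_1$, which is what makes the embedding $j$ compatible with labels. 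The model to build is a cube complex $X_{\Gamma_2}$ together with an $H$-action; concretely I would realise $X_{\Gamma_2}$ as a union of ``branched'' pieces over $X_{\Gamma_1}$: start from $X_{\Gamma_1}$, and for each standard line $\ell$ of $X_{\Gamma_1}$ of type $v$, attach to it the extra standard directions indexed by $V\st_{\Gamma_2}(v)\setminus V\st_{\Gamma_1}(v)$, extending across higher-dimensional standard flats exactly as in the gluing pattern of $X_{\Gamma_2}$ (the same mechanism used for the blow-up buildings of Section~\ref{subsec:blowup building}, but here with all blow-up maps trivial so that the result is genuinely $X_{\Gamma_2}$ rather than a deformation). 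The key point is that since $H$ sends standard flats to standard flats and preserves edge-orientations on $X_{\Gamma_1}$, the extra directions are permuted by $H$ in a way prescribed precisely by $c_2$: an element $h$ carrying a $v$-line $\ell$ to a $v'$-line $\ell'$ forces the new $\st_{\Gamma_2}(v)$-directions attached at $\ell$ to be carried to the $\st_{\Gamma_2}(v')$-directions at $\ell'$ by the graph automorphism $c_2(h,\cdot)$, and the compatibility relations $c_2(h_1h_2,x)=c_2(h_1,h_2x)c_2(h_2,x)$ are exactly what is needed for these partial identifications to assemble into a genuine action.

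Concretely, the steps I would carry out are the following. (1) Construct the underlying complex: define a cube complex $Y$ by the branched-flat gluing just described and check that $Y\cong X_{\Gamma_2}$ as a labelled, oriented cube complex — this is a combinatorial verification that the link of every vertex is the flag completion of $\Gamma_2$, together with the correct orientation data, so that $Y$ is the universal cover of the Salvetti complex of $G_{\Gamma_2}$. (2) Define the embedding $j:X_{\Gamma_1}\to X_{\Gamma_2}$ as the inclusion of the ``core'' $X_{\Gamma_1}$-part into $Y$; by construction it preserves labels and orientations. (3) Build the group $H'$ and the homomorphism $\phi$: for each $h\in H$, use $c_2(h,\cdot)$ to extend the automorphism $h$ of $X_{\Gamma_1}$ over the extra branched directions, obtaining a cubical automorphism $\phi(h)$ of $X_{\Gamma_2}$; verify $\phi$ is a homomorphism using the cocycle identity for $c_2$, and injective because $\phi(h)$ restricts to $h$ on the $j$-image of $X_{\Gamma_1}$. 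Let $H'$ be the subgroup of $\Aut(X_{\Gamma_2})$ generated by $\phi(H)$ together with the deck-type translations needed to make the action cocompact — more precisely, $H'$ should be taken to be the group of all automorphisms of $X_{\Gamma_2}$ that are ``locally modelled'' on elements of $H$, analogously to how $H'$ was defined in Lemma~\ref{lem:extend1}. (4) Check freeness and cocompactness of $H'\actson X_{\Gamma_2}$: cocompactness follows because $H$ already acts cocompactly on $X_{\Gamma_1}$, the branched directions are added in a locally finite and $H$-equivariant pattern, and every vertex of $X_{\Gamma_2}$ is within bounded distance of a vertex of the core; freeness follows since an element fixing a point of $X_{\Gamma_2}$ fixes a point of the core up to the finite local data, and $H$ acts freely there. (5) Finally, the ``moreover'' clause is immediate from the construction: $\phi(h)$ sends a $v$-line of $X_{\Gamma_2}$ to a $c_2(h,x)(v)$-line for the appropriate $x$, and a general element of $H'$ differs from some $\phi(h)$ by a deck-type translation that preserves line-types, so every element of $H'$ moves $v$-lines to lines of type in the $\{c_2(h,x)\}$-orbit of $v$.

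I expect the main obstacle to be step~(1), establishing rigorously that the branched gluing really reproduces $X_{\Gamma_2}$ on the nose (labels, orientations, and cube-structure all correct) rather than some quotient or cover of it; this requires care about how the extra $\st_{\Gamma_2}(v)$-directions interact across intersecting standard flats of $X_{\Gamma_1}$, i.e.\ one must check that the link condition is satisfied at every vertex and not just at vertices of the core. A secondary subtlety, as in Lemma~\ref{lem:extend1}, is to pin down the precise definition of $H'$ so that the action is simultaneously free \emph{and} cocompact — taking $\phi(H)$ alone may fail to be cocompact if $H$ does not act transitively enough on line-types, while taking all of $\Aut(X_{\Gamma_2})$ is too big to be free; the right group is the one whose elements project, via the type cocycle of $X_{\Gamma_2}$, into the orbit data generated by $c_2$. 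Everything else — the cocycle bookkeeping, the homomorphism property of $\phi$, the equivariance of $j$ — is routine once the complex and the group are set up correctly.
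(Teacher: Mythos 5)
Your overall architecture matches the paper's: use $c_2$ to extend the $H$-action over the new directions, take $H'$ to be the group of \emph{all} lifts (so that it contains the relevant deck transformations, not just $\phi(H)$), and obtain $j$ and $\phi$ by choosing a lift of the core. But the technical route differs in a way that matters. You propose to build $X_{\Gamma_2}$ directly as a branched gluing over $X_{\Gamma_1}$ and then verify that the result really is $X_{\Gamma_2}$ --- which you rightly flag as the main obstacle, and which your proposal does not actually carry out. The paper sidesteps this entirely via Haglund--Wise canonical completion: one takes the cover $Z$ of $S_{\Gamma_2}$ corresponding to the kernel of the retraction $G_{\Gamma_2}\to G_{\Gamma_1}$, observes that $Z$ contains $X_{\Gamma_1}$ with the \emph{same vertex set} (only edge-loops and tori are added), extends the $H$-action to $Z$ using $c_2$ to decide where each new loop goes, and then defines $H'$ as the preimage of this action in $\Aut(X_{\Gamma_2})$, i.e.\ the extension $1\to\pi_1(Z)\to H'\to H\to 1$. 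Because $Z$ is by construction a cover of $S_{\Gamma_2}$, its universal cover is $X_{\Gamma_2}$ on the nose, so your step (1) evaporates; and because $Z$ and $X_{\Gamma_1}$ share a vertex set, freeness and cocompactness of $H\actson Z$ are inherited immediately, whence the same for $H'\actson X_{\Gamma_2}$ (as $X_{\Gamma_2}/H'=Z/H$).

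Two of your justifications are not correct as stated and are repaired only by passing through $Z$. First, ``every vertex of $X_{\Gamma_2}$ is within bounded distance of a vertex of the core'' is false: $j(X_{\Gamma_1})$ is a single convex copy and generic vertices of $X_{\Gamma_2}$ are arbitrarily far from it; cocompactness needs the $\pi_1(Z)$-translates of the core, i.e.\ the quotient $Z$. Second, the freeness argument ``an element fixing a point of $X_{\Gamma_2}$ fixes a point of the core up to finite local data'' does not parse for the same reason; the clean argument is that an element of $H'$ with a fixed point projects to an element of $H$ with a fixed point in $Z$, hence to the identity, and is then a deck transformation with a fixed point, hence trivial. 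With these repairs your proposal becomes the paper's proof; as written it leaves the hardest identification and the freeness/cocompactness checks unsubstantiated.
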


\begin{proof}
The inclusion of $\Gamma_1$ as an induced subgraph of $\Gamma_2$ yields an isometric embedding $S_{\Gamma_1}\hookrightarrow S_{\Gamma_2}$ between the Salvetti complexes. By pre-composing this with the covering map $X_{\Gamma_1}\to S_{\Gamma_1}$, we obtain  a local isometric embedding $X_{\Gamma_1}\to S_{\Gamma_2}$. While $X_{\Gamma_1}\to S_{\Gamma_2}$ is not a covering map, we can ``complete'' it to a covering map as follows, using (as in the previous proof) a special case of a construction by Haglund--Wise \cite[Section~6]{HW}. Consider the homomorphism $G_{\Gamma_2}\to G_{\Gamma_1}$ fixing each generator in $\Gamma_1$ and sending all generators in $\Gamma_2\setminus\Gamma_1$ to identity. Let $K$ be the kernel of this homomorphism and $Z$ be the cover of $S_{\Gamma_2}$ corresponding to $K$. Then there is an embedding $X_{\Gamma_1}\to Z$. Under such an embedding, $X_{\Gamma_1}$ and $Z$ have the same vertex set. Moreover, we can obtain the 1-skeleton of $Z$ from the 1-skeleton of $X_{\Gamma_1}$ by attaching a collection of edge loops to each of the vertices of $X_{\Gamma_1}$, one edge loop for each vertex outside $\Gamma_2\setminus \Gamma_1$. The complex $Z$ is called the \emph{canonical completion} of $X_{\Gamma_1}$ with respect to the local  isometric embedding $X_{\Gamma_1}\to S_{\Gamma_2}$. 

Now we define an action of $H$ on the 1-skeleton $Z^{(1)}$ of $Z$ extending the existing action of $H$ on $X^{(1)}_{\Gamma_1}$ as follows. For $h\in H$, we let $h$ send an edge loop based at $x\in X_{\Gamma_1}$ labeled by $v\in V\Gamma_2\setminus V\Gamma_1$ to an edge loop based at $h(x)$ labeled by $c_2(h,x)(v)$, moreover, we require that $h$ respects the orientation of edges. It follows from the construction of the action $H\actson Z^{(1)}$ that it sends a pair of edges with commuting labels based at the same vertex to another pair of edges with commuting labels. Thus $H\actson Z^{(1)}$ extends to an action of $H$ on the 2-skeleton of $Z$. One readily checks that the action extends to higher skeleta as well. This gives an edge orientation preserving action $H\actson Z$ extending the existing action $H\actson X_{\Gamma_1}$. Note that $H\actson Z$ is also free and cocompact. 

Let $H'$ be the subgroup of $\Aut(X_{\Gamma_2})$ consisting of all lifts of automorphisms of $Z$ coming from the action $H\actson Z$. Then $H'$ fits into an exact sequence $1\to \pi_1(Z)\to H'\to H\to 1$. As $H\actson Z$ is free and cocompact, the same holds for the action $H'\actson X_{\Gamma_2}$. The moreover statement of the lemma follows from the construction of $H'$. Let $j:X_{\Gamma_1}\to X_{\Gamma_2}$ be a lift of the embedding $X_{\Gamma_1}\to Z$ with respect to the covering map $X_{\Gamma_2}\to Z$. Now we define an injective homomorphism $\phi:H\to H'$ as follows. Given $h\in H$, let $\alpha_h$ be the automorphism of $Z$ coming from the action $H\actson Z$, and let $\beta_h=(\alpha_h)_{|X_{\Gamma_1}}$, an automorphism of  $X_{\Gamma_1}$. Let $\beta'_h$ be the map $j\circ \beta_h\circ j^{-1}$ defined on $j(X_{\Gamma_1})$. We define $\phi(h):X_{\Gamma_2}\to X_{\Gamma_2}$ to be the unique lift of $\alpha_h:Z\to Z$ with respect to the covering $X_{\Gamma_2}\to Z$ such that $\phi(h)$ is an extension of $\beta'_h$. Then $\phi$ is indeed a group homomorphism, and $j$ is equivariant with respect to $\phi$.
\end{proof}

\subsection{On local actions}\label{sec:burger-mozes}

Given a factor-preserving action of a group $H$ on $T_n\times T_n$, and a vertex $x$ in one of the factor trees, the \emph{local action} of $H$ at $x$ is the action of the $H$-stabilizer of $x$ (with respect to the factor action) on the collection of edges of the factor tree containing $x$. The \emph{local group} at $x$ is the subgroup of the permutation group of edges containing $x$ coming from the local action.

The following can be deduced from work of Lazarovich--Levcovitz--Margolis \cite{LLM}, relying on earlier works by Burger--Mozes \cite{BM2,BM} and Radu \cite{Rad} (recall Definition~\ref{de:even} for the notion of even factor actions).

\begin{lemma}
	\label{lemma:simple}
	For each $n_0>0$, there exist $n\ge n_0$ and a simple  torsion-free group $H$ which acts on $T_n\times T_n$ freely, cocompactly, in a factor-preserving way, with even factor actions, such that for any vertex $x$ in one of the tree factors, the local action of $H$ at $x$ has an orbit of size at least $n/4$.
\end{lemma}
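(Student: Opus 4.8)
The plan is to invoke the construction of Burger--Mozes \cite{BM,BM2} together with the local-to-global rigidity results of Lazarovich--Levcovitz--Margolis \cite{LLM} and Radu \cite{Rad}. First I would recall that Burger--Mozes produce, for suitable finite permutation groups $F \le \Perm(n)$ acting on an $n$-element set, a cocompact lattice $\Gamma \le \Aut(T_n) \times \Aut(T_n)$ whose local action at every vertex in each factor is $F$, and which is virtually simple when $F$ is $2$-transitive (or more generally transitive and generated by point stabilizers, i.e.\ perfect, plus a non-degeneracy hypothesis on the associated bipartite graph). To pass from ``virtually simple'' to ``simple and torsion-free'' I would pass to a suitable finite-index subgroup: inside the Burger--Mozes lattice $\Gamma$ there is a finite-index normal subgroup $\Gamma_0$ which is simple, and by a further commensurability argument (e.g.\ intersecting with the kernels of the actions on the colourings, or passing to a torsion-free finite-index subgroup using that $\Gamma$ is virtually torsion-free as it acts properly on a CAT(0) cube complex) one obtains a torsion-free finite-index subgroup; one then checks that the simple piece can be taken torsion-free, since a simple group that is virtually torsion-free and non-trivial is itself torsion-free (any torsion element would generate a finite normal-closure-free subgroup; more carefully, a finitely generated simple group acting properly on a finite-dimensional CAT(0) cube complex has a torsion-free finite-index subgroup only if it is already torsion-free, because a proper finite normal subgroup cannot exist and torsion elements would otherwise survive — this point needs care). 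The cleanest route is: take $F$ $2$-transitive of degree $n$ so that the Burger--Mozes group has a simple subgroup of finite index, then observe via \cite{LLM} or \cite{Rad} that one can arrange the construction so that the simple group acts \emph{freely}; freeness already forces torsion-freeness.

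Second, I would arrange the evenness of the factor actions: an action on $T_n$ is even precisely when it lies in the index-two subgroup $\Aut^+(T_n)$ preserving the bipartition. If the Burger--Mozes lattice does not a priori act evenly, I pass to the finite-index subgroup acting evenly on both factors; intersecting a simple group with this finite-index subgroup either gives back the whole group (if it was already even) or a proper finite-index subgroup, but since we want a simple group I instead build $F$ and the lattice so that the whole lattice is even from the start — this is standard, e.g.\ by working with $T_n$ realised as the $1$-skeleton with edges subdivided (which is exactly the convention ``each edge of length $2$'' in the statement, forcing a canonical bipartition into original vertices and midpoints), and noting a group acting on the tree without subdivision acts evenly on the subdivided tree. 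The cocompactness and freeness are preserved under passing to finite-index subgroups, so no issue there.

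Third, and this is the quantitative heart of the statement, I need the local action of $H$ at each vertex $x$ (in either factor) to have an orbit of size at least $n/4$. The local action of the Burger--Mozes group is the prescribed permutation group $F \le \Perm(n)$; for a finite-index subgroup $H \le \Gamma$ the local group at $x$ becomes some subgroup of $F$ (namely the image of $\Stab_H(x)$), which could in principle shrink. So I would choose $F$ to be $2$-transitive of degree $n$ with $n$ large, for instance $F = \Perm(n)$ or $F = \mathrm{PSL}_2(q)$ acting on the $q+1$ points of the projective line; then appeal to the classification-free fact that a $2$-transitive group has no ``small index'' subgroups acting with only small orbits — more precisely, for any subgroup $K \le F$, if $[F:K]$ is bounded independently of $n$ then $K$ still has an orbit of size $\ge n/4$ once $n$ is large (a subgroup of index $m$ in a transitive group of degree $n$ has an orbit of size $\ge n/m$, so index $\le 4$ already suffices; for larger index one uses that the finite-index subgroup of $\Gamma$ we extracted can be taken of index dividing a fixed small number, e.g.\ by taking $H$ to be the simple group itself which has bounded index in $\Gamma$). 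Combining: choose $n_0$, pick a prime power $q$ with $q+1 \ge n_0$ and $F = \mathrm{PGL}_2(q)$, build the even Burger--Mozes lattice $\Gamma$ with this local data, let $H \le \Gamma$ be its (finite-index, bounded-index) simple torsion-free subgroup acting freely, cocompactly, factor-preservingly with even factors, and check $\Stab_H(x)$ has an orbit of size $\ge n/4$ on the $n$ edges at $x$ using the index bound.

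The main obstacle I expect is reconciling the four requirements \emph{simultaneously}: simplicity forces us to take a specific finite-index subgroup, but that is exactly what threatens freeness and the orbit-size lower bound. The resolution is to do everything at the level of the local data $F$ before building the lattice — use results of \cite{LLM} (and \cite{Rad}, \cite{BM2}) which give, for appropriate $2$-transitive $F$, a lattice that is \emph{itself} simple (not merely virtually simple), torsion-free, acts freely, and has local action exactly $F$ at every vertex — so that no passage to a proper finite-index subgroup is needed and the local action is literally $F$, which is transitive of degree $n$ and hence trivially has an orbit of size $n \ge n/4$. I would cite \cite[the relevant theorem]{LLM} for the existence of such a simple torsion-free freely-acting lattice with prescribed $2$-transitive local action, note that evenness is arranged by the subdivision convention, and conclude. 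If \cite{LLM} only delivers virtual simplicity, the fallback is the bounded-index argument of the previous paragraph, where the key point to verify carefully is that the simple normal subgroup in a Burger--Mozes lattice has index bounded by an absolute constant (indeed index at most $|F/F^{+}| \cdot (\text{something small})$), making the orbit-size estimate $n/4$ go through for $n$ large.
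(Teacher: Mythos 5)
Your proposal is correct and, in its ``fallback'' form, is essentially the paper's argument: the paper invokes \cite[Theorem~4.2 and Lemma~4.3]{LLM} to get a uniform lattice $H$ on $T_n\times T_n$ whose local groups are the full symmetric group $S_n$, passes to the index-$4$ subgroup $H^+$ (which is simple, preserves the bipartition of each factor, and is torsion-free by \cite[Lemma~3.1]{Rad}), and the constant $n/4$ is exactly the output of your observation that a subgroup of index at most $4$ in a transitive group of degree $n$ has an orbit of size at least $n/4$. Your preferred ``main route'' (a lattice that is itself simple with local action literally $F$) is more than \cite{LLM} delivers -- simplicity only holds for the index-$4$ subgroup -- but since you correctly identified and executed the bounded-index fallback, the proof goes through as in the paper.
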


\begin{proof}
	By \cite[Theorem~4.2 and Lemma~4.3]{LLM}, there is a group $H$ acting on $T_n\times T_n$ as a uniform lattice such that all the local groups are the full symmetry groups on $n$ letters. Moreover, $H$ has an index $4$ subgroup $H^+$ which is simple and preserves the bipartition of each factor tree (see the beginning of \cite[Section 2]{LLM} for the definition of $H^+$). In addition $H^+$ is torsion-free by \cite[Lemma~3.1]{Rad}. Thus the group $H^+$ satisfies the requirement of the lemma.
\end{proof}

\subsection{The family of examples}\label{sec:construction}

Let $\Gamma_1$ and $\Gamma_2$ be finite simplicial non-complete
graphs -- at this point we are not making any extra assumption, but in the next section our discussion will be applied to graphs $\Gamma_1,\Gamma_2$ such that $|\Out(G_{\Gamma_i})|<+\infty$ for every $i\in\{1,2\}$. 

For every $i\in\{1,2\}$, we fix a choice of two vertices $v_i,w_i\in V\Gamma_i$, with $w_i\notin \st(v_i)$. For every $n\ge 1$, let $\Gamma_{n,i}$ be the finite simplicial graph obtained by gluing $n$ copies of $\Gamma_i$ along $\st(v_i)$. We will denote by $v_{n,i}$ the image of $v_i$ in $\Gamma_{n,i}$. 
Let $\Gamma^w_{n,i}$ be the totally disconnected subgraph of $\Gamma_{n,i}$ whose vertices are the $n$ copies of $w_i$ in $\Gamma_{n,i}$, denoted by $w_{n,i}[1],\ldots,w_{n,i}[n]$. 

Let $\Lambda=\Gamma_1\circ \Gamma_2$ and $\Lambda_n=\Gamma_{n,1}\circ\Gamma_{n,2}$. 
Let $\Lambda^w_n=\Gamma^w_{n,1}\circ\Gamma^w_{n,2}$, a subgraph of $\Lambda_n$.

Notice that for every $i\in\{1,2\}$ and every $n\ge 1$, the group $G_{\Gamma_{n,i}}$ is isomorphic to the kernel of the homomorphism $\varphi_{n,i}: G_{\Gamma_i}\to \mathbb Z/n\mathbb Z$ that sends $v_i$ to $1$ and all other generators to $0$. This gives an injective homomorphism $q_{n,i}:G_{\Gamma_{n,i}}\to G_{\Gamma_i}$ with finite index image.  More precisely, let $\Gamma_{n,i}[1],\dots,\Gamma_{n,i}[n]$ be the $n$ copies of $\Gamma_i$ in $\Gamma_{n,i}$, where $\Gamma_{n,i}[j]$ is the copy that contains $w_{n,i}[j]$. Then (up to reordering the $\Gamma_{n,i}[j]$, which we do once and for all), for any vertex $u\in\Gamma_{n,i}[j]$ with $u\neq v_{n,i}$, we have $q_{n,i}(u)=v_i^{j-1}\bar uv_i^{-j+1}$, where $\bar u$ denotes the vertex of $\Gamma_i$ that corresponds to $u$ (when writing the above equality, we identify vertices of $\Gamma_i$ and $\Gamma_{n,i}$ with the corresponding elements of $G_{\Gamma_i}$ and $G_{\Gamma_{n,i}}$). And $q_{n,i}(v_{n,i})=v_i^n$. In particular $q_{n,i}(w_{n,i}[j])=v_i^{j-1}w_iv_i^{-j+1}$. 
	
	For every $i\in\{1,2\}$ and every $n\ge 1$, the $q_{n,i}$-image of any standard line of $G_{\Gamma_{n,i}}$ is Hausdorff close to a standard line of $G_{\Gamma_i}$. In addition $q_{n,i}$ sends parallel standard lines to parallel standard lines, up to finite Hausdorff distance. Thus $q_{n,i}$ induces a map from the vertex set of the extension graph $\Gamma_{n,i}^e$ to the vertex set of $\Gamma_i^e$, which extends to a graph isomorphism $(q_{n,i})_*:\Gamma_{n,i}^e\to \Gamma_i^e$. Likewise we have a map $q_n:G_{\Lambda_n}\to G_\Lambda$.

For every $i\in\{1,2\}$, let $X_{n,i}$ be the universal cover of the Salvetti complex of $G_{\Gamma_{n,i}}$. We orient edges of $X_{n,i}$ and label them by vertices of $\Gamma_{n,i}$. As $\{w_{n,i}[1],\ldots,w_{n,i}[n]\}$ generates a free subgroup of $G_{\Gamma_{n,i}}$, we have an embedding $j_{n,i}:J_n\to X_{n,i}$ which preserves the orientation and the labeling of edges, which gives $j_n:J_n\times J_n\to X_{n,1}\times X_{n,2}$. We will often refer to the first product factor as the \emph{horizontal} factor, and the second factor as \emph{vertical}.

\begin{cor}
	\label{cor:embedding properties}
	There exists an infinite subset $\mathcal C\subset \mathbb Z_{\ge 0}$ such that the following is true. For each $n\in \mathcal C$,  there exists a group $V_n$ acting on $T_n\times T_n$ satisfying the requirements of Lemma~\ref{lemma:simple}, a group $U_n$ acting on $X_{2n,1}\times X_{2n,2}$ freely and cocompactly sending standard flats to standard flats, an isometric embedding $\theta_n:T_n\times T_n\to X_{2n,1}\times X_{2n,2}$, and an injective group homomorphism $\phi_n:V_n\to U_n$ such that
	\begin{enumerate}
		\item $\theta_n$ sends each edge of $T_n\times T_n$ (of length 2) to a concatenation of two edges in $X_{2n,1}\times X_{2n,2}$;
		\item $\theta_n$ is equivariant with respect to $\phi_n$;
		\item for each $x\in T_n$, there exists $y\in X_{2n,1}$ such that $\theta(\{x\}\times T_n)\subset \{y\}\times X_{2n,2}$;
		\item for any vertex $x\in T_n\times T_n$ and any two different edges $e_1$ and $e_2$ containing $x$, the edges $\theta_n(e_1)$ and $\theta_n(e_2)$ have different labels, both contained in $\Lambda^w_{2n}$;
		\item for every $i\in\{1,2\}$, each element of $U_n$ sends standard lines labeled by $v_{2n,i}$ to standard lines labeled by $v_{2n,i}$.
	\end{enumerate}
\end{cor}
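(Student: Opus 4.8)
The plan is to combine the three building blocks from Sections~\ref{sec:extension} and~\ref{sec:burger-mozes} in sequence: start from a Burger--Mozes lattice on a product of trees, enlarge it to an action on a product of labeled free-group Cayley graphs via Lemma~\ref{lem:extend1}, and then enlarge it again to an action on $X_{2n,1}\times X_{2n,2}$ via Lemma~\ref{lem:extend2}, keeping track of the equivariant embeddings and the type cocycles throughout. Concretely, for the given $n_0$ (which we let range over $\mathbb{N}$ to produce the infinite set $\mathcal{C}$), Lemma~\ref{lemma:simple} supplies some $n\ge n_0$ and a simple torsion-free group $V_n$ acting freely, cocompactly and factor-preservingly on $T_n\times T_n$ with even factor actions and large local orbits; these are exactly the requirements of Lemma~\ref{lem:extend1}. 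Applying that lemma to each of the two factor actions $V_n\actson T_n$ gives an isometric embedding $\theta_n^{(0)}\colon T_n\times T_n\to J_{2n}\times J_{2n}$, a group $H'_n$ with a free, cocompact, factor-preserving, orientation-preserving, flat-preserving action on $J_{2n}\times J_{2n}$, and an injection $\varphi_n\colon V_n\hookrightarrow H'_n$ making $\theta_n^{(0)}$ equivariant; moreover $\theta_n^{(0)}$ can be chosen so that distinct edges at a vertex receive distinct labels. This already delivers a preliminary form of items~(1)--(4) at the level of $J_{2n}\times J_{2n}$ — note item~(1) is automatic because edges of $J_{2n}$ have length $1$ while edges of $T_n$ have length $2$, and item~(3) follows from the factor-preserving structure since $\theta_n^{(0)}$ respects the product decomposition.

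The second step is to feed $H'_n\actson J_{2n}\times J_{2n}$ into Lemma~\ref{lem:extend2}. Here one must identify $J_{2n}\times J_{2n}$ with $X_{\Gamma_1'}$ for the appropriate induced subgraph: namely $J_{2n}$ is the universal cover of the Salvetti complex of the free group on $2n$ generators, i.e.\ of $G_\Delta$ for $\Delta$ a totally disconnected graph on $2n$ vertices, so $J_{2n}\times J_{2n}=X_{\Delta\circ\Delta}$, and $\Delta\circ\Delta$ is precisely $\Lambda^w_{2n}=\Gamma^w_{2n,1}\circ\Gamma^w_{2n,2}$ (a join of two $2n$-element totally disconnected graphs — here one uses that $J_{2n}$ has valence $2n$ because the relevant free generating set is $\{w_{2n,i}[1],\dots,w_{2n,i}[2n]\}$, so the labels all lie in $\Lambda^w_{2n}$, which is the content needed for the ``both contained in $\Lambda^w_{2n}$'' part of item~(4)). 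We then take $\Gamma_1'=\Lambda^w_{2n}$ and $\Gamma_2'=\Lambda_{2n}=\Gamma_{2n,1}\circ\Gamma_{2n,2}$, an induced subgraph inclusion, and must produce a cocycle extension $c_2\colon H'_n\times VX_{\Gamma_1'}\to\Aut(\Lambda_{2n})$ of the type cocycle $c_1\colon H'_n\times VX_{\Gamma_1'}\to\Aut(\Lambda^w_{2n})$. Since $H'_n$ acts orientation-preservingly and merely permutes standard flats, and the factor actions do not swap horizontal and vertical, the type cocycle $c_1$ takes values in the subgroup of $\Aut(\Lambda^w_{2n})$ that preserves the two sides of the join and permutes the $2n$ vertices of each side; each such permutation, being a permutation of the copies $w_{2n,i}[1],\dots,w_{2n,i}[2n]$, extends canonically to the permutation of the copies $\Gamma_{2n,i}[1],\dots,\Gamma_{2n,i}[2n]$ of $\Gamma_i$ glued along $\st(v_i)$, i.e.\ to an automorphism of $\Gamma_{2n,i}$ fixing $\st(v_{2n,i})$ and hence to an automorphism of $\Lambda_{2n}=\Gamma_{2n,1}\circ\Gamma_{2n,2}$. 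This canonical extension is compatible with composition, so it defines $c_2$ with $c_2(h,x)|_{\Lambda^w_{2n}}=c_1(h,x)$. Lemma~\ref{lem:extend2} then yields $U_n\actson X_{2n,1}\times X_{2n,2}=X_{\Lambda_{2n}}$ free, cocompact, flat-preserving, an injection $\phi_n'\colon H'_n\hookrightarrow U_n$, and a $\phi_n'$-equivariant label- and orientation-preserving embedding $j_n\colon J_{2n}\times J_{2n}\hookrightarrow X_{2n,1}\times X_{2n,2}$. Composing, we set $\theta_n=j_n\circ\theta_n^{(0)}$ and $\phi_n=\phi_n'\circ\varphi_n$.

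It then remains to verify items~(1)--(5). Items~(1),(2),(3) are immediate from the construction (equivariance composes, $j_n$ preserves labels hence the horizontal/vertical split, and the length-doubling of item~(1) is inherited from $\theta_n^{(0)}$). For item~(4), distinct edges at a vertex of $T_n\times T_n$ map under $\theta_n^{(0)}$ to distinct-label edges, and $j_n$ preserves labels, so the labels of $\theta_n(e_1),\theta_n(e_2)$ remain distinct; and they lie in $\Lambda^w_{2n}\subseteq\Lambda_{2n}$ because $j_n$ lands label-preservingly in the copy of $X_{\Lambda^w_{2n}}$ inside $X_{\Lambda_{2n}}$. For item~(5), the ``moreover'' clause of Lemma~\ref{lem:extend2} says each element of $U_n$ sends a standard line labeled $v$ to one whose label lies in the $\{c_2(h,x)\}$-orbit of $v$; applying this with $v=v_{2n,i}$ and using that every $c_2(h,x)$, being the canonical extension of a permutation of the $\Gamma_{2n,i}[j]$, fixes $v_{2n,i}$, we conclude the label stays $v_{2n,i}$. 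I expect the main obstacle to be the bookkeeping in the second step: one has to be careful that $J_{2n}$ is genuinely $X$ of a totally disconnected graph (so that Lemma~\ref{lem:extend2} applies verbatim), that the type cocycle $c_1$ of the tree-product action really does take values in the join-respecting, side-preserving permutations (this uses orientation-preservation and factor-preservation crucially), and that the canonical extension $c_2$ is well-defined and a cocycle — these are all elementary but require the identifications $\Lambda^w_{2n}=\Delta\circ\Delta$ and the explicit description of $\Gamma_{2n,i}$ as $n$ copies of $\Gamma_i$ glued along $\st(v_i)$ to be spelled out precisely. Properness/freeness/cocompactness of $U_n$ and simplicity/torsion-freeness of $V_n$ come for free from the quoted lemmas.
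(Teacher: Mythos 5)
Your proposal is correct and follows essentially the same route as the paper: apply Lemma~\ref{lemma:simple} to get $V_n$, feed it into Lemma~\ref{lem:extend1} to land in $J_{2n}\times J_{2n}=X_{\Lambda^w_{2n}}$, extend the factor-preserving type cocycle from $\Aut(\Lambda^w_{2n})$ to $\Aut(\Lambda_{2n})$ by letting each permutation of the $w_{2n,i}[j]$ permute the glued copies $\Gamma_{2n,i}[j]$, and conclude via Lemma~\ref{lem:extend2}. Your verification of item~(5) via the ``moreover'' clause and the fact that the extended automorphisms fix $v_{2n,i}$ is exactly the intended argument.
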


\begin{proof}
Let $V_n$ be a group given by Lemma~\ref{lemma:simple}. By Lemma~\ref{lem:extend1}, there exist a group $V'_n$ acting geometrically on $J_{2n}\times J_{2n}$, preserving the orientation of edges, and sending standard flats to standard flats, an injective homomorphism $\varphi_n:V_n\to V'_n$, and a $\varphi_n$-equivariant isometric embedding $T_n\times T_n\to J_{2n}\times J_{2n}$. Since the action of $V'_n$ is factor-preserving, the type cocycle of $V'_n\actson J_{2n}\times J_{2n}$ is defined and takes its values in automorphisms of $\Lambda^w_{2n}$ that preserves the two factors $\Gamma^w_{2n,1}$ and $\Gamma^w_{2n,2}$ in $\Lambda^w_{2n}$. Any such automorphism of $\Lambda^w_{2n}$ extends naturally to an automorphism of $\Lambda_{2n}$, because any permutation of the vertex set of $\Gamma^w_{2n,i}$ extends to an automorphism of $\Gamma_{2n,i}$ permuting the copies $\Gamma_{2n,i}[1],\dots,\Gamma_{2n,i}[2n]$. This gives the required extension of cocycles as in the assumption of Lemma~\ref{lem:extend2}. Now the corollary follows.
\end{proof}

\subsection{Auxiliary facts about star projections}\label{sec:factor-action-more}

Given a finite simplicial graph $\Gamma$ with $|\Out(G_\Gamma)|<+\infty$, we collect several facts about certain projections on $G_\Gamma$ for later use. Recall that we identify $G_\Gamma$ as the $0$-skeleton of $X_\Gamma$. 
Given $\sfv\in V\Gamma^e$, take a $\sfv$-line $\ell$. Then $\ell$ is the 0-skeleton of a convex subcomplex of $X_\Gamma$. 
Then there is nearest point projection $\pi_{\ell}:G_\Gamma\to \ell$ sending each point to the nearest point in $\ell$ with respect to the word distance \cite[Lemma~13.8]{HW}. 
It is known that if $\ell_1,\ell_2$ are standard lines with $\Delta(\ell_1)=\Delta(\ell_2)\notin \st(\sfv)$, then $\pi_\ell(\ell_i)$ is a single point for every $i\in\{1,2\}$, and $\pi_\ell(\ell_1)=\pi_\ell(\ell_2)$ (see \cite[Lemma~6.2]{Hua}).
This gives a well-defined map $\pi_{\sfv}:V(\Gamma^e\setminus \st(\sfv))\to \ell$.

Recall that the left action $G_\Gamma\actson \Gamma^e$ induces a projection map $\Gamma^e\to \Gamma$. We say a vertex $\sfv$ of $\Gamma^e$ is of \emph{type $v$} with $v\in\Gamma$, if the map $\Gamma^e\to \Gamma$ sends $\sfv$ to $v$.

Now we continue with the notations from the previous section, and assume in addition that $\Out(G_{\Gamma_1})$ and $\Out(G_{\Gamma_2})$ are finite. In particular, for $i\in\{1,2\}$, let $\Gamma_{n,i}$, $q_{n,i}:G_{\Gamma_{n,i}}\to G_{\Gamma_i}$ and $(q_{n,i})_*:\Gamma^e_{n,i}\to \Gamma_i^e$ be as in the previous section.

\begin{lemma}
	\label{lem:projection1}
	Assume that $\Out(G_{\Gamma_1})$ and $\Out(G_{\Gamma_2})$ are finite. Let $i\in\{1,2\}$. Let $\ell$ be a standard line in $G_{\Gamma_{n,i}}$ such that $\sfv=\Delta(\ell)$ is of type $v_{n,i}$. For $1\le j\neq k\le n$, let $\ell_j$ and $\ell_k$ be two standard lines in $G_{\Gamma_{n,i}}$ intersecting $P_\sfv$ non-trivially such that $\Delta(\ell_j)=\sfw_j$ is of type $w_{n,i}[j]$ and $\Delta(\ell_k)=\sfw_k$ is of type $w_{n,i}[k]$. Let  $\sfv'=(q_{n,i})_*(\sfv)$. Then
	\begin{enumerate}
		\item  $\pi_{\sfv'}((q_{n,i})_*(\sfw_j))\neq \pi_{\sfv'}((q_{n,i})_*(\sfw_k))$.
		\item Write $P_\sfv=gG_{\st(v_{n,i})}$ and let $\mathcal C$ be the collection of standard lines $\ell'$ which have non-trivial intersection with $gG_{\lk(v_{n,i})}$ and satisfy $\Delta(\ell')\notin\st(\sfv)$.
  Let $\mathcal W=\{\Delta(\ell')\}_{\ell'\in \mathcal C}$. Then $\pi_{\sfv'}((q_i)_*(\mathcal W))$ is a finite set of cardinality at most $n$.
	\end{enumerate}
\end{lemma}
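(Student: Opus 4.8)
\textbf{Plan for the proof of Lemma~\ref{lem:projection1}.}

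The two assertions are really statements about the model right-angled Artin group $G_{\Gamma_i}$: the finite-index embedding $q_{n,i}:G_{\Gamma_{n,i}}\to G_{\Gamma_i}$ and the induced isomorphism $(q_{n,i})_*:\Gamma^e_{n,i}\to\Gamma_i^e$ transport the whole configuration (the line $\ell$, the lines $\ell_j,\ell_k$, the family $\mathcal C$) faithfully, so the plan is to rephrase everything downstairs and then use the explicit description of $q_{n,i}$ recalled in Section~\ref{sec:construction}. Recall $q_{n,i}(w_{n,i}[j])=v_i^{j-1}w_iv_i^{-j+1}$ and $q_{n,i}(v_{n,i})=v_i^n$; more generally $q_{n,i}$ sends a standard line of type $w_{n,i}[j]$ intersecting $P_\sfv$ to (something Hausdorff-close to) a $\sfw'_j$-line where $\sfw'_j$ is the parabolic conjugate of $\langle w_i\rangle$ by $v_i^{j-1}$ (up to the coset representative $g$, which I can normalize so that $P_{\sfv'}=g'G_{\st(v_i)}$ with $g'$ in $G_{\lk(v_i)}$). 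Since $w_i\notin\st(v_i)$, all these conjugates $\sfw'_j$ lie in $V(\Gamma_i^e\setminus\st(\sfv'))$, so $\pi_{\sfv'}$ is defined on them.

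For assertion (1), the point is that $\pi_{\sfv'}$ applied to a $\sfw'_j$-line measures the ``$v_i$-coordinate'' at which that line meets $P_{\sfv'}=g'(\langle v_i\rangle\times G_{\lk(v_i)})$. First I would invoke \cite[Lemma~6.2]{Hua} to see that $\pi_{\sfv'}$ factors through the parallelism class, i.e.\ gives a well-defined map on $V(\Gamma_i^e\setminus\st(\sfv'))$. Then I would compute: a $\sfw_j$-line in $G_{\Gamma_{n,i}}$ meeting $P_\sfv$, pushed forward by $q_{n,i}$, is parallel to $g' v_i^{j-1}\langle w_i\rangle$ (after absorbing $g'$ and using that $\lk(v_i)$-translates do not change the $v_i$-coordinate). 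Projecting to the line $\ell'\subset g'\langle v_i\rangle$ of type $v_i$ lands at the point with $v_i$-exponent congruent to $j-1$, and since $1\le j\neq k\le n$ while the distinct translates $v_i^{j-1},v_i^{k-1}$ project to genuinely distinct points of $g'\langle v_i\rangle$ (the projection $G_{\Gamma_i}\to g'\langle v_i\rangle$ is the exponent-sum map in $v_i$ after stripping the $\st(v_i)^\perp$-part), we get $\pi_{\sfv'}((q_{n,i})_*(\sfw_j))\neq\pi_{\sfv'}((q_{n,i})_*(\sfw_k))$. The bookkeeping with coset representatives $g$ vs.\ $g'$ is the one mildly fiddly point; I would handle it by choosing $g$ in $G_{\Gamma_{n,i}}$ so that $q_{n,i}(g)$ has trivial $v_i$-part, which is possible after left-multiplying $\sfv$ by an element of its stabilizer.

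For assertion (2), $\mathcal C$ consists of the standard lines meeting the $G_{\lk(v_{n,i})}$-slice of $P_\sfv$ and not in $\st(\sfv)$; under $q_{n,i}$ these map (up to finite Hausdorff distance) into standard lines of $G_{\Gamma_i}$ meeting $g'G_{\lk(v_i)}$ and of type in $V\Gamma_i\setminus\st(v_i)$, but the key extra input is that \emph{all} of these project under $\pi_{\sfv'}$ to a single point $\pi_{\sfv'}(g'G_{\lk(v_i)})$ by the cited fact that $\pi_\ell$ is constant on lines whose type avoids $\st(\sfv')$ \emph{and which lie in a common $\lk(v_i)$-slice} --- more precisely, by \cite[Lemma~6.2]{Hua} the projection of any such line depends only on which $\langle v_i\rangle$-coset of $P_{\sfv'}$ it meets. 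Since the lines in $q_{n,i}(\mathcal C)$ meet at most $n$ distinct $\langle v_i\rangle$-cosets inside $P_{\sfv'}$ --- this is exactly because $q_{n,i}$ has index related to $n$ and the $n$ copies $\Gamma_{n,i}[j]$ sit at $v_i$-translates $v_i^{j-1}$, $0\le j-1\le n-1$ --- the image $\pi_{\sfv'}((q_{n,i})_*(\mathcal W))$ has at most $n$ elements. I expect the main obstacle to be precisely this last counting step: making rigorous that ``$\mathcal C$ spreads over at most $n$ parallel $\langle v_i\rangle$-translates inside $P_{\sfv'}$.'' I would extract this from the explicit formula for $q_{n,i}$ on vertices of $\Gamma_{n,i}$ together with Proposition~\ref{prop:normalizer} (to control which conjugates of standard parabolics can occur), rather than from any soft argument. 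The remainder of the lemma is then routine normal-form manipulation in graph products \cite{Gre} combined with the nearest-point-projection facts from \cite[Lemma~13.8]{HW} and \cite[Lemma~6.2]{Hua}.
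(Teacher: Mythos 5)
Your proposal follows the same route as the paper: normalize by a translation so that $P_\sfv=G_{\st(v_{n,i})}$, use the explicit formula $q_{n,i}(u)=v_i^{j-1}\bar u v_i^{-j+1}$ for $u\in\Gamma_{n,i}[j]\setminus\{v_{n,i}\}$ (and $q_{n,i}(v_{n,i})=v_i^n$), and read off the $v_i$-exponent of the projection. For part (1) your ``congruent to $j-1$'' is exactly the paper's computation: after normalizing, the two projections are $v_i^{j-1}$ and $v_i^{nm+k-1}$, which are distinct because $j\not\equiv k\pmod n$ when $1\le j\ne k\le n$.

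For part (2), however, your write-up contains an internal inversion that you should fix. The image of a line $g_1\langle u\rangle\in\mathcal C$ (with $g_1\in G_{\lk(v_{n,i})}=G_{\lk(v_i)}$ and $u\in\Gamma_{n,i}[j]$) is Hausdorff-close to $g_1v_i^{j-1}\langle\bar u\rangle$, which meets $P_{\sfv'}$ at the point $g_1v_i^{j-1}$, so its $\pi_{\sfv'}$-projection is $v_i^{j-1}$. Consequently these lines do \emph{not} all project to the single point $\pi_{\sfv'}(g'G_{\lk(v_i)})$, and the projection is \emph{not} determined by which left $\langle v_i\rangle$-coset of $P_{\sfv'}$ the line meets: the lines in $q_{n,i}(\mathcal C)$ meet infinitely many such cosets (one for each $g_1$), all with the same projection once $j$ is fixed. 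The invariant that is actually finite is the $v_i$-coordinate of the intersection point, i.e.\ which left $G_{\lk(v_i)}$-coset of $P_{\sfv'}$ the line meets; it takes exactly the values $j-1$ for $j\in\{1,\dots,n\}$, which is what yields the bound $n$. You do state the correct underlying fact (``the $n$ copies $\Gamma_{n,i}[j]$ sit at $v_i$-translates $v_i^{j-1}$''), so the count comes out right, but the sentence claiming a common projection point and the coset bookkeeping around it are wrong as written and should be replaced by the direct computation above, which is precisely what the paper does.
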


\begin{proof}
	As $(q_{n,i})_\ast$ is $G_{\Gamma_{n,i}}$-equivariant, up to translation, we can assume that $P_\sfv=G_{\st(v_{n,i})}$, that $\ell_j=\langle w_{n,i}[j]\rangle$, and that $\ell_k=g_1v_{n,i}^m\langle w_{n,i}[k]\rangle$ for some $m\in\mathbb{Z}$ and some $g_1\in G_{\lk(v_{n,i})}$. Then $q_{n,i}(\ell_j)$ is Hausdorff close to the standard line $v_i^{j-1}\langle w_i\rangle$, and $q_{n,i}(\ell_k)$ is Hausdorff close to the standard line $g_1v_i^{mn+k-1}\langle w_i\rangle$. Identifying $Z_{\sfv'}$ with $\langle v_i\rangle$, we get that $\pi_{\sfv'}((q_{n,i})_*(\sfw_j))=v_i^{j-1}$ and $\pi_{\sfv'}((q_{n,i})_*(\sfw_k))=v_i^{nm+k-1}$, and Assertion 1 follows. Assertion 2 follows from a similar computation. 
\end{proof}

\begin{lemma}
	\label{lem:consistent}
 Let $\Gamma$ be a finite simplicial graph such that $|\Out(G_\Gamma)|<+\infty$. Suppose $H\actson G_\Gamma$ is an action by flat-preserving bijections. Take $\sfv\in V\Gamma^e$ and let $\alpha_\sfv:H_\sfv\actson Z_\sfv$ be the associated factor action.
	Let $\ell\subset G_\Gamma$ be a $\sfv$-line.  Take another standard line $\ell'$ with $\Delta(\ell')\notin \st(\sfv)$.

	Then $\alpha_{\sfv}(h)(\pi_{\sfv}(\Delta(\ell')))=\pi_{\sfv}(\Delta(h(\ell')))$ for any $h\in H_\sfv$.
\end{lemma}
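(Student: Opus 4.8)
The statement is a naturality/equivariance property for the star-projection $\pi_{\sfv}$ under the factor action, so the core of the proof should be to unwind the definitions and use the $H_\sfv$-equivariance of all the structures involved. First I would set up notation: fix the $\sfv$-line $\ell$ realizing $Z_\sfv$ (so that $\pi_\sfv:V(\Gamma^e\setminus\st(\sfv))\to\ell\cong Z_\sfv$ is the nearest-point projection onto the convex subcomplex with $0$-skeleton $\ell$), and recall from the discussion just before the lemma that $\pi_\sfv$ is well defined precisely because for any standard line $\ell''$ with $\Delta(\ell'')\notin\st(\sfv)$ the image $\pi_\ell(\ell'')$ is a single point depending only on the parallelism class $\Delta(\ell'')$ (by \cite[Lemma~6.2]{Hua}). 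The key observation is that $h\in H_\sfv$ acts on $G_\Gamma$ by a flat-preserving bijection preserving $P_\sfv$ and sending $\sfv$-lines to $\sfv$-lines, and that its restriction to $P_\sfv\cong Z_\sfv\times\mathcal L_\sfv$ respects the product decomposition; the induced map on the $Z_\sfv$-factor is exactly $\alpha_\sfv(h)$, and the induced map on $\ell$ (viewed inside $P_\sfv$ as $Z_\sfv\times\{\ell\}$) can be identified with $\alpha_\sfv(h)$ after conjugating by the canonical identification $\ell\cong Z_\sfv$.

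The main step is then to show that the nearest-point projection $\pi_\ell$ is ``$h$-equivariant'' in the appropriate sense: for a standard line $\ell'$ with $\Delta(\ell')\notin\st(\sfv)$ one has $h(\ell')$ again a standard line with $\Delta(h(\ell'))\notin\st(h\sfv)=\st(\sfv)$ (using $h\in H_\sfv$), so $\pi_\sfv(\Delta(h(\ell')))$ is defined. I would argue that $h$ carries the pair $(\ell,\ell')$ to the pair $(h(\ell),h(\ell'))=(\text{a }\sfv\text{-line parallel to }\ell,\ h(\ell'))$, and that $h$ — being a flat-preserving bijection that extends to a cubical automorphism of $\B_\Gamma$ and hence, via the universal cover of the Salvetti complex, commutes with the combinatorial convex-projection structure — sends the point $\pi_{\ell'}(\ell)\subset\ell'$ region configuration to the corresponding one for $(h(\ell),h(\ell'))$. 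More concretely: since $h$ preserves $P_\sfv$ and its product splitting, and since the nearest-point projection onto $\ell$ of a point outside $\st(\sfv)$ only depends on the $Z_\sfv$-coordinate of that point (this is what makes $\pi_\sfv$ well defined), applying $h$ and then projecting equals projecting and then applying $\alpha_\sfv(h)$. One then composes with the parallelism map sending the $\sfv$-line $h(\ell)$ back to $\ell$ to land the equality inside $Z_\sfv$, noting that this composite is harmless because $\pi_\sfv$ only records parallelism classes and $\alpha_\sfv(h)$ is defined on $Z_\sfv$ independently of which $\sfv$-line is chosen.

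In practice I would phrase the computation as a short chain of equalities: $\alpha_\sfv(h)(\pi_\sfv(\Delta(\ell'))) = \alpha_\sfv(h)(\pi_\ell(\ell')) = h(\pi_\ell(\ell')) = \pi_{h(\ell)}(h(\ell')) = \pi_\ell(h(\ell')) = \pi_\sfv(\Delta(h(\ell')))$, where the first equality is the identification $\ell\cong Z_\sfv$ together with the definition of $\pi_\sfv$, the second is the identification of the factor action on the $Z_\sfv$-coordinate with the $h$-action on $\ell$, the third is the equivariance of nearest-point projection under the cubical automorphism $h$ (projection onto $h$ of a convex set is $h$ of the projection onto that set — \cite[Lemma~13.8]{HW} makes the projections well-defined and they are manifestly automorphism-equivariant), the fourth uses that $h(\ell)$ and $\ell$ are parallel $\sfv$-lines so their projections of a common point outside $\st(\sfv)$ coincide (again \cite[Lemma~6.2]{Hua}), and the last is the definition of $\pi_\sfv$ applied to $h(\ell')$. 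The one point requiring a little care — and the step I expect to be the main obstacle — is justifying the second equality, i.e.\ that the $Z_\sfv$-factor of the $H_\sfv$-action on $P_\sfv=Z_\sfv\times\mathcal L_\sfv$ really is $\alpha_\sfv(h)$ acting on $\ell\subset P_\sfv$ after the canonical identification $\ell\cong Z_\sfv$; this is essentially the definition of the factor action (Definition~\ref{def:factor action} together with the discussion in Section~\ref{sec:qi-criterion}), but one must check that the identification of $\ell$ with $Z_\sfv$ via $\pi_1|_\ell$ (or via the projection $P_\sfv\to Z_\sfv$) intertwines the two actions, which follows because $h$ preserves the product splitting. Everything else is a formal consequence of automorphism-equivariance of convex projections in CAT(0) cube complexes.
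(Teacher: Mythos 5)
Your overall strategy is the same as the paper's: everything reduces to showing that the nearest-point projection onto $\ell$ (equivalently onto $P_\sfv$) is equivariant under $h\in H_\sfv$, after which the statement is bookkeeping with the product splitting $P_\sfv\cong Z_\sfv\times\call_\sfv$ and the parallelism map. However, the justification you give for that key step --- the third equality $h(\pi_\ell(\ell'))=\pi_{h(\ell)}(h(\ell'))$, which you dismiss as ``manifest automorphism-equivariance of convex projections'' --- does not work, and this is precisely the step the paper's proof is devoted to. The point is that $h$ is only a flat-preserving bijection of $G_\Gamma$: it induces a cubical automorphism of the building $\B_\Gamma$, but it is \emph{not} an automorphism (or even a quasi-isometry) of the cube complex $X_\Gamma$ in which the nearest-point projection $\pi_\ell$ is defined via the word metric. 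As the introduction of the paper stresses, an arbitrary permutation of the points of a standard line extends to a flat-preserving bijection, so $h$ can badly distort distances and does not send geodesics to geodesics or hyperplanes to hyperplanes. Hence there is nothing ``manifest'' about the equivariance of $\pi_\ell$; the appeal to \cite[Lemma~13.8]{HW} only gives well-definedness of the projection, not its compatibility with $h$. (You flag the identification of the factor action with the action on the $Z_\sfv$-coordinate as the main obstacle; that part is in fact routine, and the real difficulty sits exactly where you wave your hands.)

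The correct argument, as in the paper, replaces the metric description of the projection by a combinatorial one that \emph{is} visibly preserved by flat-preserving bijections fixing $\sfv$: a vertex $y\in P_\sfv$ equals $\pi_{P_\sfv}(x)$ if and only if some (equivalently, any) edge path from $x$ to $y$ decomposes into maximal segments lying in standard lines $\ell_0,\dots,\ell_{n-1}$ with $\Delta(\ell_i)\notin\st(\sfv)$ for all $i$ --- equivalently, no hyperplane separating $x$ from $y$ crosses the convex hull of $P_\sfv$. Applying $h$ to such a path produces a concatenation of segments in the standard lines $h(\ell_i)$, and since $h$ induces an automorphism of $\Gamma^e$ fixing $\sfv$, it preserves $\st(\sfv)$, so $\Delta(h(\ell_i))\notin\st(\sfv)$ as well; one concludes $h(\pi_{P_\sfv}(x))=\pi_{P_\sfv}(h(x))$ without ever claiming that $h$ respects the metric. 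Your proposal is missing this characterization, and as written the chain of equalities rests on a false premise about $h$.
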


\begin{proof}
As $P_\sfv$ is the vertex set of a convex subcomplex $C_\sfv$ of $X_\Gamma$, by \cite[Lemma~13.8]{HW}, there is a nearest point projection map $\pi_{P_\sfv}:G_\Gamma\to P_\sfv$.
The lemma follows from the fact, established below, that the nearest point projection map $\pi_{P_\sfv}:G_\Gamma\to P_\sfv$ is $H_\sfv$-equivariant.

Now we prove the fact. Let $x\in G_\Gamma$ and $h\in H_\sfv$. Let $y=\pi_{P_\sfv}(x)$. Let $\omega$ be a shortest edge path in the 1-skeleton of $X_\Gamma$ connecting the vertices $x$ and $y\in P_\sfv$. Let $x_0,\dots,x_n$ be
vertices in $\omega$ such that for $0\le i\le n-1$, $[x_i
, x_{i+1}]$ is a maximal sub-segment of $\omega$ that
is contained in a standard line ($x_0=x$ and $x_n=y$). Denote the corresponding
standard line by $\ell_i$. As $y$ can be alternatively characterized as the unique point in $P_\sfv$ such that any hyperplane separating $x$ and $y$ does not cross $C_\sfv$ (see \cite[Lemmas~13.1 and~13.8]{HW}),  $y=\pi_{P_\sfv}(x)$ if and only if  for any $\omega$ and $(\ell_i)$ as above, $\Delta(\ell_i)\notin\st(\sfv)$ for any $0\le i\le n-1$.

For $0\le i\le n-1$, let $\ell'_i$ be the standard line with $h(\ell_i)=\ell'_i$ and let $\omega'_i$ be a geodesic segment in $\ell'_i$ from $h(x_i)$ to $h(x_{i+1})$.   Let $\omega'$ be the concatenation of all the $\omega'_i$ with $0\le i\le n-1$. 
By considering the automorphism $h_*:\Gamma^e\to \Gamma^e$ induced by the flat-preserving bijection $h$, we see that $\Delta(\ell'_i)\notin \st(\sfv)$ for any $0\le i\le n-1$. Thus none of the hyperplanes that have nonempty intersection with $\omega'$ will intersect $C_{\sfv}$. As any hyperplane separating $h(x)$ and $h(y)$ is dual to an edge in $\omega'$, this hyperplane does not cross $C_\sfv$. Thus $h(y)=\pi_{P_\sfv}(h(x))$ by the previous paragraph.
\end{proof}

\subsection{Conclusion}\label{sec:end}

As usual, given an action $\alpha$ of a group $H$ by flat-preserving bijections on $G=G_\Gamma$, for every $\sfv\in V\Gamma^e$, we let $\alpha_\sfv:H_\sfv\actson Z_\sfv$ be the factor action.

\begin{prop}
	\label{prop:quasi-action}
Let $G$ be a non-cyclic right-angled Artin group with $|\Out(G)|<+\infty$. Let $\rho$ be a quasi-action of $H$ on $G$, and let 
	$\alpha:H\actson G$ be the unique $H$-action on $G$ by flat-preserving bijections which is equivalent to $\rho$ (see Corollary~\ref{cor:equivalent}).
	
 Then there exists $C\ge 0$ (depending on $\rho$) such that for every $\sfv\in V\Gamma^e$ and every subgroup $H'_\sfv\subseteq H_\sfv$, 
 if some $H'_\sfv$-orbit of the factor action $\alpha_{\sfv}$ is finite, then every $H'_\sfv$-orbit for this action has cardinality at most $C$. 
\end{prop}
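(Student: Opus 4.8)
The plan is to reduce the statement to a statement about the geometry of the quasi-action $\rho$ together with the structure theory of factor actions developed earlier. First I would recall that, by Theorem~\ref{theo:qi}, the quasi-action $\rho$ is equivalent to the flat-preserving action $\alpha$, with a uniform bound $D = D(\rho)$ on the displacement between $\rho(h,\cdot)$ and $\alpha(h)$; in particular $\alpha$ is an action by uniform $(L,A)$-quasi-isometries, with $L,A$ depending only on $\rho$. Fix $\sfv \in V\Gamma^e$, let $v \in V\Gamma$ be its type, and let $P_\sfv = gG_{\st(v)}$ be the associated parallel set, which splits as $Z_\sfv \times \call_\sfv$. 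The key observation is that $Z_\sfv$ can be realized, via a nearest-point projection, as a coarse retract of $G$: choosing a $\sfv$-line $\ell$, the projection $\pi_\ell : G \to \ell$ is $1$-Lipschitz, and (using Lemma~\ref{lem:projection1}-style facts, or more directly the convexity of $\ell$ in $X_\Gamma$) the factor action $\alpha_\sfv$ on $Z_\sfv \cong \ell$ is intertwined with $\alpha$ via $\pi_\ell$ up to bounded error. More precisely, because $\alpha(h)$ maps $P_\sfv$ to $P_\sfv$ respecting the product decomposition for $h \in H_\sfv$, and because $\pi_\ell$ is $\alpha$-equivariant on $P_\sfv$ up to bounded error (a consequence of the reasoning in Lemma~\ref{lem:consistent}), we get that $\alpha_\sfv(h)$ is the $\pi_\ell$-pushforward of $\alpha(h)|_{P_\sfv}$. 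Combining this with the uniform quasi-isometry bound on $\alpha(h)$, the maps $\alpha_\sfv(h) : Z_\sfv \to Z_\sfv$ are uniform quasi-isometries of $\mathbb Z$, with constants $(L',A')$ depending only on $\rho$.

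Next I would exploit the fact that a uniform quasi-action on $\mathbb Z$ has controlled orbit geometry. The group $H_\sfv$ acts on $Z_\sfv \cong \mathbb Z$ by $(L',A')$-quasi-isometries; each such quasi-isometry of $\mathbb Z$ is within distance $A'$ of either a map close to $+\mathrm{id}$-type (``orientation-preserving'') or $-\mathrm{id}$-type (``orientation-reversing'') — concretely, $q(n) = \pm n + c + (\text{error} \le A')$ where the sign and the constant $c$ depend on $q$. Now suppose $H'_\sfv \subseteq H_\sfv$ has a finite orbit $\calo$ for $\alpha_\sfv$, say $x \in \calo$ with $|\calo| < \infty$. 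I claim the stabilizer $(H'_\sfv)_x$ acts on $Z_\sfv \cong \mathbb Z$ with \emph{uniformly bounded} orbits (bounded in terms of $(L',A')$ only): indeed for $h \in (H'_\sfv)_x$, the quasi-isometry $\alpha_\sfv(h)$ fixes $x$ up to error $A'$, so being of ``slope $\pm 1$'' and nearly fixing one point, it moves \emph{every} point by at most some constant $C_0 = C_0(L',A')$ — a translation-like map of $\mathbb Z$ that almost fixes a point almost fixes everything. Hence the $(H'_\sfv)_x$-orbit of any $y \in Z_\sfv$ lies in an interval of length $\le 2C_0$, so has cardinality $\le 2C_0 + 1$. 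Since $[H'_\sfv : (H'_\sfv)_x] = |\calo| < \infty$ is finite but \emph{not} a priori bounded, I need one more step: for an arbitrary $y$, the $H'_\sfv$-orbit of $y$ is a union of $(H'_\sfv)_x$-orbits indexed by $\calo$, hence has cardinality $\le |\calo| \cdot (2C_0+1)$ — but this depends on $|\calo|$, which is not good enough.

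To remove the dependence on $|\calo|$, I would argue directly: if $y \in Z_\sfv$ has finite $H'_\sfv$-orbit $\calo_y$, then the stabilizer $(H'_\sfv)_y$ also has index $|\calo_y|$ and, as above, moves every point by at most $C_0$. The crucial point is that the union $\calo \cup \calo_y$ is $H'_\sfv$-invariant and finite, and $H'_\sfv$ acts on a finite subset of $\mathbb Z$ by uniform quasi-isometries of $\mathbb Z$ that permute this finite set; such an action factors through the (finite, but unbounded) symmetric group on $\calo \cup \calo_y$. To bound $|\calo_y|$ in terms of $|\calo|$ and the constants, note that any $h \in H'_\sfv$ with $\alpha_\sfv(h)(x) = x$ moves $y$ by at most $C_0$, so the $(H'_\sfv)_x$-orbit of $y$ has size $\le 2C_0 + 1$; and $\calo_y = \bigsqcup_{z \in \calo}$ (pieces each of size $\le 2C_0+1$) is not literally correct, but replacing $\calo$ by the minimal-cardinality finite orbit and iterating, the correct statement is: \emph{any} finite orbit has cardinality at most $2C_0 + 1$. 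This is because if $\calo$ is a finite orbit and $z_1, z_2 \in \calo$ with $|z_1 - z_2|$ maximal, choosing $h \in H'_\sfv$ with $\alpha_\sfv(h)(z_1) = z_2$, the quasi-isometry $\alpha_\sfv(h)$ of slope $\pm 1$ maps the interval $[\min\calo, \max\calo]$ to an interval of comparable length containing $z_2$, forcing $\mathrm{diam}(\calo) \le C_1(L',A')$. So $C := C_1 + 1$ works uniformly. \textbf{The main obstacle} is precisely this last point — extracting a bound on the diameter (equivalently the cardinality) of any finite orbit that depends only on the quasi-isometry constants $(L,A)$ of $\rho$, and not on the orbit or the subgroup $H'_\sfv$; once one correctly formalizes ``a slope-$\pm1$ quasi-isometry of $\mathbb Z$ that permutes a finite set has small displacement on that set,'' the rest is bookkeeping with the equivariance of $\pi_\ell$ and the uniform bounds from Theorem~\ref{theo:qi}.
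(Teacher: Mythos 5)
Your first step---that the factor actions $\alpha_\sfv$ are by bijective $(L',A')$-quasi-isometries of $Z_\sfv\cong\mathbb{Z}$ with constants depending only on $\rho$---is exactly how the paper's (very short) proof begins. The gap is in how you then handle the orientation-reversing elements. Writing $\alpha_\sfv(h)$ as $x\mapsto \epsilon_h x+c_h$ up to error $C_0$, your two key claims both fail when $\epsilon_h=-1$: a map coarsely equal to the reflection $x\mapsto 2x_0-x$ fixes $x_0$ but moves $y$ by about $2|y-x_0|$, so an element of the stabilizer $(H'_\sfv)_x$ need not ``move every point by at most $C_0$''; and a single such reflection permutes the two-point set $\{a,2x_0-a\}$ for arbitrary $a$, so a finite orbit can have cardinality $2$ while its diameter is as large as you like. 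Consequently the statement your last paragraph reduces everything to---$\mathrm{diam}(\calo)\le C_1(L',A')$ for every finite orbit $\calo$---is false; diameter and cardinality are not ``equivalent'' here, and only the cardinality bound is true (and is what the proposition asserts).

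The repair is to argue with cardinality from the start. The paper does this by invoking \cite[Proposition~6.3]{HK}: the uniform quasi-action $\alpha_\sfv$ is semi-conjugate, via an equivariant surjection $f:Z_\sfv\to\mathbb{Z}$ whose fibers have size at most $D=D(L',A')$, to a genuine \emph{isometric} action $\alpha'_\sfv$ of $H_\sfv$ on $\mathbb{Z}$. A finite orbit of $(\alpha_\sfv)_{|H'_\sfv}$ pushes forward to a finite orbit of the isometric action, which forces a global fixed point (its barycenter), so every orbit of $(\alpha'_\sfv)_{|H'_\sfv}$ has at most $2$ points, and pulling back through $f$ gives the uniform bound $C=2D$. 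If you prefer to stay closer to your direct approach, the correct move is to pass to the index-$\le 2$ subgroup of $H'_\sfv$ of elements with $\epsilon_h=+1$: these are coarse translations, permuting a finite set forces $|c_h|\le 2C_0$, hence their orbits have cardinality at most $4C_0+1$, and an $H'_\sfv$-orbit is a union of at most two such orbits. Note finally that the ``slope $\pm1$'' normal form for the individual maps, which you assert without proof, is itself essentially the content of \cite[Proposition~6.3]{HK}, so the direct route does not actually avoid that input.
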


\begin{proof}
	Note that there are constants $L,A>0$, depending on $\rho$, such that for any $\sfv\in V\Gamma^e$, the factor action $\alpha_\sfv:H_\sfv\actson Z_\sfv$ is by bijections which are $(L,A)$-quasi-isometries. Then by \cite[Proposition~6.3]{HK}, there exist a constant  $D$ depending only on $L$ and $A$, an isometric action $\alpha'_\sfv:H_\sfv\actson \mathbb Z$ and a surjective equivariant map $f: Z_\sfv\to \mathbb Z$ such that each point inverse of $f$ has cardinality at most $D$. If $(\alpha_\sfv)_{|H'_\sfv}$ has a finite orbit, then $(\alpha'_\sfv)_{|H'_\sfv}$ has a finite orbit, hence a fixed point. Hence any orbit of $(\alpha'_\sfv)_{|H'_\sfv}$ has cardinality at most $2$, and therefore any orbit of $(\alpha_\sfv)_{|H'_\sfv}$ has cardinality at most $2D$.
\end{proof}

 Let $\Gamma_1$ and $\Gamma_2$ be two finite simplicial graphs such that $\Out(G_{\Gamma_1})$ and $\Out(G_{\Gamma_2})$ are finite. Given this choice, let $\mathcal{C}\subseteq\mathbb{Z}_{\ge 0}$ and the groups $U_n,V_n$ be as in Corollary~\ref{cor:embedding properties}. Recall that $U_n$ acts properly and cocompactly on $X_{2n,1}\times X_{2n,2}$, and so does $G_{\Gamma_{2n,1}}\times G_{\Gamma_{2n,2}}$. In particular $U_n$ and $G_{\Lambda}=G_{\Gamma_1}\times G_{\Gamma_2}$ are strongly commable in the sense recalled before the statement of Theorem~\ref{theointro:nonrigidity} in the introduction. Therefore, Theorem~\ref{theointro:nonrigidity} is a consequence of the following statement -- recall that $U_n$ is torsion-free, so we can focus on lattice embeddings instead of lattice representations with finite kernel. 

\begin{theo}\label{theo:final}
	There does not exist a locally compact second countable topological group $\mathfrak{G}$ such that for each $n\in \mathcal C$, there exists a finite index subgroup $U'_n$ of $U_n$ which admits an embedding $U'_n\to \mathfrak{G}$ whose image is a lattice.
\end{theo}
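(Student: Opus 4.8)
\textbf{Proof plan for Theorem~\ref{theo:final}.}

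The plan is to argue by contradiction: suppose such a locally compact second countable group $\mathfrak{G}$ exists, together with finite-index subgroups $U'_n\subseteq U_n$ and lattice embeddings $U'_n\hookrightarrow\mathfrak{G}$ for all $n\in\mathcal{C}$. First I would observe that since $U_n$ acts properly cocompactly on the $\mathrm{CAT}(0)$ cube complex $X_{2n,1}\times X_{2n,2}$, which is quasi-isometric to $G_\Lambda=G_{\Gamma_1}\times G_{\Gamma_2}$, all the groups $U'_n$ are quasi-isometric to the fixed right-angled Artin group $G_\Lambda$, with $|\Out(G_\Lambda)|<+\infty$. By Theorem~\ref{theo:cocompact} (applicable since $U'_n$ is torsion-free, hence of bounded torsion, and measure equivalent — indeed quasi-isometric — to $G_\Lambda$), each embedding $U'_n\hookrightarrow\mathfrak{G}$ is cocompact, and $\mathfrak{G}$ maps with compact kernel to a totally disconnected group; moreover $\mathfrak{G}$ is itself quasi-isometric to $G_\Lambda$. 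Then Proposition~\ref{prop:q-action} gives, for each $n$, a proper cobounded continuous action of $\mathfrak{G}$ on $G_\Lambda$ by flat-preserving bijective uniform quasi-isometries, arising from a proper cobounded measurable quasi-action $\rho_n$ of $\mathfrak{G}$ on $G_\Lambda$; by Corollary~\ref{cor:equivalent} this is equivalent to a unique continuous flat-preserving action $\alpha_n:\mathfrak{G}\actson G_\Lambda$. The crucial point is that the quasi-action constants $(L_n,A_n)$, hence the constant $C_n$ coming from Proposition~\ref{prop:quasi-action}, cannot be controlled uniformly as $n$ varies — but this will be exactly what we exploit, combined with a uniform control on the relevant orbits coming from the geometry of $\mathfrak{G}$ itself.

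Next I would extract a uniform bound. Since $\mathfrak{G}$ is a fixed locally compact second countable group and each $U'_n$ is a cocompact lattice in $\mathfrak{G}$, the quasi-action constants of $\rho_n$ (built from a fixed compact transversal $K_n$ for $U'_n\backslash\mathfrak{G}$ and a quasi-isometry $U'_n\to G_\Lambda$) a priori depend on $n$; the key geometric input is that the quasi-action of $\mathfrak{G}$ on $G_\Lambda$ restricts, on the subgroup $U'_n$, to its standard (geometric, hence bounded-displacement) action coming from $U'_n\actson X_{2n,1}\times X_{2n,2}$. So I would instead work directly with the $U'_n$-action: composing $\alpha_n$ with the inclusion $U'_n\hookrightarrow\mathfrak{G}$, and using the uniqueness statement in Corollary~\ref{cor:equivalent} together with the $U'_n$-equivariant quasi-isometry from $X_{2n,1}\times X_{2n,2}$ to $G_\Lambda$, we see that $(\alpha_n)_{|U'_n}$ is the flat-preserving action induced by $U'_n\actson X_{2n,1}\times X_{2n,2}$ conjugated through the quasi-isometry $q_{2n}:G_{\Lambda_{2n}}\to G_\Lambda$ of Section~\ref{sec:construction}. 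Now fix a vertex $\sfw$ of $\Gamma_1^e$ of type $v_1$ and let $\sfw_n=(q_{2n,1})_*^{-1}(\sfw)$ be the corresponding vertex of $\Gamma_{2n,1}^e$. Lemma~\ref{lem:projection1}(1) shows that for the $\sfw_n$-line of type $v_{2n,1}$, the images under $\pi_{\sfw}\circ(q_{2n,1})_*$ of the $2n$ distinct $w$-type lines meeting $P_{\sfw_n}$ are $2n$ distinct points of $Z_{\sfw}\cong\mathbb{Z}$. Meanwhile, Corollary~\ref{cor:embedding properties}(4) and the even-local-action hypothesis of Lemma~\ref{lemma:simple}, together with Lemma~\ref{lem:consistent} (which says the star-projection is equivariant for the factor action $\alpha_{n,\sfw}$), force the factor action of $\phi_n(V_n)_{\sfw_n}\cap U'_n$ on $Z_{\sfw}$ to have an orbit of size at least $n/4$ — while simultaneously having a fixed point (the projection of $P_{\sfw_n}$ itself, which is $U'_n$-invariant, hence a finite orbit).

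Finally, I would derive the contradiction. Applying Proposition~\ref{prop:quasi-action} to the quasi-action $\rho_n$ — but here is where the subtlety lies — would bound all finite-orbit-adjacent orbits of $\alpha_{n,\sfw}$ by a constant $C_n$; the issue is that $C_n=2D_n$ depends on the quasi-isometry constants $(L_n,A_n)$ of $\rho_n$. To close the argument I would show these constants are bounded independently of $n$. For this, observe that the quasi-action $\rho_n$ of $\mathfrak{G}$ on $G_\Lambda$ restricted to $U'_n$ has displacement bounded by a constant depending only on the quasi-isometry $X_{2n,1}\times X_{2n,2}\to G_\Lambda$; but all these cube complexes have uniformly bounded geometry (bounded valence, bounded dimension — the dimensions are those of $G_{\Gamma_1},G_{\Gamma_2}$, independent of $n$) and the quasi-isometry constants of $q_{2n}$ are uniformly bounded because $q_{2n}$ is built from the fixed homomorphisms $G_{\Gamma_i}\to\mathbb{Z}/2n\mathbb{Z}$ in a uniform way (the multiplicative constant is $1$ and the additive constant is controlled by $\mathrm{diam}(\st(v_i))$). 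Consequently there is a single constant $C=2D$ valid for all $n\in\mathcal{C}$. Choosing $n\in\mathcal{C}$ with $n/4>C$ — possible since $\mathcal{C}$ is infinite and we may, by the freedom in Lemma~\ref{lemma:simple}/Corollary~\ref{cor:embedding properties}, take $n$ arbitrarily large — we obtain a subgroup $H'_{\sfw}=\phi_n(V_n)_{\sfw_n}\cap U'_n$ of $(U'_n)_{\sfw}$ whose factor action on $Z_{\sfw}$ has both a finite orbit and an orbit of cardinality $\ge n/4>C$, contradicting Proposition~\ref{prop:quasi-action}. The main obstacle, and the step requiring the most care, is precisely this uniformity of the quasi-action constants across the infinite family $\{U'_n\}$: one must verify that passing through a \emph{fixed} envelope $\mathfrak{G}$ (equivalently, using that all the $U'_n$ sit cocompactly in one locally compact group) forces a single quasi-isometry constant, so that Proposition~\ref{prop:quasi-action} yields a bound $C$ that does not grow with $n$, while the Burger--Mozes local-action data makes the relevant orbits grow without bound.
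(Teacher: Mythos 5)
Your overall architecture matches the paper's: contradiction, cocompactness of the lattice embeddings via Theorem~\ref{theo:cocompact}, a flat-preserving action of $\mathfrak{G}$ on $G_\Lambda$ via Proposition~\ref{prop:q-action} and Corollary~\ref{cor:equivalent}, large finite orbits of factor actions produced by the Burger--Mozes local action together with Lemmas~\ref{lem:projection1} and~\ref{lem:consistent}, against the bound of Proposition~\ref{prop:quasi-action}. But the step you yourself single out as the crux --- the uniformity in $n$ of the constant $C$ --- is resolved by a false claim. You assert that the quasi-isometry constants of $q_{2n}:G_{\Lambda_{2n}}\to G_\Lambda$ are uniformly bounded, ``the multiplicative constant is $1$ and the additive constant is controlled by $\mathrm{diam}(\st(v_i))$.'' This is not so: $q_{2n,i}$ sends the generator $v_{2n,i}$ to $v_i^{2n}$, so $d(1,v_{2n,i})=1$ while $d(1,q_{2n,i}(v_{2n,i}))=2n$, and the multiplicative constant of this inclusion of an index-$2n$ subgroup grows linearly in $n$. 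Consequently the action $\alpha_{2,n}$ of $U'_n$ on $G_\Lambda$ obtained by pushing the geometric action on $X_{2n,1}\times X_{2n,2}$ through $q_{2n}$ is by quasi-isometries whose constants genuinely blow up with $n$, and no bound $C$ extracted from those constants can be uniform. Since the entire contradiction rests on a single $C$ valid for all $n$ while the orbit sizes $n/4$ go to infinity, this is a gap in the argument, not a technicality.

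The way to repair it (and what the paper does) is to never ask for uniformity of constants across the family $\alpha_{2,n}$. Fix one $n_0\in\mathcal{C}$ and apply Proposition~\ref{prop:q-action} once, to get a single proper cobounded action $\alpha_1$ of the fixed group $\mathfrak{G}$ on $G_\Lambda$ by flat-preserving bijections which are quasi-isometries with constants $(L,A)$ depending only on $\mathfrak{G}$ and that one choice; Proposition~\ref{prop:quasi-action} applied to $\alpha_1$ yields one constant $C$. For every $n$, restricting $\alpha_1$ along the lattice embedding $U'_n\hookrightarrow\mathfrak{G}$ gives an action $\alpha_{1,n}$ with those same constants, hence satisfying the $C$-bound on finite orbits of factor actions. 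Separately, $\alpha_{1,n}$ and $\alpha_{2,n}$ are two proper cobounded actions of the finitely generated group $U'_n$ on $G_\Lambda$, hence quasi-conjugate, and by the rigidity Theorem~\ref{theo:qi} they are actually conjugate by a flat-preserving bijection of $G_\Lambda$. Such a conjugacy does not preserve quasi-isometry constants, but it does carry orbits of factor actions bijectively to orbits of factor actions, so the bound $C$ transfers from $\alpha_{1,n}$ to $\alpha_{2,n}$ for free. Your proposal gestures at this (``uniform control\ldots coming from the geometry of $\mathfrak{G}$ itself'' and ``conjugated through the quasi-isometry $q_{2n}$'') but then takes the wrong fork by trying to bound the constants of $q_{2n}$ instead of invoking the conjugation-invariance of the orbit-cardinality statement; you also implicitly use that $V_n\subseteq U'_n$, which deserves a word (it holds because $V_n$ is simple and infinite, so it has no proper finite-index subgroup).
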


\begin{proof}
 We argue by contradiction and assume such $\mathfrak{G}$ exists. Recall that $|\Out(G_\Lambda)|<+\infty$. Since $G_{\Lambda_{2n}}$ has finite index in $G_\Lambda$, and since $U_n$ and its finite-index subgroup $U'_n$ are uniform lattices in $\Aut(X_{2n,1}\times X_{2n,2})$, we can apply Theorem~\ref{theo:cocompact} and deduce that the lattice embedding $U'_n\to\mathfrak{G}$ is cocompact.

 Fix $n_0\in\mathcal{C}$. Since $U'_{n_0}$ is quasi-isometric to $G_\Lambda$, we can apply Proposition~\ref{prop:q-action} and get a proper, cobounded action $\alpha_1$ of $\mathfrak{G}$ on $G_\Lambda$ by flat-preserving bijective  quasi-isometries with uniform constants. 
 This gives, for every $n\in\mathcal{C}$, an action by flat-preserving bijections $\alpha_{1,n}:U'_n\actson G_\Lambda$ through the lattice embedding $U'_n\to \mathfrak{G}$. We apply Proposition~\ref{prop:quasi-action} to the action $\alpha_1:\mathfrak{G}\actson G_\Lambda$, and let $C$ be the resulting constant. 
	
	On the other hand, for every $n\in\mathcal{C}$, the group $U'_n$ has a properly discontinuous cocompact action on $X_{2n,1}\times X_{2n,2}$. Let $q_{2n}:X_{2n,1}\times X_{2n,2}\to G_\Lambda$ be as in the previous section, after identifying the $0$-skeleton of $X_{2n,1}\times X_{2n,2}$ with $G_{\Lambda_{2n}}$. Then $q_{2n}$ gives a quasi-action of $U'_n$ on $G_\Lambda$, which is equivalent to a unique action by flat-preserving bijections 
	$\alpha_{2,n}:U'_n\actson G_\Lambda$ (see Corollary~\ref{cor:equivalent}). Note that both $\alpha_{1,n}$ and $\alpha_{2,n}$ are  proper and cobounded actions of $U'_n$ on $G_\Lambda$ by quasi-isometries with uniform constants. Thus $\alpha_{1,n}$ and $\alpha_{2,n}$ are quasi-conjugate (as any two proper and cobounded quasi-actions of the same finitely generated group on the same space are quasi-conjugate). By Theorem~\ref{theo:qi}, $\alpha_{1,n}$ and $\alpha_{2,n}$ are actually conjugate via a flat-preserving bijection of $G_\Lambda$. 

 Through $\alpha_{2,n}$, the group $U'_n$ acts on the extension graph $\Lambda^e$. Since the action of $U'_n$ on $X_{2n,1}\times X_{2n,2}$ sends standard flats to standard flats (Corollary~\ref{cor:embedding properties}), we also have an action of $U'_n$ on $\Lambda_{2n}^e$. The map $q_{2n}:X_{2n,1}\times X_{2n,2}\to G_\Lambda$ induces an isomorphism $(q_{2n})_{\ast}:\Lambda_{2n}^e\to\Lambda^e$, which is $U'_n$-equivariant with respect to the above actions.
 
	Let $\{\alpha_{2,n,\sfv}:U'_{n,\sfv}\actson Z_\sfv\}_{\sfv\in V\Lambda^e}$ be the collection of factor actions for $\alpha_{2,n}:U'_n\actson G_\Lambda$.  We observe that for every subgroup $U''_{n,\sfv}\subseteq U'_{n,\sfv}$, every finite orbit of $(\alpha_{2,n,\sfv})_{|U''_{n,\sfv}}$ has cardinality at most $C$: indeed, this follows from the same property  for the action $\alpha_{1,n,\sfv}$, which comes from our choice of constant $C$, and from the fact that the actions $\alpha_{1,n}$ and $\alpha_{2,n}$ are conjugate via a flat-preserving bijection. In the rest of the proof, we will show that as $n$ becomes larger and larger, we can find some $\sfv\in V\Lambda^e$ and some subgroup $U''_{n,\sfv}$ of $U'_{n,\sfv}$ acting on $Z_\sfv$ with finite orbits of larger and larger size, which will be a contradiction.

	Consider the simple subgroup $V_n$ of $U_n$ as above. Then $V_n\subseteq U'_n$. We view $T_n\times T_n$ as a $V_n$-invariant subcomplex of $X_{2n,1}\times X_{2n,2}$ via the embedding $\theta_n$ provided by Corollary~\ref{cor:embedding properties}. A \emph{vertical} (resp.\ \emph{horizontal}) \emph{$T_n$-copy} in $X_{2n,1}\times X_{2n,2}$ is the $\theta_n$-image of $\{z\}\times T_n$ (resp.\ $T_n\times \{z\}$) for some vertex $z$ in the first (resp.\ second) tree factor.  
	Let $\ell\subset X_{2n,1}\times X_{2n,2}$ be a standard line of type $v_{2n,1}$ such that $\ell$ intersects $T_n\times T_n$ in a vertex $x\in X_{2n,1}\times X_{2n,2}$. Let $p_1:T_n\times T_n\to T_n$ be the projection to the first factor. 
	Let $V_{n,x}$ be the $V_n$-stabilizer of $p_1(x)$ with respect to the action of $V_n$ on the first factor of $T_n\times T_n$. 
	
	We claim that each element in $V_{n,x}$ sends $\ell$ to a standard line which is parallel to $\ell$.
	Indeed, by Corollary~\ref{cor:embedding properties}(5), for any $g\in V_{n,x}$, edges in $g(\ell)$ are also labeled by $v_{2n,1}$. On the other hand, $x$ and $g(x)$ are connected by an edge path in a vertical $T_n$-copy whose edge labels commute with $v_{2n,1}$, thus $\ell$ and $g(\ell)$ are parallel.
	
	Let $P_\ell$ be the parallel set of $\ell$ in $G_{\Lambda_{2n}}$. We write $P_\ell=g_0G_{\st(v_{2n,1})}$ and assume without loss of generality that $x\in g_0G_{\lk(v_{2n,1})}$. By the previous paragraph, $V_{n,x}$ stabilizes both $P_\ell$ and $g_0G_{\lk(v_{2n,1})}$.
	
	Let $T_{n,x}^h$ be the horizontal $T_n$-copy that contains $x$.
	Let $\{\ell_1,\dots,\ell_k\}$ be the set of standard lines in $X_{2n,1}\times X_{2n,2}$ passing through $x$ whose intersection with $T_{n,x}^h$ contains at least one edge. By Corollary~\ref{cor:embedding properties}(4), we have $k=n$, and the types of the lines $\ell_1,\dots,\ell_n$ are contained in $\{w_{2n,1}[1],\ldots, w_{2n,1}[2n]\}$.  For every $i\in\{1,\dots,n\}$, let $e_i\subseteq\ell_i$ be an edge based at $x$ and contained in $\ell_i\cap T_{n,x}^h$. Since the action of $U_n$ on $X_{2n,1}\times X_{2n,2}$ sends standard flat to standard flat (Corollary~\ref{cor:embedding properties}), for every $g\in V_{n,x}$ and every $i\in\{1,\dots,n\}$, the image $g\ell_i$ is a standard line, and $g(e_i)$ is an edge based at $gx$ contained in $g\ell_i\cap T_{n,gx}^h$. It thus follows from Lemma~\ref{lemma:simple} and Corollary~\ref{cor:embedding properties}(4) that, up to permuting the lines $\ell_i$, the set of all types of $g\ell_1$ as $g$ varies in $V_{n,x}$, is a subset of $\{w_{2n,1}[1],\dots,w_{2n,1}[2n]\}$ of cardinality at least $n/4$.

	 Let $\ell'_1$ (resp.\ $\ell'$) be a standard line in $G_\Lambda$ at finite Hausdorff distance from $q_{2n}(\ell_1)$ (resp.\ $q_{2n}(\ell)$). Let $\sfv=\Delta(\ell')$. We consider the action $\alpha_{2,n,\sfv}:U'_{n,\sfv}\actson Z_\sfv$. The fact that every element of $V_{n,x}$ sends $\ell$ to a parallel line ensures that $V_{n,x}\subset U'_{n,\sfv}$. 
	
Notice that for every $g\in V_{n,x}$, we have $(q_{2n})_\ast(\Delta(g\ell_1))=\Delta(g\ell'_1)$. Thus, using Lemma~\ref{lem:consistent}, we deduce that the projections $\pi_\sfv((q_{2n})_*(\Delta(g\ell_1)))$, as $g$ varies in $V_{n,x}$, form exactly one orbit of the action $V_{n,x}\actson Z_\sfv$ under $\alpha_{2,n,\sfv}$.
As the types of the lines $g\ell_1$, with $g$ varying in $V_{n,x}$, form a subset of $\{w_{2n,1}[1],\ldots, w_{2n,1}[2n]\}$ of cardinality at least $n/4$, Lemma~\ref{lem:projection1}(1) implies that the cardinality of the set $\{\pi_\sfv((q_{2n})_*(\Delta(g\ell_1)))\}_{g\in V_{n,x}}$ is at least $n/4$. In addition, since $\Delta(\ell_1)$ and $\Delta(\ell)$ are not adjacent in $\Lambda_{2n}^e$, and $(q_{2n})_\ast$ is an isomorphism, we have $\Delta(\ell'_1)\notin\st(\Delta(\ell'))$. We can thus apply Lemma~\ref{lem:projection1}(2) and deduce that the above set (which as we explained is exactly one orbit of $V_{n,x}$ for the action $\alpha_{2,n,\sfv}$) is finite. We have thus found a subgroup of $U'_{n,\sfv}$ whose action on $Z_\sfv$ via $\alpha_{2,n,\sfv}$ has a finite orbit of size at least $n/4$, as desired.
\end{proof}

			\footnotesize
			
			\bibliographystyle{alpha}
			\bibliography{l1}

\begin{flushleft}
Camille Horbez\\ 
Universit\'e Paris-Saclay, CNRS,  Laboratoire de math\'ematiques d'Orsay, 91405, Orsay, France \\
\emph{e-mail:~}\texttt{camille.horbez@universite-paris-saclay.fr}\\[4mm]
\end{flushleft}

\begin{flushleft}
Jingyin Huang\\
Department of Mathematics\\
The Ohio State University, 100 Math Tower\\
231 W 18th Ave, Columbus, OH 43210, U.S.\\
\emph{e-mail:~}\texttt{huang.929@osu.edu}\\
\end{flushleft}
~
\begin{flushleft}
This work is openly licensed via CC-BY 4.0 (https://creativecommons.org/licenses/by/4.0/).
\end{flushleft}

		\end{document}